\documentclass[a4,11pt]{amsart}
\setlength{\textwidth}{450pt}
\setlength{\textheight}{600pt}
\setlength{\oddsidemargin}{0.4cm}
\setlength{\evensidemargin}{0.4cm}
 
\usepackage{hyperref}
 
\usepackage{amssymb,amsmath,amsthm,txfonts, verbatim,bm,mathtools}
\usepackage{cite}

\usepackage{color}
\usepackage{esint}

\newtheorem{thm}{Theorem}[section]
\newtheorem{lem}[thm]{Lemma}
\newtheorem{cor}[thm]{Corollary}
\newtheorem{prop}[thm]{Proposition}
\theoremstyle{definition}
\newtheorem{defn}[thm]{Definition}
\theoremstyle{remark}
\newtheorem{rem}[thm]{\textbf{Remark}}


 \makeatletter
    
    \@addtoreset{equation}{section}
  \makeatother

\makeatletter
      \def\@makefnmark{%
         \leavevmode
            \raise.9ex\hbox{\check@mathfonts
                \fontsize\sf@size\z@\normalfont%
                            \@thefnmark}%
       }
      \makeatother

\newcommand{\dd}{\textrm{d}}

\begin{document}

\title[]{Stability of   Hill's spherical vortex}
\author[]{K.Choi}
\date{}
\address[K. Choi]{Department of Mathematical Sciences, Ulsan National Institute of Science and Technology, 50 UNIST-gil, Ulsan, 44919, Republic of Korea}
\email{kchoi@unist.ac.kr}

\subjclass[2010]{35Q31, 76B47}
\keywords{Hill's spherical vortex,  incompressible Euler equations, axi-symmetric flow, orbital stability,   variational method, concentrated compactness, metrical boundary}
\date{\today}

\maketitle

\begin{abstract}  
We study   stability of a spherical vortex  introduced by M.  Hill in 1894, which is an  explicit solution of the three-dimensional  incompressible Euler equations.   The flow is  axi-symmetric with no swirl,  the vortex core is simply a  ball sliding on the axis of symmetry with a constant speed, and the vorticity in the core is proportional to the distance from the symmetry axis. We use  the variational setting introduced by A. Friedman and B. Turkington (Trans. Amer. Math. Soc.,
1981), which produced  a maximizer of the kinetic energy under constraints on vortex strength, impulse, and circulation.  We match   the set of maximizers with the Hill's vortex via the uniqueness result of C. Amick and L. Fraenkel (Arch. Rational Mech. Anal., 1986). The matching process is done by an approximation near  exceptional points (so-called metrical boundary points) of the vortex core. As a consequence,  the  stability   up to a translation is obtained by using a concentrated compactness method.
\end{abstract}

\tableofcontents

\vspace{15pt}

\section{Introduction}


\subsection{Hill's spherical vortex: Hill (1894) } \ \\

The  three-dimensional incompressible Euler equations are written by\begin{equation}\begin{split}\label{euler_velocity}\partial_t u+( u\cdot \nabla)u+\nabla P&=0,\\\mbox{div}\, u&=0,\quad x\in\mathbb{R}^3,\quad t>0,\end{split} \end{equation} where $u(x,t)\in\mathbb{R}^3$ is the fluid velocity and $P(x,t)\in \mathbb{R}$ is the  pressure.
The Hill's spherical vortex, which was discovered 
in 1894 \cite{Hill}, 
represents an axi-symmetric flow without swirl whose compactly supported vorticity is proportional to the distance from the symmetry axis. The vortex \textit{core}, which means the support of the vorticity, is an unit ball which    slides the axis   in a constant speed forever without changing its shape or size. 
 More  precisely, 
we write the Euler equations in vorticity vector $\omega$ form 
\begin{equation}\begin{split}\label{eq_general_euler}
\partial_t \omega+( u\cdot \nabla)\omega&=( \omega\cdot\nabla ) u, \quad  
\mbox{curl}\,{u} = \, {\omega}, \quad  x\in\mathbb{R}^3,\quad t>0. 
\end{split} \end{equation} Here, the fluid velocity $u$ can be recovered from its vorticity $\omega$ via the 3d Biot-Savart law $u=\nabla\times (-\Delta)^{-1}\omega$, which makes the fluid at rest at infinity for compactly supported and bounded vorticities. When the velocity of a  flow is axi-symmetric without swirl, the vorticity admits its angular component $\omega^\theta$ only. 
By choosing the $x_3$-axis as the axis of symmetry and by setting the \textit{relative} vorticity  \footnote{In this paper, we use the terminology ``\textit{relative} vorticity", which appeared in \cite{NoSe}, even if  it  is not standard.} $\xi$ (in the cylindrical coordinates) $$\xi(r,z)=\frac {\omega^\theta(r,z)} {r}, \quad r=\sqrt{x_1^2+x_2^2},\quad z=x_3,$$ the symmetry transforms \eqref{eq_general_euler} into the active scalar equation
 \begin{equation}\begin{split}\label{3d_Euler_eq_intro}
 {\partial_t}\xi+ u\cdot \nabla\xi&=0,
 \quad  x\in\mathbb{R}^3,\quad t>0,\\  
\end{split}
\end{equation} where the   axi-symmetric velocity is determined by  
the axi-symmetric  Biot-Savart law 
 $u =\mathcal{K}[\xi]$ 
 introduced later in \eqref{form_u}. 
 In this setting, 
  the Hill's vortex $\xi_H$ is simply defined by
\begin{equation}\label{defn_hill_intro}
 {\xi_H(x)}={1}_{B }(  x),
\end{equation}
 where
$B$ is the unit ball in $\mathbb{R}^3$ centered at the origin. 
It has been well-known that
$$\xi(t,x)=\xi_H(x+t u_\infty )={1}_{B }(  x+t u_\infty )$$ is a traveling wave solution of
\eqref{3d_Euler_eq_intro} where   
$u_\infty$ is the constant velocity $$  u_\infty
=-W_H e_{x_3}, \quad W_H=(2/15).$$
It produces the unique weak solution $u(t)=\mathcal{K}[\xi(t)]$ of \eqref{euler_velocity}.   The  velocity  $u(t)$ lies on $C^{\alpha}(\mathbb{R}^3),\,0<\alpha<1$ because
the corresponding vorticity vector $\omega=(r\xi)e_\theta$ lies on $(L^1\cap L^\infty)(\mathbb{R}^3)$. 
For  more detail, we refer
to Section \ref{sec_prelim} in this paper or the original paper \cite{Hill}, the classical  textbooks \cite{Lamb}, \cite{Bat}, the modern textbook \cite{Saff}.  In fluid mechanics, it is important to study such a localized vortex moving without changing shape or size because it might help to explain  transport  of mass, momentum and energy in  large scale at a flow of  high Reynolds number.\\


 

 In this paper, we are interested in  stability of the Hill's vortex in axi-symmetric perturbations. Since 
such a traveling vortex  can be easily observed experimentally, e.g. when an ink is dropped in another fluid\cite{harper_moore_1968}, smoke is ejected from a tube \cite[p44]{VanDyke82}, or a bubble rises in a liquid \cite{LAKER2004473}, it is natural to expect (or ask questions on) its  stability in longer times. 
Expecting that 
the vortex \eqref{defn_hill_intro}     maximizes the kinetic energy among other axi-symmetric patch-type functions $\zeta$ having the same impulse condition
$$\int_{\mathbb{R}^3} r^2 \zeta dx=\int_{\mathbb{R}^3} r^2 \xi_Hdx,$$
Benjamin\cite[Section I]{Ben76}  in 1976 suggested 
variational principles   for a broad class of  steady vortex rings and  inferred their   stability up to a translation. 
Saffman also suggested in his textbook \cite[footnote in p25]{Saff}  that one can employ conservation of mass and momentum to produce  nonlinear   stability in an $L^1$ and $L^2$ norm. However,  to the best of our knowledge, there is still no   rigorous proof for such stability.
 Wan's paper  \cite{Wan88} in 1988 contains  an orbital stability statement\footnote{It was mentioned as a corollary \cite[Corollary (H)]{Wan88}  \textit{without} a written proof. The metric used in 
the statement contains a non-invariance quantity, which makes the corollary incorrect. In particular, it fails     when one simply compares two Hill's vortices with different radii. 
 The original statement of 
\cite{Wan88} 
 will be reviewed 
 in Remark \ref{rk_wan}  while
  a counterexample 
  will be presented in Remark \ref{rk_wan_pf}.}  in  patch-type axi-symmetric perturbations as a corollary. The statement 
  has most to do with our result in the sense that the variational principles used for both results 
 are suggested by \cite{Ben76} and  Friedman-Turkington  \cite{FT81}. \\
 
Most of the  other  existing literature regarding on  stability/instability issue
 focus  on linearized (or approximated) response to a patch-type  perturbation to its boundary and/or related numerical computations.
 Moffatt-Moore \cite{MM78} in 1978
 (also see \cite{Bliss}) analysed  an approximate evolution equation for the patch boundary. Roughly speaking, the perturbation can produce a thin spike from the rear stagnation point when the initial vortex is either a prolate spheroid or an oblate spheroid. It might be understood that the irrotational flow outside the sphere tends to ``sweep" the perturbation as mentioned in \cite{MM78}. The  situation is validated in a nonlinear setting by Pozrikidis \cite{pozrikidis_1986}  in 1986 numerically (also see  \cite{Pro_El} for an spectral approach). For non axi-symmetric perturbations, we refer to   \cite{Fukuya}, \cite{Rozi}.
Lastly,   investigations  by short-wavelength stability analysis  can be found  in 
\cite{Lifschitz95},\cite{RoFu}, \cite{HaHi}.\\

 Our main result (Theorem \ref{thm_hill_gen}) says that
 the vortex is nonlinearly  stable (up to a translation performed in the axis) in axi-symmetric perturbations which are allowed to be a non-patch type.  Simply speaking, 
  the amount swept by the irrotational flow outside the core  can be controlled uniformly in all time by the initial difference. The key idea is to 
 make a bridge between the existence result of  
\cite{FT81} based on variational method and  the uniqueness result of Amick-Fraenkel \cite{AF86} in order to apply the concentrated compactness method of Lions \cite{Lions84a} into a maximizing sequence.

 \subsection{Main Theorems \ref{thm_hill}, \ref{thm_hill_gen}: stability of Hill's vortex}\label{subsec_main_thm}\ \\


By using the cylindrical coordinate system $(r,\theta,z)$, 
we say that a scalar function $f:\mathbb{R}^3\to\mathbb{R}$ is axi-symmetric
if it has the form of $f(x)=f(r,z)$, 
 and  
a subset $A\subset\mathbb{R}^3$ is  axi-symmetric if the characteristic function
${1}_A:\mathbb{R}^3\to\mathbb{R}$ is axi-symmetric. Here is our main result for patch type data. 

\begin{thm}\label{thm_hill}  
The Hill's vortex 
 is  stable up to a translation in the sense that  
 for  $\varepsilon>0$,
there exists $\delta>0$ such that 
for any    axi-symmetric measurable 
 subset  $A_0\subset \mathbb{R}^3$  satisfying  
\begin{equation}\label{assump_uniq_pat}
A_0\subset \{0\leq r<R\}\quad\mbox{for some}\quad R<\infty
\end{equation} 
 and  
  \begin{align*}
 \int_{A_0\triangle B}(1+r^2)\,dx
\leq \delta,
\end{align*} the  corresponding
   solution $\xi(t)= {{1}}_{A_t}$  of \eqref{3d_Euler_eq_intro} for the initial data
   $\xi_0=1_{A_0}$
   satisfies 
 \begin{align*}\label{conclu_orb_patch}
 \inf_{\tau\in\mathbb{R}}\left\{ 
 \int_{A_t\triangle B^\tau}(1+r^2)\,dx
\right\}
\leq \varepsilon \quad \mbox{for all}\quad t\geq0,
\end{align*}  
where
 
$$B^{\tau}:=\{x\in\mathbb{R}^3\,|\, |x-\tau e_{z}|<1\}$$
is the unit ball   centered at $(0,0,\tau)$. 
Here, the symbol $ \triangle  $ means the symmetric difference. 
\ \\

\end{thm}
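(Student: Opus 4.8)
The plan is to run the classical variational route to orbital stability: combine the conservation laws of the active scalar equation \eqref{3d_Euler_eq_intro} with the variational characterization of Hill's vortex \eqref{defn_hill_intro} as the essentially unique energy maximizer, and then deduce stability by a compactness-and-contradiction argument. Throughout I write $d(f,g):=\int_{\mathbb{R}^3}|f-g|(1+r^2)\,dx$, so that $\int_{A\triangle B^\tau}(1+r^2)\,dx=d(1_A,1_{B^\tau})$; this $d$ is the metric adapted to the two conserved scalars sitting in the weight, namely the total mass (the ``$1$'') and the impulse (the ``$r^2$''). The first point to record is the list of invariants: since $\xi$ is transported by the divergence-free field $u=\mathcal{K}[\xi]$, the solution stays a patch, $\xi(t)=1_{A_t}$ with $|A_t|=|A_0|$, and moreover the three constrained quantities of the Friedman--Turkington problem (vortex strength, the impulse $\int r^2\xi\,dx$, and the circulation $\int\xi\,dx$) as well as the kinetic energy $E[\xi(t)]$ are all constant in $t$. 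These conservation laws are what let me transfer closeness of the constraints at $t=0$ to closeness at an arbitrary later time.

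Next I would invoke the variational picture set up earlier in the paper. Let $\mathcal{I}$ be the supremum of $E$ over the admissible class pinned to the invariants of $\xi_H$. Through the Friedman--Turkington existence result, the Amick--Fraenkel uniqueness, and the metrical-boundary matching, the set of maximizers coincides exactly with the family of vertical translates $\{1_{B^\tau}\}_{\tau\in\mathbb{R}}$, all of energy $\mathcal{I}=E[\xi_H]$. The analytic engine is then the concentrated-compactness lemma, which I treat as the paper's central step: any sequence $(\xi_n)$ that is asymptotically admissible (strength, impulse, and circulation converging to those of $\xi_H$) and asymptotically maximizing ($E[\xi_n]\to\mathcal{I}$) is, after a vertical shift $\tau_n$, precompact, with $\xi_n(\cdot-\tau_n e_z)\to\xi_H$ strongly in $d$. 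One applies Lions' trichotomy to the concentration function in the $z$-variable: vanishing is excluded because it would force $E[\xi_n]\to 0<\mathcal{I}$, while splitting is excluded by a strict binding inequality, the maximal energy of the full constraints strictly exceeding the sum of the maximal energies of any nontrivial splitting; the impulse weight $r^2$ together with the hypothesis $A_0\subset\{r<R\}$ prevents mass from escaping to $r=\infty$, and translation is needed only in $z$, which is precisely why the infimum over $\tau$ appears in the conclusion. Uniqueness and the metrical-boundary matching finally upgrade weak convergence of the shifted patches to strong convergence in $d$ to the exact ball $1_B$.

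With these in hand the theorem follows by contradiction. If it failed, there would be $\varepsilon_0>0$, data $A_0^{(n)}$ (satisfying the support hypothesis) with $d(1_{A_0^{(n)}},1_B)\to 0$, and times $t_n\geq 0$ with $\inf_\tau d(1_{A_{t_n}^{(n)}},1_{B^\tau})>\varepsilon_0$. Since $d(1_{A_0^{(n)}},1_B)\to 0$, the invariants of $\xi_0^{(n)}$ converge to those of $\xi_H$ and, by continuity of $E$ in $d$ on the uniformly bounded patch class, $E[\xi_0^{(n)}]\to\mathcal{I}$. Conservation transports all of this unchanged to time $t_n$, so $\xi_n:=1_{A_{t_n}^{(n)}}$ is asymptotically admissible and asymptotically maximizing. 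The compactness lemma then produces shifts $\tau_n$ with $d(1_{A_{t_n}^{(n)}},1_{B^{\tau_n}})\to 0$, contradicting the lower bound $\varepsilon_0$.

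I expect the main obstacle to be the compactness lemma, and within it the exclusion of dichotomy: proving the strict binding inequality for $\mathcal{I}(\cdot)$ as a function of the constraint parameters, and controlling the two non-compact directions, $r\to\infty$ (handled by the impulse weight and the support hypothesis) and $z\to\pm\infty$ (handled by the vertical shift). A second delicate point is robustness: at time $t_n$ the constraints are only \emph{approximately} those of $\xi_H$, so the lemma must tolerate small perturbations of the constraint values, which amounts to continuity of $\mathcal{I}$ in its parameters together with the fact that near-optimizers for nearby constraints still concentrate on Hill's vortex. Finally, it is the metrical-boundary approximation that bridges the abstract identification of the maximizer set with genuine strong convergence in the symmetric-difference metric $d$, and thus is essential to making the conclusion quantitative rather than merely weak.
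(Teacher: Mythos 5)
Your outline follows the same route as the paper: conservation laws, the variational characterization of the Hill orbit via Friedman--Turkington existence, Amick--Fraenkel uniqueness and the metrical-boundary matching, Lions' concentrated compactness applied to the time-$t_n$ snapshots, and a final contradiction argument. The only structural difference is that the paper proves the non-patch statement (Theorem \ref{thm_hill_gen}) first and obtains Theorem \ref{thm_hill} as an immediate corollary; working directly with patches, as you do, is legitimate and even makes the technical level-set hypothesis of Theorem \ref{thm_cpt} trivial, since an indicator function satisfies $\int_{\{|\xi_n-1|\geq a_n\}}\xi_n\,dx=0$.

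Two points in your plan are genuinely off. First, you assign the hypothesis $A_0\subset\{0\leq r<R\}$ the job of ``preventing mass from escaping to $r=\infty$'' in the compactness step. That is not its role: escape to large $r$ is excluded inside the proof of Theorem \ref{thm_cpt} by an energy estimate (a sequence concentrating at $\tilde r_n\to\infty$ forces $E[\xi_n]\to 0<\mathcal{I}_\mu$), with no support assumption whatsoever. The hypothesis \eqref{assump_uniq_pat} is there so that $r\xi_0\in L^\infty(\mathbb{R}^3)$, i.e.\ exactly the Ukhovskii--Yudovich condition guaranteeing existence and uniqueness of the weak solution and the validity of all the conservation laws (Lemma \ref{lem_exist_weak_sol}, Remark \ref{rem_uniq_sol}). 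Your very first step --- ``the list of invariants'' --- has no justification without it, so this is where your argument actually consumes the hypothesis.

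Second, you propose to exclude dichotomy by ``proving the strict binding inequality \dots\ the maximal energy of the full constraints strictly exceeding the sum of the maximal energies of any nontrivial splitting.'' The paper explicitly warns that sub-additivity of $\mu\mapsto\mathcal{I}_\mu$ is \emph{not} known and never proves such an inequality. Instead, given the dichotomy limits $\hat\xi_1,\hat\xi_2$, it replaces each by a genuine maximizer $\zeta_i$ with the corresponding impulse and circulation constraints, uses compactness of the supports of maximizers (Theorem \ref{thm_max_is_ring}) to translate them so the supports are disjoint, observes $\zeta_1+\zeta_2\in\mathcal{P}'_\mu$ so $E[\zeta_1+\zeta_2]\leq\mathcal{I}_\mu$, and then strict positivity of the interaction term $\iint G\,\zeta_1\zeta_2\,$ forces one of the two pieces to vanish; the surviving case is killed by the strict monotonicity $\mathcal{I}_\alpha<\mathcal{I}_\mu$ (Lemma \ref{lem_strict}). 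So the obstacle you single out is real, but the route you sketch for it is precisely the one the paper says is unavailable; as written, your plan would stall at this step.
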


The above theorem   is a particular case of  the following stability theorem allowing non patch-type data: 
\begin{thm}\label{thm_hill_gen}  
 For  $\varepsilon>0$, 
there exists $\delta>0$ such that 
for 
any non-negative axi-symmetric  function
$\xi_0$ satisfying
\begin{equation}\label{assump_uniq}
\xi_0, \, r\xi_0\in L^\infty(\mathbb{R}^3)
\end{equation} and
\begin{align*}
\|\xi_0-\xi_H\|_{L^1\cap L^2(\mathbb{R}^3)}
+\|r^2(\xi_0-\xi_H)\|_{L^1(\mathbb{R}^3)} 
\leq \delta,
\end{align*} the corresponding
 solution $\xi(t)$ of \eqref{3d_Euler_eq_intro} 
 for the initial data $ \xi_0$ 
satisfies
 \begin{align*}\label{conclu_orb_patch_gen}
 \inf_{\tau\in\mathbb{R}}\left\{ 
\|\xi(\cdot+\tau e_{z},t)-\xi_H\|_{L^1\cap L^2(\mathbb{R}^3)}
+\|r^2(\xi(\cdot+\tau e_{z},t)-\xi_H)\|_{L^1(\mathbb{R}^3)}  
\right\}
\leq \varepsilon \quad \mbox{for all}\quad t\geq0.
\end{align*} 
Here, $\|\cdot\|_{L^1\cap L^2}$ means $\|\cdot\|_{L^1}+\|\cdot\|_{L^2}$.\\

\end{thm}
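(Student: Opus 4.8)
The plan is to run a variational concentration--compactness argument in the spirit of Lions \cite{Lions84a} and Benjamin \cite{Ben76}, combining the conservation laws of the transport equation \eqref{3d_Euler_eq_intro} with the existence theory of Friedman--Turkington \cite{FT81} and the uniqueness theorem of Amick--Fraenkel \cite{AF86}. The first task is to record the invariants of \eqref{3d_Euler_eq_intro}. Because $\xi(t)$ is advected by the divergence-free velocity $u=\mathcal{K}[\xi]$, the entire distribution function of $\xi$ is preserved; in particular $\|\xi(t)\|_{L^1\cap L^2}$ is constant in time and $\xi(t)$ stays in the rearrangement class of $\xi_0$. In addition the impulse $P[\xi]:=\int_{\mathbb{R}^3}r^2\xi\,dx$ and the kinetic energy $E[\xi]:=\tfrac12\int_{\mathbb{R}^3}|\mathcal{K}[\xi]|^2\,dx$ are conserved. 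Consequently, if $\xi_0$ satisfies the closeness hypothesis with parameter $\delta$, then for every $t\ge 0$ the quantities $\|\xi(t)\|_{L^1}$, $\|\xi(t)\|_{L^2}$, $P[\xi(t)]$ and $E[\xi(t)]$ all lie within an error controlled by $\delta$ of their values for $\xi_H$. This reduces the time-dependent statement to a static one about functions close to $\xi_H$ in these invariants.

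Next I would reduce the orbital stability to a compactness statement for maximizing sequences, by the usual contradiction scheme. Write $\mathcal{D}(\xi,\eta):=\|\xi-\eta\|_{L^1\cap L^2}+\|r^2(\xi-\eta)\|_{L^1}$ for the metric appearing in the theorem. Suppose the conclusion fails: there exist $\varepsilon_0>0$, initial data $\xi_0^{(n)}$ with $\mathcal{D}(\xi_0^{(n)},\xi_H)\to0$, and times $t_n\ge0$ with $\inf_\tau\mathcal{D}(\xi^{(n)}(\cdot+\tau e_z,t_n),\xi_H)>\varepsilon_0$. Setting $\zeta_n:=\xi^{(n)}(\cdot,t_n)$, the conservation laws force $\zeta_n$ to be an \emph{almost maximizing sequence}: its mass, $L^2$ norm and impulse converge to those of $\xi_H$, while $E[\zeta_n]\to E[\xi_H]$, which by the variational principle of \cite{FT81} is the maximal energy $\mathcal{I}$ at the relevant constraint values. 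It therefore suffices to prove that every such maximizing sequence converges, after a translation $\tau_n e_z$ along the axis, to $\xi_H$ in the metric $\mathcal{D}$, contradicting the lower bound $\varepsilon_0$.

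The heart of the argument is the concentration--compactness analysis of $\zeta_n$. Writing $E[\xi]=\tfrac12\langle\xi,\mathcal{G}\xi\rangle$ with the positive axisymmetric Green's operator $\mathcal{G}$, I would form Lions' concentration function for the impulse density $r^2\zeta_n$ and exclude the two bad scenarios. \emph{Vanishing} is ruled out because a sequence whose impulse density disperses to infinity has energy tending to a value strictly below $\mathcal{I}$, contradicting $E[\zeta_n]\to\mathcal{I}>0$. \emph{Dichotomy} is excluded by the strict superadditivity (strict binding inequality) $\mathcal{I}(\lambda)>\mathcal{I}(\alpha)+\mathcal{I}(\lambda-\alpha)$ for $0<\alpha<\lambda$ of the maximal energy as a function of the impulse $\lambda$, which follows from the scaling behaviour of $E$ and $P$ under the constraints; splitting $\zeta_n$ into two well-separated lumps would then yield strictly less energy than $\mathcal{I}$. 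Hence, after translating by a suitable $\tau_n e_z$, the sequence is tight, converges weakly in $L^2$ to a limit $\zeta_*$ that retains the full mass and impulse, and the localization afforded by tightness gives $E[\zeta_*]\ge\limsup_n E[\zeta_n]=\mathcal{I}$, so $\zeta_*$ is an energy maximizer.

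Finally I would identify $\zeta_*$ with $\xi_H$ and upgrade to strong convergence. Any maximizer of the \cite{FT81} problem satisfies the semilinear elliptic Euler--Lagrange equation of a steady vortex ring, and matching the normalization of this equation with the hypotheses of \cite{AF86} forces $\zeta_*=\xi_H$ up to translation. \textbf{The main obstacle} is exactly this matching step: the maximizer \emph{a priori} solves the Euler--Lagrange equation only almost everywhere, and the geometry of its level sets can degenerate at the exceptional ``metrical boundary'' points of the core, where the free boundary meets the symmetry axis and the constraint becomes singular; a careful approximation argument near these points is needed to certify that $\zeta_*$ is genuinely the profile to which \cite{AF86} applies. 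Once $\zeta_*=\xi_H$ is established, the convergence $\|\zeta_n\|_{L^2}\to\|\xi_H\|_{L^2}$ together with weak $L^2$ convergence yields, by the Radon--Riesz property, strong $L^2$ convergence, while preservation of the distribution function and tightness of the impulse density give the $L^1$ and weighted $L^1$ convergence. Thus $\mathcal{D}(\zeta_n(\cdot+\tau_n e_z),\xi_H)\to0$, contradicting the choice of $\varepsilon_0$ and completing the proof.
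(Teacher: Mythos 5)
Your overall architecture (contradiction via conservation laws, reduction to compactness of an almost-maximizing sequence, Lions concentration compactness, identification of the limit via Amick--Fraenkel) is the same as the paper's, but two of your steps contain genuine gaps.

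\textbf{Dichotomy.} You exclude dichotomy by asserting strict superadditivity $\mathcal{I}(\mu)>\mathcal{I}(\alpha)+\mathcal{I}(\mu-\alpha)$ and claiming it ``follows from the scaling behaviour of $E$ and $P$ under the constraints.'' It does not. The admissible class carries three constraints (impulse $=\mu$, circulation $\leq\nu$, pointwise strength $\leq\lambda$), and the dilation $x\mapsto ax$ changes impulse and circulation at different rates ($a^{5}$ versus $a^{3}$), so $\mathcal{I}_{\mu}=\mathcal{I}_{\mu,1,1}$ does \emph{not} scale as a pure power of $\mu$; scaling only yields the one-sided bound $\mathcal{I}_{\mu}\geq \mu^{7/5}\mathcal{I}_{1}$ for $\mu\leq 1$, which is useless for superadditivity. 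The paper states explicitly that the needed additivity property is not known, and instead kills dichotomy structurally: each piece $\xi 1_{\Omega_i}$ is Steiner-symmetrized, weak limits $\hat\xi_1,\hat\xi_2$ are extracted, each is replaced by a genuine maximizer $\zeta_1\in\mathcal{S}_{\alpha,\nu_1,1}$, $\zeta_2\in\mathcal{S}_{\mu-\alpha,\nu_2,1}$ (note that the circulation budget $\nu$ must itself be split between the two lumps, a point your argument ignores entirely), the two maximizers are translated along the $z$-axis so their supports are disjoint --- admissible precisely because $z$-translations preserve all constraints --- and then positivity of the kernel $G$ together with the strict monotonicity $\mathcal{I}_{\alpha}<\mathcal{I}_{\mu}$ of Lemma \ref{lem_strict} yields the contradiction. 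Your route, as written, rests on an unproven inequality.

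\textbf{Identification of the limit.} Your ``main obstacle'' paragraph is aimed at the wrong place. The exceptional (metrical boundary) points are what the paper uses to prove that \emph{every} maximizer satisfies the Euler--Lagrange relation $\xi=\lambda 1_{\{\Psi>0\}}$ for \emph{some} $W>0$, $\gamma\geq 0$ (Theorem \ref{thm_max_is_ring}); they are measure-theoretic boundary points of the core, needed because the core is only defined up to null sets --- not points ``where the free boundary meets the symmetry axis.'' The actual obstruction to invoking \cite{AF86} is that its uniqueness theorem applies only when the flux constant $\gamma$ vanishes; with $\gamma>0$ the maximizer could a priori be a genuine torus-type ring rather than Hill's vortex. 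Forcing $\gamma=0$ is a separate argument: one shows a maximizer with $\gamma>0$ must exhaust the circulation bound (Lemma \ref{lem_pos_gam}), and then shows that for small normalized impulse $\mu\nu^{-5/3}\lambda^{2/3}\leq M_1$ every maximizer has $\int\xi\,dx<\nu$ (Proposition \ref{prop_small_mu}), using the lower bound on the traveling speed $W$ and kernel estimates. This is why the paper's proof must fix $\mu_0=(4/15)\pi$, $\lambda_0=1$ and \emph{choose $\nu_0$ large enough} that $\mu_0\nu_0^{-5/3}\leq M_1$ before Theorem \ref{thm_uniq} identifies $\mathcal{S}_{\mu_0,\nu_0,\lambda_0}$ with the translates of $\xi_H$. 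Your proposal never fixes the constraint parameters and never addresses $\gamma$, so the uniqueness step does not go through as stated.
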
 
\begin{rem} \label{rem_uniq_sol} 
In general, the Euler equations in velocity form \eqref{euler_velocity} admits non-unique weak solutions for  initial data $u_0\in L^2(\mathbb{R}^3)$  by \cite{MR2564474},  \cite{MR2838398}. However, when the flow is axi-symmetric without swirl,  we can consider the 
simpler equation 
\eqref{3d_Euler_eq_intro}
 instead. Then for any
    axi-symmetric initial data 
\begin{equation*}
0\leq \xi_0\in 
(L^1\cap L^2)(\mathbb{R}^3)\quad\mbox{with}\quad
r^2\xi_0\in   L^1(\mathbb{R}^3),
\end{equation*}
  existence  and   uniqueness of a weak solution  is guaranteed by  imposing    the  extra condition \eqref{assump_uniq} on the relative vorticity $\xi_0$ by  Ukhovskii-Yudovich \cite{UI} (also see \cite{Raymond}, \cite{Danchin}).    By the same reason, the assumption \eqref{assump_uniq_pat} is added in Theorem \ref{thm_hill}. We revisit the issue  in detail in Subsection 
\ref{subsec_exist_weak_sol} (see  Lemma \ref{lem_exist_weak_sol} and  Remark \ref{rem_exist_weak}).
\end{rem}

\begin{rem} \label{c_infty_smooth} 
Theorem \ref{thm_hill_gen} deals with   non-patch type solutions near the Hill's vortex.
It gives some advantage in the following sense:
For any $\delta>0$, there is a   $C^\infty$-smooth initial compactly supported axi-symmetric relative vorticity $\xi_0$ satisfying  the assumption of Theorem \ref{thm_hill_gen} whose compact support lies away from the axis $\{r=0\}$. 
 Then, we have the  initial vorticity $\omega_0(x)=r\xi_0(r,z)e_\theta(\theta)$ lying on $C_c^\infty(\mathbb{R}^3)$
   which gives the global-in-time $C^\infty$  solution $u(t)$   of \eqref{euler_velocity} by \cite{SY}, \cite[Theorem 2.4]{Raymond}
   since the axi-symmetric initial velocity $u_0:=\nabla\times(-\Delta)^{-1}\omega$ has no swirl and  lies on $H^m(\mathbb{R}^3)$ for any integer $m>0$.    
  As a result, the smooth solution $u(t)$ is close in our sense (up to a translation) to the flow of the Hill's vortex for all time.
\end{rem}

\begin{rem}\label{rem_hill_scaling}
By the scaling invariance of the Euler equations, we have a   family of Hill's vortices of two parameters in the following sense:\\
For any given $0<\lambda,\,a<\infty$, we define   $\xi_{H(\lambda,a)}$ by
\begin{equation}\label{defn_hill_gen_intro}
 \xi_{H(\lambda,a)}(x)=\lambda{1}_{B_a}(x)
\end{equation}
 where $B_a:=\{x\in\mathbb{R}^3\,|\, |x|< a\}$.
Then 
\begin{equation}\label{hill_intro_scaling}
\xi(t,x)=\lambda {1}_{B_a }(  x -tW_{H(\lambda, a)}e_{x_3} )
\end{equation}
 is a traveling wave solution of
\eqref{3d_Euler_eq_intro}  with its traveling speed
\begin{equation}\label{speed_intro_gen}
  W_{H(\lambda, a)}=W_H\cdot ({\lambda a^2}) =\frac{2}{15}{\lambda a^2}
\end{equation}  (see Subsection \ref{subsec_def_hill} for more detail).
The above theorems work for each fixed $\lambda,a\in(0,\infty)$ by the scaling.
 \end{rem}
 
 \begin{rem}\label{rem_negative}
 Dropping the non-negativity assumption on the initial relative vorticity $\xi_0$ in Theorem \ref{thm_hill_gen} seems   non-trivial in the sense that we do not exclude a possibility that  small negative part of $\xi_0$ might spoil the distribution of  other positive part much at time infinity. Indeed, 
our variational method  uses the  fluid  impulse $\int_{\mathbb{R}^3}r^2\xi(t,x)\,dx$ conserved  in time as the main reference quantity. Together with the non-negativity on $\xi$, the conservation of impulse  plays a role of attraction or cohesion toward the symmetry axis $\{r=0\}$. However, without assuming the sign condition,
 we do not expect any global-in-time bound  on 
$\int_{\mathbb{R}^3}r^2\xi^+(t,x)\,dx$ and $\int_{\mathbb{R}^3}r^2\xi^-(t,x)\,dx$. There  might be continued leakage of positive  part and negative part of $\xi$   from the core of the Hill's vortex. On the other hand, dropping the axis-symmetry assumption is more challenging. We do not even know global existence of solutions without the symmetry.

\end{rem}
  \begin{rem}\label{rk_wan}
  The   paper \cite{Wan88} proved that the Hill's vortex  $\xi_H$ is a nondegenerate local maximum of the kinetic energy   under certain constraints. It    mentioned an orbital stability of the   vortex as  a corollary \cite[Corollary (H)]{Wan88} without a written  proof. The metric used in the corollary has the form 
\begin{equation*}
 d(\xi_1,\xi_2)=\int  r^2 |\xi_1-\xi_2| dx+\left|\int z r^2 \xi_1  dx-\int z r^2 \xi_2  dx\right|. 
\end{equation*}   We note that the term
$\int z r^2 \xi(t,x)  dx$   is not conserved in general   when $\xi(t)$ is a solution of \eqref{3d_Euler_eq_intro} while the impulse
$\int  r^2 \xi(t,x) dx$ is preserved. The   statement   \cite[Corollary (H)]{Wan88} 
  considers   patch-type initial data and 
says    
    that for $\varepsilon>0$, there is $\delta>0$ such that
  if $A_0$ is an axi-symmetric bounded subset of $\mathbb{R}^3$ satisfying
   \begin{align*}
 \int_{A_0\triangle B}r^2 dx+\left|\int_{A_0 } zr^2 dx -\int_{B } zr^2dx \right|
\leq \delta,
\end{align*} 
then the  corresponding 
    solution $\xi(t)= {{1}}_{A_t}$ of \eqref{3d_Euler_eq_intro} for the initial data $\xi_0=1_{A_0}$ 
    satisfies 
 \begin{align}\label{conclu_orb_patch_remark}
\inf_{\tau\in\mathbb{R}}\left( \int_{A_t\triangle B^\tau}r^2dx+\left|\int_{A_t } zr^2 dx -\int_{B^\tau } zr^2dx \right|\right)
\leq \varepsilon \quad \mbox{for all}\quad t\geq0.
\end{align} 
However, when $A_0$ has a different impulse, i.e. when $$\int_{A_0} r^2 dx\neq   \int_{B} r^2 dx,$$
the statement fails in general. 
The precise verification is postponed until Remark \ref{rk_wan_pf}.
Heuristically, 
the impulse part  $ \int_{A_t\triangle B^\tau}r^2dx$ is minimized when two sets $A_t$ and $B^\tau$ share the same center.  However, in the case,  the non-invariance part $\left|\int_{A_t } zr^2 dx -\int_{B^\tau } zr^2dx \right|$ can grow linearly in time due to the weight $z$ in the integrand (see \eqref{linear_sp}).
It shows that the quantity in \eqref{conclu_orb_patch_remark} cannot be  
 small for \textit{large} time. 
  \end{rem}
\begin{rem}\label{filamentation}
It  is natural to ask where the core of the perturbed solution  should be at each time. Indeed, the current result merely says that the perturbed one is  close to the Hill's vortex up to \textit{some} time-dependent $z-$translation $\tau(t)$. In this direction, one can estimate the core position $\tau(\cdot)$ by a bootstrap argument  (see \cite{CJ_hill}). Roughly speaking, the perturbed solution travels in a similar speed of the Hill's vortex.
\end{rem}

\subsection{Ideas of proof}\ \\


The  
stability up to   a \textit{translation} (or an \textit{orbital} stability in general)  is a proper notion of nonlinear stability of the Hill's vortices. For instance, imagine the vortex \eqref{hill_intro_scaling}  for $\lambda= 1$ and $  0<|a-1|\ll 1$. The core travels with the speed \eqref{speed_intro_gen}  and eventually becomes disjoint  from  the other travelled core of the Hill's vortex for $\lambda=1,  a=1$. In that sense, a translation is necessary when comparing sliding vortices in longer times. We also refer to the explanation  \cite[p605]{Const_survey} for the case of Kirchhoff ellipses   asking stability up to a rotation.\\

To obtain such a stability, we follow the strategy via the variational method based on vorticity due to the idea of Kelvin \cite{kelvin1880} and Arnold \cite{Arnold66} (also see the book \cite{AK98}). The variational setting for vortex rings  we use comes from  \cite{FT81}. More specifically, for given $\lambda,\mu,\nu\in(0,\infty)$, we consider an  admissible function $\xi$ which is a characteristic function of strength $\lambda$
$$\xi=\lambda 1_A$$ for some axi-symmetric $A\subset\mathbb{R}^3$
(i.e.  a patch-type data) and whose impulse satisfies the exact condition
$$\frac 1 2\int_{\mathbb{R}^3}r^2\xi\,dx=\mu$$ while
 its total circulation(or its mass) is asked to hold the less strict condition $$\int_{\mathbb{R}^3}\xi\,dx\leq \nu.$$
  (see \eqref{defn_adm}) 
 ({cf}. for vortex pairs, see \cite{Tu83}, \cite{Burton88}, \cite{AC2019}). In this class of admissible functions, we pursue maximizing the kinetic energy $$\int_{\mathbb{R}^3} |u|^2 dx,$$ where $u=\mathcal{K}[\xi]$ is the corresponding velocity field of the relative vorticity $\xi$. The approach prescribing impulse was first suggested by
  \cite{Ben76} (also see Burton \cite{Burton_03}).
  \\ 

Under this setting, Friedman and Turkington in their paper \cite[Theorem 2.1]{FT81}   showed that there exists a maximizer $
\xi$ of the energy 
 satisfying 
\begin{equation}\label{eq_W_intro}
\begin{aligned}
&\xi=\lambda f_H\left(\Psi\right),\quad \Psi:=\mathcal{G}[\xi]-\frac{1}{2}Wr^2-\gamma,  
\end{aligned}
\end{equation}  for some 
$W>0$ and $\gamma\geq 0$, where the vorticity function $f_H$ is defined by
\begin{equation}\label{defn_hill_vor_fct}
f_H(s)=\begin{cases} &1,\quad s>0\\ &0,\quad s\leq 0,\end{cases}
\end{equation} and 
  the stream function $\mathcal{G}[\xi]$ is defined later in 
\eqref{form_psi}. The constants $W, \gamma$ represent
 the propagation speed and the flux constant, respectively.
The existence was obtained by a limiting argument ($\beta\to 0$) via a penalized energy functional 
\begin{equation}\label{defn_pen_en}
E_\beta[\xi]:=
\mbox{(kinetic energy of the fluid induced by $\xi$)}
-\beta\lambda\int \left(\frac{\xi}{\lambda}\right)^{1+1/\beta} dx,
\end{equation} where the definition of kinetic energy can be found in  
\eqref{defn_en}
(also see Remark \ref{rem_ft81}).\\

 On the other hand, 
Amick and Fraenkel in their paper \cite[Theorem 1.1]{AF86} showed that any $\xi$ satisfying \eqref{eq_W_intro} with $\gamma=0$ is  the Hill's vortex 
 \eqref{defn_hill_gen_intro} with certain radius $a>0$ (see \eqref{def_a})  up to a translation in $z-$variable. This uniqueness was proved by adapting the moving plane method (refer to\cite{Serrin71},
\cite{GNN}). 
The main difficulty comes from  discontinuity of the vorticity function $f_H$ in \eqref{defn_hill_vor_fct} (also see Remark \ref{rem_idea_uniqAF}).
\\

Before explaining the key ideas of our proof,  we  may assume $\lambda=\nu=1$
without loss of generality. This is done by the scaling argument (e.g. see \eqref{scaling}). Now the impulse $\mu\in(0,\infty)$ is the only free parameter. To obtain stability of the Hill's vortex whose impulse is exactly the constant $\mu$, we connect those two classical results \cite{FT81} and \cite{AF86}  mentioned above by showing the following two statements:\\

\noindent 1. 
For any impulse $\mu>0$, \textit{every} maximizer satisfies \eqref{eq_W_intro} for some speed $W>0$ and some flux constant $\gamma\geq 0$ (see Theorem \ref{thm_max_is_ring}). It implies that every maximizer is a steady vortex ring related to the vorticity function $f_H$ \eqref{defn_hill_vor_fct}. \\
2. 
For any \textit{small} impulse $\mu>0$, every maximizer is the corresponding
Hill's vortex \eqref{defn_hill_gen_intro}   up to a translation (Theorem \ref{thm_uniq}).\\

When proving 
statement 1 above
(Theorem \ref{thm_max_is_ring}), 
the discontinuity of $f_H$ is the main obstacle. Indeed, for a maximizer $\xi_{max}=1_A$ with some axi-symmetric $A\subset\mathbb{R}^3$,   by using variational principles, 
we     have to find some constants $W,\gamma$ satisfying
\begin{equation}\label{intro_claim}
A=\{\Psi>0\},
\end{equation} where the adjusted stream function $\Psi$ is defined in   \eqref{eq_W_intro}.
We note that for any choice of constants $W, \gamma$, the set
$\{\Psi>0\}$ is open so that  its  boundary $\partial \{\Psi>0\}$ is well-defined.
Once the claim \eqref{intro_claim} is proved, such constants $W,\gamma$ are
uniquely and explicitly determined 
by any two points on the boundary $\partial A$ because
$\Psi=0$  there  (see \eqref{def_W_gamma_uniq}).\\

However, 
we merely know that $A$ is measurable (i.e. $A$ is defined up to \textit{almost everywhere})  until proving the claim \eqref{intro_claim}
 for some $W, \gamma$. It makes {topological} boundary $\partial A$ unusable. Instead,   we
 use the   \textit{metrical}\footnote{\label{footlabel1}
 These terminologies ``\textit{metrical}"  and ``\textit{exceptional}" can be found in  \cite[p78]{Croft}  and \cite[p765]{Szenes}, respectively (also see \cite{Kol}).} boundary (see Definition \ref{defn_excep}) of the measurable set $A$, whose element is called  an \textit{exceptional} 
 point. Near an exceptional point of $A$,
there are non-trivial parts of $A$ and $A^c$ in the neighborhood (See  Lemma \ref{lem_exist_one}) so that  
  we can construct an approximation to the Dirac  function at the point from the inside of $A$ and from the outside of $A$ (see Remark \ref{rem_intui} for more detail). It is the key idea to solve the former statement (Theorem \ref{thm_max_is_ring}). 
As a minor difficulty, the restriction $$\xi(x)\in\{0,1\}$$ 
on 
admissible functions $\xi$
 prevents us from adding general $L^\infty$-perturbations into the maximizer  $\xi_{max}$.
 To overcome, we  
extend  the class of admissible functions in advance so that non patch-type functions
$$0\leq\xi(x)\leq 1$$ are allowed (see \eqref{defn_prime_class}).
Then we can freely  add small negative perturbations supported on $A$ and small positive perturbations supported on the complement $A^c$  into   $\xi_{max}$ .\\

The statement 2 above (Theorem \ref{thm_uniq}) is decomposed into three steps:\\ 
\begin{equation*}
\mu\ll 1 \quad\Rightarrow_1\quad \int \xi_{max}\, dx<1\quad\Rightarrow_2\quad \gamma=0 \quad \Rightarrow_3\quad \xi_{max}=\xi_{H(1,a)}\quad \mbox{up to a translation,}
\end{equation*} where the radius $a>0$ satisfies $\mu= (4/15)\pi   a^5  $.
To prove the first step (Proposition \ref{prop_small_mu}),
we use the lower bound  of the traveling speed $W$ depending only on the impulse $\mu$ (Proposition \ref{prop_cpt_supp}) (also see \cite{FT81} or see \cite[p42]{FB74}). Together with an elementary estimate of stream functions (Lemma \ref{lem_est_psi_axis}), it says that the total mass should be strictly smaller than the prescribed number $\nu=1$.  The second step is done 
at Lemma \ref{lem_pos_gam} by using variational principles. As noted in 
Remark \ref{rem_small_mu}, 
this strategy is essentially contained in \cite[Remark 5.2]{FT81} (\text{cf}. for vortex pairs, see \cite[Remark 2.6]{AC2019}).  For the third step, the uniqueness  result due to\cite[Theorem 1.1]{AF86} is used. In Section \ref{sec_uniq_proof_hill}, we carefully verify the setting and all the assumptions 
of \cite[Theorem 1.1]{AF86}. 
In sum,  we make a bridge between the existence result
\cite{FT81} and the uniqueness result  \cite{AF86}, which gives 
 the characterization of the set of maximizers as a single orbit of the corresponding Hill's vortex (Theorem \ref{thm_uniq}).\\
 
  The next part is to prove  compactness  (Theorem \ref{thm_cpt}), which means that
there is a limiting function (up to a subsequence) for any maximizing sequence, and the limit is a maximizer.  
   As already used in the case of 2d vortex pairs by 
Burton-Nussenzveig Lopes-Lopes Filho
  \cite{BNL13} (and more recently by \cite{AC2019}), we
exploit the concentrated compactness lemma  of Lions \cite[Lemma I.1]{Lions84a}. 
Roughly speaking, the lemma says that for any sequence $\{f_n\}$ of non-negative functions on the whole space, if each function of the sequence has the unit mass, then  (up to a subsequence) one of the three following cases happens\footnote{For the precise statements,  see Lemma \ref{lem_concent}. }: \begin{enumerate}
\item (compactness) The most mass is confined in a large ball up to a translation. 
\item (vanishing) For any large ball, $f_n$ has negligible mass on the ball as $n$ goes to infinity.
\item (dichotomy) There is a real number $\mu$ between 0 and 1 such that each $f_n$ is decomposed  into two functions 
 having disjoint supports, and the supports are getting farther and farther away from each other. In particular, both the supports has non-trivial mass close to $\mu$ and $(1-\mu)$, respectively.
\end{enumerate}
In this context, our goal is to exclude both  the case of vanishing and the case of dichotomy.  There is a  difficulty when avoiding the dichotomy case since sub-additivity of maximum energy   in $\mu>0$  is not known.  In particular, since we are not allowed to use  $L^\infty$-bound of the initial data $\xi_0$, we carefully use the particular form of the  two functions $\xi_i=\xi 1_{\Omega_i}$ from dichotomy (e.g. see \eqref{dic_key}). It is important that the function class in the assumptions need to be large enough to cover our stability statement.
As a consequence,
the proof (Section \ref{sec_cpt}) gets a bit technical.    \\

Lastly, the orbital stability is obtained 
 once we  combine  compactness (Theorem \ref{thm_cpt}) together with   uniqueness (Theorem \ref{thm_uniq}).  Indeed, we can use
the classical contradiction argument. 
  Loosely speaking, let's suppose that there is a sequence of   solutions obtained from perturbed  initial data, but at a certain positive time, it  does not converge to the Hill's vortex up to any translation. Then by the energy conservation, we can use the compactness result to obtain a limit function which is a maximizer. Then by the uniqueness result, the limit should be the Hill's vortex up to a translation, which is a contradiction.

 \begin{rem}
If one can extend the uniqueness result
 (Theorem \ref{thm_uniq})
 beyond the class of Hill's vortices 
  (i.e. $\gamma>0$ case in \eqref{eq_W_intro}),
then we can follow the same line of our
proof in Subsection \ref{subsec_proof} of Theorem \ref{thm_hill_gen} to establish a similar stability for the vortex rings in certain (probably smaller) function class. However, 
  such a uniqueness statement  seems out of reach under the current technique (e.g. moving plane method \cite{GNN}). For instance, while one may prove that the core set $A$
   is connected and symmetric (about the plane $\{z=c\}$ for some $c\in\mathbb{R}$) 
    by following the  approach
from  \cite{Ben76} (also see \cite{CF80}) and
from \cite{BNL13} respectively,
 it is  not strong enough to guarantee the uniqueness. 
\end{rem} 

\subsection{Other vortices and waves}\ \\

The Hill's vortex can be considered as the limiting fattest case of the one parameter family 
from Fraenkel \cite{Fraenkel70} and Norbury  \cite{Norbury72}. 
Except the limiting case, Fraenkel-Norbury's solutions are genuine  \textit{rings} in the sense that their cores are away from  the symmetry axis forming torus-type vortices. The Helmholtz's rings of small cross-section \cite{Helm1858} are related to the opposite limiting thinnest case. We refer to \cite{FB74} for the existence of general vortex rings via   a stream function method (also see   \cite{Ni80}
 and references therein). \\
 
It is interesting  that there are explicit spherical vortices  \textit{with} swirl traveling in a constant speed (e.g. see \cite{Hicks}, \cite{Moffatt}). We refer to the recent preprint  \cite{Abe_swirl} and references therein for diverse vortices with swirl. Their stability are generally open.\\

 As a 2d analogue to Hill's spherical vortex, there is an explicit traveling vortex pair (or ``dipole") introduced  by 
 H. Lamb \cite[p231]{Lamb} in 1906 and,  independently, by S. A. Chaplygin in 1903 \cite{Chap1903}, \cite{Chap07} 
  (cf. \cite{MV94}).
For a simple presentation, 
let us consider the case when the vortex core is the unit disk in $\mathbb{R}^2$ and the traveling speed is exactly equal to 1. Then the vortex   $\omega_{L}$ 
has the form
 in the polar coordinates $(r,\theta)$:
\begin{equation}\label{lamb_dipole}
\omega_{L}(x)=\left\{
\begin{aligned}
\left(-\frac{2c_0}{J_0(c_0)}\right) J_1(c_0r)\sin\theta,\quad r\leq 1,\\
0,\quad r>1,
\end{aligned}
\right. 
\end{equation} where   $J_{m}(r)$ is the $m$-th order Bessel function of the first kind. The constant $c_0>0$ is the first zero point of $J_1$ and  $J_0(c_0)<0$.  In particular, it satisfies
\begin{equation*}\label{lamb_dipole_vor}
\omega_{L}=c_0^2 f_L(\psi_L-W_L x_2)\quad\mbox{in}\quad\mathbb{R}^2_+,\quad \psi_L=(-\Delta_{\mathbb{R}^2})^{-1}\omega_L, \quad W_L=1,
\end{equation*} where the vorticity function $f_L$ is defined by 
\begin{equation}\label{defn_lamb_vor_fct}
f_L(s)=\begin{cases} &s,\quad s>0\\ &0,\quad s\leq 0.\end{cases}
\end{equation}
Then 
$$\omega(t,x)=\omega_L(x-W_Lte_{x_2})$$ is a traveling wave solution of the 2d Euler equations
\begin{equation*}
\begin{aligned}
\partial_t \omega+u\cdot \nabla \omega=0,\quad  u&=K*\omega \quad \textrm{in}\quad \mathbb{R}^{2}\times (0,\infty),\\
\end{aligned}
 \end{equation*}
with the 2d Biot-Savart kernel $K(x)=(2\pi)^{-1}x^{\perp}|x|^{-2}$, $x^{\perp}=
(-x_2,x_1)$. 
 We note that 
the vorticity function  $f_L$  in  \eqref{defn_lamb_vor_fct}
 is smoother than $f_H$  in  \eqref{defn_hill_vor_fct} of the Hill's vortex.  The fact that  $f_L$  is Lipschitz   helps to study  certain properties of the dipole including the stability question. The orbital stability of the dipole \eqref{lamb_dipole_vor} was recently obtained by   \cite{AC2019}. We mention Burton's work
\cite{Burton96},  \cite{Burton05b} for other properties of the dipole.
 A similar stability for broader class of vortex pairs (not including the particular case \eqref{lamb_dipole}) was proved by \cite{BNL13}. In this paper, we    follow a similar structure of \cite{BNL13}, \cite{AC2019}   to obtain the stability for the Hill's vortex. For more properties of general vortex pairs,   we refer to \cite{Norbury75}, \cite{Tu83}, \cite{Burton88}. 
 For general dimension $N\geq2$, existence (and uniqueness) of a  vortex generalizing \eqref{lamb_dipole} and \eqref{defn_hill_intro} was proved by Burton-Preciso \cite{Burton_Preciso}. 
 \\
 
There are stability results 
 for   other exact solutions of the 2d Euler equations. For instance, we  see \cite{WP85}, \cite{SV09}, \cite{Cao19} for a circular   patch,  \cite{Denisov17}  for a rectangular patch in the infinite strip.  As   simple applications of stability for a circular patch, we mention \cite{CJ_winding}, \cite{Choi_distance} for   winding number estimates and \cite{CJ_perimeter} for   growth in perimeter. Surprisingly, there is an asymptotic stability result \cite{BesMas} for a 2d Couette flow (cf. 
 \cite{BGM} even for 3d Couette flow).\\
  
  An orbital stability of solitary waves  appeared first in \cite{Ben72}, \cite{Bona75} for the   generalized  KdV equation. We also refer to \cite{MartelMerle01}, \cite{Merle01}, \cite{MartelMerle02} for   instability and   blow-ups  (also see the survey paper \cite{Tao09}). For general dispersive equations, we refer to \cite{CL82}, \cite{Bona87}, \cite{GSS}.  \\
  
  Such a stability up to a translation  even occurs for inviscid/viscous shocks in   conservation laws. 
    In $L^2$-setting in one dimensional space, we mention \cite{Leger}, \cite{ChoiV} for the scalar case,  \cite{LegerV}, \cite{Kang-V-NS17}, \cite{Kang-V-NS20}  for   systems  such as
  compressible Euler/Navier-Stokes systems  
 and \cite{ckkv2019}, \cite{ckv2020} for certain Keller-Segel type systems. We also  refer to the classical paper \cite {MR0101396} for asymptotic stability up to a translation  in $L^1$-setting (also see \cite{MR2217605} and references therein).  \\

 In the rest of the paper, in Section \ref{sec_prelim}, we introduce preliminary materials. Then, Theorems \ref{thm_cpt}, \ref{thm_uniq} (compactness, uniqueness) are presented in Subsection \ref{subsec_cpt_uniq}. By assuming them for a moment, the proof of our main stability result (Theorems \ref{thm_hill}, \ref{thm_hill_gen}) is given in Subsection \ref{subsec_proof}. The rest of the paper (Sections \ref{sec_exist}-\ref{sec_uniq_proof_hill})  is devoted to prove Theorems \ref{thm_cpt}, \ref{thm_uniq}.

 \color{black}
\section{Preliminaries}\label{sec_prelim}

We introduce relevant  mathematical background of the Hill's vortex (more generally  steady vortex rings). 
For more detail, we  also refer to \cite[Section 2]{FB74}, \cite[Section 1]{AF86}, \cite[Section 1]{FT81}. 


\subsection{Axi-symmetric Biot-Savart law} \label{subsec_axi-sym}\ \\

A  vector field $ u
$ is  called \textit{axi-symmetric} 
if 
it has the form of   $$ u(x)=u^r(r,z)e_r(\theta) +u^\theta(r,z)e_\theta(\theta)+u^z(r,z)e_z,
$$ for $$e_r(\theta)=(\cos\theta,\sin\theta,0),
 \quad e_\theta(\theta)=(-\sin \theta, \cos \theta,0),
\quad e_z=(0,0,1),$$ 
where $(r,\theta,z)$ is the cylindrical coordinate to the Cartesian coordinate $x=(x_1,x_2,x_3)$, i.e. $x_1=r\cos\theta,\, x_2=r\sin\theta,\,x_3=z.$
If $u^\theta\equiv0$,   then   $ u$ is called axi-symmetric \textit{without} swirl.
The divergence-free condition for $ u$ can be written as $$ 
  \partial_r(ru^r)+\partial_z(ru^z)=0.$$ 
Then, 
there exists 
an axi-symmetric stream function  $ \psi=\psi(r,z)$ such that  
$$ u =\nabla \times\phi,\quad \phi= \left(\frac \psi r e_\theta\right).$$
By denoting the vorticity vector field 
 $${\omega} :=\nabla\times  u=
 (\partial_z u^r-\partial_r u^z)e_\theta(\theta)=  \omega^\theta e_\theta(\theta),
 $$ 
the stream $\psi$ satisfies 
 $$ -\frac 1 {r^2} {\mathcal{L}\psi}=\xi
 $$  
for the relative vorticity $\xi:=r^{-1}{\omega^\theta}$
  with   the operator 
\begin{equation*}\label{defn_L}
\mathcal{L}=
r\frac{\partial}{\partial r}\left(\frac{1}{r}\frac{\partial}{\partial r}\right)+\frac{\partial^2}{\partial z^2}=
\frac{\partial^2}{\partial r^2}-\frac{1}{r}\frac{\partial}{\partial r}+ \frac{\partial^2}{\partial z^2}. 
\end{equation*}
From $-\Delta \phi=\omega^\theta e_\theta,$  we may assume   
$$
\phi
=\frac{1}{4\pi|x|}*_{\mathbb{R}^3}(\omega^\theta e_\theta).
$$
Then, by the axial symmetry (e.g. see \cite[Section 1]{FT81}), we have
\begin{equation}\label{form_psi} \psi(r,z)=
\int_\Pi G(r,z,r',z')\xi(r',z')r'dr'dz'
=:\mathcal{G}[\xi](r,z),\quad(r,z)\in \Pi,\\
\end{equation} for the the half space $$ \Pi=  \{(r,z)\in\mathbb{R}^2\,|\, \,r>0\}$$ and 
for the Green function \begin{equation*}
G(x,y)=
G(r,z,r',z')=\frac{rr'}{2\pi}\int_{0}^\pi\frac{\cos\vartheta}{\sqrt{
r^2+r'^2-2rr'\cos\vartheta+(z-z')^2}}d\vartheta,\end{equation*}  where $y_1^2+y_2^2=r'^2, \, y_3=z'$ for $y=(y_1,y_2,y_3)\in\mathbb{R}^3$. We note that $G(x,y)=G(y,x)$  and $G$ is  axi-symmetric for each variable. 
  Following \cite{Sverak_lecture}, we write 
\begin{equation}\label{form_F}
  G(r,z,r',z') =\frac{\sqrt{rr'}}{2\pi}\,F(s)
\end{equation}
by setting 
$$s=\frac{(r-r')^2+(z-z')^2}{rr'},\quad F(s)=\int_0^\pi\frac{\cos\vartheta}{\sqrt{2(1-\cos\vartheta)+s}}d\vartheta.$$
We observe that the function $F>0$ is strictly decreasing in $s>0$. 
The  function $F$ (and $G$) can be estimated by using  
  complete elliptic integrals of the first and second kind (e.g. see \cite[Lemma 3.3]{FT81}, \cite[Section 19]{Sverak_lecture}). Indeed, by the asymptotic behavior which can be found in \cite[Lemma 2.1, Remark 2.2]{GV2015}, \cite[Lemmas 2.7,  2.8]{FengSverak}, \cite[Section 2.3]{Do}  we have 
\begin{equation}\label{log_beha}
 F(s)=\frac{1}{2}\log \frac 1 s +\log 8-2+{O}\left(s\log \frac  1 s\right)\quad\mbox{as}\quad s\to 0 
\end{equation}
and
$$F(s)=\frac{\pi}{2}\frac{1}{s^{3/2}}+{O}(s^{-5/2})\quad\mbox{as}\quad  s\to\infty.$$ 
Thus we have 
\begin{equation*}
F(s)\lesssim_\tau   \frac 1 {s^\tau}, \quad 0<\tau\leq 3/2,\quad s>0
\end{equation*}  and
\begin{equation}\label{est_F}
G(r,z,r',z')\lesssim_\tau  
\frac{(rr')^{\tau+\frac 1 2}}{\left(\sqrt{|r-r'|^2+|z-z'|^2}\right)^{2\tau}},
 \quad 0<\tau\leq 3/2,\quad  (r,z),\,(r',z')\in\Pi.
\end{equation}
We set the Biot-Savart law $$ \mathcal{K}[\xi]:=\nabla\times\left(\frac 1 r \mathcal{G}[\xi]e_\theta\right)=\nabla\times\left(\frac 1 r \psi e_\theta\right).$$ Then we recover the axi-symmetric velocity
\begin{equation}\label{form_u}
 u=\mathcal{K}[\xi]=\frac{-\partial_z\psi}{r} e_r+\frac{\partial_r\psi}{r}e_z.
\end{equation}
 
In Subsection \ref{subsec_est_stream}, 
we present  a natural class for $\xi$ where the above formal computations can be valid. 

\subsection{Vorticity equation}\label{subsec_axi_euler}\ \\
 

Once we assume that the flow $u$ in \eqref{eq_general_euler} is axi-symmetric without swirl, 
 we can derive  the following active scalar equation   for  the relative vorticity $\xi=\omega^\theta/r$:  
\begin{equation}\begin{split}\label{3d_Euler_eq}
 {\partial_t}\xi+ u\cdot \nabla\xi&=0,\quad  u =\mathcal{K}[\xi], \quad  x\in\mathbb{R}^3,\quad t>0,\\ 
\xi|_{t=0}&=\xi_0,\quad x\in\mathbb{R}^3. 
\end{split}
\end{equation} 
In this paper, we consider  only non-negative axi-symmetric weak solutions $\xi(t)$ 
to \eqref{3d_Euler_eq}
preserving the 
quantities (1)-(4) below (by noting $dx=2\pi rdrdz$):\\

\begin{enumerate}
\item     Impulse
 $$P[\xi] =\frac{1}{2}\int_{\mathbb{R}^3}r^2\xi  dx= \pi\int_\Pi r^3\xi drdz.$$
 \item Kinetic energy
 \begin{equation}\label{defn_en}\begin{split}
E[\xi]&=
\frac{1}{2}\int_{\mathbb{R}^3} \xi \mathcal{G}[\xi] dx
= \pi\iint_{\Pi\times\Pi}  G(r,z,r',z')\xi(r',z')\xi(r,z)rr'dr'dz' drdz.
\end{split}
\end{equation} 
 \item    Circulation (or total mass)
  $$ \Gamma[\xi]=\int_{\mathbb{R}^3} \xi dx=2\pi\int_\Pi r\xi drdz.$$
  \item Vortex strength
  $$\Lambda[\xi]=\mbox{ess\,sup}_{x\in\mathbb{R}^3}\xi(x).$$
\end{enumerate}
For the existence of such weak solutions, we see Lemma \ref{lem_exist_weak_sol} in Subsection \ref{subsec_exist_weak_sol}. 
In addition, we  have   conservation of any
 $L^p-$norm   $$\|\xi\|_{L^p(\mathbb{R}^3)},\quad p\in[1,\infty]$$ and
 conservation of  mass contained in any  level sets
$$ \int_{\{x\in\mathbb{R}^3\,|\,a<\xi<b\}}\xi \,dx,\quad 0<a<b<\infty.$$

\subsection{Steady vortex rings}\label{subsec_vortex_ring}\ \\


 We call an axi-symmetric motion without   swirl with vanishing velocity at infinity a \textit{steady vortex ring} if the support of the vorticity is a bounded set (so-called the vortex \textit{core}), and if the vortex  moves  at a constant speed along the axis without any change in its size and shape.
In other words, we are interested in 
axi-symmetric solutions $\xi$ 
to \eqref{3d_Euler_eq}
 of the form
\begin{equation*}\label{eq_vortex_ring}
\begin{aligned}
\xi(x,t)=\xi_{vr}(x+t u_\infty )
\end{aligned}
\end{equation*} for some compactly supported $\xi_{vr}$
and
for some constant velocity
$u_\infty=-We_z$. 
We set 
$$  U =\mathcal{K}[\xi_{vr}]+u_\infty$$ 
 so that the pair  $(\xi_{vr},
 U)$
satisfies \color{black} the   stationary equation 
\begin{equation*}\begin{split}
& U\cdot\nabla \xi_{vr} =0,\quad x\in\mathbb{R}^3,\\
& U\to u_\infty \quad \mbox{as}\quad |x|\to \infty.
\end{split}
\end{equation*}
By denoting $ \psi_{vr}=\mathcal{G}[\xi_{vr}]$ and by using \eqref{form_u}, we get 
$$ U
=\frac{-\partial_z\psi_{vr}}{r} e_r+\left(\frac{\partial_r\psi_{vr}}{r}-W\right)e_z.
 $$
Thus the above stationary equation
 can be written as  
\begin{equation*}
\begin{split}
&\frac{\partial(\xi_{vr},\Psi_{vr})}{\partial(r,z)}=
\frac{\partial \xi_{vr} }{\partial z}\frac{\partial \Psi_{vr} }{\partial r}
-
\frac{\partial \xi_{vr} }{\partial r}\frac{\partial \Psi_{vr} }{\partial z}
=0
\end{split}
\end{equation*} for   the adjusted stream function 
\begin{equation}\label{defn_adj_str}
\Psi_{vr}=\psi_{vr}-\frac{1}{2}Wr^2-\gamma
\end{equation}
 for any flux constant $\gamma$.
In this sense, we shall seek a (time-independent) solution  $\xi$ to (by dropping the subscript $``vr"$) $$\xi=f(\Psi)$$ for some non-decreasing function $f:\mathbb{R}\to\mathbb{R}$ (so-called a \textit{vorticity function})
satisfying $f(s)=0$ for $s\leq 0$, $f(s)>0$ for $s>0$. Due to
 $$-\frac 1 {r^2} {\mathcal{L}\Psi}= -\frac 1 {r^2} {\mathcal{L}\psi}=\xi, $$ 
the above system  is reduced to a semilinear elliptic equation in the half-space $\Pi$:
\begin{equation}\label{jac}
\begin{split}
& -\frac 1 {r^2}{\mathcal{L}\Psi}=f(\Psi),\quad r>0, \quad z\in\mathbb{R},\\
&\Psi(0,z)=-\gamma,\quad z\in\mathbb{R},\\
\frac{1}{r}\partial_r\Psi&\rightarrow -W, \quad \frac{1}{r}\partial_z\Psi\rightarrow 0\quad \textrm{as}\quad\sqrt{r^2+z^2}\to\infty.
\end{split}
\end{equation}

\subsection{Hill's spherical vortex revisited}\label{subsec_def_hill}\ \\

The stream function $\psi_H=\mathcal{G}[\xi_H]$ of the Hill's vortex $\xi_H=1_{\{|x|\leq1\}}$ \eqref{defn_hill_intro} is explicitly written by 
\begin{equation*}
\psi_{H}(x)=\left\{
\begin{aligned}
&\,\frac{1}{2}Wr^2\left(\frac{5}{2}-\frac{3}{2} {|x|^2} \right),\quad & |x|\leq 1,\\
&\,\frac{1}{2}Wr^2\frac{1}{|x|^3},\quad &|x|>1,
\end{aligned}
\right. 
\end{equation*} where
the traveling speed $W$ is set by ${2}/{15}$. 
The corresponding axi-symmetric velocity $u=(u_H^re_r+  u_H^ze_z)$ is obtained via
$$ u_H^r=-\frac{\partial_z\psi_H}{r},\quad u_H^z=\frac{\partial_r\psi_H}{r}$$
(see  \eqref{form_u}).
As noted in 
\cite{AF86}, $\psi_H$ is simply    obtained by solving the following O.D.E. problem for $\eta(|x|)=:\psi(x)/r^2$:
\begin{equation*}\label{ode_hill_unit}
\begin{split}
&\eta \in C^1[0,\infty): \mbox{strictly decreasing},\\
& -\frac 1 {t^4} (t^4 \eta')'=1, \quad 0<t<1,\\
 & -\frac 1 {t^4} (t^4 \eta')'=0, \quad t>1,\\
 & \eta(t)|_{t=1}=\frac{1}{2}W,\quad \eta(t)|_{t=\infty}=0.\\
\end{split}\end{equation*}
 Its relevant physical quantities are 
 \begin{equation}\label{phy_quan_hill}
\Lambda[\xi_H]=1,\quad  \Gamma[\xi_H]
=\frac{4}{3}\pi,\quad
P[\xi_H]
= \frac{4\pi }{15}, \quad E[\xi_H]=\frac{8\pi}{15\cdot 21} .
 \end{equation}
The adjusted stream function
$\Psi_H:=\psi_H-(1/2)Wr^2$ (i.e.  \eqref{defn_adj_str} with  $\gamma=0$)
 solves
\eqref{jac} for $\gamma=0$ and for the vorticity function 
$f=f_H=1_{(0,\infty)}$ (see \eqref{defn_hill_vor_fct}).\\

As mentioned in Remark \ref{rem_hill_scaling}, 
 for any given $\lambda>0$ and $a>0$, we set
\begin{equation}\label{defn_hill_gen}
 \xi_{H(\lambda,a)}(x)=\lambda{1}_{B_a}(x),
\end{equation}
where $B_a$ is the ball centered at the origin with radius $a>0$. 
In other words, we use the scaling
$$\xi_{H(\lambda,a)}(x)=\lambda\xi_H\left(\frac x a\right).$$ The corresponding stream function $\psi_{H(\lambda,a)}$ is obtained by
\begin{equation}\label{defn_hill_gen_st}
\psi_{H(\lambda,a)}=\lambda a^4\psi_H\left(\frac x a\right),
\end{equation}
and the corresponding traveling speed is set by
\begin{equation}\label{def_a}
 W_{H(\lambda,a)} =\frac {2}{15}\lambda a^2. 
\end{equation}
By the scaling, we see 
\begin{equation}\label{comp_imp}
\Lambda[\xi_{H(\lambda,a)}]=\lambda,\quad \Gamma[\xi_{H(\lambda,a)}] 
=\frac{4}{3}\pi\lambda a^3,
\quad P[\xi_{H(\lambda,a)}] 
= \frac{4\pi }{15}\lambda a^5,\quad  E[\xi_{H(\lambda,a)}]=\frac{8\pi}{15\cdot 21}\lambda^2 a^7.
\end{equation}

 
\begin{rem}\label{rk_wan_pf}
  As promised in Remark \ref{rk_wan} of Subsection \ref{subsec_main_thm}, here we verify that the orbital stability statement \cite[Corollary (H)]{Wan88} fails by an example.
  Indeed, we can simply set the example
  $$A_0= {B_{a}} $$ for $0<|a-1|<1/2$.
  In other words, we set the initial data $\xi_0=1_{A_0}=1_{B_a}=\xi_{H(1,a)}.$
 Then the solution has the form $$\xi(t,x)=1_{B_a}(x-t{\widetilde{W}} e_{x_3})$$ with the  traveling speed   ${\widetilde{W}} := W_{H(1,a)} =(2/15)a^2$. We note that the speed $\widetilde{W}$ is  different from the speed  $W_{H(1,1)}=(2/15)$ of the Hill's vortex $\xi_H=\xi_{H(1,1)}$ on the unit ball. 
  Then we can check the quantity in the left-hand side of \eqref{conclu_orb_patch_remark} admits a   positive lower bound  of order $1$ for large time. 
  More precisely, we first 
   observe  $$\xi(t)=1_{A_t}\quad\mbox{with}\quad A_t=B_a^{(t{\widetilde{W}} )}:=\{x\in\mathbb{R}^3\,|\, |x-(t{\widetilde{W}} ) e_z|<a\}, \quad t>0.$$  
 For the impulse part in \eqref{conclu_orb_patch_remark},  for any $t>0$ and for any $\tau\in\mathbb{R}$, if 
\begin{equation}\label{disjoint_time}
|t {\widetilde{W}} -\tau|\geq (1+a),
\end{equation} 
   then
the two sets $A_t$ and $B^\tau$ become disjoint so that we get $$\int_{A_t\triangle B^\tau}r^2dx
 =\left(\int_{A_t }r^2dx+\int_{  B^\tau}r^2dx\right)
 =\left(\int_{A_0 }r^2dx+\int_{  B}r^2dx\right) 
 =\frac{8\pi}{15} (a^5+1)\sim 1.
 $$ 
 On the other hand, for the non-invariance part in  
\eqref{conclu_orb_patch_remark}, 
 if $$|t {\widetilde{W}} -\tau|<(1+a),$$ then we can compute
 \begin{align*}\left|\int_{A_t } zr^2 dx -\int_{B^\tau } zr^2dx \right|
 &= \left|\left(\int_{A_0 } zr^2 dx +t{\widetilde{W}} \int_{A_0 } r^2 dx\right) -\left(\int_{B  } zr^2dx+\tau\int_{B  } r^2dx \right) \right|\\
&= \left| 
t{\widetilde{W}} \int_{A_0 } r^2 dx  - 
\tau\int_{B  }  r^2dx   \right|=\frac{8\pi}{15}
\left| 
a^5t{\widetilde{W}}   - 
\tau    \right|\\&\geq  \frac{8\pi}{15}
\left(
\left| 
a^5t{\widetilde{W}}   - 
t{\widetilde{W}}   \right|-\left| 
 t{\widetilde{W}}   - 
\tau    \right| \right) \geq  \frac{8\pi}{15}
\left(
\frac 2 {15} a^2|a^5-1|t-(1+a)\right)\\
&\geq  \frac{8\pi}{15}(a+1)
\left(
\frac 2 {15}a^2|a-1|t-1\right),
 \end{align*} which implies  for   
$t\geq 2(
(2/15)a^2|a-1|)^{-1}$, 
\begin{equation}\label{linear_sp}
 \left|\int_{A_t } zr^2 dx -\int_{B^\tau } zr^2dx \right|\geq  \frac{8\pi}{15}(a+1)
\left(
\frac 1 {15}a^2|a-1|t\right)\gtrsim |a-1|t\gtrsim 1.
\end{equation} 
In sum, we have verified that 
for any
$t\geq 2(
(2/15)a^2|a-1|)^{-1}$ 
(whether \eqref{disjoint_time} holds or not), 
 \begin{align*}\inf_{\tau\in\mathbb{R}}\left( \int_{A_t\triangle B^\tau}r^2dx+\left|\int_{A_t } zr^2 dx -\int_{B^\tau } zr^2dx \right|\right)
\gtrsim 1. 
\end{align*} However, the metric at the initial time has the estimate 
    \begin{align*}
 \int_{A_0\triangle B}r^2 dx+\left|\int_{A_0 } zr^2 dx -\int_{B } zr^2dx \right|= \left|\int_{A_0}r^2 dx-\int_{ B}r^2 dx\right|\sim  |a^5-1| \sim |a-1|,
\end{align*} which we can make arbitrarily small by taking the limit $a\to 1$. 
Hence, the statement  \cite[Corollary (H)]{Wan88} (or see Remark \ref{rk_wan}) cannot be true for this example.
 \end{rem}
\subsection{Notations} \ \\

We collect notations used in this paper.


$$\|f\|_{p}:=\|f\|_{L^p}=\|f\|_{L^p(\mathbb{R}^3)},
\quad p\in[1,\infty],$$
$$\mbox{(weighted }L^1\mbox{-space)}\quad L^1_w:=\{f:\mbox{measurable}\,|\, \|r^2f\|_{1}<\infty \}, \quad\mbox{where}\quad 
 \|r^2f\|_{1}:=\int_{\mathbb{R}^3}(x_1^2+x_2^2)|f(x)|\,dx,
 $$
   $$\int\,dx:=\int_{\mathbb{R}^3}\,dx,\quad\iint\,dydx:=\int_{\mathbb{R}^3}\int_{\mathbb{R}^3}\,dydx,$$
 $$\Pi:=\{(r,z)\in\mathbb{R}^2\,|\,  \, r>0\},\quad
 \overline\Pi=\{(r,z)\in\mathbb{R}^2\,|\,  \, r\geq 0\},$$
 $$\int\,drdz:=\int_{\Pi}\,drdz,\quad\iint\,dr'dz'drdz:=\int_{\Pi}\int_{\Pi}\,dr'dz'drdz.$$ We note   $$ dx=2\pi rdrdz$$ when restricted to axi-symmetric integrands. For instance, for axi-symmetric $f$,
 $$\|f\|_1=2\pi \int|f(r,z)|rdrdz.$$ For $R>0$, we define
\begin{equation}\begin{split} \label{defn_ball}
(\mbox{disks in  }\Pi) \quad B_R(r,z):&=\{(r',z')\in\Pi\,|\, |(r',z')-(r,z)|<R\},\\
(\mbox{balls in }\mathbb{R}^3)\quad B_R(x):&=\{x'\in\mathbb{R}^3\,|\, |x'-x|< R\},\\
(\mbox{tori in  }\mathbb{R}^3)\quad T_R(r,z):&=\{x'\in\mathbb{R}^3\,|\, |(r',z')-(r,z)|<R\}
 \end{split}\end{equation} when $$
  x_1'^2+x_2'^2=r'^2, \quad x_3'=z',\quad\mbox{for}\quad x'=(x_1',x_2',x_3').$$
We note that $T_R(r,z)$ is not a ball in $\mathbb{R}^3$ in general but a torus obtained by revolving 
$  B_R(r,z)$ with respect to the axis of symmetry. \ \\ 



We denote by $BUC(\overline{\mathbb{R}^d})$ the space of all bounded uniformly continuous functions in $\mathbb{R}^d$ and by $C^{\alpha}(\overline{\mathbb{R}^d})$ for $\alpha\in(0,1)$ the space of all uniformly
 H\"older continuous functions of the exponent $\alpha$ in $\mathbb{R}^d$. For an integer $k\geq 0$, $BUC^{k+\alpha}( \overline{\mathbb{R}^d})$ means the space of all $\phi\in BUC(\overline{\mathbb{R}^d})$ such that $\partial_{x}^{l}\phi\in BUC(\overline{\mathbb{R}^d})\cap C^\alpha(\overline{\mathbb{R}^d})$ for $|l|\leq k$.
 For  the half-space $\overline\Pi$, we define $BUC(\overline\Pi), C^\alpha(\overline\Pi), BUC^{k+\alpha}(\overline\Pi)$ in the same way as above. \\
 
\subsection{Elementary estimates of stream functions}\label{subsec_est_stream}\ \\


Before finishing the preliminary section, we  collect some elementary estimates for axi-symmetric   $\xi$.
First we study the decay rate of the stream function near $r=0$ and $ r=\infty$  thanks to the estimate \eqref{est_F} of the kernel $G$. These are essentially contained in \cite[Lemma 3.4]{FT81}.

\begin{lem}
\label{lem_est_stream}
For   axi-symmetric   $\xi\in \left(L^1_w\cap L^2\cap L^1\right)(\mathbb{R}^3)$, the   stream function $\psi=\mathcal{G}[\xi]$ satisfies  
\begin{align}
&|\psi(r,z)|
\lesssim  r\left(   \|r^2\xi\|_{1}+
   \|\xi\|_{L^1\cap L^2} \right), \quad (r,z)\in\Pi,\label{est_psi}  \\
   &|\psi(r,z)|
\lesssim_\delta r^{-1+\delta}\left(   \|r^2\xi\|_{1}+
   \|\xi\|_{L^1\cap L^2} \right), \quad (r,z)\in\Pi,\quad 0<\delta\leq 1.\label{est_psi_bdd}  
   \end{align} 
\end{lem}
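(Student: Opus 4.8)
The plan is to estimate $\psi=\mathcal{G}[\xi]$ directly from its integral representation \eqref{form_psi}, using the pointwise kernel bound \eqref{est_F}. Writing $d:=\sqrt{|r-r'|^2+|z-z'|^2}$, I would start from
\[
|\psi(r,z)|\le\int_\Pi G(r,z,r',z')\,|\xi(r',z')|\,r'\,dr'dz',
\]
and split the $(r',z')$--integral into a near-diagonal part $\{d<\rho\}$ and a far part $\{d\ge\rho\}$, choosing the threshold $\rho$ comparable to $r$ so that $r'\sim r$ on the near part. The factor of $r$ in \eqref{est_psi} is meant to reflect the vanishing of the kernel on the symmetry axis: from \eqref{form_F} one has $G=\tfrac{\sqrt{rr'}}{2\pi}F(s)$ with $s=d^2/(rr')$, so the linear-in-$r$ behaviour is built into the kernel and will be extracted through the appropriate choice of the exponent $\tau$ in \eqref{est_F}.

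On the near-diagonal part I would exploit that the singularity of $G$ is only logarithmic (see \eqref{log_beha}): applying \eqref{est_F} with a small exponent $\tau<\tfrac12$ makes $d^{-2\tau}$ locally square-integrable in the plane, so Cauchy--Schwarz against the measure $r'\,dr'dz'$ bounds this contribution by a constant times $\|\xi\|_2$ together with the correct power of $r$. On the far part I would instead take $\tau$ large (up to $3/2$), so that $d^{-2\tau}$ produces strong decay; then I bound the remaining $r'$--weight $r'^{\,\tau+3/2}$ by splitting $\{r'<1\}$, where it is $\le r'$ and is controlled by $\|\xi\|_1=2\pi\int_\Pi r'|\xi|\,dr'dz'$, and $\{r'\ge1\}$, where it is $\le r'^3$ and is controlled by $\|r^2\xi\|_1=2\pi\int_\Pi r'^3|\xi|\,dr'dz'$. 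Summing the two contributions gives \eqref{est_psi}. For the decay estimate \eqref{est_psi_bdd} the only change is that on the far part one has $d\gtrsim r$, so choosing $\tau=\tfrac32-\delta$ makes $d^{-2\tau}\lesssim r^{-3+2\delta}$, which combined with the prefactor $(rr')^{\tau+1/2}$ yields the gain $r^{-1+\delta}$; the near-diagonal part, being relevant only for large $r$, is then absorbed using the weighted norm.

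The hard part will be the uniform bookkeeping of the powers of $r$, and in particular reconciling two competing regimes. For bounded $r$ (near the axis) the local logarithmic self-interaction must be tamed by the $L^2$ norm, and there the kernel's $r$--vanishing supplies the desired factor $r$; but for large $r$ the same self-interaction grows and cannot be controlled by $\|\xi\|_2$ alone, so one is forced to estimate it through the weighted norm $\|r^2\xi\|_1$, using $r'\sim r$ on the near part to convert a power of $r'$ into the needed power of $r$ while leaving only a harmless factor $\tfrac{\log r}{r}$. I would therefore carry out the near-diagonal estimate in the two cases $r\lesssim1$ and $r\gtrsim1$ separately and check that in both the total is $\lesssim r\,(\|r^2\xi\|_1+\|\xi\|_{L^1\cap L^2})$; this case analysis, rather than any single inequality, is the technical core. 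These bounds are essentially \cite[Lemma 3.4]{FT81}.
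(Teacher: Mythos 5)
Your near-diagonal estimate and your far-field estimate for \eqref{est_psi_bdd} check out (the exponent arithmetic with $\tau=\tfrac32-\delta$ is fine), but the far part of \eqref{est_psi} contains a genuine gap. On $\{d\ge r/2\}$ your method uses the kernel bound \eqref{est_F} and then treats the two factors separately: $d^{-2\tau}$ is bounded by $(r/2)^{-2\tau}$, and the weight $r'^{\,\tau+3/2}$ is decoupled from $d$ by splitting at $r'=1$. The total power of $r$ that survives is then $r^{\tau+1/2}\cdot r^{-2\tau}=r^{1/2-\tau}$, and since \eqref{est_F} requires $\tau>0$, this is at best $r^{1/2-}$ — never the factor $r$ claimed in \eqref{est_psi}. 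This deficit is not cosmetic: \eqref{est_psi} is genuinely a statement about small $r$ (for $r\ge 1$ it already follows from \eqref{est_psi_bdd} with $\delta=1$), and for $r<1$ the bound $r^{1/2-\tau}$ is strictly weaker than $r$. So "summing the two contributions" does not give \eqref{est_psi}; your far-field treatment structurally cannot produce a power of $r$ larger than $r^{1/2}$.

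The missing idea, which is exactly what the paper uses, is that on the far region $r'$ and $d$ are coupled: $d\ge r/2$ implies $r'\le r+d\le 3d$, so excess powers of $r'$ can be traded for powers of $d$, and the inequality $d\ge r/2$ needs to be spent only \emph{once}. Concretely, with $\tau=3/2$,
\begin{equation*}
\frac{(rr')^{2}}{d^{3}}\,r'=r^{2}r'\cdot\frac{r'^{2}}{d^{2}}\cdot\frac{1}{d}\le 9\,r^{2}r'\cdot\frac{2}{r}=18\,r r',
\end{equation*}
so the far part is $\lesssim r\|\xi\|_{1}$; spending $d\ge r/2$ on all three powers of $d$ instead gives the companion bound $r^{-1}\|r^2\xi\|_{1}$, and interpolating between the two yields $r^{2\vartheta-1}\left(\|\xi\|_1+\|r^2\xi\|_1\right)$ for all $\vartheta\in[0,1]$, which covers both \eqref{est_psi} and \eqref{est_psi_bdd} at once. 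Your splitting at $r'=1$ discards precisely this coupling. (Secondarily: on the near part the paper avoids your case analysis in $r$ and the logarithmic bookkeeping by applying H\"older with a general exponent $p$ and interpolating $\|\xi 1_{\{d<r/2\}}\|_{p'}$ between $L^1$ and $L^2$, with $\|\xi 1_{\{d<r/2\}}\|_1\lesssim r^{-2}\|r^2\xi\|_1$ from $r'\sim r$; this gives $r^{-1+7/p}$ uniformly in $r$, but your two-regime argument could also be made to work there.)
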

\begin{proof}

We estimate
$$|\psi(r,z)|\leq 
 \int_\Pi
G(r,z,r',z') 
|\xi(r',z')|r'dr'dz'
=\int_{t<r/2}+\int_{t\geq r/2}=:I+II,
$$ for 
$  t
=\sqrt{(r-r')^2+(z-z')^2}.$
For the term $I$, we take $p\geq 2$ and $\tau=1/(2p)$ in \eqref{est_F}. Applying   H\"older's inequality
with $(1/p)+(1/p')=1$
 implies that 
\begin{equation*}\begin{split}
I&\lesssim \int_{t<r/2}
\frac{(rr')^{\frac 1 2 +\frac 1 {2p}}}{t^{1/p}}
 |\xi(r',z')|r' dr'dz' \lesssim  \left(\int_{t<r/2}  
 \frac{(rr')^{(1+p)/2}}{t}
 r' dr'dz'\right)^{1/p}  \|\xi{1}_{\{t<r/2\}}\|_{p'}
 \\ &\lesssim r^{(3/p) +1}  \|\xi{1}_{\{t<r/2\}}\|_{p'},
\end{split}\end{equation*}
where 
the last inequality follows from $r'\sim r$ when $t<r/2$.
 Since
$1\leq 2r'/r$ for $t<r/2,$
 \begin{equation*}\begin{split}
 \|\xi{1}_{\{t<r/2\}}\|_{p'}&\leq  \|\xi{1}_{\{t<r/2\}}\|_{1}^{1-(2/p)} \|\xi\|_{2}^{2/p}\lesssim r^{-2+(4/p)} \left(\|r^2\xi\|_{1}+  \|\xi\|_{2}\right).
\end{split}\end{equation*} 
 Thus we get
$$
I\lesssim 
r^{-1+(7/p)
} \left(\|r^2\xi\|_{1} + \|\xi\|_{2}\right).
$$
For the term $II$, we take $
\tau=3/2$ in \eqref{est_F}. Since $r'\leq 3t$ for $t\geq r/2$, we obtain
\begin{equation*}\begin{split}
II&\lesssim \int_{t\geq r/2}
\frac{(rr')^2}{t^3}
|\xi(r',z')|r'dr'dz'
\lesssim
r\|\xi\|_1.
\end{split}\end{equation*} 
Similarly, we get
$
II\lesssim
 r^{-1}\|r^2\xi\|_{1}. 
$
Thus, for any $\vartheta\in[0,1]$, we have
 \begin{equation*}\begin{split}
II\lesssim
r^{2\vartheta-1} \left(\|\xi\|_1+ \|r^2\xi\|_{1} \right).  
\end{split}\end{equation*}
By combining the estimates of $I$ and $II$, we get the estimate
\begin{equation*}
|\psi(r,z)|\leq 
C_pr^{-1+(7/p)}\left(\|r^2\xi\|_{1}+ \|\xi\|_2\right)+ C r^{2\vartheta-1} \left(\|\xi\|_1+ \|r^2\xi\|_1 \right), \quad 2\leq p<\infty,\quad 0\leq \theta\leq 1.
\end{equation*}
Thus, \eqref{est_psi} and \eqref{est_psi_bdd}  follow.

\end{proof}
As a consequence, the energy defined in \eqref{defn_en}  is well-defined with the following estimates.
\begin{lem}

\label{lem_est_energy} For   axi-symmetric   $\xi, \xi_1,\xi_2\in \left(L^1_w\cap L^2\cap L^1\right)(\mathbb{R}^3)$, we have 
\begin{equation}\label{est_E} 
|E[\xi]|\leq E[|\xi|]
\lesssim \left(   \|r^2\xi\|_{1}+
   \|\xi\|_{L^1\cap L^2}\right)  
\|r^2\xi\|^{1/2}_{1}  \|\xi\|^{1/2}_{1},
\end{equation}
\begin{equation}\label{est_iint} 
\left|\int_\Pi\int_\Pi G(r,z,r',z')\xi_1(r,z)\xi_2(r',z') rr' dr'dz'drdz\right|
\lesssim \left(   \|r^2\xi_1\|_{1}+
   \|\xi_1\|_{L^1\cap L^2} \right) 
\|r^2\xi_2\|^{1/2}_{1} \|\xi_2\|^{1/2}_{1}, 
\end{equation}
\begin{align}\label{est_E_diff} 
&\left|E[\xi_1]-E[\xi_2]\right|
\lesssim \left(   \|r^2(\xi_1+\xi_2)\|_{1}+
   \|\xi_1+\xi_2\|_{L^1\cap L^2} \right) 
\|r^2(\xi_1-\xi_2)\|^{1/2}_{1} \|\xi_1-\xi_2\|^{1/2}_{1}. 
\end{align}
 
\end{lem}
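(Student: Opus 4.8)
The plan is to reduce all three estimates to the pointwise stream‑function bound \eqref{est_psi} of Lemma \ref{lem_est_stream}, combined with two structural facts about the kernel: its nonnegativity and its symmetry. Since $F>0$ (recorded just after \eqref{form_F}) and $G=\frac{\sqrt{rr'}}{2\pi}F(s)$, we have $G(r,z,r',z')\geq 0$ throughout $\Pi\times\Pi$, whence $|\mathcal{G}[\eta]|\leq \mathcal{G}[|\eta|]$ pointwise for any admissible $\eta$, and in particular $|E[\xi]|\leq E[|\xi|]$. This already gives the first inequality in \eqref{est_E} and lets me argue with absolute values throughout. Moreover $G(x,y)=G(y,x)$, so the bilinear form $B[\eta,\zeta]:=\int \eta\,\mathcal{G}[\zeta]\,dx$ is symmetric; this symmetry is what will let me decide, in each estimate, which function carries the full $\|r^2\cdot\|_1+\|\cdot\|_{L^1\cap L^2}$ norm and which carries the two half‑powers.

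For the main bound in \eqref{est_E}, I would write, using $dx=2\pi r\,dr\,dz$,
\[
E[|\xi|]=\pi\int_\Pi \mathcal{G}[|\xi|](r,z)\,|\xi(r,z)|\,r\,dr\,dz,
\]
and insert \eqref{est_psi} applied to $|\xi|$ (whose norms coincide with those of $\xi$), namely $\mathcal{G}[|\xi|](r,z)\lesssim r\,C$ with $C:=\|r^2\xi\|_1+\|\xi\|_{L^1\cap L^2}$. This gives $E[|\xi|]\lesssim C\int_\Pi r^2|\xi|\,dr\,dz$, and the remaining integral is the only place where the product of square roots is generated: splitting $r^2=r^{3/2}\cdot r^{1/2}$ and applying Cauchy–Schwarz,
\[
\int_\Pi r^2|\xi|\,dr\,dz\leq\Big(\int_\Pi r^3|\xi|\,dr\,dz\Big)^{1/2}\Big(\int_\Pi r|\xi|\,dr\,dz\Big)^{1/2}\lesssim \|r^2\xi\|_1^{1/2}\,\|\xi\|_1^{1/2},
\]
since $\|r^2\xi\|_1=2\pi\int_\Pi r^3|\xi|\,dr\,dz$ and $\|\xi\|_1=2\pi\int_\Pi r|\xi|\,dr\,dz$. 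Combining the two displays yields \eqref{est_E}.

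For \eqref{est_iint}, I use the symmetry of $G$ to integrate over the first variable, rewriting the double integral as $\int_\Pi \xi_2(r',z')\,\mathcal{G}[\xi_1](r',z')\,r'\,dr'\,dz'$. Bounding $|\mathcal{G}[\xi_1]|\leq \mathcal{G}[|\xi_1|]\lesssim r'\,C_1$ with $C_1:=\|r^2\xi_1\|_1+\|\xi_1\|_{L^1\cap L^2}$ by \eqref{est_psi}, and then running exactly the same Cauchy–Schwarz step on $\int_\Pi r'^2|\xi_2|\,dr'\,dz'\lesssim \|r^2\xi_2\|_1^{1/2}\|\xi_2\|_1^{1/2}$, produces the stated bound with the mass norm on $\xi_1$ and the half‑powers on $\xi_2$. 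It is precisely the freedom to integrate in either variable that forces the asymmetric weight allocation to come out matching \eqref{est_iint}.

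Finally, \eqref{est_E_diff} follows by polarization. Writing $E[\xi]=\tfrac12 B[\xi,\xi]$ with the symmetric bilinear $B$ above gives the identity
\[
E[\xi_1]-E[\xi_2]=\tfrac12\,B[\xi_1+\xi_2,\,\xi_1-\xi_2],
\]
and applying \eqref{est_iint} to the pair $(\xi_1+\xi_2,\,\xi_1-\xi_2)$—with $\xi_1+\xi_2$ in the slot carrying the mass norm and $\xi_1-\xi_2$ in the slot carrying the square roots—delivers \eqref{est_E_diff} at once. The computation is otherwise routine; the only genuine point of care is the bookkeeping of the $r$‑weights, i.e. using $G(x,y)=G(y,x)$ to ensure each displayed inequality places the full $\|r^2\cdot\|_1+\|\cdot\|_{L^1\cap L^2}$ factor on the intended function and the two half‑powers on the other.
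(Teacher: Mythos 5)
Your proof is correct and follows essentially the same route as the paper's: both arguments reduce everything to the stream-function bound \eqref{est_psi}, the Cauchy--Schwarz splitting $\|r\xi\|_{1}\leq \|r^2\xi\|_{1}^{1/2}\|\xi\|_{1}^{1/2}$, and, for \eqref{est_E_diff}, the symmetry of $G$ together with the polarization identity $E[\xi_1]-E[\xi_2]=\tfrac12\int (\xi_1-\xi_2)\,\mathcal{G}[\xi_1+\xi_2]\,dx$ followed by an application of \eqref{est_iint}. The only (immaterial) differences are ordering and emphasis: the paper proves \eqref{est_iint} first and obtains \eqref{est_E} as a corollary, whereas you prove \eqref{est_E} directly and make explicit the observation $|E[\xi]|\leq E[|\xi|]$ coming from $G\geq 0$, which the paper leaves implicit.
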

\begin{proof}

We use \eqref{est_psi} to estimate
\begin{equation*}\begin{split}
&\int_\Pi\left(\int_\Pi G(r,z,r'z')|\xi_1(r,z)|rdrdz\right)|\xi_2(r',z')|r'dr'dz' 
\lesssim
\left(   \|r^2\xi_1\|_{1}+
   \|\xi_1\|_{L^1\cap L^2}  \right) \int_\Pi r^2  |\xi_2(r,z)|drdz.  
\end{split}\end{equation*} 
Since we have  $$\|r^2\xi\|_{L^1(\Pi)}\sim \|r\xi\|_1\leq \|r^2\xi\|^{1/2}_{1}\|\xi\|^{1/2}_{1},$$ the estimate \eqref{est_iint} holds. 
 The estimate \eqref{est_E} follows from 
\eqref{est_iint}. 
By the symmetry $G(r,z,r',z')=G(r',z',r,z)$
and by setting $\tilde{\xi}=\xi_1-\xi_2$, we obtain (by suppressing the measure $rr'drdzdr'dz'$)
\begin{align*}
\frac 1 \pi\left(E[\xi_1]-E[\xi_2]\right)
&=\iint G(r,z,r',z')\xi_1(r,z)\xi_1(r',z')
-\iint G(r,z,r',z')\xi_2(r,z)\xi_2(r',z')
 \\
&=\iint G(r,z,r',z')\tilde{\xi}(r,z)\xi_1(r',z')
+\iint G(r,z,r',z')\xi_2(r,z)\tilde{\xi}(r',z')
\\
&=\iint G(r,z,r',z')\tilde{\xi}(r,z)\left(\xi_1(r',z')+\xi_2(r',z')\right).
\end{align*} 
Applying \eqref{est_iint} implies \eqref{est_E_diff}.
\end{proof}

 
Next we show the energy defined in \eqref{defn_en} is equal to the kinetic energy $\frac 1 2 \int |
u|^2 dx$.

\begin{lem}
\label{lem_en_iden_origin}
For axi-symmetric   $\xi\in \left(  L^1_w\cap L^\infty\cap L^1\right)(\mathbb{R}^3)$,   
the stream function $\psi=\mathcal{G}[\xi]$ is continuous  on $\overline\Pi$,
\begin{align}
   &\psi(r,z)\to 0\quad\mbox{as}\quad |(r,z)|\to\infty,\label{psi_conv}
   \end{align}  
  $$\psi\in H^2_{loc}(\Pi),\quad \mbox{and}\quad -\frac{1}{r^2}\mathcal{L\psi }=\xi\quad \mbox{a.e.}$$Moreover,
  the axi-symmetric velocity $
\mathcal{K}[\xi]$ is continuous on $\mathbb{R}^3$ and lies on $  L^2(\mathbb{R}^3)$  with 
 \begin{align}\label{iden_en}
 E[\xi]=
\frac 1 2 \|\mathcal{K}[\xi]\|_2^2=
 \frac{1}{2}\int_{\mathbb{R}^{3}}\frac{1}{r^2}
\left(|\partial_r\psi|^2+|\partial_z\psi|^2\right) 
  dx.
\end{align}  
\end{lem}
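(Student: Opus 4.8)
The plan is to push everything to the three-dimensional Cartesian picture, where the governing operator is the ordinary Laplacian, and then read off the cylindrical assertions. Following Subsection \ref{subsec_axi-sym}, set $\omega=(r\xi)e_\theta$ and $\phi=\frac{1}{4\pi|x|}*_{\mathbb{R}^3}\omega$, so that $\mathcal{K}[\xi]=\nabla\times\phi=:u$ and $\phi=(\psi/r)e_\theta$ with $\psi=\mathcal{G}[\xi]$. The first thing I would record is that the hypotheses force $\omega\in(L^1\cap L^2)(\mathbb{R}^3)\cap L^\infty_{loc}(\mathbb{R}^3)$: indeed $\|\omega\|_1=\|r\xi\|_1\le\|r^2\xi\|_1^{1/2}\|\xi\|_1^{1/2}$ and $\|\omega\|_2^2=\int r^2\xi^2\,dx\le\|\xi\|_\infty\|r^2\xi\|_1$, while $r$ is bounded on bounded sets so $\omega$ is locally bounded. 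This integrability is what makes the classical potential theory for $\phi$ available.

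For the regularity statements, since $\omega\in L^2_{loc}$, Calder\'on--Zygmund theory yields $\phi\in H^2_{loc}(\mathbb{R}^3)$ with $-\Delta\phi=\omega$ a.e.; restricting to the open half-plane $\Pi=\{r>0\}$, where every factor $1/r$ is harmless, gives $\psi\in H^2_{loc}(\Pi)$ and, through the identity between $\mathcal{L}$ and $\Delta$ recorded in Subsection \ref{subsec_axi-sym}, the pointwise equation $-r^{-2}\mathcal{L}\psi=\xi$ a.e. Continuity of $\psi$ on $\overline\Pi$ I would obtain by combining two facts: away from the axis $\phi$ is continuous (the local Newtonian piece is controlled by $|x|^{-1}\in L^2_{loc}(\mathbb{R}^3)$ against $\omega\in L^2$, the far piece is smooth because $\omega\in L^1$), hence so is $\psi=r\,\phi\cdot e_\theta$; and the boundary behaviour $\psi\to 0$ as $r\to0$, together with the decay \eqref{psi_conv}, is already quantified by the bounds \eqref{est_psi}--\eqref{est_psi_bdd} of Lemma \ref{lem_est_stream}.

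Continuity of $u$ on all of $\mathbb{R}^3$, including the axis where the cylindrical formula \eqref{form_u} and the frame $e_r$ degenerate, is best seen from $\phi$. Fixing $x_0$ and splitting $\omega=\omega 1_{B_R(x_0)}+\omega 1_{B_R(x_0)^c}$, the first term is bounded and compactly supported, so its Newtonian potential lies in $W^{2,p}_{loc}$ for every $p<\infty$ and hence $\nabla\times\phi$ is continuous near $x_0$ by Sobolev embedding, whereas for the second term the kernel $\nabla|x-y|^{-1}$ is smooth for $x$ near $x_0$ and $\omega\in L^1$, giving a smooth contribution; thus $u$ is continuous everywhere and coincides with \eqref{form_u} for $r>0$. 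The far-field behaviour $\phi=O(|x|^{-1})$ and $u=O(|x|^{-2})$ (from $\omega\in L^1$) then gives $u\in L^2(\mathbb{R}^3)$, and the second equality in \eqref{iden_en} is immediate from \eqref{form_u}, since $|u|^2=r^{-2}(|\partial_r\psi|^2+|\partial_z\psi|^2)$.

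It remains to prove $E[\xi]=\tfrac12\|u\|_2^2$, which formally is the integration by parts $\int|u|^2=\int(\nabla\times\phi)\cdot u=\int\phi\cdot(\nabla\times u)=\int\phi\cdot\omega=\int(\psi/r)(r\xi)\,dx=\int\psi\xi\,dx=2E[\xi]$. I would make this rigorous by approximation: choose smooth $\xi_n$, compactly supported away from the axis, with $\xi_n\to\xi$ in $L^1_w\cap L^2\cap L^1$, for which the identity and the vanishing of all boundary terms are classical. Writing $u_n=\mathcal{K}[\xi_n]$, the estimate \eqref{est_E} applied to $\xi_n-\xi_m$ shows $\|u_n-u_m\|_2^2=2E[\xi_n-\xi_m]\to0$, so $u_n\to u$ in $L^2$ (the limit identified with $u$ by pointwise convergence of $\mathcal{K}[\xi_n]$), while $E[\xi_n]\to E[\xi]$ by \eqref{est_E_diff}; passing to the limit in $2E[\xi_n]=\|u_n\|_2^2$ gives the claim. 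The main obstacle is exactly this last step: one must simultaneously control the flux at spatial infinity (the boundary integral over $\partial B_R$ is $O(R^{-1})$ thanks to the $O(|x|^{-1})$, $O(|x|^{-2})$ decay) and avoid the apparent singularity of the cylindrical representation on the axis. The approximation argument disposes of both at once, and its only real input is the quantitative continuity of $\xi\mapsto E[\xi]$ and $\xi\mapsto\mathcal{K}[\xi]$ in the $L^1_w\cap L^2\cap L^1$ topology furnished by Lemmas \ref{lem_est_stream} and \ref{lem_est_energy}.
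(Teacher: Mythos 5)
Your overall framework (lifting to the Cartesian potential $\phi=(4\pi|x|)^{-1}*\omega$, potential theory for $\phi$, then reading off $\psi=r\phi\cdot e_\theta$) is the same as the paper's, and your approximation scheme for \eqref{iden_en} is a legitimate alternative to the paper's direct cutoff integration by parts. However, there is a genuine gap in your treatment of the decay \eqref{psi_conv}. You claim it is "already quantified" by \eqref{est_psi}--\eqref{est_psi_bdd}, but those estimates control only the dependence on $r$, uniformly in $z$: along the line $r=1$, for instance, both bounds reduce to $|\psi(1,z)|\lesssim \|r^2\xi\|_1+\|\xi\|_{L^1\cap L^2}$, which is a constant and gives no decay as $|z|\to\infty$. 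So the limit $\psi(r,z)\to 0$ as $|(r,z)|\to\infty$ in the region $\{r\le R,\ |z|\to\infty\}$ is simply not addressed. The paper closes exactly this case by a separate argument: $\phi\in L^{q'}(\mathbb{R}^3)$ for some finite $q'$ (from Hardy--Littlewood--Sobolev) together with $\phi\in BUC^{1/2}(\overline{\mathbb{R}^3})$ forces $\phi(x)\to 0$ as $|x|\to\infty$, and then $|\psi|=r|\phi|\le R\,\sup|\phi|\to 0$ on $\{r\le R\}$. Some such input is indispensable; your proposal has nothing playing this role.

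A second, related flaw: you assert $\phi=O(|x|^{-1})$ and $u=O(|x|^{-2})$ "from $\omega\in L^1$" and use this to conclude $\mathcal{K}[\xi]\in L^2(\mathbb{R}^3)$. For a vorticity that is merely in $L^1\cap L^2$ without compact support, no such pointwise rates follow (the tail of $\omega$ near $|y|\sim|x|$ destroys any rate); these decay rates are valid only for your compactly supported approximations $\xi_n$, where you correctly use them for the boundary term. For the general $\xi$, the clean route is the paper's: $\omega\in L^{6/5}$ by interpolation, hence $\nabla\phi\in L^2$ by Hardy--Littlewood--Sobolev, hence $\mathcal{K}[\xi]=\nabla\times\phi\in L^2$. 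Alternatively, within your scheme, $u\in L^2$ can be extracted as the $L^2$-limit of the Cauchy sequence $u_n$ — but then the identification of that limit with $\mathcal{K}[\xi]$, which you dismiss with the phrase "by pointwise convergence of $\mathcal{K}[\xi_n]$", becomes the load-bearing step and needs an actual proof (e.g., $\mathcal{K}[\xi_n]\to\mathcal{K}[\xi]$ in distributions from $\omega_n\to\omega$ in $L^1\cap L^2$ and the convolution representation). With these two repairs, your approximation argument for the energy identity does go through and buys a proof that avoids the paper's explicit cutoff computation with $\varphi_M$; without them, the lemma's conclusions \eqref{psi_conv} and $\mathcal{K}[\xi]\in L^2$ are unproven.
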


\begin{proof}

By setting 
$\omega(x)=r\xi(r,z) e_\theta(\theta)$, 
 we observe 
\begin{equation}\label{vor_l1}
\int |\omega|\,dx=\int r|\xi |^{1/2}|\xi|^{1/2}\,dx\leq
\|r^2\xi\|_{1}^{1/2}  \|\xi\|_{1}^{1/2}
\end{equation} 
  and 
$$\int |\omega|^2\,dx=\int r^2|\xi|^2\,dx\leq \|\xi\|_\infty\int r^2|\xi|\,dx <\infty.$$ It implies   
  $\omega\in L^p(\mathbb{R}^3)$ for any $p\in[1,2]$.   
  By setting 
\begin{equation}\label{rel_phi_}
\phi=\frac\psi r e_\theta,
\end{equation} we have
  $\phi =({4\pi|x|})^{-1}* \omega$ (see Subsection \ref{subsec_axi-sym}). 
  This representation 
implies
 $\nabla \phi\in L^{q}(\mathbb{R}^3)$ for any $q\in (3/2,6]$, and
 $\phi\in L^{q'}(\mathbb{R}^3)$ for any $q'\in (3,\infty)$  by the Hardy-Littlewood-Sobolev inequality (e.g. see \cite[p354]{Stein93}).
 Hence
 \begin{equation}\label{phi_w_1_q}
 \phi\in W^{1,q''}(\mathbb{R}^3)\quad\mbox{for any}\quad q''\in (3,6].
 \end{equation}
We also observe
$\omega\in L^\infty_{loc}(\mathbb{R}^3)$ since
 $\sup_{x\in U}|\omega
(x)|=\sup_{x\in U}r|\xi(r,z)|\leq C_U\|\xi\|_{\infty}$ for any bounded set $ U
\subset\mathbb{R}^3.$ 
Together with
\eqref{phi_w_1_q},
we get $\phi\in W_{loc}^{2,6}(\mathbb{R}^3)$ which implies
$\phi$ and $\nabla\phi$ are continuous on $\mathbb{R}^3$ by the Sobolev embedding (e.g. see \cite[p284]{Evans_book}). 
  Hence \eqref{phi_w_1_q} implies 
  $\phi \in BUC^{(1/2)}(\overline{\mathbb{R}^3})$ by the Morrey's inequality (e.g. see \cite[p280]{Evans_book}).      
 From  \eqref{rel_phi_}, 
 we have 
\begin{equation}\label{psi_from_phi}
 \psi(r,z)=r  \phi_2(r e_{x_1}+z e_{x_3}),\quad (r,z)\in \Pi,
\end{equation} 
 where $\phi(x)=\phi_{1}(x) e_{x_1}+\phi_{2}(x)e_{x_2} +\phi_3(x) e_{x_3}$.   Since $\phi_2$ is continuous on $\mathbb{R}^3$,   the stream function $\psi$ is continuous 
 on $\overline\Pi$ 
by defining $\psi|_{r=0}=0.$\\ 

 In particular, the velocity 
$\mathcal{K}[\xi]=(\nabla\times \phi )$ 
  is continuous on $\mathbb{R}^3$ with 
  $\mathcal{K}[\xi]
\in L^2(\mathbb{R}^3).$ 
 By integration by parts,
we have
\begin{align*}2 E[\xi]&=\int r\xi\frac \psi r  dx
= \int\omega\cdot \phi dx
=-\int\Delta\phi\cdot \phi dx
=\int\nabla\times\left(\nabla\times\phi \right)\cdot \phi dx\\&=\int \left(\nabla \times\phi \right)\cdot\left(\nabla \times\phi \right) dx=\int \frac  1 {r^2}\left(|\partial_r\psi|^2+|\partial_z\psi|^2\right)   dx
=
 \|\mathcal{K}[\xi]\|_2^2.\end{align*} 
The integration by parts we did in the above is justified in the following way:\\ We take
  a radial function
 $\varphi\in C^{\infty}_{c}(\mathbb{R}^3)$ satisfying  $\varphi(x)=1$ for $|x|\leq 1$ and $ \varphi(x)=0$ for $|x|\geq 2$, and set the cut-off function on $\mathbb{R}^3$ by $\varphi_M(x)=\varphi(x/M)$ for any $M>0$. Then we have
\begin{align*}\int\nabla\times\left(\nabla\times\phi \right)\cdot (\varphi_M\phi) dx&=\int \left(\nabla \times\phi \right)\cdot\left(\nabla \times(\varphi_M\phi) \right) dx 
\\
&=\int \varphi_M\left(\nabla \times\phi \right)\cdot\left(\nabla \times\phi \right) dx 
+\int \left(\nabla \times\phi \right)\cdot\left((\nabla \varphi_M)\times \phi\right) dx. 
\end{align*}
Since we know
 $ \omega=\nabla\times\left(\nabla\times\phi \right)\in L^1(\mathbb{R}^3)$, $\phi \in L^\infty(\mathbb{R}^3),$ and $
\nabla\times\phi \in L^2(\mathbb{R}^3), 
$  it is enough to show that the last integral   above vanishes as $M\to \infty$. We simply compute
$$\left|\int \left(\nabla \times\phi \right)\cdot\left((\nabla \varphi_M)\times \phi\right) dx\right|\lesssim \|\nabla \phi\|_{2}\|\nabla\varphi_M\|_{4}\|\phi\|_4\lesssim M^{-1/4} \|\nabla \phi\|_{2} \|\phi\|_4\to 0\quad\mbox{as}\quad M\to\infty.$$
Thus \eqref{iden_en} follows.\\

To show \eqref{psi_conv}, 
 let $\epsilon>0$. By \eqref{est_psi_bdd} in Lemma \ref{lem_est_stream}, there exists $R>0$ such that
 $$\sup_{r\geq R}|\psi(r,z)|<\epsilon.$$ On the other hand, from
  $\phi\in L^{q'}(\mathbb{R}^3)$ for any $q'\in (3,\infty)$ and
  $\phi \in BUC^{(1/2)}(\overline{\mathbb{R}^3})$, we get
  $$\lim_{|x|\to\infty} |\phi(x)|=0.$$ 
It implies, from $|\psi|=r|\phi|$,  there exists $Z>0$ such that
   $$\sup_{r\leq R, |z|\geq Z}|\psi(r,z)|\leq R\cdot \sup_{x_1^2+x_2^2\leq R^2, |x_3|\geq Z}|\phi(x)|<\epsilon.$$
   Thus we get 
   $$\sup_{r^2+z^2\geq R^2+Z^2}|\psi(r,z)|\leq  \epsilon,$$ which implies
    \eqref{psi_conv}.\ \\

Since   $\phi$ lies on $H^2_{loc}(\mathbb{R}^3)$,
we have  $-\Delta \phi =\omega$ {a.e. in} $ \mathbb{R}^3$ by the   elliptic regularity theory, which implies $$\psi\in H^2_{loc}(\Pi)\quad \mbox{and}\quad -\frac{\mathcal{L\psi }}{r^2}=\xi\quad \mbox{a.e. in }\,\Pi.$$



   \end{proof}
 
 \section{Variational problem with compactness and uniqueness}
 \subsection{Variational setting: Friedman-Turkington (1981)}\ \\

For $0<\mu,\nu,\lambda<\infty$, we set the space of admissible functions
\begin{align}\label{defn_adm}
{\mathcal{P}}_{\mu,\nu,\lambda}=\left\{\xi\in L^{\infty}(\mathbb{R}^3)\ \middle|\ \xi=\lambda{{1}}_A \mbox{  for   some axi-symmetric   
  } A\subset\mathbb{R}^3\,,\ 
\frac 1 2 \|r^2\xi\|_{1}=\mu,\,\,
\|\xi\|_1\leq \nu
 \right\},
\end{align} and we
 study the variational problem of  maximizing the energy $E$ on ${\mathcal{P}}_{\mu,\nu,\lambda}$.
In the rest of the paper, we set \begin{align}\label{var_prob}
{\mathcal{I}}_{\mu,\nu,\lambda}
= \sup_{\xi\in {\mathcal{P}}_{\mu,\nu,\lambda}}E[\xi]   
\end{align} and denote $
{\mathcal{S}}_{\mu,\nu,\lambda}$  by  the set  of maximizers of \eqref{var_prob}, i.e.
\begin{equation}\label{var_prob_max}
{\mathcal{S}}_{\mu,\nu,\lambda}=\{
\xi\in  {\mathcal{P}}_{\mu,\nu,\lambda}\,|\, E[\xi]=
 {\mathcal{I}}_{\mu,\nu,\lambda}
\}.
\end{equation}
We note that any $z$-directional translation
of   $\xi\in {\mathcal{S}}_{\mu,\nu,\lambda}$  
lie on the same set ${\mathcal{S}}_{\mu,\nu,\lambda}$. 

 \subsection{Theorems \ref{thm_cpt}, \ref{thm_uniq}: compactness and uniqueness of the set of maximizers}\label{subsec_cpt_uniq}\ \\

We introduce the following compactness theorem and uniqueness theorem, whose proofs will be given in later sections.  By assuming these theorems for a moment, we will produce our main result (Theorems \ref{thm_hill_gen} and \ref{thm_hill}) in Subsection \ref{subsec_proof}.
 
\begin{thm}\label{thm_cpt}[Compactness of  maximizing sequence]
Let $0<\mu,\nu, \lambda<\infty$. 
    Let  $\{\xi_n\}_{n=1}^\infty$ be a sequence of non-negative axi-symmetric functions in $\mathbb{R}^3$ and let $\{a_n\}_{n=1}^\infty$ be  a sequence of positive numbers such that
$$a_n\to 0\quad \mbox{as}\quad n\to\infty,$$
$$\limsup_{n\to\infty}\|\xi_n\|_{{1}}\leq \nu,\quad
\lim_{n\to\infty}\int_{\{x\in\mathbb{R}^3\,|\,|\xi_n(x)-\lambda|\geq a_n\}}\xi_n\,dx=0,\quad \color{black}
\lim_{n\to\infty}\frac 1 2\|r^2\xi_n\|_{1}
  =\mu,  $$ 
  $$\sup_n\|\xi_n\|_2<\infty,\quad  \mbox{and}\quad \quad \lim_{n\to\infty}E[\xi_n]= {\mathcal{I}}_{\mu,\nu,\lambda}.$$
\color{black}  
 Then there exist    a subsequence $\{\xi_{n_k
 }\}_{k=1}^\infty$, a
 sequence $\{c_{k}\}_{k=1}^\infty\subset \mathbb{R}$, and a function $\xi\in {\mathcal{S}}_{\mu,\nu,\lambda} $ such that
 \begin{equation}\label{conclu_cpt} 
\|r^2\left(\xi_{n_k}(\cdot+c_ke_z)-\xi\right)\|_{1}\to 0\quad  \mbox{as}\quad k\to \infty.
 \end{equation}
In particular, the set ${\mathcal{S}}_{\mu,\nu,\lambda} $ is non-empty.
\end{thm}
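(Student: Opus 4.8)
The plan is to run Lions' concentrated compactness method on the impulse densities, exploiting that the energy $E$, the impulse, and both constraints in \eqref{defn_adm} are invariant under translations along the symmetry axis (but not in $r$). First I would set $d\rho_n=\tfrac12 r^2\xi_n\,dx$, so that $\rho_n(\mathbb{R}^3)=\tfrac12\|r^2\xi_n\|_1\to\mu>0$, and apply the concentration-compactness lemma \cite{Lions84a} (Lemma~\ref{lem_concent}) to the normalized measures. After passing to a subsequence, exactly one of compactness (up to translation), vanishing, or dichotomy occurs. Since only $z$-translations are admissible symmetries, I take all translations in the $e_z$-direction, and I rule out via the energy estimate \eqref{est_E} the degenerate possibility that the relevant mass drifts to $r=\infty$: a configuration of bounded impulse supported at large $r$ has vanishing $L^1$-mass and hence vanishing energy.

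Vanishing is excluded as follows. First $0<\mathcal{I}_{\mu,\nu,\lambda}<\infty$: finiteness is \eqref{est_E}, and positivity holds because $G>0$ makes the self-energy of any admissible patch strictly positive. If vanishing held, then $\sup_{y}\rho_n(B_R(y))\to0$ for every fixed $R$. Splitting the double integral for $E[\xi_n]$ into the near-diagonal part $\{t<R\}$ and the far part $\{t\ge R\}$, with $t=\sqrt{|r-r'|^2+|z-z'|^2}$, the kernel bound \eqref{est_F} makes the far part uniformly small as $R\to\infty$, while Hölder's inequality controls the near part by a positive power of $\sup_y\rho_n(B_R(y))$ times the bounded quantities appearing in \eqref{est_iint}. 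Thus $E[\xi_n]\to0$, contradicting $E[\xi_n]\to\mathcal{I}_{\mu,\nu,\lambda}>0$.

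Dichotomy is the main obstacle, precisely because strict subadditivity of $\mu\mapsto\mathcal{I}_{\mu,\nu,\lambda}$ is not known. In that case I would obtain, after a subsequence, some $\alpha\in(0,\mu)$ and a splitting $\xi_n=\xi_n^{(1)}+\xi_n^{(2)}+o(1)$ with $\xi_n^{(i)}=\xi_n1_{\Omega_n^{(i)}}$, impulses tending to $\alpha$ and $\mu-\alpha$, and $\mathrm{dist}(\Omega_n^{(1)},\Omega_n^{(2)})\to\infty$. The separation forces the cross term in \eqref{est_iint} to zero, so $E[\xi_n]=E[\xi_n^{(1)}]+E[\xi_n^{(2)}]+o(1)$. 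Rather than appeal to subadditivity, I recombine: using that the impulse and $E$ are invariant under $z$-translation, I slide $\xi_n^{(2)}$ along the axis until its support is adjacent to, but disjoint from, that of $\xi_n^{(1)}$. The recombined function is again admissible — disjointness preserves both the pointwise patch bound and $\|\cdot\|_1\le\nu$, while the impulse still tends to $\mu$ — and by positivity of $G$ its interaction energy \eqref{est_iint} is now bounded below by a fixed $\delta_0>0$ depending only on $\alpha$ and $\mu-\alpha$. This gives $\mathcal{I}_{\mu,\nu,\lambda}\ge E[\xi_n^{(1)}]+E[\xi_n^{(2)}]+\delta_0-o(1)=E[\xi_n]+\delta_0-o(1)$, contradicting $E[\xi_n]\to\mathcal{I}_{\mu,\nu,\lambda}$. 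The delicate points are to bring the two pieces to bounded distance in the $(r,z)$ half-plane — a piece escaping to $r=\infty$ is first discarded via \eqref{est_E}, as above — and to extract the uniform lower bound $\delta_0$; this is exactly where the explicit form $\xi_n^{(i)}=\xi_n1_{\Omega_n^{(i)}}$ is needed.

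With vanishing and dichotomy excluded, the compactness alternative holds: there are $c_k\in\mathbb{R}$ for which the shifted densities $\tfrac12 r^2\xi_{n_k}(\cdot+c_ke_z)$ are tight, and hence, being $L^2$-bounded, the shifted functions converge after a further subsequence weakly in $L^2$ to some $\xi\ge0$; tightness gives $\|r^2\xi\|_1=2\mu$ and weak lower semicontinuity gives $\|\xi\|_1\le\nu$. Because $\mathcal{G}$ acts compactly on this tight, $L^2$-bounded family, $E[\xi_{n_k}]=E[\xi_{n_k}(\cdot+c_ke_z)]\to E[\xi]$, so $E[\xi]=\mathcal{I}_{\mu,\nu,\lambda}$. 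Since $E$ is a strictly positive-definite quadratic form (by \eqref{iden_en}), it is strictly convex, so its maximum over the relaxed convex class $0\le\xi\le\lambda$ with the same constraints is attained only at an extreme point, which forces $\xi=\lambda 1_A$ to be a genuine patch; thus $\xi\in\mathcal{S}_{\mu,\nu,\lambda}$, which in particular is non-empty. Finally, the strong statement \eqref{conclu_cpt} is obtained by upgrading the weak convergence: the uniform integrability furnished by $\sup_n\|\xi_n\|_2<\infty$ together with the almost-patch hypothesis $\int_{\{|\xi_n-\lambda|\ge a_n\}}\xi_n\,dx\to0$, combined with tightness and a.e. convergence along a subsequence, yields convergence of $r^2\xi_{n_k}(\cdot+c_ke_z)$ to $r^2\xi$ in $L^1$ by Vitali's theorem. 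This last upgrade, and the verification of the patch structure of the limit, is where I expect the argument to become genuinely technical.
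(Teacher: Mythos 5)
Your treatment of vanishing and of the compactness alternative is broadly in line with the paper (and your extreme-point/strict-convexity route to the patch structure of the limit is a legitimate substitute for the paper's Lagrange-multiplier argument, given that Theorem \ref{thm_exist_max} is available). The genuine gap is in the dichotomy case, which is precisely where the paper itself flags the difficulty (no known subadditivity of $\mu\mapsto\mathcal{I}_{\mu,\nu,\lambda}$). Your recombination move --- ``slide $\xi_n^{(2)}$ along the axis until its support is adjacent to, but disjoint from, that of $\xi_n^{(1)}$, and extract a uniform interaction bound $\delta_0>0$ depending only on $\alpha$ and $\mu-\alpha$'' --- fails on two counts. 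First, the dichotomy pieces $\Omega_n^{(1)},\Omega_n^{(2)}$ produced by Lions' lemma are only disjoint measurable sets with mutual distance tending to infinity; typically one of them contains the complement of a huge ball, so neither piece has (uniformly) bounded support and the proposed rigid translation to ``adjacency'' is not even well defined. Second, even granting bounded supports, no such uniform $\delta_0$ exists: two configurations of prescribed impulses can each be spread over regions of diameter $D_n\to\infty$, and since $G(r,z,r',z')\lesssim (rr')^2/|(r,z)-(r',z')|^3$, their interaction energy when placed side by side can be made arbitrarily small. The paper circumvents exactly this by \emph{not} recombining at finite $n$: it Steiner-symmetrizes each piece, passes to weak $L^2$ limits $\hat\xi_1,\hat\xi_2$ (using Lemma \ref{lem_energy_conv} for energy convergence and the almost-patch hypothesis for the pointwise bound), then replaces $\hat\xi_1,\hat\xi_2$ by \emph{maximizers} $\zeta_1\in\mathcal{S}_{\alpha,\nu_1,1}$, $\zeta_2\in\mathcal{S}_{\mu-\alpha,\nu_2,1}$, which are compactly supported by Theorem \ref{thm_max_is_ring}. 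Only then does the translation-to-disjoint-supports argument run, and the contradiction is qualitative, not quantitative: $\mathcal{I}_\mu\le E[\zeta_1]+E[\zeta_2]=E[\zeta_1+\zeta_2]-\mbox{(interaction)}\le\mathcal{I}_\mu-\mbox{(interaction)}$ forces the interaction to vanish, which by positivity of $G$ forces one $\zeta_i\equiv0$, contradicting $\alpha\in(0,\mu)$; the degenerate case $\hat\xi_2\equiv0$ is handled separately by the strict monotonicity $\mathcal{I}_\alpha<\mathcal{I}_\mu$ (Lemma \ref{lem_strict}). None of this machinery (Steiner symmetrization, sub-impulse maximizers, compact support, strict monotonicity) appears in your sketch, and without it the dichotomy case does not close.

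A secondary but real flaw is the final upgrade to \eqref{conclu_cpt}: you invoke ``a.e.\ convergence along a subsequence'' plus Vitali, but weak $L^2$ convergence does not produce a.e.\ convergence of any subsequence, so Vitali's theorem has nothing to act on. The paper's route is a Radon--Riesz argument applied to the square roots: one shows $\sqrt{r^2\xi_{n}}\rightharpoonup\sqrt{r^2\xi}$ weakly in $L^2$ (this step genuinely uses the almost-patch hypothesis, since $\sqrt{\xi_n}\approx\xi_n$ when $\xi_n$ is close to $0$ or $\lambda$), observes that the norms converge because $\|\sqrt{r^2\xi_n}\|_2^2$ is twice the impulse, concludes strong $L^2$ convergence of $\sqrt{r^2\xi_n}$, and only then converts this into $L^1$ convergence of $r^2\xi_n$ using the patch structure of the limit. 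You should replace the Vitali step by this argument.
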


 \begin{thm}\label{thm_uniq}
 [Uniqueness]   There exists a   constant $M_1>0$ such that for any constants $0<\mu,\nu, \lambda<\infty$ satisfying $\mu\nu^{-5/3}\lambda^{2/3}\leq M_1$, \begin{equation*}
 \mathcal{S}_{\mu,\nu,\lambda}= \{\xi_{H(\lambda,a)}(\cdot+c e_z)\,|\, c\in \mathbb{R}\}, \end{equation*}  where $\xi_{H(\lambda, a)}$ is the Hill's vortex for the vortex strength constant $\lambda$ with the radius $a=a(\lambda,\mu)>0$ solving the equation $\mu= (4/15)\pi \lambda a^5  $. \end{thm}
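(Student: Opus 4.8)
The plan is to strip the problem down to the normalized case $\nu=\lambda=1$ and then run the three-step scheme advertised in Subsection~\ref{subsec_main_thm}, gluing the structure theory of maximizers to the Amick--Fraenkel rigidity result. First I would remove two of the three parameters by scaling: setting $\xi_{\nu,\lambda}(x)=\lambda^{-1}\xi\bigl((\nu/\lambda)^{1/3}x\bigr)$ gives a bijection of $\mathcal{S}_{\mu,\nu,\lambda}$ onto $\mathcal{S}_{M,1,1}$ with $M=\mu\nu^{-5/3}\lambda^{2/3}$, which carries the family $\{\xi_{H(\lambda,a)}\}$ onto $\{\xi_{H(1,\cdot)}\}$ and commutes with $z$-translations. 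Hence it suffices to exhibit $M_1>0$ such that $\mathcal{S}_{\mu,1,1}$ is a single $z$-orbit of a Hill vortex whenever $\mu\le M_1$. The set $\mathcal{S}_{\mu,1,1}$ is nonempty by Theorem~\ref{thm_cpt}, and by Theorem~\ref{thm_max_is_ring} every maximizer $\xi_{max}=1_A$ is a steady vortex ring: it satisfies \eqref{eq_W_intro} for some speed $W>0$ and flux $\gamma\ge 0$, equivalently $A=\{\Psi>0\}$ with $\Psi=\mathcal{G}[\xi_{max}]-\tfrac12 Wr^2-\gamma$ solving \eqref{jac} for $f=f_H$.

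The heart of the argument is to force $\gamma=0$ when $\mu$ is small, in two moves. First (Proposition~\ref{prop_small_mu}) I would show that the circulation constraint becomes inactive, $\Gamma[\xi_{max}]=\int\xi_{max}\,dx<1$. Here I would combine the lower bound on the propagation speed $W$ in terms of $\mu$ alone (Proposition~\ref{prop_cpt_supp}) with the elementary stream-function estimates of Lemmas~\ref{lem_est_stream} and \ref{lem_est_psi_axis}: on the core one has $\tfrac12 Wr^2+\gamma\le\psi\lesssim r\bigl(\|r^2\xi_{max}\|_{1}+\|\xi_{max}\|_{L^1\cap L^2}\bigr)$ by \eqref{est_psi}, which confines $A$ to a region $\{r\lesssim C/W\}$; feeding this into the impulse normalization $\tfrac12\|r^2\xi_{max}\|_{1}=\mu$ together with the speed bound pins the mass below the threshold $1$ uniformly for small $\mu$. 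Second (Lemma~\ref{lem_pos_gam}), since $\gamma$ plays the role of the multiplier attached to the inequality constraint $\Gamma\le\nu$, complementary slackness---obtained by testing $\xi_{max}$ against admissible negative variations on $A$ and positive variations on $A^c$, as in \cite[Remark~5.2]{FT81}---yields $\gamma=0$ as soon as the constraint is strict.

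With $\gamma=0$ in hand, $\xi_{max}$ solves \eqref{jac} with vorticity function $f_H$ and vanishing flux. By Lemma~\ref{lem_en_iden_origin} and the decay estimate \eqref{est_psi_bdd}, the stream function $\psi$ has exactly the regularity and vanishing-at-infinity required to match the standing hypotheses of \cite[Theorem~1.1]{AF86}; that rigidity result then identifies $\xi_{max}$ with a Hill vortex $\xi_{H(1,a)}$ up to a $z$-translation, and the impulse constraint fixes the radius through $\mu=P[\xi_{H(1,a)}]=\tfrac{4}{15}\pi a^{5}$. Conversely $\xi_{H(1,a)}$ with this radius lies in $\mathcal{S}_{\mu,1,1}$, so the maximizer set is precisely the claimed orbit; undoing the scaling recovers the statement for general $\nu,\lambda$.

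The main obstacle is Proposition~\ref{prop_small_mu}: converting the soft fact that the Hill vortex has $\gamma=0$ into the \emph{quantitative, uniform} deactivation of the circulation constraint for all small $\mu$. The difficulty is that one needs an \emph{upper} bound on the core volume $|A|$, whereas the naive energy/impulse inequalities (e.g. $2E[\xi_{max}]\ge W\mu+\gamma\Gamma$ and the bound \eqref{est_E}) only control $\Gamma$ from below; the resolution is to make the speed lower bound cooperate with the pointwise bound on $\psi$ so as to localize $A$ and thereby cap its measure. A secondary but delicate point is the rigidity step: because $f_H$ is discontinuous, verifying every hypothesis of \cite{AF86}---in particular that the merely measurable core $A$ genuinely coincides with the open superlevel set $\{\Psi>0\}$ so that the moving-plane method applies---relies on the metrical-boundary analysis underlying Theorem~\ref{thm_max_is_ring}.
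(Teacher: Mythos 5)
Your plan follows the paper's architecture exactly: scale to $\nu=\lambda=1$, use Theorem \ref{thm_max_is_ring} for the vortex-ring structure of \emph{every} maximizer, kill $\gamma$ for small $\mu$ via Proposition \ref{prop_small_mu} plus the complementary-slackness Lemma \ref{lem_pos_gam}, verify the weak formulation (Lemma \ref{lem_hill_sol}) so that Theorem \ref{thm_AF} applies, fix the radius from the impulse, and get the reverse inclusion by translation invariance. However, there is a genuine gap in the one step you yourself identify as the heart of the matter: the mechanism you sketch for Proposition \ref{prop_small_mu} does not close. You propose that the pointwise bound $\tfrac12 Wr^2+\gamma\le\psi\lesssim r\left(\|r^2\xi\|_{1}+\|\xi\|_{L^1\cap L^2}\right)$ confines the core to $\{r\lesssim C/W\}$, and that this confinement ``fed into the impulse normalization'' caps the mass. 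Two problems. First, with $\|\xi\|_1\le 1$, $\|\xi\|_2\le 1$, $\|r^2\xi\|_1=2\mu$ and the best available speed bound $W\gtrsim \mu^{2/5}$ (which itself is not contained in Proposition \ref{prop_cpt_supp}, whose conclusion is only $W\ge \mathcal{I}_\mu/(2\mu)$; one must add the scaling inequality $\mathcal{I}_\mu\ge \mathcal{I}_1\mu^{7/5}$, proved by rescaling a fixed element of $\mathcal{S}_1$), the confinement radius is $C/W\sim\mu^{-2/5}\to\infty$ as $\mu\to 0$, and is anyway weaker than what Chebyshev with the impulse already gives, namely $\int_{r\ge\rho}\xi\,dx\le 2\mu/\rho^2$. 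Second, and more fundamentally, a bound on the radial extent of $A$ together with the impulse cannot control $|A|$: a set hugging the axis of thickness $\epsilon$ in $r$ and length $L$ in $z$ has impulse $\sim\epsilon^4 L$ but volume $\sim\epsilon^2 L$, so at fixed impulse $\mu$ its volume $\sim\mu/\epsilon^2$ is arbitrarily large. The dangerous mass sits \emph{near} the axis, exactly where your confinement says nothing.

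The correct mechanism (the paper's) is different, and it is where Lemma \ref{lem_est_psi_axis} actually enters. Split the mass at $r=2\sqrt{\mu}$: the far part is $\le 1/2$ by Chebyshev applied to the impulse. For the near-axis part, use the pointwise inequality $\xi\le 2\psi/(Wr^2)$ a.e.\ (valid since $\xi$ is the indicator of $\{\psi>\tfrac12Wr^2+\gamma\}$ and $\psi\ge 0$), integrate it over a dyadic annulus $\{\alpha\le r<2\alpha\}$, and exploit the symmetry of $G$ via Fubini so that the kernel estimate of Lemma \ref{lem_est_psi_axis}, $\int_{r'<\alpha}G(r,z,r',z')\,r'\,dr'dz'\lesssim\alpha^4$ uniformly in $(r,z)$, yields $\int_{\alpha\le r<2\alpha}\xi\,dx\lesssim \alpha^2W^{-1}\int\xi\,dx\le\alpha^2/W$. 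Summing the dyadic scales $\alpha=2\sqrt{\mu}\,2^{-i}$ gives $\int_{r<2\sqrt{\mu}}\xi\,dx\lesssim\mu/W\lesssim\mu^{3/5}$, hence $\int\xi\,dx\le\tfrac12+C\mu^{3/5}<1$ for all small $\mu$, after which Lemma \ref{lem_pos_gam} gives $\gamma=0$. So you cite the right lemmas, but they must be deployed at the kernel level through this dyadic decomposition near the axis, not through a localization of the core in $r$; as written, that step of your argument would fail. The remainder of your proposal (the rigidity step, the determination of $a$ from $\mu=\tfrac{4}{15}\pi a^5$, and the reverse inclusion) matches the paper and is sound.
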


\begin{rem}\label{rem_m1}
The optimal constant of $M_1$ satisfying the above theorem can be explicitly computed even if we do not need the exact value   in the sequel. Indeed, we consider the Hill's vortex $\xi_H=\xi_{H(1,1)}$ of unit strength on the unit ball. By a direct computation (or see \eqref{phy_quan_hill} in Subsection \ref{subsec_def_hill}), we know
$$\xi_H\in \mathcal{P}_{(4/15)\pi,(4/3)\pi,1}.$$ 
 On the other hand, by Theorem \ref{thm_uniq}, for any  $\nu>0$ satisfying
 $(4/15)\pi\nu^{-5/3}\leq M_1$, we obtain
\begin{equation}\label{state_uniq_m1}
\xi_H\in \mathcal{S}_{(4/15)\pi,\nu,1}. \end{equation}  Since the admissible class
$\mathcal{P}_{(4/15)\pi,\nu,1}$ is increasing in $\nu>0$, we conclude that 
\eqref{state_uniq_m1} holds if and only if $\nu\geq (4/3)\pi$, which 
  was  inferred by \cite[p21]{Ben76} in 1976.
 Hence, we can set 
$$M_1= \frac 4 {15}\pi \left(\frac 4 3 \pi\right)^{-5/3},
$$ and it is sharp.
\end{rem}


\subsection{Existence and uniqueness of global weak solutions}\label{subsec_exist_weak_sol}\ \\
 



We consider 
the case when
 the Euler  equations  admits the active scalar transport equation form \eqref{3d_Euler_eq}. Therefore existence and uniqueness of solutions $\xi(t)$ can be studied analogously as the two-dimensional case. We refer to
\cite{UI}, \cite{Maj86}, \cite{Raymond}, \cite{SY}, \cite{ChaeKim97}, \cite{CI},
  \cite{Danchin}, \cite{AHK2010}, \cite{Jiu15} in various settings.\\
   
 For our stability result, we  just  need a weak solution preserving the quantities listed in Subsection \ref{subsec_axi_euler}. Since our main interest lies not on existence of such weak solutions but on stability of them, we only briefly explain the existence (and the uniqueness) here.
We simply  take 
initial data $\xi_0$ regular enough in order to have existence and uniqueness of the corresponding weak solution $\xi(t)$ with desirable conservations. More precisely we consider a  non-negative axi-symmetric initial data
$\xi_0\in (L^1_w\cap L^{\infty}\cap L^{1})(\mathbb{R}^{3})$
satisfying $ (r\xi_0) \in L^\infty(\mathbb{R}^3)$. Such  a regularity  
 guarantees    existence and uniqueness of the corresponding weak solution of the Euler equations \eqref{euler_velocity}  (see Remark \ref{rem_exist_weak} after Lemma \ref{lem_exist_weak_sol}). This axi-symmetric solution  satisfies \eqref{3d_Euler_eq} both in its weak form and   in the renormalized sense  of DiPerna-Lions  \cite{DL89} (refer to \cite[Definitions  2, 3]{NoSe} for precise notions of  such solutions).
 As a result, we obtain all conservations listed in Subsection \ref{subsec_axi_euler} by using \cite[Theorems 1, 2, 3]{NoSe} (also see \cite[Section 5.1]{AC2019} for 2d case). We state the result in the form of a lemma below. 
\begin{lem}
\label{lem_exist_weak_sol}
For any  non-negative axi-symmetric   
$\xi_0\in(L^1_w\cap L^{\infty}\cap L^{1})(\mathbb{R}^{3})$
satisfying 
$(r\xi_0)\in L^\infty(\mathbb{R}^3),$
 there exists a unique    weak solution $\xi\in 
 L^\infty(0,\infty; (L^1_w\cap L^{\infty}\cap L^{1})(\mathbb{R}^{3}))$ of \eqref{3d_Euler_eq}
 for the initial data $\xi_0$ such that\\
 $$\xi(t)\geq0\,:\,\mbox{ axi-symmetric},$$
   \begin{equation}\label{conserv_euler} 
\begin{aligned}
||\xi(t)||_{q}&=||\xi_0||_{q},\quad 1\leq q\leq \infty,  \\
||r^2\xi(t)||_{1}&=||r^2\xi_0||_{1},  \\
E[\xi(t)]&=E[\xi_0],\qquad \textrm{for all}\ t> 0, 
\end{aligned}
\end{equation}  
 and,   for any $0<a<b<\infty$ and for each $t> 0$,
\begin{equation}\label{conserv_parti}
 \int_{\{x\in\mathbb{R}^3\,|\,a<\xi(x,t)<b\}}\xi(x,t)\,dx=\int_{\{x\in\mathbb{R}^3\,|\,a<\xi_0(x)<b\}}\xi_0(x)\,dx.\ \\
\end{equation}  
 \ \\
\end{lem}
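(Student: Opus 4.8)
The plan is to place the initial data inside the scope of the classical Ukhovskii--Yudovich theory \cite{UI} (see also \cite{Raymond}, \cite{SY}) for axi-symmetric flows without swirl, and then to read off each listed invariant from the Lagrangian and renormalized structure of the transport equation \eqref{3d_Euler_eq}. First I would control the velocity. Writing $\omega=(r\xi)\,e_\theta$ as in Lemma \ref{lem_en_iden_origin}, the hypotheses $\xi_0\in L^1_w\cap L^1$ and $r\xi_0\in L^\infty$ give, via \eqref{vor_l1}, that $\omega\in(L^1\cap L^\infty)(\mathbb{R}^3)$. The three-dimensional Biot--Savart law $u=\nabla\times(-\Delta)^{-1}\omega$ then yields a bounded, divergence-free, log-Lipschitz velocity field, uniformly on $[0,T]$ along the evolution, since $\|\xi(t)\|_1$ and $\|r\xi(t)\|_\infty$ are a priori transported (see below).

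Next, for existence and uniqueness, a log-Lipschitz $u$ produces a well-defined measure-preserving flow map $\Phi_t$, and the unique solution is the Lagrangian one, $\xi(t)=\xi_0\circ\Phi_t^{-1}$. This gives immediately the conservation of every $L^q$ norm together with the level-set masses \eqref{conserv_parti}, because $\Phi_t$ preserves Lebesgue measure. Non-negativity and axi-symmetry persist since both are stable under composition with the (axi-symmetry-respecting) flow, and continuity of $t\mapsto\xi(t)$ into $L^1_w\cap L^\infty\cap L^1$ follows from continuity of $t\mapsto\Phi_t$ together with dominated convergence.

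For the two remaining invariants I would argue at the level of renormalized solutions in the sense of DiPerna--Lions \cite{DL89}. The impulse $2P[\xi(t)]=\|r^2\xi(t)\|_1$ is conserved by a direct computation using $\partial_t\xi+u\cdot\nabla\xi=0$, the explicit form \eqref{form_u} of $u$, and $\mathrm{div}\,u=0$; the weight $r^2$ is exactly compatible with the radial component $u^r=-\partial_z\psi/r$, so the time derivative integrates to zero. The energy identity $E[\xi(t)]=E[\xi_0]$ is the delicate one, and I would obtain it from the Hamiltonian structure by invoking the results of Nobili--Seis \cite{NoSe}, which establish conservation of energy for precisely this renormalized axi-symmetric setting.

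The main obstacle is making the energy conservation rigorous: for merely $L^\infty$-type relative vorticity the formal step of pairing the equation against $\psi$ requires knowing that the weak solution is simultaneously a renormalized and a Lagrangian solution, with enough integrability to justify the integration by parts carried out in Lemma \ref{lem_en_iden_origin}, and with controlled behavior near the axis $\{r=0\}$. This is exactly what the framework of \cite{NoSe} (building on \cite{DL89}) supplies, so the remaining work is the routine verification that our data $\xi_0\in L^1_w\cap L^\infty\cap L^1$ with $r\xi_0\in L^\infty$ meet their structural hypotheses, which the stated bounds guarantee.
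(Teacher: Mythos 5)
Your overall route coincides with the paper's: the paper likewise outsources existence and uniqueness to the Ukhovskii--Yudovich theory for axi-symmetric flows without swirl (\cite{UI}, or equivalently \cite{Raymond}, \cite{Danchin}), and it reads off all of \eqref{conserv_euler} and \eqref{conserv_parti} from the fact that the solution is simultaneously a weak and a renormalized (DiPerna--Lions \cite{DL89}) solution, citing \cite[Theorems 1, 2, 3]{NoSe} --- exactly the division of labor you propose.

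There is, however, one concrete flaw in your justification of the uniform log-Lipschitz bound: you assert that $\|r\xi(t)\|_\infty$ is ``a priori transported.'' It is not. The quantity transported by the flow is $\xi=\omega^\theta/r$; the angular vorticity $\omega^\theta=r\xi$ obeys the stretching equation $\partial_t\omega^\theta+u\cdot\nabla\omega^\theta=(u^r/r)\,\omega^\theta$, i.e.\ $\omega^\theta(t,X(t))=\bigl(R(t)/R(0)\bigr)\,\omega^\theta_0(X(0))$ along trajectories, where $R(t)$ is the particle's distance to the axis. Consequently $\|r\xi(t)\|_\infty=\|\omega^\theta(t)\|_\infty$ can grow as particles drift away from the axis, and what is actually true is only that it stays finite on each finite interval $[0,T]$: this requires a Gr\"onwall-type a priori estimate combining the genuine conservation of $\|\xi(t)\|_\infty$ with the velocity bound $\|u(t)\|_\infty\lesssim\|\omega(t)\|_{L^1\cap L^\infty}$, which controls the radial displacement $R(t)\leq R(0)+\int_0^t\|u\|_\infty\,ds$. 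This is precisely the estimate (1.26) on p.~56 of \cite{UI} that the paper invokes in Remark \ref{rem_exist_weak}; it is the essential point where the 3d axi-symmetric theory differs from the 2d Yudovich theory, where the vorticity sup-norm really is conserved. A smaller cousin of the same slip: the Biot--Savart estimate needs $\|\omega(t)\|_{L^1}$, which is also not conserved, but is uniformly controlled via \eqref{vor_l1} by the conserved impulse and circulation. Once these two points are repaired --- replacing ``transported'' by the finite-time a priori bound --- the rest of your argument (Lagrangian flow, measure preservation giving the $L^q$ norms and \eqref{conserv_parti}, and \cite{NoSe} for the impulse and energy identities) goes through and matches the paper's proof.
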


 \begin{rem}\label{rem_exist_weak}
 For axi-symmetric data $\xi_0\in (L^1_w\cap L^{\infty}\cap L^{1})(\mathbb{R}^3)$
with  $ (r\xi_0) \in L^\infty(\mathbb{R}^3)$, 
the initial velocity 
$u_0:=\mathcal{K}[\xi_0]$ lies on $L^2(\mathbb{R}^3) $ by Lemma \ref{lem_en_iden_origin}, and  
the initial vorticity 
$\omega_0:=(r\xi_0)e_\theta$ lies on $(L^1\cap L^\infty)(\mathbb{R}^3)$ (e.g. see the estimate \eqref{vor_l1}). In this setting, existence with uniqueness of a weak solution
can be obtained similarly as in the two-dimensional case. For instance, if one use the earlier paper  \cite{UI},  then
the existence of a weak solution for such data can be found in  \cite[Theorem 4.1]{UI}. 
Thanks to the transport  structure for $\xi$ in  
\eqref{3d_Euler_eq}, 
we can show the norm $\|\omega\|_{L^\infty(0,T;L^\infty(\mathbb{R}^3))}$ is finite for any finite $T>0$ (e.g. see   the \textit{a priori} estimate (1.26) on p56 in \cite{UI}). Then, the uniqueness is obtained by \cite[Theorem 2.2]{UI}. 
 Or equivalently,  one may simply use  \cite[Theorem 3.3]{Raymond} (also see \cite[Theorem 1]{Danchin}) for both existence and uniqueness.
\end{rem}

 \begin{rem}\label{rem_uniq_sol_detail}
 The assumption 
for initial data in Lemma \ref{lem_exist_weak_sol}
   might be weakened if one does not ask  \textit{uniqueness} of solutions. 
   To have a clear presentation toward nonlinear stability which is our main goal, here we do not seek such a generalization. For readers interested in the existence issue (without asking uniqueness) with desirable conservations \eqref{conserv_euler},   \eqref{conserv_parti},    we refer to  \cite{NoSe} and references therein.
  \color{black} 
 \end{rem}
 \begin{rem}\label{exist_sols_vel}
Regarding on smooth solutions $u(t)$ of
the velocity form \eqref{euler_velocity} of the three-dimensional Euler equations, it is still an open problem whether it can develop a finite-time blow-up from a $C^\infty$ initial data. In particular, it looks very difficult to convince a blow-up via direct numerical experiments as explained in \cite{VaVi}.
For classical solutions $u(t)$ of \eqref{euler_velocity}, local-in-time existence and uniqueness  in Sobolev space $H^s$ for $s>5/2$  
 \cite{MR271984}, \cite{MR0481652}
and in H\"{o}lder space  $C^{1,\alpha}$ for $\alpha>0$  \cite{MR1544733}, \cite{Gunther}
have been known. 
Very recently,   \cite{Elgindi19} showed that  the latter case for small $\alpha>0$ admits a finite-time blow-up. It is interesting that the  initial velocity  $u_0\in C^{1,\alpha}$   developing  a   blow-up in \cite{Elgindi19} is axi-symmetric  without swirl while  the corresponding initial relative vorticity $\xi_0(:=\omega^\theta_0/r)$ doe not lie on $L^\infty(\mathbb{R}^3)$. More precisely, it is not bounded on the axis $\{r=0\}$.
As noted  in \cite[p10]{Elgindi19}, 
in order to have  a $C^\infty$ vorticity  $\omega=\omega^\theta e_\theta$, it is a necessary condition that $\omega^\theta$ vanishes  linearly (so $\xi$ is at least bounded)  on the axis $\{r=0\}$ 
 (also see \cite{LiuWang}).
 Lastly, regarding on  weak solutions $u(t)$ of \eqref{euler_velocity}, even uniqueness 
fails for any solenoidal vector field $u_0\in L^2(\mathbb{R}^3)$ by \cite{MR2564474},  \cite{MR2838398}
(also see \cite{MR1231007}, \cite{MR1476315}, \cite{MR2600877}).
 \end{rem}
 
 \subsection{Proof of  nonlinear stability (Theorems \ref{thm_hill_gen},  \ref{thm_hill}) }\label{subsec_proof}\ \\
 
 Now we are ready to prove Theorem \ref{thm_hill_gen} by assuming      Theorem \ref{thm_cpt}(compactness)  and Theorem \ref{thm_uniq}(uniqueness).
\begin{proof}[Proof of Theorem \ref{thm_hill_gen}]



 
We recall that  $\xi_H=\xi_{H(1,1)}={1}_B$ is the  the Hill's vortex
\eqref{defn_hill_gen}, where $B$ is the unit ball centered at the origin.   
Let us suppose that the conclusion
   of Theorem \ref{thm_hill_gen} 
were false. 
Then there exist    a constant $\varepsilon_0>0$ and a sequence  $\{\xi_{0,n}\}_{n=1}^\infty$  of non-negative axi-symmetric functions,
and  a sequence  $\{t_n\}_{n=1}^\infty$ of non-negative numbers   such that, for each $n\geq1$, we have
$\xi_{0,n}, \, (r\xi_{0,n})\in L^\infty(\mathbb{R}^3),$
\begin{align}\label{init_conv}
\|\xi_{0,n}-\xi_H\|_{L^1\cap L^2}+\|r^2\left(\xi_{0,n}-\xi_H\right)\|_{1} 
\leq \frac 1 {n^2},
\end{align}  and
 \begin{align}\label{con_conv}
 \inf_{\tau\in\mathbb{R}}\left\{ 
\|\xi_{n}(t_n,\cdot+\tau e_z)-\xi_H\|_{L^1\cap L^2}+\|r^2\left(\xi_{n}(t_n,\cdot+\tau e_z)-\xi_H\right)\|_{1}  
\right\}
\geq \varepsilon_0.
\end{align} 
where  $\xi_{n}(t)$  is the  global-in-time weak solution of \eqref{3d_Euler_eq}  for the initial data $ \xi_{0,n}$  obtained by
    Lemma \ref{lem_exist_weak_sol}.\\ 
  
  
We set $\mu_0=(4/ {15}) \pi$, $\lambda_0=1$ and  fix any   $\nu_0>0$ satisfying
$\mu_0\nu_0^{-5/3}\lambda_0^{2/3}\leq M_1$,
where $M_1$ is the constant in Theorem \ref{thm_uniq}.
Then  the theorem says   \begin{equation}\label{orbit}  \mathcal{S}_{\mu_0,\nu_0,\lambda_0}= \{\xi_{H }(\cdot+c e_z)\,|\, c\in \mathbb{R}\}. \end{equation}    
  By \eqref{init_conv} and the estimate \eqref{est_E_diff},
  $$\lim_{n\to \infty} E[\xi_{0,n}]=E[\xi_{H}]=\mathcal{I}_{\mu_0,\nu_0,\lambda_0}.$$
  We write $\xi_{n}=\xi_{n}(t_n)$ by suppressing $t_n$.
  Thus, by the conservations \eqref{conserv_euler}, we get
\begin{equation}\begin{split}\label{conserv_}
& \lim_{n\to\infty} \frac  1 2 \|r^2\xi_{n}\|_{1}=\frac 1 2  \|r^2\xi_{H}\|_{1}=\mu_0,\quad
 \lim_{n\to\infty}\|\xi_n \|_{{1}}= \|\xi_H\|_1\leq \nu_0,\\&
  \lim_{n\to\infty}\|\xi_n \|_{{2}}= \|\xi_H\|_2<\infty,\quad  \lim_{n\to\infty}E[\xi_n]=\mathcal I_{\mu_0,\nu_0,\lambda_0}.
\end{split}\end{equation}
 We claim \begin{equation}\label{claim_outside}
\lim_{n\to\infty}\int_{\{x\in\mathbb{R}^3\,|\,|\xi_n(x)-\lambda_0|\geq 1/n\}}\xi_n\,dx=0.
\end{equation}  
To prove, we observe that
the conservation \eqref{conserv_parti} (together with \eqref{conserv_euler} for $q=1$) implies $$\int_{\{|\xi_n(x)-\lambda_0|\geq 1/n\}}\xi_n\,dx=\int_{\{|\xi_{0,n}(x)-\lambda_0|\geq 1/n\}}\xi_{0,n}\,dx=:I_n.$$
By setting ${D(n)}=\{x\in\mathbb{R}^3\,|\,|\xi_{0,n}(x)-\lambda_0|\geq 1/n\}$ and by recalling 
$\xi_H=1_B=\lambda_0{1}_B$, we observe
$$\|\xi_{0,n}-\xi_H\|_1\geq \|\xi_{0,n}-\xi_H\|_{L^1( {D(n)}\cap B)}=\int_{{D(n)}\cap B}|\xi_{0,n}(x)-\lambda_0|\,dx \geq \int_{{D(n)}\cap B} \frac 1 n\,dx=
\frac 1 n|{{D(n)}\cap B}|.$$ Thus we estimate 
 \begin{align*}
I_n&=\|\xi_{0,n}\|_{L^1({D(n)})}=\|\xi_{0,n}\|_{L^1({D(n)}\cap B)}+\|\xi_{0,n}\|_{L^1({D(n)}\cap B^c)}\\ &\leq \|\xi_{0,n}-\xi_H\|_{L^1({D(n)}\cap B)}+\|\xi_H\|_{L^1({D(n)}\cap B)}+\|\xi_{0,n}-\xi_H\|_{L^1({D(n)}\cap B^c)}\\
&\leq\|\xi_{0,n}-\xi_H\|_{L^1( B)}+\|\xi_H\|_{L^1({D(n)}\cap B)}+\|\xi_{0,n}-\xi_H\|_{L^1(B^c)}\\
&\leq\|\xi_{0,n}-\xi_H\|_{1}+|{{D(n)}\cap B}| \leq(n+1)  \|\xi_{0,n}-\xi_H\|_{1}\leq \frac{n+1}{n^2}\to 0\quad \mbox{as} \quad n\to \infty,
\end{align*} which shows \eqref{claim_outside}.\\




Now we apply  Theorem \ref{thm_cpt} to the sequence $\{\xi_n\}_{n=1}^\infty$ with the choice $a(n)=1/n$ to obtain
a subsequence (still denoted by $\{\xi_n\}$ after reindexing),
 $\{c_{n}\}_{n=1}^\infty\subset \mathbb{R}$, and  
 $\xi\in\mathcal S_{\mu_0,\nu_0,\lambda_0} $ such that  
\begin{equation}\label{conv_weight}
\|r^2\left(\xi_{n}(\cdot+c_ne_z)-\xi\right)\|_{1}\to 0 \quad\mbox{as}\quad n\to \infty.
\end{equation}
By \eqref{orbit}, we know $\xi=\xi_H(\cdot+ce_z)$ for some $c\in\mathbb{R}$.   We may assume $c=0$  by shifting $c_n$ by the constant $c$.\\ 
 
By uniform boundedness in $L^2$ from \eqref{conserv_},  the sequence $\{\xi_n(\cdot +c_n e_z)\}$ subsequently converges weakly in $L^2(\mathbb{R}^3)$, and the weak limit agrees with  $\xi_H$ by \eqref{conv_weight}. Thus,
   convergence of the norm
$$\lim_{n\to\infty}\|\xi_n(\cdot+c_ne_z) \|_{{2}}=\lim_{n\to\infty}\|\xi_n \|_{{2}}= \|\xi_H\|_2$$ from \eqref{conserv_} gives the strong convergence  in $L^2$ (still denoted by $\{\xi_n\}$) 
\begin{equation}\label{conv_l2}
 \xi_{n}(\cdot+c_ne_z)\to \xi_H\quad\mbox{in}\quad L^2(\mathbb{R}^3)\quad\mbox{as}\quad n\to \infty.
\end{equation}
In particular, $ \xi_{n}(\cdot+c_ne_z)\to \xi_H$ in $L^1(U)$ for any bounded $U\subset \mathbb{R}^3$ by using   H\"older's inequality.
  Since $\mbox{spt} \,\xi_H=B$ and
  \begin{align*}
 \|\xi_{n}(\cdot+c_ne_z)-\xi_H\|_{L^1(\mathbb{R}^3)}&=\|\xi_{n}(\cdot+c_ne_z)-\xi_H\|_{L^1(B)}+\|\xi_{n}(\cdot+c_ne_z)\|_{L^1(B^c)}\\
 &=\|\xi_{n}(\cdot+c_ne_z)-\xi_H\|_{L^1(B)}+
 \|\xi_{n}(\cdot+c_ne_z)\|_{L^1(\mathbb{R}^3)}-\|\xi_{n}(\cdot+c_ne_z)\|_{L^1(B)},
\end{align*} 
the convergences \eqref{conserv_} and \eqref{conv_l2} imply
  \begin{align*}
 \limsup_{n\to\infty}\|\xi_{n}(\cdot+c_ne_z)-\xi_H\|_{L^1(\mathbb{R}^3)}& \leq  \lim_{n\to\infty}
 \|\xi_{n}(\cdot+c_ne_z)\|_{L^1(\mathbb{R}^3)}- \liminf_{n\to\infty}\|\xi_{n}(\cdot+c_ne_z)\|_{L^1(B)}\\ &
\leq  
 \|\xi_H \|_{L^1(\mathbb{R}^3)}-\|\xi_H\|_{L^1(B)}=0.
\end{align*}  

In sum, we have
 $\xi_{n}(\cdot+c_ne_z)\to \xi_H$ in $(L^1_w\cap L^2\cap L^1)(\mathbb{R}^3)$, which contradicts to 
\eqref{con_conv} because 
  \begin{align*}
0& 
= \lim_{n\to\infty}
\left\{||\xi_n(\cdot+c_ne_z)-\xi_H||_{L^1\cap L^2} +||r^2\left(\xi_n(\cdot+c_ne_z)-\xi_H\right)||_{1}\right\} \\ 
&\geq  \liminf_{n\to\infty}\left(\inf_{\tau\in\mathbb{R}}
\left\{||\xi_n(\cdot+\tau e_z)-\xi_H||_{L^1\cap L^2} +||r^2\left(\xi_n(\cdot+\tau e_z)-\xi_H\right)||_{1}\right\} \right)
\geq \varepsilon_0.
\end{align*} 
\color{black}
\end{proof}
\begin{proof}[Proof of Theorem \ref{thm_hill_gen}]
 
For axi-symmetric set $ {A_0}$ satisfying 
\eqref{assump_uniq_pat}, if we set the initial data
$\xi_0=1_{A_0}$, then the data satisfies \eqref{assump_uniq}. Since
$$\int_{A_0\Delta B}1 \,dx=\|\xi_0-\xi_H\|_{1}=\|\xi_0-\xi_H\|_{2}^2,\quad
\int_{A_0\Delta B}r^2 \,dx=\|r^2(\xi_0-\xi_H)\|_{L^1(\mathbb{R}^3)}, $$
 we obtain Theorem \ref{thm_hill}   by applying Theorem \ref{thm_hill_gen} into the unique   weak solution $\xi(t) =1_{A_t}$ obtained from Lemma \ref{lem_exist_weak_sol}.
\end{proof}

It remains to show Theorems \ref{thm_cpt} and \ref{thm_uniq}.
In Section \ref{sec_exist}, as a warm-up section, we revisit the existence result of a maximizer for \eqref{var_prob} due to \cite{FT81}. In fact, we show that such a maximizer maximizes the energy in a slightly larger class. Then, 
 in Section \ref{sec_exist_vortex}, we prove that every maximizer gives a steady vortex ring by constructing a sequence of admissible perturbations. In Section \ref{sec_cpt}, we obtain Theorem \ref{thm_cpt} via 
 concentrated compactness due to \cite{Lions84a}.  Lastly, in Section \ref{sec_uniq_proof_hill}, we apply the uniqueness result of \cite{AF86} to prove Theorem \ref{thm_uniq}.


%

\section{Existence and properties of maximizers}\label{sec_exist}
In this section, our goal is to show the existence of a maximizer (Theorem \ref{thm_exist_max})  below, which will be used  in Section \ref{sec_cpt} when proving 
Theorem \ref{thm_cpt}. 

 \subsection{Variational problem in larger spaces} \label{subsec_other_cl}\ \\
 
Before stating the existence theorem, we introduce
some other spaces of admissible functions.
For $0<\mu,\nu,\lambda<\infty$, we set the following spaces of admissible functions
\begin{equation}\begin{split}\label{defn_prime_class}
&\mathcal{P}'_{\mu,\nu,\lambda}=\left\{\xi\in L^{\infty}(\mathbb{R}^3)\ \middle|\ \xi:\mbox{axi-symmetric}, 0\leq \xi\leq \lambda,
\,\frac 1 2 \|r^2\xi\|_{1}=\mu, 
\,\|\xi\|_{1}\leq \nu\    \right\},\\
&\mathcal{P}''_{\mu,\nu,\lambda}=\left\{\xi\in L^{\infty}(\mathbb{R}^3)\ \middle|\ \xi:\mbox{axi-symmetric}, 0\leq \xi\leq \lambda,
\,\frac 1 2 \|r^2\xi\|_{1}\leq \mu,\, 
\|\xi\|_{1}\leq \nu\    \right\}.
\end{split}\end{equation}  
\begin{rem}
We observe the set relations:
\begin{equation}\label{set_rel}\mathcal{P}''_{\mu,\nu,\lambda}\supset  \mathcal{P}'_{\mu,\nu,\lambda}\supset {\mathcal{P}}_{\mu,\nu,\lambda},   \end{equation} and note that
$\mathcal{P}''_{\mu,\nu,\lambda}$ is closed under the weak-$L^2$ topology. i.e.
if $\{\xi_n\}$ is a
sequence in $\mathcal{P}''_{\mu,\nu,\lambda}$ and if
 $\xi_n\rightharpoonup \xi$ in $L^2(\mathbb{R}^3)$ for some $\xi\in L^2(\mathbb{R}^3)$ as $n\to \infty$, then
 the weak-limit $\xi$ lies on $\mathcal{P}''_{\mu,\nu,\lambda}$.
\end{rem}

As in \eqref{var_prob}, 
\eqref{var_prob_max}, we set  the variational problems
\begin{align}\label{var_prob_extra}
\mathcal{I}'_{\mu,\nu,\lambda}=\sup_{\xi\in \mathcal{P}'_{\mu,\nu,\lambda}}E[\xi],\quad\mathcal{I}''_{\mu,\nu,\lambda}=\sup_{\xi\in 
\mathcal{P}''_{\mu,\nu,\lambda}}E[\xi]
\end{align}
and denote $\mathcal{S}'_{\mu,\nu,\lambda},\,\mathcal{S}''_{\mu,\nu,\lambda}$   the sets of maximizers of 
\eqref{var_prob_extra}, respectively.\\

\begin{thm}\label{thm_exist_max}
For $0<\mu,\nu,\lambda<\infty$, we have   
   $$  {\mathcal{I}}_{\mu,\nu,\lambda}= \mathcal{I}'_{\mu,\nu,\lambda} =\mathcal{I}''_{\mu,\nu,\lambda}\in(0,\infty)\quad\mbox{and}\quad 
 {\mathcal{S}}_{\mu,\nu,\lambda}= \mathcal{S}'_{\mu,\nu,\lambda} =\mathcal{S}''_{\mu,\nu,\lambda}\neq \emptyset.$$
\end{thm}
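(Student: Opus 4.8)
The plan is to prove everything by working in the largest class $\mathcal{P}''_{\mu,\nu,\lambda}$, where the constraints are inequalities and (as recorded in the remark following \eqref{set_rel}) the set is weakly $L^2$-closed, and then to force every maximizer there back into the smallest class $\mathcal{P}_{\mu,\nu,\lambda}$. Since $\mathcal{P}''_{\mu,\nu,\lambda}\supset\mathcal{P}'_{\mu,\nu,\lambda}\supset\mathcal{P}_{\mu,\nu,\lambda}$ by \eqref{set_rel}, maximizing the \emph{same} functional $E$ gives $\mathcal{I}_{\mu,\nu,\lambda}\le\mathcal{I}'_{\mu,\nu,\lambda}\le\mathcal{I}''_{\mu,\nu,\lambda}$ for free. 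Finiteness $\mathcal{I}''_{\mu,\nu,\lambda}<\infty$ follows from \eqref{est_E} together with the uniform bounds $\|r^2\xi\|_1\le 2\mu$, $\|\xi\|_1\le\nu$ and $\|\xi\|_2\le(\lambda\nu)^{1/2}$ valid on $\mathcal{P}''_{\mu,\nu,\lambda}$; positivity $\mathcal{I}_{\mu,\nu,\lambda}>0$ holds because any thin torus of strength $\lambda$ at a suitable radius realizes impulse exactly $\mu$ with arbitrarily small mass (so $\mathcal{P}_{\mu,\nu,\lambda}\neq\emptyset$) and $E[\xi]=\tfrac12\|\mathcal{K}[\xi]\|_2^2>0$ for $\xi\neq0$ by \eqref{iden_en}. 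The entire content is then the single inclusion $\mathcal{S}''_{\mu,\nu,\lambda}\subseteq\mathcal{P}_{\mu,\nu,\lambda}$.

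First I would establish existence of a maximizer in $\mathcal{P}''_{\mu,\nu,\lambda}$ by the direct method, reproducing \cite{FT81} without the penalization \eqref{defn_pen_en}. The decisive analytic input is that $E$ is weakly sequentially continuous along a maximizing sequence $\{\xi_n\}\subset\mathcal{P}''_{\mu,\nu,\lambda}$: the bounds $0\le\xi_n\le\lambda$ and $\|\xi_n\|_1\le\nu$ bound $\{\xi_n\}$ in $L^2$, so a subsequence converges weakly in $L^2$, and the pointwise/decay estimates \eqref{est_psi}--\eqref{est_psi_bdd} of Lemma \ref{lem_est_stream} together with the elliptic regularity in Lemma \ref{lem_en_iden_origin} make the stream functions $\psi_n=\mathcal{G}[\xi_n]$ equicontinuous on compacta and uniformly small at infinity, whence $E[\xi_n]=\tfrac12\int\xi_n\psi_n\,dx$ passes to the limit. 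The limit remains in $\mathcal{P}''_{\mu,\nu,\lambda}$ by weak closedness. To defeat the $z$-translation invariance, which otherwise lets mass escape to $z=\pm\infty$ or split, I would first replace each $\xi_n$ by its Steiner symmetrization in the $z$-variable: this preserves $0\le\xi\le\lambda$, the mass, and the impulse (all depending on $\xi$ only through $\int\xi(r,\cdot)\,dz$), while the Riesz rearrangement inequality applied to $G$, which as a function of $z-z'$ is symmetric and decreasing in $|z-z'|$, does not decrease $E$. A symmetric-decreasing sequence cannot exhibit dichotomy, and vanishing is excluded by $\mathcal{I}''_{\mu,\nu,\lambda}>0$ while the impulse bound confines the mass in $r$; this yields genuine compactness and a maximizer $\xi^\ast\in\mathcal{S}''_{\mu,\nu,\lambda}$.

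Next I would show any $\xi^\ast\in\mathcal{S}''_{\mu,\nu,\lambda}$ is a patch. The key point is that $E$ is a \emph{strictly convex} quadratic form, since $E[\eta]=\tfrac12\|\mathcal{K}[\eta]\|_2^2>0$ for $\eta\neq0$ by \eqref{iden_en}. If $S=\{\varepsilon\le\xi^\ast\le\lambda-\varepsilon\}$ had positive measure for some $\varepsilon>0$, I would split $S$ into three positive-measure pieces and solve a $2\times3$ linear system to produce a bounded $\eta\neq0$ supported on $S$ with $\int\eta\,dx=0$ and $\int r^2\eta\,dx=0$. Then $\xi^\ast\pm t\eta\in\mathcal{P}''_{\mu,\nu,\lambda}$ for small $t>0$ (the values stay in $[0,\lambda]$, and mass and impulse are preserved exactly), while bilinearity gives $E[\xi^\ast+t\eta]+E[\xi^\ast-t\eta]=2E[\xi^\ast]+2t^2E[\eta]>2E[\xi^\ast]$, so one of the two perturbations strictly exceeds $\mathcal{I}''_{\mu,\nu,\lambda}$, a contradiction. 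Hence $\xi^\ast=\lambda 1_A$.

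It remains to prove impulse saturation, $\tfrac12\|r^2\xi^\ast\|_1=\mu$, and this is the step I expect to be the main obstacle. Testing the isotropic dilation $\xi^\ast_\kappa(x)=\xi^\ast(x/\kappa)$, which by \eqref{comp_imp} multiplies impulse by $\kappa^5$, mass by $\kappa^3$ and energy by $\kappa^7$ while fixing the $L^\infty$-bound, shows immediately that no $\kappa>1$ can keep $\xi^\ast_\kappa$ admissible; hence at least one of the constraints (impulse $=\mu$) or (mass $=\nu$) must be active. The delicate case to exclude is mass active but impulse slack: one must then exhibit an admissible competitor of strictly larger energy that leaves the mass unchanged and merely spends part of the impulse budget, which amounts to displacing the core to slightly larger radius. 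Making this rigorous requires the kernel analysis behind \eqref{est_F}--\eqref{log_beha} to show that such an outward displacement strictly raises $E$, and this is the crux of the argument. Once saturation holds, $\xi^\ast\in\mathcal{P}_{\mu,\nu,\lambda}$, so $\mathcal{I}''_{\mu,\nu,\lambda}=E[\xi^\ast]\le\mathcal{I}_{\mu,\nu,\lambda}\le\mathcal{I}''_{\mu,\nu,\lambda}$ forces the three maximal values to coincide in $(0,\infty)$; the inclusion $\mathcal{S}''_{\mu,\nu,\lambda}\subseteq\mathcal{P}_{\mu,\nu,\lambda}$ then gives $\mathcal{S}''_{\mu,\nu,\lambda}\subseteq\mathcal{S}_{\mu,\nu,\lambda}$, while $\mathcal{S}_{\mu,\nu,\lambda}\subseteq\mathcal{S}'_{\mu,\nu,\lambda}\subseteq\mathcal{S}''_{\mu,\nu,\lambda}$ is immediate from the equality of the maximal values and \eqref{set_rel}, closing the chain.
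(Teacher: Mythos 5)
The genuine gap is exactly where you flagged it: impulse saturation for maximizers over $\mathcal{P}''_{\mu,\nu,\lambda}$ is described but not proven, and the case you isolate (mass active, impulse slack) is left to an unproven claim. Two concrete things are missing. First, the displacement you describe --- ``displacing the core to slightly larger radius \dots leaving the mass unchanged'' --- cannot be the naive shift $\xi^\ast(r-\tau,z)$: since the volume element is $2\pi r\,drdz$, that shift strictly \emph{increases} the mass, which is inadmissible precisely in the case where the mass constraint is active. The paper's Lemma \ref{lem_iden_tilde} resolves this with the weighted shift \eqref{tau_trans}, $\xi_\tau(r,z)=\frac{r-\tau}{r}\,\xi(r-\tau,z)$ for $r\geq\tau$, whose Jacobian factor preserves $\|\cdot\|_1$ exactly and keeps $0\leq\xi_\tau\leq\lambda$, while by \eqref{trans_comp} the impulse increases continuously in $\tau$, so some $\tau$ restores impulse $=\mu$. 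Second, the strict energy increase $E[\xi_\tau]>E[\xi^\ast]$ does not come from the asymptotics \eqref{est_F}--\eqref{log_beha} you cite; it comes from the structural representation \eqref{form_F}, $G=\frac{\sqrt{rr'}}{2\pi}F(s)$ with $s=\frac{(r-r')^2+(z-z')^2}{rr'}$ and $F>0$ strictly decreasing: shifting both radial arguments outward increases the prefactor and decreases $s$, so $G(r+\tau,z,r'+\tau,z')>G(r,z,r',z')$ pointwise (\eqref{exploit}), and a change of variables then gives the strict inequality \eqref{exploit2}. Note that once this lemma is in hand, your dilation step becomes superfluous: the weighted shift contradicts maximality whenever the impulse is slack, regardless of whether the mass constraint is active, and your closing chain of inclusions then goes through exactly as written.

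The rest of your plan is sound. The existence step is the paper's own (Lemma \ref{lem_exist}: Steiner symmetrization, weak $L^2$ limit, energy convergence), with one caveat: the ``uniform smallness at infinity'' of the $\psi_n$ is not a consequence of \eqref{est_psi}--\eqref{est_psi_bdd} alone, which control small and large $r$ but not large $|z|$; it is exactly what the monotonicity condition \eqref{cond_sym} delivers through Lemmas \ref{lem_stream_AR} and \ref{lem_energy_conv}, so the symmetrization must precede the passage to the limit, as you in fact arrange. Your patch-structure argument, by contrast, is genuinely different from the paper's and is correct: where Lemma \ref{key_lemma} runs a first-order Lagrange-multiplier argument and then needs $\psi\in H^2_{loc}$ to conclude $\xi=0$ a.e.\ on $\{\tilde\psi=0\}$, your second-order argument --- choose disjoint axi-symmetric positive-measure pieces $S_1,S_2,S_3$ of $\{\varepsilon\leq\xi^\ast\leq\lambda-\varepsilon\}$, solve the $2\times 3$ linear system for a nontrivial $\eta=\sum_i c_i 1_{S_i}$ with $\int\eta\,dx=\int r^2\eta\,dx=0$, and use $E[\xi^\ast+t\eta]+E[\xi^\ast-t\eta]=2E[\xi^\ast]+2t^2E[\eta]$ together with $E[\eta]=\frac12\|\mathcal{K}[\eta]\|_2^2>0$ from \eqref{iden_en} --- is shorter, avoids the elliptic regularity step, and applies directly in $\mathcal{P}''_{\mu,\nu,\lambda}$ since both signed perturbations leave mass and impulse unchanged. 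This buys a cleaner proof that maximizers are patches, but it does not substitute for the saturation lemma above, which remains the missing piece.
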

We prove the above theorem by building a series of lemmas in this section. In fact, we closely follow the approach of  the proof of \cite[Theorem 2.1]{FT81}.  Since some ingredient 
 of the proof   is needed later again (e.g. in Section \ref{sec_cpt}  when proving Theorem \ref{thm_cpt}), we reproduce the proof   here. More specifically,
 the notion of Steiner symmetrization (see Proposition \ref{prop_steiner}) and energy convergence lemma (see Lemma \ref{lem_energy_conv}) will be used again in  Section \ref{sec_cpt}.

\begin{rem}\label{rem_ft81}
  \cite[Theorem 2.1]{FT81} says, in our terminology, that there exists a compactly supported function 
 $$\xi\in   \left(\mathcal{S}_{\mu,\nu,\lambda} \cap\mathcal{S}'_{\mu,\nu,\lambda}\right)$$ satisfying
the symmetry
$\xi(r,z)=\xi(r,-z)$ together with the property
\begin{equation*}
\begin{aligned}
&\xi=\lambda{{1}}_{ \{\Psi>0\}}, 
\quad
 \Psi:= \mathcal{G}[\xi]-\frac{1}{2}Wr^2-\gamma
\end{aligned}
\end{equation*} for some $W>0$ and $\gamma\geq 0$.
Such a function $\xi$
 can be obtained  from a weak-limit of a sequence $\{\xi_\beta\}_{\beta>0}$  as $\beta\to 0$  where $\xi_\beta$ is a  maximizer for the penalized energy functional \eqref{defn_pen_en} (see \cite[Lemma 5.1]{FT81}).
 \end{rem}
 
  In the sequel, we frequently reduce the variational  problems 
\eqref{var_prob}, \eqref{var_prob_extra}  
   to the case $\nu=\lambda=1$ by the scaling    
\begin{align*}
 {\xi_{\nu,\lambda}}(x)=\frac{1}{\lambda  }\xi\left(\Big( \frac{\nu}{\lambda}\Big)^{1/3} x\right).
\end{align*} It is easy to check that if $ \xi\in {\mathcal{P}}_{\mu,\nu,\lambda}$, then $ {\xi_{\nu,\lambda}}\in {\mathcal{P}}_{M,1,1 }$ for $M:=\mu\nu^{-5/3}\lambda^{2/3}$
 and $E[ {\xi_{\nu,\lambda}}]=\lambda^{1/3}\nu^{-7/3}E[\xi]$
due to  $$\mathcal{G}[ {\xi_{\nu,\lambda}}](x)=\frac{1}{\lambda  }\Big( \frac{\nu}{\lambda}\Big)^{-4/3} \mathcal{G}[\xi]\left(\Big( \frac{\nu}{\lambda}\Big)^{1/3} x\right).$$   
Thus we get 
\begin{equation}\label{scaling}
\xi\in {\mathcal{S}}_{\mu,\nu,\lambda}
\, \mbox{ if and only if }\, \xi_{\nu,\lambda}\in {\mathcal{S}}_{M,1,1}
.
\end{equation}
From now on, we abbreviate the notations as
${\mathcal{P}}_{\mu}={\mathcal{P}}_{\mu,1,1}$, ${\mathcal{I}}_{\mu}={\mathcal{I}}_{\mu,1,1}$, 
and 
${\mathcal{S}}_{\mu}={\mathcal{S}}_{\mu,1,1}$. Similarly, we
 abbreviate the notations as
 $\mathcal{P}'_{\mu}=\mathcal{P}'_{\mu,1,1}$, $\mathcal{I}'_{\mu}=\mathcal{I}'_{\mu,1,1}$, 
$\mathcal{S}'_{\mu}=\mathcal{S}'_{\mu,1,1}$,
$\mathcal{P}''_{\mu}=\mathcal{P}''_{\mu,1,1}$, $\mathcal{I}''_{\mu}=\mathcal{I}''_{\mu,1,1}$, 
and
$\mathcal{S}''_{\mu}=\mathcal{S}''_{\mu,1,1}$. \\
 

As a warm-up, we first check that the maximum values 
${\mathcal{I}}_{\mu} ,\mathcal{I}'_{\mu}, \mathcal{I}''_{\mu}$
are non-trivial  for each $\mu>0$.

\begin{lem} 
\label{lem_nontrivial_extra}
Let $\mu\in(0,\infty)$. Then
  $$0<{\mathcal{I}}_{\mu} \leq \mathcal{I}'_{\mu}\leq \mathcal{I}''_{\mu}<\infty. $$
\end{lem}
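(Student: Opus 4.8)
The plan is to dispatch the two middle inequalities for free, bound the top by the energy estimate already established, and spend the real effort on producing a single nonzero competitor. Since $\mathcal{P}_{\mu,1,1}\subset\mathcal{P}'_{\mu,1,1}\subset\mathcal{P}''_{\mu,1,1}$ by the inclusions \eqref{set_rel}, taking the supremum of $E$ over progressively larger sets immediately gives ${\mathcal{I}}_{\mu}\leq\mathcal{I}'_{\mu}\leq\mathcal{I}''_{\mu}$ with no additional work. For the finite upper bound I would invoke the energy estimate \eqref{est_E} of Lemma \ref{lem_est_energy}: for any $\xi\in\mathcal{P}''_{\mu}$ the defining constraints yield $\|r^2\xi\|_{1}\leq 2\mu$, $\|\xi\|_{1}\leq 1$ and $0\leq\xi\leq 1$, whence $\|\xi\|_{2}\leq\|\xi\|_{\infty}^{1/2}\|\xi\|_{1}^{1/2}\leq 1$; substituting these uniform bounds into \eqref{est_E} produces a bound on $E[\xi]$ depending only on $\mu$, so that $\mathcal{I}''_{\mu}<\infty$.

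The substantive point is the strict positivity ${\mathcal{I}}_{\mu}>0$, for which it suffices to exhibit a single nonzero $\xi=1_A\in\mathcal{P}_{\mu}$: since the kernel $G$ is strictly positive on $\Pi\times\Pi$ (see \eqref{form_F}), the representation \eqref{defn_en} forces $E[1_A]>0$ for every axi-symmetric $A$ of positive measure. Thus everything reduces to checking that $\mathcal{P}_{\mu}$ is nonempty for all $\mu>0$, i.e. to finding an axi-symmetric $A$ with $|A|=\|1_A\|_{1}\leq 1$ and prescribed impulse $P[1_A]=\mu$. For small $\mu$ the scaled Hill ball $B_a$ with $\tfrac{4\pi}{15}a^5=\mu$ works by \eqref{comp_imp}, but its volume $\tfrac{4}{3}\pi a^3$ exceeds $1$ once $\mu$ is large, so a different shape is needed in general.

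To cover every $\mu$ uniformly I would instead take $A$ to be the thin torus obtained by revolving a small disk $B_\rho(R,0)\subset\Pi$ about the symmetry axis. Pappus' theorem then gives $|A|=2\pi^2 R\rho^2$ and $P[1_A]=\pi^2\rho^2 R\bigl(R^2+\tfrac{3}{4}\rho^2\bigr)$. For $R$ fixed and large, $P[1_A]$ increases continuously from $0$ as $\rho$ grows in $(0,R)$, so the equation $P[1_A]=\mu$ can be solved for some $\rho<R$; the corresponding volume satisfies $|A|=2\mu/\bigl(R^2+\tfrac{3}{4}\rho^2\bigr)\approx 2\mu/R^2$, which is $<1$ once $R$ is chosen large. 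This supplies the nonzero competitor and closes the argument.

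The main obstacle is precisely this last construction. Because the impulse weight $r^2$ is unbounded, a large value of $\mu$ simply cannot be realized by a patch of volume at most $1$ concentrated near the axis, so the mass must be pushed out to large radius; the thin-torus computation is what makes the prescribed-impulse and bounded-volume constraints simultaneously satisfiable for all $\mu>0$, and it is the only place in the proof requiring genuine care.
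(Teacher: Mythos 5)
Your proposal is correct and follows the same skeleton as the paper's proof: the middle inequalities come from the inclusions \eqref{set_rel}, the finiteness from the energy estimate \eqref{est_E} with the uniform bounds $\|r^2\xi\|_{1}\leq 2\mu$, $\|\xi\|_{1}\leq 1$, $\|\xi\|_{2}\leq 1$, and the positivity from the a.e.\ positivity of the kernel $G$ applied to any nonzero admissible patch. The one place you diverge is that the paper simply asserts ${\mathcal{P}}_{\mu}\neq\emptyset$ without comment, whereas you correctly observe that this is not entirely free for large $\mu$: the volume constraint $\|\xi\|_{1}\leq 1$ rules out the scaled Hill ball once $\tfrac{4}{3}\pi a^{3}>1$, so the prescribed impulse must be achieved by pushing mass to large radius. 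Your thin-torus competitor does exactly this, and the computations check out: revolving $B_\rho(R,0)$ gives $|A|=2\pi^2R\rho^2$ and $P[1_A]=\pi\int_{B_\rho(R,0)}r^3\,drdz=\pi^2R\rho^2\bigl(R^2+\tfrac{3}{4}\rho^2\bigr)$, the intermediate value theorem in $\rho$ solves $P[1_A]=\mu$ whenever $\tfrac{7}{4}\pi^2R^5>\mu$, and then $|A|=2\mu/\bigl(R^2+\tfrac{3}{4}\rho^2\bigr)\leq 2\mu/R^2<1$ for $R>\sqrt{2\mu}$. So your argument proves a little more than the paper writes down (it verifies the nonemptiness the paper takes for granted), at the cost of the extra Pappus-type computation; both yield the lemma.
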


\vspace{5pt}

\begin{proof}

First we get
 $$0<\sup_{\xi \in {\mathcal{P}}_{\mu}}E[\xi]= {\mathcal{I}}_\mu$$  since     any  $\xi  $ in ${\mathcal{P}}_{\mu}\neq \emptyset$ is non-negative,    and the kernel $G$ is positive a.e. The set relations \eqref{set_rel} 
give
$$0< {\mathcal{I}}_{\mu}\leq  \mathcal{I}'_{\mu}\leq \mathcal{I}''_{\mu}.$$
Lastly, from the estimate \eqref{est_E}, we have
\begin{align*}
 \mathcal{I}''_\mu=\sup_{\xi\in \mathcal{P}''_{\mu}} E [\xi]\lesssim (1+\mu) \mu^{1/2}<\infty.  
\end{align*}

\end{proof}

The following lemma is useful when we need convergence of the  energy for a weak-convergent sequence $\{\xi_n\}$ when the energy of each member $\xi_n$ is   uniformly concentrated in a fixed bounded set.
\begin{lem}\label{lem_en_diff}For  non-negative axi-symmetric functions $\xi_1, \xi_2\in \left(L^{1}_w\cap L^2\cap L^{1}\right)(\mathbb{R}^{3})$ and for   axi-symmetric  set $U\subset\mathbb{R}^3$,  we have
\begin{equation}\begin{split}\label{est_en_diff}
\left| E[\xi_1]-E[\xi_2]\right|
&\leq  \frac{1}{4\pi}\left| \int_{{  U } }\int_{{  U}  }  {G}(x,y)\Big(\xi_1(x)\xi_1(y)-\xi_2(x)\xi_2(y)\Big)\,dxdy \right|
\\&\quad \quad + 
 \int_{\mathbb{R}^{3}\setminus {U }}\xi_1\mathcal{G}[\xi_1]\,dx 
+  \int_{\mathbb{R}^{3}\setminus {U }}\xi_2\mathcal{G}[\xi_2]\,dx.
\end{split}\end{equation}
\end{lem}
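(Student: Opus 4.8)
The plan is to reduce the statement to the symmetric bilinear representation of the energy on $\mathbb{R}^3\times\mathbb{R}^3$ and then split the domain of integration according to the set $U$. First I would record the representation
$$E[\xi]=\frac{1}{4\pi}\int_{\mathbb{R}^3}\int_{\mathbb{R}^3}G(x,y)\,\xi(x)\xi(y)\,dx\,dy,$$
which follows from the definition \eqref{defn_en} together with axi-symmetry: since the kernel $G(x,y)=G(r,z,r',z')$ is independent of the angular variables, writing $dx=r\,dr\,d\theta\,dz$ and $dy=r'\,dr'\,d\theta'\,dz'$ and integrating $\theta,\theta'$ over $[0,2\pi)$ produces the factor $(2\pi)^2$ and recovers $\pi\iint_{\Pi\times\Pi}G\,\xi\xi\,rr'\,drdz\,dr'dz'$. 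The same angular integration yields the pointwise identity
$$\int_{\mathbb{R}^3}G(x,y)\,\xi(y)\,dy=2\pi\,\mathcal{G}[\xi](x),$$
which I will use at the end to convert the remainder integrals back into $\int\xi\,\mathcal{G}[\xi]$. Finiteness of every integral below is guaranteed by Lemmas \ref{lem_est_stream}--\ref{lem_est_energy} under the hypothesis $\xi_i\in L^1_w\cap L^2\cap L^1$.

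Next I would split $\mathbb{R}^3\times\mathbb{R}^3$ into the diagonal block $U\times U$ and its complement, writing for each $i$
$$4\pi E[\xi_i]=\int_{U\times U}G\,\xi_i\xi_i+\int_{(\mathbb{R}^3\times\mathbb{R}^3)\setminus(U\times U)}G\,\xi_i\xi_i.$$
Subtracting the two values of $i$, the diagonal blocks produce exactly the first term on the right-hand side of \eqref{est_en_diff} after dividing by $4\pi$, while the off-diagonal remainder is controlled by the triangle inequality combined with the nonnegativity $G\ge 0$, $\xi_i\ge 0$. Indeed, nonnegativity gives the pointwise bound $|\xi_1(x)\xi_1(y)-\xi_2(x)\xi_2(y)|\le \xi_1(x)\xi_1(y)+\xi_2(x)\xi_2(y)$, so the two functions decouple on the remainder and it suffices to estimate $\int_{(\mathbb{R}^3\times\mathbb{R}^3)\setminus(U\times U)}G\,\xi_i\xi_i$ for $i=1,2$ separately.

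To finish, I would note that the remainder region equals $\big((\mathbb{R}^3\setminus U)\times\mathbb{R}^3\big)\cup\big(\mathbb{R}^3\times(\mathbb{R}^3\setminus U)\big)$, so by nonnegativity of the integrand and the symmetry $G(x,y)=G(y,x)$ the two pieces contribute equally and
$$\int_{(\mathbb{R}^3\times\mathbb{R}^3)\setminus(U\times U)}G\,\xi_i\xi_i\le 2\int_{(\mathbb{R}^3\setminus U)\times\mathbb{R}^3}G(x,y)\,\xi_i(x)\xi_i(y)\,dx\,dy=4\pi\int_{\mathbb{R}^3\setminus U}\xi_i\,\mathcal{G}[\xi_i]\,dx,$$
where the last equality is the pointwise identity above. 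Dividing by $4\pi$ and summing over $i=1,2$ produces the last two terms of \eqref{est_en_diff}, completing the proof.

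The argument is entirely elementary, so the only real care is bookkeeping. The main (mild) obstacle I anticipate is pinning down the normalizing constant $\tfrac{1}{4\pi}$ correctly—that is, the passage between the $\mathbb{R}^3$-representation of $E$ and the half-plane formula \eqref{defn_en} via the angular integration—and tracking the factor $2$ that appears because the off-diagonal region is covered by two strips whose overlap $(\mathbb{R}^3\setminus U)\times(\mathbb{R}^3\setminus U)$ is simply discarded, which is harmless since the integrand is nonnegative. No compactness or regularity beyond the stated integrability is required.
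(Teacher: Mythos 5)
Your proof is correct and follows essentially the same route as the paper: both decompose the bilinear energy integral into the block $U\times U$ plus a remainder, and both use the symmetry and positivity of $G$ to bound that remainder by $2\int_{(\mathbb{R}^3\setminus U)\times\mathbb{R}^3}G\,\xi_i\xi_i = 4\pi\int_{\mathbb{R}^3\setminus U}\xi_i\,\mathcal{G}[\xi_i]\,dx$ for each $i$. The only cosmetic difference is that the paper establishes the two-sided bound $0\leq E[\xi_i]-\frac{1}{4\pi}\int_{U\times U}G\,\xi_i\xi_i\leq \int_{\mathbb{R}^3\setminus U}\xi_i\,\mathcal{G}[\xi_i]\,dx$ for each function separately before subtracting, whereas you subtract first and then apply the triangle inequality; the two bookkeepings are equivalent.
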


\begin{proof}

Let $\xi \in \left(L^{1}_w\cap L^2\cap L^{1}\right)(\mathbb{R}^{3})$ be  non-negative and axi-symmetric.  
By setting 
$\psi=\mathcal{G}[\xi]$, we decompose
\begin{align*}
 4\pi  E[\xi]= {\int}2\pi\psi\xi\,dx= \int_{{U} }+ \int_{\mathbb{R}^{3}\setminus {U}}=:I+II,\quad\mbox{and}
\end{align*}
\begin{align*}
 {I} 
=\int_{{  U} }\xi(x){\int_{\mathbb{R}^3}}G(x,y)\xi(y)\,dy\,dx
=\int_{{ U} }\int_{{ U}  }
+\int_{{ U} }\int_{\mathbb{R}^3\setminus { U}}
=:I_{1}+I_{2}.
\end{align*}
From the symmetry of  $G(x,y)=G(y,x)>0$, 
we estimate the last term $I_2$ by
\begin{align*}
I_{2}&=\int_{{  U} }\xi(x){\int_{\mathbb{R}^3\setminus { U}}}G(x,y)\xi(y)\,dy\,dx
 = \int_{\mathbb{R}^3\setminus { U}}\xi(x){\int_{  { U}}}G(x,y)\xi(y)\,dy\,dx\\
  &\leq  \int_{\mathbb{R}^3\setminus { U}}\xi(x){\int_{\mathbb{R}^3} }G(x,y)\xi(y)\,dy\,dx
= \int_{\mathbb{R}^3\setminus { U}}2\pi \psi(x)\xi(x)\,dx= II.
\end{align*} 
Thus we get $$4\pi E[\xi]\leq    I_{1}
+2II,$$ which gives  
\begin{align*}
0\leq\, & E[\xi]-\frac 1 {4\pi}\int_{{ U} }\int_{{U}  }  G(x,y)\xi(x)\xi(y)\,dydx 
\leq  \int_{\mathbb{R}^{3}\setminus {U}}\psi(x)\xi(x)\,dx.
\end{align*} By  applying the above estimate into  any pair $(\xi_1,\xi_2)$, 
we obtain
\eqref{est_en_diff}.
 
\end{proof} 
We check that the kernel $G(x,y)$ is locally square integrable due to its logarithm
 behavior \eqref{log_beha} near $x=y$. This lemma  will be used not only in this section  and but also in  Section \ref{sec_cpt}.
\begin{lem}\label{lem_g_l_2} The kernel $G(x,y)$ satisfies
$$G\in L^2_{loc}(\mathbb{R}^3\times\mathbb{R}^3).$$ In particular,
 we have 
 \begin{align}\label{est_g_l_2}
\int_{B_{M}(r,z)}  r'|G(r,z,r',z')|^2\,dr'dz'  \lesssim  (Mr^{4}+M^{7/2}r^{3/2}),\quad M>0, \quad(r,z)\in\Pi,
\end{align}  where 
$B_M(r,z)=\{(r',z')\in\Pi\,|\, |(r,z)-(r',z')|<M\}$ as defined in 
\eqref{defn_ball}.
\end{lem}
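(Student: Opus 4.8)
The plan is to read off the pointwise estimate \eqref{est_g_l_2} directly from the power bound \eqref{est_F} and then deduce $G\in L^2_{loc}(\mathbb{R}^3\times\mathbb{R}^3)$ as a consequence. Writing $t=\sqrt{|r-r'|^2+|z-z'|^2}$ and squaring \eqref{est_F} gives $|G(r,z,r',z')|^2\lesssim_\tau (rr')^{2\tau+1}t^{-4\tau}$ for $0<\tau\le 3/2$. The crucial observation is that the borderline exponent $\tau=1/2$ — which corresponds to the logarithmic singularity \eqref{log_beha} of $F$ near $s=0$ — produces the non-integrable planar factor $t^{-2}$, so I must take $\tau$ strictly below $1/2$. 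A convenient single choice is $\tau=1/4$, for which $r'|G|^2\lesssim r^{3/2}r'^{5/2}t^{-1}$.

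First I would split the disk $B_M(r,z)$ (as defined in \eqref{defn_ball}) into the near region $\{t<r/2\}$ and the far region $\{r/2\le t<M\}$, following the dichotomy used in the proof of Lemma \ref{lem_est_stream}. On the near region one has $r'\sim r$, so $r^{3/2}r'^{5/2}\sim r^4$, and integrating $r^4 t^{-1}$ over a planar disk of radius $\min(M,r/2)$ in the measure $dr'dz'\sim t\,dt\,d\theta$ yields a contribution $\lesssim r^4\min(M,r/2)\le Mr^4$, since $\min(M,r/2)\le M$.

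On the far region the constraint $t\ge r/2$ gives $r\le 2t$ and hence $r'\le r+t\le 3t$, so that $r'^{5/2}\lesssim t^{5/2}$ and $r'|G|^2\lesssim r^{3/2}t^{3/2}$. Integrating over $\{r/2\le t<M\}$ then produces $r^{3/2}\int_{r/2}^{M}t^{5/2}\,dt\lesssim M^{7/2}r^{3/2}$ (and this region is empty, hence contributes nothing, when $M\le r/2$). Adding the two contributions gives \eqref{est_g_l_2}.

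Finally, to conclude $G\in L^2_{loc}(\mathbb{R}^3\times\mathbb{R}^3)$, I would fix compact sets $K,K'\subset\mathbb{R}^3$: for each fixed $x=(r,z)$ with $r>0$ the inner integral $\int_{K'}|G(x,y)|^2\,dy=2\pi\int r'|G|^2\,dr'dz'$ over the bounded $(r',z')$-region is finite by \eqref{est_g_l_2} with $M$ large enough to contain $K'$, and since the resulting bound $Mr^4+M^{7/2}r^{3/2}$ is locally bounded in $r$ (and in fact vanishes as $r\to 0$), integrating over $x\in K$ keeps everything finite. The only genuinely delicate point is the borderline integrability at $\tau=1/2$; once one commits to $\tau<1/2$, the rest is the routine near/far bookkeeping of the weight $r'$.
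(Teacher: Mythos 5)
Your proof is correct, and it reaches the bound \eqref{est_g_l_2} by a genuinely different route than the paper, even though both arguments launch from the same kernel estimate \eqref{est_F} with $\tau=1/4$ (so that $r'|G|^2\lesssim r^{3/2}r'^{5/2}t^{-1}$). The paper does not split into near and far regions at all: it translates so that $z=0$, rescales $(r',z')\mapsto(r\tilde r,r\tilde z)$ to pull out a factor $r^5$, bounds $\tilde r^{5/2}\le(1+Mr^{-1})^{5/2}\lesssim 1+M^{5/2}r^{-5/2}$ crudely over the rescaled disk $B_{Mr^{-1}}(1,0)$, and then computes the single radial integral $\int_0^{Mr^{-1}}1\,d\rho=Mr^{-1}$, so that the two terms $Mr^4$ and $M^{7/2}r^{3/2}$ emerge from the binomial splitting of $(1+Mr^{-1})^{5/2}$. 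Your dichotomy $\{t<r/2\}$ versus $\{r/2\le t<M\}$ — borrowed, as you note, from the proof of Lemma \ref{lem_est_stream} — instead attributes each term to a geometric regime: $Mr^4$ comes from the near region where $r'\sim r$, and $M^{7/2}r^{3/2}$ from the far region where $r'\lesssim t$; this is more transparent about where the bound comes from, and your remark that one must take $\tau<1/2$ to beat the borderline non-integrable $t^{-2}$ (the shadow of the logarithmic behavior \eqref{log_beha}) makes explicit a point the paper leaves implicit in its silent choice $\tau=1/4$. Your deduction of $G\in L^2_{loc}$ is the same in substance as the paper's (the paper integrates \eqref{est_g_l_2} over $B_M(0,0)$ with $r\,drdz$ and gets $\lesssim M^8$); the only cosmetic imprecision is that $\int_{K'}|G(x,y)|^2\,dy$ equals $2\pi\int r'|G|^2\,dr'dz'$ only when $K'$ is axisymmetric — for general compact $K'$ it is an inequality after enlarging $K'$ to its axisymmetrization, which costs nothing since you only need an upper bound, and $M$ should be chosen large enough that $B_M(r,z)$ contains the $(r',z')$-projection of $K'$ uniformly for $(r,z)$ ranging over the projection of $K$, which compactness provides.
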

 \begin{proof}
 
We use the estimate \eqref{est_F} with $\tau=1/4$:
\begin{align*}\label{G_est_2}
G(r,z,r',z')\lesssim \frac{(rr')^{3/4}}{|(r,z)-(r',z')|^{1/2}}.
\end{align*} Thus we estimate, by change of variables 
$r'/r=\tilde r,\quad z'/r=\tilde z$,
\begin{align*} 
&\int_{B_{M}(r,z)}  r'|G(r,z,r',z')|^2\,dr'dz'  \lesssim \int_{B_{M}(r,z)}  r'\frac{(rr')^{3/2}}{|(r,z)-(r',z')|}\,dr'dz'  =\int_{B_{M}(r,0)}  r'\frac{(rr')^{3/2}}{|(r,0)-(r',z')|}\,dr'dz' \\&
=r^5\int_{B_{Mr^{-1}}(1,0)
 }   \frac{(\tilde r)^{5/2}}{|(1,0)-(\tilde r,\tilde z)|}\,d\tilde rd\tilde z \leq r^5(1+Mr^{-1})^{5/2}\int_{B_{Mr^{-1}}(1,0)
 }  \frac{1}{|(1,0)-(\tilde r,\tilde z)|}\,d\tilde rd\tilde z\\
& 
\lesssim r^5(1+M^{5/2}r^{-5/2}) 
\int_0^{{Mr^{-1}}}
 1 d\rho =  (Mr^{4}+M^{7/2}r^{3/2}).
\end{align*}  
  Thanks to the above estimate, we have $G\in L^2_{loc}(\mathbb{R}^3\times\mathbb{R}^3).$ Indeed, for any $M>0$, we estimate
 \begin{align*} 
&\int_{B_M(0)}\int_{B_M(0)} |G(x,y)|^2dydx 
 =4\pi^2 \int_{B_{M}(0,0)} r\int_{B_{M}(0,0)} r'|G(r,z,r',z')|^2\,dr'dz'  drdz\\
 & \lesssim  \int_{B_{M}(0,0)} r\int_{B_{2M}(r,z)} r'|G(r,z,r',z')|^2\,dr'dz'  drdz 
    \lesssim  \int_{B_{M}(0,0)} r (Mr^{4}+M^{7/2}r^{3/2})  drdz\\&\lesssim 
   M^6\int_{B_{M}(0,0)} 1  drdz\lesssim M^8.
 \end{align*}

 \end{proof}
 
 \subsection{Existence of a maximizer}\ \\
 
 In this subsection, our goal is to prove the following existence lemma:
 \begin{lem}\label{lem_exist}
  Let $0<\mu<\infty$. Then
 $\mathcal{S}''_{\mu}\neq \emptyset.$
\end{lem}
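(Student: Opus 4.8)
The plan is to run the direct method of the calculus of variations, using Steiner symmetrization to recover the compactness that the $z$-translation invariance and the non-compact kernel $G$ would otherwise destroy. First I would fix a maximizing sequence $\{\xi_n\}\subset\mathcal{P}''_\mu$, so that $E[\xi_n]\to\mathcal{I}''_\mu$, which is finite and positive by Lemma \ref{lem_nontrivial_extra}. The pointwise bound $0\le\xi_n\le1$ together with $\|\xi_n\|_1\le1$ gives a uniform $L^2$ bound, since $\|\xi_n\|_2^2\le\|\xi_n\|_\infty\|\xi_n\|_1\le1$, so after passing to a subsequence $\xi_n\rightharpoonup\xi$ weakly in $L^2(\mathbb{R}^3)$. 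Since $\mathcal{P}''_\mu$ is closed under the weak-$L^2$ topology (see the Remark following \eqref{set_rel}), the limit satisfies $\xi\in\mathcal{P}''_\mu$, and hence $E[\xi]\le\mathcal{I}''_\mu$. Everything then reduces to the reverse inequality $E[\xi]\ge\mathcal{I}''_\mu$, i.e. to showing that no energy is lost in the weak limit.

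Before passing to the limit I would replace each $\xi_n$ by its Steiner symmetrization in the $z$-variable about the plane $\{z=0\}$ (Proposition \ref{prop_steiner}). This rearrangement preserves the constraint $0\le\xi\le1$, the circulation $\|\xi\|_1$, and the impulse $\tfrac12\|r^2\xi\|_1$ (symmetrizing in $z$ leaves $\int\xi(r,\cdot)\,dz$ unchanged for each fixed $r$, and the weight $r^2$ is independent of $z$), while it does not decrease $E$ because the kernel $G(r,z,r',z')$ is decreasing in $|z-z'|$. Thus the symmetrized functions again form a maximizing sequence in $\mathcal{P}''_\mu$, are now symmetric and decreasing in $z$, and their weak limit inherits the same monotonicity; I relabel this sequence as $\{\xi_n\}$.

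The heart of the argument is a uniform tightness estimate: $\int_{\mathbb{R}^3\setminus B_R}\xi_n\,\mathcal{G}[\xi_n]\,dx\to0$ as $R\to\infty$, uniformly in $n$ (and likewise for $\xi$). The large-$r$ part is controlled by the impulse constraint, since $\int_{\{r>R_0\}}r^2\xi_n\,drdz\le R_0^{-1}\int r^3\xi_n\,drdz\lesssim R_0^{-1}\mu$; the large-$|z|$ part is controlled by the symmetric-decreasing structure together with $\xi_n\le1$ via the bathtub principle, which forces the $z$-tail of $\xi_n(r,\cdot)$ beyond $Z$ to be bounded by $(\int\xi_n(r,\cdot)\,dz-2Z)_+$. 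Combining these with the stream bound $|\mathcal{G}[\xi_n]|\lesssim r\,(\|r^2\xi_n\|_1+\|\xi_n\|_{L^1\cap L^2})$ from Lemma \ref{lem_est_stream} yields the tightness. With it in hand I would apply the energy-difference Lemma \ref{lem_en_diff} with $U=B_R$, bounding $|E[\xi_n]-E[\xi]|$ by the localized discrepancy $\tfrac1{4\pi}\bigl|\iint_{B_R\times B_R}G(x,y)(\xi_n(x)\xi_n(y)-\xi(x)\xi(y))\,dxdy\bigr|$ plus the uniformly small tails. On the fixed bounded set $B_R$ one has $G\in L^2(B_R\times B_R)$ by Lemma \ref{lem_g_l_2}, so the associated integral operator on $L^2(B_R)$ is Hilbert--Schmidt, hence compact, and the weak convergence $\xi_n\rightharpoonup\xi$ makes the localized discrepancy tend to $0$; this is precisely the energy-convergence Lemma \ref{lem_energy_conv}. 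Sending $n\to\infty$ and then $R\to\infty$ gives $E[\xi]=\mathcal{I}''_\mu$, so $\xi\in\mathcal{S}''_\mu\ne\emptyset$.

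The main obstacle is the tightness step: because $E$ is a translation-invariant, attractive quadratic form with a non-compact kernel, energy can a priori leak to spatial infinity, and ruling this out is exactly what forces the use of both the Steiner symmetrization (to eliminate $z$-drift and pin the vertical profile) and the impulse bound (to penalize mass at large $r$). The delicate bookkeeping occurs near the axis $\{r=0\}$ and in the $z$-direction, where one must simultaneously exploit the pointwise constraint $\xi\le1$ and the rearrangement in order to convert the finite impulse and circulation into genuine decay of the energy tails.
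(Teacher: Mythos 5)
Your overall route is the same as the paper's: take a maximizing sequence in $\mathcal{P}''_{\mu}$, Steiner-symmetrize it (Proposition \ref{prop_steiner}), which keeps it admissible and does not decrease the energy, extract a weak $L^2$ limit, use the weak-$L^2$ closedness of $\mathcal{P}''_{\mu}$, and conclude by the energy convergence of Lemma \ref{lem_energy_conv}. At that structural level the argument is complete and correct: once you verify the hypotheses of Lemma \ref{lem_energy_conv} (the monotonicity condition \eqref{cond_sym}, uniform bounds in $L^1_w\cap L^2\cap L^1$, weak $L^2$ convergence), you may invoke it as a black box, which is exactly what the paper does.

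The genuine problem is in your self-contained justification of the tightness step, which you yourself call the heart of the argument. You claim that for a symmetric decreasing profile with $0\le\xi_n\le1$, the bathtub principle bounds the $z$-tail mass beyond $Z$ by $\left(\int\xi_n(r,\cdot)\,dz-2Z\right)_+$. This is false: take $\xi_n(r,\cdot)=\tfrac12\,{1}_{[-m,m]}$, so the mass is $m$ while the tail beyond $Z$ equals $m-Z$, which exceeds $(m-2Z)_+$ for every $Z\in(0,m)$. Worse, no uniform tail-mass bound is available in this class at all: the profile $\epsilon\,{1}_{\{|z|\le m/(2\epsilon)\}}$ has mass $m$ and tail mass arbitrarily close to $m$ beyond any fixed $Z$ as $\epsilon\to 0$; the constraint $\xi\le1$ prevents concentration, not spreading. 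So the large-$|z|$ part of the energy tail cannot be controlled through the mass of $\xi_n$ at large $|z|$. The correct mechanism --- this is the paper's Lemma \ref{lem_stream_AR}, proved via Lemma \ref{lem_stream_sym} in Appendix \ref{app_en_conv} --- is that the \emph{stream function} is small far out in $z$: for $r\le|z|/A$, monotonicity gives that any relative slab $\{|z'-z|<|z|/A\}$ carries mass $\lesssim\|\xi_n\|_{1}/A$ (this is where \eqref{g_est} enters), while the mass outside that slab is far from $(r,z)$ and is suppressed by the decay of $G$; hence $\mathcal{G}[\xi_n](r,z)$ is $O(A^{-1/2})$ there, and the energy tail $\int\xi_n\,\mathcal{G}[\xi_n]\,dx$ is small even though the mass tail need not be. Replacing your bathtub claim by a citation of Lemma \ref{lem_stream_AR} (or simply citing Lemma \ref{lem_energy_conv} wholesale) closes the gap.
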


In order to prove Lemma \ref{lem_exist},   we introduce Steiner symmetrization (symmetrical rearrangement about the plane  $\{z=0\}$) with its property as in \cite{FT81}. Here we say that a   non-negative  
 function $f$ on $\Pi$ satisfies the \textit{monotonicity} condition (about the plane  $\{z=0\}$) if \begin{equation}\label{cond_sym}
\begin{aligned}
&f(r,z)=f(r,-z),\quad(r,z)\in\Pi\quad\mbox{and} \\
&\mbox{for each fixed}\,\,\, r>0,\quad f(r,z)\ \textrm{is a  non-increasing  function of}\,\,\, z\,\,\,\mbox{for}\,\,\, z>0. 
\end{aligned}
\end{equation}

\begin{prop}[Steiner symmetrization]\label{prop_steiner}
Let $p\in[2,\infty]$. For   axi-symmetric   $\zeta\geq 0$ satisfying $\zeta\in (L^1_w\cap L^{p}\cap L^{1})(\mathbb{R}^{3})$, there exists an axi-symmetric $\zeta^{*}\geq 0$ 
 satisfying {the monotonicity }condition \eqref{cond_sym},
\begin{equation}\begin{split}\label{steiner_rearr}  
&||\zeta^{*}||_{q}=||\zeta||_{q},\quad 1\leq q\leq p,    \\
&||r^2\zeta^{*}||_{1}=||r^2\zeta||_{1},\quad \mbox{ and} \\
&\int_{\{x\in\mathbb{R}^3\,|\, \zeta^*(x) \in I\}}\zeta^*\,dx= \int_{\{x\in\mathbb{R}^3\,|\,|\zeta(x) \in I\}}\zeta\,dx \quad\mbox{for any  interval}\quad I\subset \mathbb{R}.
\end{split}\end{equation}
In particular, it satisfies
\begin{equation}\label{energy_steiner}
 E[\zeta^{*}]\geq E[\zeta].
\end{equation}
\end{prop}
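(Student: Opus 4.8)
The plan is to take $\zeta^{*}$ to be the Steiner symmetrization of $\zeta$ about the plane $\{z=0\}$: for each fixed $r>0$ I replace the slice $z\mapsto\zeta(r,z)$ by its one-dimensional symmetric decreasing rearrangement in $z$. By construction $\zeta^{*}$ satisfies the monotonicity condition \eqref{cond_sym}. To see that $\zeta^{*}$ is jointly measurable on $\Pi$, I would use the layer-cake representation together with the super-level sets: $\{\zeta>t\}$ is measurable, so $r\mapsto m_r(t):=|\{z:\zeta(r,z)>t\}|$ is measurable, the symmetrized sets $\{\zeta^{*}>t\}=\{(r,z):|z|<\tfrac12 m_r(t)\}$ are measurable, and reassembling via $\zeta^{*}(r,z)=\int_0^\infty 1_{\{\zeta^{*}>t\}}(r,z)\,dt$ produces a measurable $\zeta^{*}$.

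The equimeasurability properties \eqref{steiner_rearr} will follow slice-by-slice. For each fixed $r$ the rearrangement preserves the distribution function $\lambda\mapsto|\{z:\zeta(r,z)>\lambda\}|$, hence $\int_{\mathbb{R}}\Phi(\zeta^{*}(r,z))\,dz=\int_{\mathbb{R}}\Phi(\zeta(r,z))\,dz$ for every Borel $\Phi\ge0$ with $\Phi(0)=0$. Since the weights $r$ (coming from $dx=2\pi r\,dr\,dz$) and $r^{2}$ are constant in $z$, I would multiply by these weights and integrate in $r$ (Fubini): taking $\Phi(s)=s^{q}$ gives the $L^{q}$-norm identities for $1\le q<\infty$, the essential supremum is preserved directly from the common distribution function (the case $q=\infty$), taking $\Phi(s)=s$ gives $\|r^{2}\zeta^{*}\|_{1}=\|r^{2}\zeta\|_{1}$, and taking $\Phi(s)=s\,1_{\{s\in I\}}$ gives the level-set identity. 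In particular $\zeta^{*}\in(L^1_w\cap L^{p}\cap L^{1})(\mathbb{R}^3)$.

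The substance of the proposition is the energy inequality \eqref{energy_steiner}, which I would deduce from the one-dimensional Riesz rearrangement inequality. Using \eqref{defn_en} and Fubini (legitimate since the energy is finite by \eqref{est_E}), I would write
\[
E[\zeta]=\pi\int_0^\infty\!\!\int_0^\infty rr'\,\Big[\iint_{\mathbb{R}^2}G(r,z,r',z')\,\zeta(r,z)\,\zeta(r',z')\,dz\,dz'\Big]\,dr\,dr'.
\]
For fixed $r,r'$, the representation \eqref{form_F} shows $G(r,z,r',z')=\tfrac{\sqrt{rr'}}{2\pi}F(s)$ with $s=\frac{(r-r')^2+(z-z')^2}{rr'}$, so $G$ depends on $(z,z')$ only through $|z-z'|$; since $F$ is strictly decreasing and $s$ is increasing in $|z-z'|$, the kernel $K_{r,r'}(z-z'):=G(r,z,r',z')$ is even and nonincreasing in $|z-z'|$, i.e. symmetric decreasing. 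The Riesz rearrangement inequality in one dimension (whose middle factor $K_{r,r'}$ equals its own rearrangement) then yields, for each fixed $r,r'$,
\[
\iint_{\mathbb{R}^2}K_{r,r'}(z-z')\,\zeta(r,z)\,\zeta(r',z')\,dz\,dz'\le\iint_{\mathbb{R}^2}K_{r,r'}(z-z')\,\zeta^{*}(r,z)\,\zeta^{*}(r',z')\,dz\,dz',
\]
because $\zeta^{*}(r,\cdot)$ and $\zeta^{*}(r',\cdot)$ are exactly the symmetric decreasing rearrangements of the two slices. Multiplying by the nonnegative weight $\pi rr'$ and integrating over $(r,r')\in(0,\infty)^2$ gives $E[\zeta]\le E[\zeta^{*}]$.

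The main obstacle will be the energy step: one must verify that for fixed $r,r'$ the kernel genuinely is symmetric decreasing in the $z$-variable (this is precisely where the monotonicity of $F$ recorded after \eqref{form_F} is used) and then invoke the correct form of Riesz's inequality in which only the two vortex slices are rearranged. The rest is bookkeeping, namely the joint measurability of $\zeta^{*}$ and the Fubini interchanges, both of which are licensed by the finiteness estimate \eqref{est_E} applied to $|\zeta|$ and to $\zeta^{*}$, whose norms coincide with those of $\zeta$.
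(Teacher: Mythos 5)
Your proof is correct and takes essentially the same approach as the paper: the paper obtains \eqref{steiner_rearr} from the standard properties of symmetric rearrangement (citing Lieb--Loss) and \eqref{energy_steiner} from the Riesz rearrangement inequality, which is precisely your slicewise one-dimensional Riesz argument exploiting that the kernel in \eqref{form_F} is symmetric decreasing in $|z-z'|$ for fixed $(r,r')$. You have simply written out in detail what the paper's proof delegates to citations.
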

  
\begin{proof}
 The   symmetrical   rearrangement $\zeta^*$ of $\zeta$ about the plane  $z=0$  satisfies the  properties in \eqref{steiner_rearr} (e.g. see \cite[Section 3.3]{LiebLoss}). The inequality \eqref{energy_steiner}   is a consequence of   
   Riesz rearrangement inequality  \cite{Riesz} (or see \cite[p84]{LiebLoss}).
 We also refer to \cite[Appendix I]{FB74}
  which is an adaptation of P\'{o}lya-Szeg\"{o}
inequality \cite{PoSz_book}.
\end{proof}


 

The following lemma says that the kinetic energy is concentrated in a bounded domain when $\xi$ satisfies  {the monotonicity } condition \eqref{cond_sym}.  We present  its proof in Appendix \ref{app_en_conv}
   while a similar estimate can be found  
  in
   \cite[Lemma 3.5]{FT81}.
\begin{lem}\label{lem_stream_AR}
Let $\xi\in (L^1_w \cap L^{2}\cap L^{1})  (\mathbb{R}^{3})$ be an  axi-symmetric nonnegative function satisfying {the monotonicity } condition \eqref{cond_sym}. Then we have
\begin{equation}\label{est_stream_AR}
\begin{aligned}
\int_{\mathbb{R}^{3}\backslash Q} \xi\mathcal{G}[\xi] \dd x 
\lesssim  \left(\frac{1}{\sqrt A}  +
 \frac{1}{R^2}   \right)  \left(\|\xi\|_{L^1\cap L^2}  +\|r^2\xi\|_{1} \right)^2, 
\end{aligned}
\end{equation} where
$$Q=Q_{A,R}=\{x\in\mathbb{R}^3\, |\,  \quad |z|<AR, \quad r <R\}$$
 provided 
$R\geq 1$ and $A\geq1$. 
\end{lem}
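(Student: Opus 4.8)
The plan is to estimate the exterior energy by splitting $\mathbb{R}^3\setminus Q$ into the \emph{cylindrical} far field $\{r\ge R\}$ and the \emph{axial} far field $\{r<R,\ |z|\ge AR\}$, and to control the two pieces of $\int \xi\mathcal{G}[\xi]\,dx$ separately. Throughout I abbreviate $\mathcal{M}:=\|\xi\|_{L^1\cap L^2}+\|r^2\xi\|_{1}$ and write $\psi=\mathcal{G}[\xi]\ge 0$. The cylindrical piece is the easy one: using the bounded estimate \eqref{est_psi_bdd} with $\delta=1$ gives $|\psi|\lesssim \mathcal{M}$ uniformly, while the crude mass bound $\int_{\{r\ge R\}}\xi\,dx\le R^{-2}\int r^2\xi\,dx\le R^{-2}\mathcal{M}$ supplies the gain in $R$. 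Multiplying the two yields $\int_{\{r\ge R\}}\xi\psi\,dx\lesssim R^{-2}\mathcal{M}^2$, which accounts for the $R^{-2}$ term.

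The axial piece is where the monotonicity hypothesis \eqref{cond_sym} enters, and it is the heart of the argument. For fixed $r>0$ the slice $z\mapsto\xi(r,z)$ is even and non-increasing on $(0,\infty)$, so $|z|\,\xi(r,z)\le\int_0^{|z|}\xi(r,w)\,dw\le m(r)$ with $m(r):=\int_0^\infty\xi(r,w)\,dw$; that is, $\xi(r,z)\le m(r)/|z|$. For a point $(r,z)$ with $|z|\ge AR$ I would split $\psi(r,z)=\psi_a+\psi_b$ according to whether the source variable obeys $|z'|<|z|/2$ or $|z'|\ge |z|/2$. On the near-source part one has $|z-z'|\ge |z|/2\ge AR/2$, so the kernel bound \eqref{est_F} with $\tau=1$ gives $\psi_a(r,z)\lesssim |z|^{-2}r^{3/2}\int (r')^{5/2}\xi\,dr'dz'\lesssim |z|^{-2}r^{3/2}\mathcal{M}$; integrating $\xi\psi_a$ over the slab and using $|z|^{-2}\le (AR)^{-2}$ together with the moment $\int_{\mathbb{R}^3} r^{3/2}\xi\,dx\lesssim\mathcal{M}$ bounds this contribution by $(AR)^{-2}\mathcal{M}^2$, which is far better than needed.

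The far-source part $\psi_b$ is the main obstacle, because there the source may sit near the evaluation point, so the kernel is \emph{not} small and the gain must instead come from the smallness of the vorticity itself in the axial tail. I would treat $\int_{\{r<R,\ |z|\ge AR\}}\xi\psi_b\,dx$ directly as a double integral in which both vorticity factors are evaluated at $|z|,|z'|\gtrsim AR$, and apply the monotone decay $\xi(r,z)\le m(r)/|z|$ and $\xi(r',z')\le m(r')/|z'|$ to \emph{both} of them. With \eqref{est_F} at $\tau=1/4$, i.e. $G\lesssim (rr')^{3/4}|z-z'|^{-1/2}$, the exponent $1/2<1$ makes the inner $z'$-integral convergent across the diagonal, and a rescaling $z=(AR)\zeta,\ z'=(AR)\zeta'$ shows that integrating the weight $(zz')^{-1}|z-z'|^{-1/2}$ over $\{z>AR,\ z'>z/2\}$ produces exactly a factor $\lesssim (AR)^{-1/2}$. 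The leftover $(r,r')$-integral is a product of moments $\int_0^\infty r^{7/4}m(r)\,dr$, each controlled by $\|\xi\|_1+\|r^2\xi\|_1\lesssim\mathcal{M}$ after splitting at $r=1$ (using $r^{7/4}\le r$ for $r\le1$ and $r^{7/4}\le r^3$ for $r\ge1$). This bounds the far-source piece by $(AR)^{-1/2}\mathcal{M}^2\le A^{-1/2}\mathcal{M}^2$.

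Combining the cylindrical piece with the two axial pieces and using $R\ge1$, $A\ge1$ then gives \eqref{est_stream_AR}. The delicate point throughout is the bookkeeping of the interplay between the monotone decay $1/|z|$, which is only \emph{borderline} non-integrable, and the kernel exponent $\tau$: one needs $\tau<1/2$ so that the $z'$-integral converges near the diagonal, yet the two monotonicity factors together must still beat $\int^{\infty}dz/z$, and the choice $\tau=1/4$ with the geometric constraint $z'>z/2$ is precisely what arranges this and delivers the square-root rate $A^{-1/2}$.
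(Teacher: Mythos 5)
Your proof is correct, and its cylindrical piece $\{r\ge R\}$ coincides with the paper's: the uniform bound $|\psi|\lesssim \|\xi\|_{L^1\cap L^2}+\|r^2\xi\|_1$ from \eqref{est_psi_bdd} with $\delta=1$, multiplied by the Chebyshev-type mass bound $\int_{\{r\ge R\}}\xi\,dx\le R^{-2}\|r^2\xi\|_1$. For the axial piece $\{r<R,\ |z|\ge AR\}$, however, you take a genuinely different route. The paper first establishes a pointwise estimate for $\psi=\mathcal{G}[\xi]$ in the region $r\le |z|/A$ (Lemma \ref{lem_stream_sym}), whose proof exploits monotonicity through the windowed-mass bound \eqref{g_est} (for non-increasing $g$, the integral of $g$ over a window of relative width $1/A$ around $s$ is $\le \frac{4}{A}\|g\|_{L^1(0,\infty)}$, so the $L^1$ and $L^2$ norms of $\xi$ restricted to $\{|z-z'|<|z|/A\}$ gain factors $A^{-1}$ and $A^{-1/2}$ via H\"older); it then integrates that pointwise bound against $\xi$. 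You instead estimate the quadratic form directly: monotonicity is converted into the pointwise tail decay $\xi(r,z)\le m(r)/|z|$ with $m(r)=\int_0^\infty\xi(r,w)\,dw$, applied to \emph{both} vorticity factors on the far-source region $|z'|\ge|z|/2$, combined with the kernel bound \eqref{est_F} at $\tau=1/4$ and a scaling computation; the near-source region $|z'|<|z|/2$ is handled by kernel decay alone ($\tau=1$, giving $(AR)^{-2}$). I checked the two key computations: the moment bound $\int_0^\infty r^{7/4}m(r)\,dr\lesssim \|\xi\|_1+\|r^2\xi\|_1$ (splitting at $r=1$), and the convergence and scaling of $\iint_{\{|z|\ge AR,\,|z'|\ge|z|/2\}}|z|^{-1}|z'|^{-1}|z-z'|^{-1/2}\,dz\,dz'\lesssim (AR)^{-1/2}$ (the exponent $1/2<1$ makes the diagonal harmless, and the inner integral is $\lesssim |z|^{-1/2}$), so both mechanisms deliver the same rates $A^{-1/2}$ and $R^{-2}$. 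What the paper's route buys is a reusable pointwise decay statement for the stream function itself, in the spirit of \cite[Lemma 3.5]{FT81}; what yours buys is a more self-contained bilinear argument whose axial part needs only $\|\xi\|_1$ and $\|r^2\xi\|_1$ (no $L^2$ norm or H\"older step there), the $L^2$ norm entering only through the cylindrical piece.
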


The following lemma ensures  convergence of the energy for any bounded  sequence in $(L^1_w \cap L^{2}\cap L^{1})$ satisfying {the monotonicity} condition \eqref{cond_sym}.
This lemma was implicitly appeared in the proof of \cite[Theorem 2.1]{FT81} while it was
not explicitly written in the form of a lemma.  
We will use the lemma both in this section and in Section \ref{sec_cpt}.

\begin{lem}\label{lem_energy_conv}
Let $\{\xi_n\}_{n=1}^\infty$ be a sequence of axi-symmetric non-negative functions on $\mathbb{R}^3$ such that 
\begin{align*}
&\xi_n\quad \mbox{satisfies the monotonicity conditon }\eqref{cond_sym}\quad\mbox{for each}\quad n,\\
&\sup_{n  }\left\{||\xi_n||_{L^1\cap L^2}+||r^2\xi_n||_{1}  \right\}<\infty,\quad\mbox{and}\\
&\xi_n\rightharpoonup \xi\quad \textrm{in}\ L^{2}(\mathbb{R}^{3})\quad \textrm{as}\quad n\to\infty\quad \mbox{for some non-negative axi-symmetric}\quad \xi\in L^{2}(\mathbb{R}^{3}).
\end{align*} 
 Then we have convergence of  the energy:
\begin{align*}
E[\xi_{n}]\to E[\xi]\quad \textrm{as}\ n\to\infty.
\end{align*}
\end{lem}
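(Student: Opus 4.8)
The plan is to localize the energy to a large bounded cylinder, control the remainder uniformly, and treat the bounded piece by the compactness of the associated integral operator. For a bounded axi-symmetric set $U$ write $E_U[\zeta]:=\frac{1}{4\pi}\iint_{U\times U}G(x,y)\zeta(x)\zeta(y)\,dxdy$, so that $E_U$ is the quadratic form truncated to $U\times U$. Applying Lemma \ref{lem_en_diff} to the pair $(\xi_n,\xi)$ gives
\[
|E[\xi_n]-E[\xi]|\le |E_U[\xi_n]-E_U[\xi]| + \int_{\mathbb{R}^3\setminus U}\xi_n\mathcal{G}[\xi_n]\,dx + \int_{\mathbb{R}^3\setminus U}\xi\mathcal{G}[\xi]\,dx,
\]
and the strategy is to make the two tail terms small by the choice of $U$ and then send $n\to\infty$ in the bounded term with $U$ fixed.

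First I would fix $\varepsilon>0$ and choose $U=Q_{A,R}$ with $A,R$ large. For the $\xi_n$-tail, each $\xi_n$ satisfies the monotonicity condition \eqref{cond_sym} and, setting $C:=\sup_n\{\|\xi_n\|_{L^1\cap L^2}+\|r^2\xi_n\|_{1}\}<\infty$, Lemma \ref{lem_stream_AR} yields $\int_{\mathbb{R}^3\setminus Q_{A,R}}\xi_n\mathcal{G}[\xi_n]\,dx\lesssim (A^{-1/2}+R^{-2})C^2$ uniformly in $n$, which is $<\varepsilon$ once $A,R$ are large. For the $\xi$-tail, I first note the weak limit inherits finite norms: since $(1+r^2)\mathbf{1}_U\in L^2(\mathbb{R}^3)$ for bounded $U$, weak convergence gives $\int_U(1+r^2)\xi\,dx=\lim_n\int_U(1+r^2)\xi_n\,dx\le C$, and monotone convergence as $U\uparrow\mathbb{R}^3$ shows $\xi\in(L^1_w\cap L^2\cap L^1)(\mathbb{R}^3)$. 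Then $\xi\mathcal{G}[\xi]\ge 0$ is integrable with $\int\xi\mathcal{G}[\xi]\,dx=2E[\xi]<\infty$ by \eqref{defn_en} and Lemma \ref{lem_est_energy}, so its tail over $\mathbb{R}^3\setminus Q_{A,R}$ tends to $0$; enlarging $A,R$ if necessary makes it $<\varepsilon$ as well. This fixes the bounded set $U=Q_{A,R}$.

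Second, with $U$ now fixed and bounded, I would handle $E_U$ by compactness. By Lemma \ref{lem_g_l_2} the truncated kernel $G\,\mathbf{1}_{U\times U}$ lies in $L^2(\mathbb{R}^3\times\mathbb{R}^3)$, so the operator $T_Uf(x):=\mathbf{1}_U(x)\int_U G(x,y)f(y)\,dy$ is Hilbert--Schmidt on $L^2(\mathbb{R}^3)$, hence compact. Since $\xi_n\rightharpoonup\xi$ weakly in $L^2$, compactness upgrades this to the strong convergence $T_U\xi_n\to T_U\xi$ in $L^2$, whence
\[
4\pi E_U[\xi_n]=\langle\xi_n,T_U\xi_n\rangle=\langle\xi_n,\,T_U\xi_n-T_U\xi\rangle+\langle\xi_n,T_U\xi\rangle\longrightarrow\langle\xi,T_U\xi\rangle=4\pi E_U[\xi],
\]
because the first pairing is bounded by $\|\xi_n\|_2\,\|T_U\xi_n-T_U\xi\|_2\to 0$ and the second converges by weak convergence tested against the fixed function $T_U\xi$. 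Combining both steps gives $\limsup_{n}|E[\xi_n]-E[\xi]|\le 2\varepsilon$, and letting $\varepsilon\to0$ proves $E[\xi_n]\to E[\xi]$.

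The hard part is precisely that $E$ is a \emph{quadratic} functional, so weak convergence cannot be passed to the limit naively: the product $\xi_n(x)\xi_n(y)$ need not converge weakly. The device that rescues the argument is the compactness of $T_U$ on the bounded piece, which converts the weak convergence of $T_U\xi_n$ into norm convergence; this forces the localization to a bounded $U$, and that is exactly why the uniform exterior estimate of Lemma \ref{lem_stream_AR}, which rests on the monotonicity condition \eqref{cond_sym}, is indispensable for first reducing matters to a bounded region.
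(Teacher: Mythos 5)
Your proposal is correct, and its skeleton coincides with the paper's proof: the same decomposition via Lemma \ref{lem_en_diff} into a bounded piece over $Q_{A,R}$ plus two tails, the same uniform bound on the $\xi_n$-tails via Lemma \ref{lem_stream_AR} (resting on the monotonicity condition \eqref{cond_sym}), and the same use of $G\in L^2(Q\times Q)$ from Lemma \ref{lem_g_l_2} for the bounded piece. You differ in two local steps, and in both cases your version is more self-contained. For the $\xi$-tail, the paper asserts that the weak limit $\xi$ inherits the monotonicity condition \eqref{cond_sym} and applies Lemma \ref{lem_stream_AR} to $\xi$ as well; this inheritance is true (the set of functions satisfying \eqref{cond_sym} a.e.\ is a strongly closed convex cone in $L^2$, hence weakly closed), but the paper does not justify it. You avoid the issue entirely by showing $\xi\in(L^1_w\cap L^2\cap L^1)(\mathbb{R}^3)$ and invoking absolute continuity of the integral of $\xi\mathcal{G}[\xi]\geq 0$, which is legitimate because this tail, unlike the $\xi_n$-tails, need not be uniform in $n$. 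For the bounded piece, the paper tests $G\in L^2(Q\times Q)$ against the weak convergence $\xi_n(x)\xi_n(y)\rightharpoonup\xi(x)\xi(y)$ in $L^2(Q\times Q)$ — again true, via product test functions, density, and the uniform $L^2$ bound, but stated without proof — whereas your Hilbert--Schmidt compactness argument, with the splitting $\langle\xi_n,T_U\xi_n\rangle=\langle\xi_n,T_U(\xi_n-\xi)\rangle+\langle\xi_n,T_U\xi\rangle$, reaches the same conclusion from the same kernel bound by standard functional analysis. The two routes have the same depth and rely on identical estimates; yours simply trades the paper's two unproved (though correct) assertions for textbook facts about compact operators, which is a reasonable way to handle the quadratic nature of $E$.
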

   \begin{proof}
  First, we observe, by the weak convergence $\xi_n\rightharpoonup \xi$ in $L^{2}(\mathbb{R}^{3})$, 
$$  \|\xi\|_{L^1\cap L^2}+\|r^2\xi\|_{1}\leq C\quad \mbox{for some }\, C>0.$$
We set a bounded domain
$$Q=Q_{A,R}=\{x\in\mathbb{R}^3\, |\,  \quad |z|<AR, \quad r <R\}.$$
  for $R\geq 1$ and $A\geq1$.
  Then, by \eqref{est_en_diff} of Lemma \ref{lem_en_diff}, we have
  \begin{equation*}\begin{split}
\left| E[\xi_n]-E[\xi]\right|
&\leq  \frac{1}{4\pi}\left| \int_{{  Q } }\int_{{  Q}  }  {G}(x,y)\Big(\xi_n(x)\xi_n(y)-\xi(x)\xi(y)\Big)\,dxdy \right|
\\&\quad \quad + 
 \int_{\mathbb{R}^{3}\setminus {Q }}\xi_n\mathcal{G}[\xi_n]\,dx 
+  \int_{\mathbb{R}^{3}\setminus {Q }}\xi\mathcal{G}[\xi]\,dx.
\end{split}\end{equation*}
Since  $\xi_n$  satisfies {the monotonicity }condition \eqref{cond_sym} for each $n\geq 1$, so does $\xi$. Thus
   we can estimate, by \eqref{est_stream_AR} of Lemma \ref{lem_stream_AR},
   \begin{equation*}
\begin{aligned}
&\int_{\mathbb{R}^{3}\backslash Q}\xi\mathcal{G}[\xi]\dd x 
\lesssim  \left(\frac{1}{\sqrt A}  +
 \frac{1}{R^2}   \right)\quad\mbox{and}\quad \sup_n\int_{\mathbb{R}^{3}\backslash Q}\xi_n\mathcal{G}[\xi_n]\dd x 
\lesssim \left(\frac{1}{\sqrt A}  +
 \frac{1}{R^2}   \right).
\end{aligned}
\end{equation*}
Since $G(x,y)\in L^{2}(Q\times Q)$ by Lemma \ref{lem_g_l_2}  and $\xi_n(x)\xi_n(y)\rightharpoonup\xi(x)\xi(y)$ in $L^{2}(Q\times Q)$, sending $n\to\infty$ and $A,R\to\infty$ imply  convergence of the energy.    \\

  \end{proof}

Now we are ready to prove Lemma \ref{lem_exist}.
\begin{proof}[Proof of Lemma \ref{lem_exist}]
Let $\{\xi_n\}\subset \mathcal{P}''_{\mu}$ be a   sequence  satisfying $E[\xi_n]\nearrow \mathcal{I}''_\mu$. By the Steiner symmetrization (Proposition \ref{prop_steiner} with $p=\infty$), we 
obtain the corresponding sequence $\{\xi_n^*\}$ in $\mathcal{P}''_\mu$ satisfying {the monotonicity }condition \eqref{cond_sym}.
 Since $\{\xi^*_n\}$ is uniformly bounded in $L^{2}(\mathbb{R}^3)$ by interpolation between $L^1$ and $L^\infty$, 
by choosing a subsequence (still denoted by $\{\xi^*_n\}$ for simplicity), there exists a non-negative axi-symmetric function $\xi\in L^{2}(\mathbb{R}^3)$ satisfying
$\xi_n^*\rightharpoonup \xi$ in $L^{2}$. 
Hence
  we can apply Lemma \ref{lem_energy_conv} for $\{\xi^*_n\}$ to get 
  \begin{align*}
\lim_{n\to \infty}E[\xi^*_n]=E[\xi].
\end{align*} 
  Since the weak-limit $\xi$ lies on $\mathcal{P}''_{\mu}$ and 
\begin{align*}
\mathcal{I}''_\mu\geq \lim_{n\to \infty}E[\xi^*_n]\geq \lim_{n\to \infty}E[\xi_n]
 =\mathcal{I}''_\mu,
\end{align*} we conclude 
  $\xi\in \mathcal{S}''_{\mu}$.  
\end{proof}

\subsection{Properties of the  set   of  maximizers}\ \\
 
Next we show $\mathcal{S}'_{\mu}=\mathcal{S}''_{\mu}$.
\begin{lem}\label{lem_iden_tilde} Let $\mu\in(0,\infty)$. Then
\begin{align*}
&\mathcal{S}'_{\mu}=\mathcal{S}''_{\mu}\neq \emptyset\quad\mbox{and}\quad
 \mathcal{I}'_{\mu}=\mathcal{I}''_{\mu}.
\end{align*}
\end{lem}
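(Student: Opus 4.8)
The plan is to reduce the whole statement to one claim: \emph{every maximizer of the relaxed problem $\mathcal{I}''_{\mu}$ saturates the impulse constraint.} From the inclusion $\mathcal{P}'_{\mu}\subset\mathcal{P}''_{\mu}$ recorded in \eqref{set_rel} we already have $\mathcal{I}'_{\mu}\le\mathcal{I}''_{\mu}$ (Lemma \ref{lem_nontrivial_extra}), and Lemma \ref{lem_exist} furnishes a maximizer $\xi\in\mathcal{S}''_{\mu}$. If I can show that any such $\xi$ obeys $\tfrac12\|r^2\xi\|_{1}=\mu$, then $\xi\in\mathcal{P}'_{\mu}$, so $\mathcal{I}''_{\mu}=E[\xi]\le\mathcal{I}'_{\mu}$; together with the reverse inequality this gives $\mathcal{I}'_{\mu}=\mathcal{I}''_{\mu}$, and comparing the two maximizing sets then yields $\mathcal{S}'_{\mu}=\mathcal{S}''_{\mu}\ne\emptyset$.

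The heart of the argument is thus the saturation claim, which I would prove by contradiction via a mass-preserving dilation. Suppose $\xi\in\mathcal{S}''_{\mu}$ has $\tfrac12\|r^2\xi\|_{1}<\mu$, and for $s>1$ set $\xi_s(x):=s^{-3}\xi(x/s)$. Since $\mathcal{G}$ is linear and the stream function scales as $\mathcal{G}[\xi(\cdot/s)]=s^{4}\,\mathcal{G}[\xi](\cdot/s)$ under a full dilation (the computation behind \eqref{defn_hill_gen_st}, here for general $\xi$), one gets $\mathcal{G}[\xi_s](x)=s\,\mathcal{G}[\xi](x/s)$, and hence the four scaling identities
\begin{align*}
\|\xi_s\|_{1}=\|\xi\|_{1},\qquad & \|\xi_s\|_{\infty}=s^{-3}\|\xi\|_{\infty},\\
\tfrac12\|r^2\xi_s\|_{1}=s^{2}\,\tfrac12\|r^2\xi\|_{1},\qquad & E[\xi_s]=s\,E[\xi].
\end{align*}
For $s>1$ sufficiently close to $1$ all constraints are preserved: $\xi_s$ stays non-negative and axi-symmetric, $\|\xi_s\|_{\infty}=s^{-3}\|\xi\|_{\infty}\le\lambda$ since $s^{-3}\le1$, the mass bound $\|\xi_s\|_{1}=\|\xi\|_{1}\le\nu$ is inherited verbatim, and $\tfrac12\|r^2\xi_s\|_{1}=s^{2}\cdot\tfrac12\|r^2\xi\|_{1}<\mu$ by the assumed strict inequality. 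Thus $\xi_s\in\mathcal{P}''_{\mu}$, while $E[\xi_s]=s\,E[\xi]>E[\xi]=\mathcal{I}''_{\mu}$ because $E[\xi]>0$ by Lemma \ref{lem_nontrivial_extra}. This contradicts maximality, so $\tfrac12\|r^2\xi\|_{1}=\mu$.

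The step I would flag as the crux, rather than as routine computation, is the \emph{choice} of dilation. A naive full dilation $\xi(\cdot/s)$ raises the energy but enlarges the mass, which may already sit at the bound $\|\xi\|_{1}=\nu$; the $L^{1}$-preserving normalization $s^{-3}\xi(\cdot/s)$ is tuned precisely so that it fixes the mass and even relaxes the pointwise bound $0\le\xi\le\lambda$, while still increasing the impulse (by $s^{2}$) and the energy (by $s$). Hence it can always be applied when the impulse is slack, regardless of whether the mass or amplitude constraints are active, which is exactly what makes the contradiction unconditional. Once saturation is in hand, the remaining bookkeeping is immediate: $\mathcal{S}''_{\mu}\subset\mathcal{S}'_{\mu}$ follows from saturation, $\mathcal{S}'_{\mu}\subset\mathcal{S}''_{\mu}$ follows from $\mathcal{P}'_{\mu}\subset\mathcal{P}''_{\mu}$ together with $\mathcal{I}'_{\mu}=\mathcal{I}''_{\mu}$, and nonemptiness is inherited from Lemma \ref{lem_exist}.
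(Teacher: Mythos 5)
Your proposal is correct, and it reaches the lemma by a genuinely different route than the paper. The paper also reduces everything to the saturation claim, but where you dilate, it translates: for $\xi\in\mathcal{S}''_{\mu}$ with slack impulse it uses the shift away from the axis, $\xi_\tau(r,z)=\frac{r-\tau}{r}\xi(r-\tau,z)$ for $r\geq\tau$, which preserves mass and the pointwise bound, increases the impulse continuously (so $\tau$ can be chosen to hit $\mu$ exactly, landing $\xi_\tau\in\mathcal{P}'_{\mu}$), and strictly increases the energy because the kernel profile $F$ in \eqref{form_F} is strictly decreasing, i.e.\ $G(r+\tau,z,r'+\tau,z')>G(r,z,r',z')$ as in \eqref{exploit}--\eqref{exploit2}; the contradiction is then $\mathcal{I}'_{\mu}\geq E[\xi_\tau]>\mathcal{I}''_{\mu}$ against Lemma \ref{lem_nontrivial_extra}. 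Your mass-preserving dilation $\xi_s=s^{-3}\xi(\cdot/s)$ instead exploits only the degree-one homogeneity of $G$ under simultaneous dilation (equivalently, your identities $\mathcal{G}[\xi_s]=s\,\mathcal{G}[\xi](\cdot/s)$ and $E[\xi_s]=sE[\xi]$, which are consistent with the paper's own scaling \eqref{scaling} with $\nu=1$, $\lambda=s^{3}$), needs no kernel monotonicity, and produces a contradiction entirely inside $\mathcal{P}''_{\mu}$, since $\xi_s\in\mathcal{P}''_{\mu}$ with $E[\xi_s]>\mathcal{I}''_{\mu}$. What the paper's choice buys is reuse: the same translation and strict kernel inequality are invoked again to prove Lemma \ref{lem_strict} ($\mathcal{I}_{\mu_0}<\mathcal{I}_{\mu}$). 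Your dilation would serve there as well --- taking $s=\sqrt{\mu/\mu_0}$ gives $\xi_s\in\mathcal{P}'_{\mu}$ and even the quantitative bound $\mathcal{I}_{\mu}\geq\sqrt{\mu/\mu_0}\,\mathcal{I}_{\mu_0}$ --- so your approach is, if anything, slightly more elementary and yields more explicit information, at the cost of not introducing the translation tool the paper wants later.
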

 \begin{proof}

Let us take any  $\xi\in \mathcal{S}''_{\mu}$ by recalling
$\mathcal{S}''_{\mu}\neq \emptyset$ from Lemma \ref{lem_exist}. We claim  
\begin{equation}\label{claim_iden_tilde}
  \xi\in \mathcal{P}'_{\mu}.
\end{equation}
 For a contradiction,  
we suppose 
\begin{equation*}\label{assum1} 
\mu>\frac 1 2 \int r^2 \xi dx=:\mu_0,\end{equation*} i.e. we assume $\xi \in \mathcal{P}''_\mu\setminus \mathcal{P}'_\mu$.   
From $\mathcal{I}''_\mu>0$ by Lemma \ref{lem_nontrivial_extra}, we have 
$\xi\nequiv 0$ so 
$\mu_0>0$. 
 If we define
 the (relative) translation ${\xi_\tau}$ of $\xi$ away from $z-$axis  for $\tau>0$
  by
\begin{equation}\label{tau_trans}
 \xi_\tau(r,z)=\begin{cases}&\frac{r-\tau}{r}\xi(r-\tau,z)\quad\mbox{for}\quad r\geq\tau,\\
&0\quad\mbox{for}\quad 0< r<\tau,
\end{cases}\end{equation} then we have
 \begin{equation}\begin{split}\label{trans_comp} 
 &0\leq \xi_\tau\leq \sup_{r\geq \tau}\left(\frac{r-\tau}{r}\right)\cdot\|\xi\|_\infty\leq 1,\\
 &
\| \xi_\tau\|_1=2\pi\int r \xi_\tau drdz=
2\pi\int_{r\geq \tau} (r-\tau) \xi(r-\tau,z) drdz
=2\pi\int_{\Pi} r \xi drdz=\|\xi\|_1\leq 1,\\
&\frac 1 2 \int r^2\xi_\tau\,dx
=\pi   \int_{r\geq\tau} (r-\tau) r^2  \xi(r-\tau,z) drdz=\pi   \int_{\Pi}   (r+\tau)^2 r \xi drdz
=\mu_0+\frac 1 2    \int  (2\tau r+\tau^2)  \xi dx.
\end{split}\end{equation} Thus
 we can take some constant $\tau>0$ such that 
  $ \frac 1 2 \int r^2\xi_\tau\,dx=\mu,$ 
i.e.  we have $ \xi_\tau\in\mathcal{P}'_\mu$. On the other hand, we observe $$E[ \xi_\tau]> E[\xi]$$  by exploiting  the form \eqref{form_F} of the kernel $G$.
 Indeed, we observe, for $(r,z),(r',z')\in\Pi$,
 \begin{equation}\begin{split}\label{exploit} 
 G(r+\tau,z,r'+\tau,z') &=
 \frac{\sqrt{(r+\tau)(r'+\tau)}}{2\pi}\,F\left(\frac{(r-r')^2+(z-z')^2}{(r+\tau)(r'+\tau)}\right)\\
 &
 > \frac{\sqrt{rr'}}{2\pi}\,F\left(\frac{(r-r')^2+(z-z')^2}{rr'}\right)=G(r,z,r',z')
 \end{split}\end{equation} because
$F(\cdot)$ is strictly decreasing  (see Subsection \ref{subsec_axi-sym}).  Thus, we have
 \begin{equation}\begin{split}\label{exploit2}&\frac 1 \pi E[ \xi_\tau]
 =\iint rr'G(r,z,r',z')\xi_\tau(r',z')\xi_\tau(r,z)dr'dz'drdz\\
 &
 =\iint_{r>\tau,\, r'>\tau}  (r-\tau)(r'-\tau)G(r,z,r',z')\xi (r'-\tau,z') \xi (r-\tau,z)dr'dz'drdz\\
  &
 =\iint rr'G(r+\tau,z,r'+\tau,z')\xi (r',z')\xi (r,z)dr'dz'drdz\\
  & >\iint rr'G(r,z,r',z')\xi(r',z')\xi(r,z)dr'dz'drdz=\frac 1 \pi E[ \xi],
\end{split}\end{equation} where the last inequality comes from
 \eqref{exploit} and non-triviality of $\xi\geq 0$. 
Hence we get 
$$ \mathcal{I}'_\mu\geq E[ {\xi}_\tau]>E[\xi]=\mathcal{I}''_\mu,$$ which contradicts to
   $\mathcal{I}'_{\mu}\leq  \mathcal{I}''_{\mu}$ obtained from Lemma \ref{lem_nontrivial_extra}. Hence  $\mu_0=\mu$ so we get
the claim \eqref{claim_iden_tilde}.   
 Since
 $\xi\in\mathcal{S}''_{\mu}\cap \mathcal{P}'_{\mu}$ implies
  $$\mathcal{I}'_{\mu}\leq  \mathcal{I}''_{\mu}=E[\xi]\leq  \mathcal{I}'_{\mu},$$ we get
 $  \mathcal{I}'_{\mu}=\mathcal{I}''_{\mu}$ and  $\xi \in \mathcal{S}'_{\mu}$.
 In sum, we have shown  $   \mathcal{S}''_{\mu}\subset \mathcal{S}'_{\mu}$.
Due to 
$ \mathcal{P}'_{\mu} \subset \mathcal{P}''_{\mu}$ and 
$  \mathcal{I}'_{\mu}=\mathcal{I}''_{\mu}$, we get 
  $   \mathcal{S}'_{\mu}= \mathcal{S}''_{\mu}.$\ \\
 
 \end{proof}

 \color{black}
Now we show $\mathcal{S}_{\mu}=\mathcal{S}'_{\mu}$.
\begin{lem}\label{key_lemma} Let $\mu\in(0,\infty)$. Then
\begin{align*}
&\mathcal{S}_{\mu}=\mathcal{S}'_{\mu}\neq \emptyset\quad\mbox{and}\quad
 {\mathcal{I}}_{\mu}=\mathcal{I}'_{\mu}.
\end{align*}

\end{lem}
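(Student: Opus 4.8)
The plan is to reduce everything to a single claim: \emph{every} maximizer of $\mathcal{I}'_\mu$ is patch-type. Indeed, the inclusion ${\mathcal{P}}_\mu\subset \mathcal{P}'_\mu$ from \eqref{set_rel} gives immediately ${\mathcal{I}}_\mu\leq \mathcal{I}'_\mu$, and the set $\mathcal{S}'_\mu$ is nonempty by Lemma \ref{lem_iden_tilde}. Once we know that any $\xi^\ast\in\mathcal{S}'_\mu$ has the form $\xi^\ast=1_A$ for some axi-symmetric $A$ (so that $\xi^\ast\in{\mathcal{P}}_\mu$), we get ${\mathcal{I}}_\mu\geq E[\xi^\ast]=\mathcal{I}'_\mu$, hence ${\mathcal{I}}_\mu=\mathcal{I}'_\mu$ and $\xi^\ast\in{\mathcal{S}}_\mu$; this yields $\mathcal{S}'_\mu\subset{\mathcal{S}}_\mu$. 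The reverse inclusion ${\mathcal{S}}_\mu\subset\mathcal{S}'_\mu$ is automatic, since any $\xi\in{\mathcal{S}}_\mu\subset\mathcal{P}'_\mu$ attains the value ${\mathcal{I}}_\mu=\mathcal{I}'_\mu$. So the whole lemma follows from the patch-type claim.

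To prove the claim I would argue by contradiction using an energy-increasing admissible perturbation. Suppose $\xi^\ast\in\mathcal{S}'_\mu$ is not patch-type, so $\{0<\xi^\ast<1\}$ has positive measure; then $S_\epsilon:=\{\epsilon\leq \xi^\ast\leq 1-\epsilon\}$ has positive (and, since $\xi^\ast\in L^1\cap L^1_w$, finite) measure for some $\epsilon>0$. Partition $S_\epsilon$ into three disjoint axi-symmetric sets $S_1,S_2,S_3$ of positive measure, cut out by slabs in the $z$-variable. Choosing a nonzero vector $(c_1,c_2,c_3)$ solving the two homogeneous linear conditions $\int r^2\eta\,dx=0$ and $\int \eta\,dx=0$ for $\eta:=\sum_{i}c_i 1_{S_i}$ (a nontrivial solution exists because three unknowns are constrained by only two equations), we obtain an axi-symmetric $\eta\in(L^1_w\cap L^\infty\cap L^1)(\mathbb{R}^3)$, $\eta\not\equiv 0$, supported where $\xi^\ast$ is bounded away from $0$ and $1$. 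Consequently, for small $s>0$ both $\xi^\ast\pm s\eta$ lie in $\mathcal{P}'_\mu$: the pointwise bounds $0\leq \xi^\ast\pm s\eta\leq 1$ hold on $S_\epsilon$ and trivially off it, the impulse is preserved by $\int r^2\eta\,dx=0$, and the mass is preserved by $\int\eta\,dx=0$ (keeping $\|\xi^\ast\pm s\eta\|_1\leq\nu$). Expanding the quadratic functional $E$ gives
\[
E[\xi^\ast+s\eta]+E[\xi^\ast-s\eta]=2E[\xi^\ast]+2s^2E[\eta].
\]
By Lemma \ref{lem_en_iden_origin} applied to the signed function $\eta$, we have $E[\eta]=\tfrac12\|\mathcal{K}[\eta]\|_2^2>0$, since $\mathcal{K}[\eta]=0$ would force $\eta=-r^{-2}\mathcal{L}\mathcal{G}[\eta]=0$. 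Hence $\max\{E[\xi^\ast+s\eta],E[\xi^\ast-s\eta]\}>E[\xi^\ast]=\mathcal{I}'_\mu$, contradicting maximality of $\xi^\ast$ over $\mathcal{P}'_\mu$. Therefore $\{0<\xi^\ast<1\}$ is null and $\xi^\ast=1_A$ is patch-type.

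The main obstacle is the simultaneous handling of \emph{both} constraints while strictly gaining energy. The impulse constraint is an equality, so it is always active and must be respected exactly; the mass constraint is only an inequality, but to allow the symmetric perturbations $\xi^\ast\pm s\eta$ one must also neutralize its first variation, forcing $\int\eta\,dx=0$ as well. Using three pieces $S_1,S_2,S_3$ rather than two is precisely what supplies the extra degree of freedom needed to satisfy the two linear conditions with $\eta\not\equiv0$, and the symmetric $\pm$ combination, together with the strict positivity of the energy form $E[\eta]=\tfrac12\|\mathcal{K}[\eta]\|_2^2$, lets us conclude an energy increase without ever computing the sign of the first variation $\int\eta\,\mathcal{G}[\xi^\ast]\,dx$. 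This is the technical heart of the argument; the remaining bookkeeping (finiteness of $|S_\epsilon|$, measurability and axi-symmetry of the slab partition, and the admissibility checks) is routine.
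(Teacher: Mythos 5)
Your proof is correct, but it takes a genuinely different route from the paper's. The paper runs a \emph{first-variation} (Lagrange multiplier) argument: it builds a two-function basis $h_1,h_2$ supported in $\{\delta_0\leq\xi\leq 1-\delta_0\}$, projects an arbitrary admissible perturbation $h$ onto the mass and impulse constraints, lets $\epsilon\searrow 0$ in $E[\xi+\epsilon\eta]\leq E[\xi]$ to obtain pointwise conditions on an adjusted stream function $\tilde{\psi}=\psi-\tfrac12\alpha r^2-\beta$ (in particular $\tilde{\psi}=0$ a.e.\ on the intermediate set), and then uses $\psi\in H^2_{loc}(\Pi)$ together with $-r^{-2}\mathcal{L}\tilde{\psi}=\xi$ to force $\xi=0$ a.e.\ on $\{\tilde{\psi}=0\}$, a contradiction. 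You instead run a \emph{second-variation} (strict convexity) argument: a single zero-mass, zero-impulse perturbation $\eta$ supported in $\{\epsilon\leq\xi^\ast\leq 1-\epsilon\}$, the exact quadratic identity $E[\xi^\ast+s\eta]+E[\xi^\ast-s\eta]=2E[\xi^\ast]+2s^2E[\eta]$, and the strict positivity $E[\eta]=\tfrac12\|\mathcal{K}[\eta]\|_2^2>0$ together show a maximizer cannot charge $\{0<\xi^\ast<1\}$. Your key devices check out: the three-slab construction gives a nontrivial solution of the two homogeneous linear constraints; the symmetric $\pm s\eta$ combination correctly neutralizes the first variation (so you never need its sign, nor the sign bookkeeping on $\{0\leq\xi<\delta_0\}$ that the paper performs); and Lemma \ref{lem_en_iden_origin} does apply to the signed function $\eta$, since its hypotheses and proof nowhere use nonnegativity, so $\mathcal{K}[\eta]=0$ indeed forces $\nabla\mathcal{G}[\eta]=0$ a.e., hence $\eta=-r^{-2}\mathcal{L}\mathcal{G}[\eta]=0$. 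The trade-off: the paper's longer computation is precisely the template that Proposition \ref{prop_exist_vortex} in Section \ref{sec_exist_vortex} later refines (with exceptional points replacing the intermediate set) to identify every maximizer as $1_{\{\Psi>0\}}$, i.e.\ it yields the Euler--Lagrange structure, whereas your convexity trick proves patch-type only and gives no stream-function level-set information; conversely, your argument is shorter, more elementary, and isolates exactly where ellipticity enters (only in the injectivity step $\mathcal{K}[\eta]=0\Rightarrow\eta=0$).
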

\begin{proof}
Let us  take any $\xi\in \mathcal{S}'_{\mu}$ by recalling $ \mathcal{S}'_{\mu}\neq \emptyset$ by Lemma \ref{lem_iden_tilde}.
In order to show $\xi\in{\mathcal{S}}_{\mu} $,
we first show $\xi\in{\mathcal{P}}_{\mu}$. In other words, we claim
\begin{equation}\label{claim_key_lem}
|\{x\in\mathbb{R}^3\,|\, 0<\xi<1\}|=0.
\end{equation} 
For a contradiction, let us suppose 
$|\{0<\xi<1\}|>0.$
Then there exists $\delta_0>0$ such that
\begin{equation}\label{contra_nontrivial}
|\{\delta_0\leq\xi\leq 1-\delta_0\}|>0. 
\end{equation}
First, we take axi-symmetric compactly supported functions $h_1,h_2\in L^{\infty}(\mathbb{R}^{3})$ such that for $i=1,2$, $$\textrm{spt}\ h_i\subset \{\delta_0\leq \xi\leq 1-\delta_0\},$$
\begin{align*}
{\int}h_1(x)\dd x=1,\quad \frac 1 2{\int}r^2h_1(x)\dd x=0,\\
{\int}h_2(x)\dd x=0,\quad \frac  1 2 {\int}r^2h_2(x)\dd x=1.
\end{align*}
We may consider them as  a basis for our two constraints(mass, impulse) problem. 
Let $h\in L^{\infty}(\mathbb{R}^{3})$ be   any axi-symmetric compactly supported function  such that 
$$\textrm{spt}\ h \subset \{ \xi\leq 1-\delta_0\}\quad\mbox{and}\quad h\geq 0\quad \mbox{on}\quad \{0\leq \xi< \delta_0\}.$$ We set
\begin{align}\label{defn_eta}
\eta:=h-\left({\int}h \dd x  \right)h_1-\left(\frac 1 2{\int}r^2 h \dd x \right)h_2
\end{align}
so that
$\eta\in (L^1_w\cap L^\infty\cap L^1)(\mathbb{R}^3)$ is axi-symmetric,
 $\int \eta\dd x=0$, and $\frac 1 2\int r^2\eta\dd x=0$.  
We consider 
$(\xi+\epsilon\eta)$ for $\epsilon>0$. Thanks to the above construction of $\eta$, we know 
$$(\xi+\epsilon\eta)\in L^\infty(\mathbb{R}^3),\quad \frac 1 2 \int r^2 (\xi+\epsilon\eta) dx=\frac 1 2 \int r^2 \xi dx= \mu,\quad \int (\xi+\epsilon\eta) dx=\int \xi dx\leq 1.$$ We claim $$0\leq (\xi+\epsilon\eta)\leq  1\quad\mbox{for small}\quad \epsilon>0.$$
We observe  
$(\xi+\epsilon\eta) =\xi$ on the set  $\{\xi>1-\delta_0\}$ while
 we have,  for sufficiently small $\epsilon>0$, on the set $\{\delta_0\leq\xi\leq 1-\delta_0\}$, 
$$1\geq 1-\delta_0+\epsilon\|\eta\|_\infty\geq (\xi+\epsilon\eta)\geq \delta_0-\epsilon||\eta||_{\infty}\geq 0.$$ 
On the remainder set $\{0\leq \xi< \delta_0\}$, due to $\eta=h\geq 0$, 
 we get $(\xi+\epsilon\eta)\geq 0$ and, for small $\epsilon>0$, 
 we know  $(\xi+\epsilon\eta)\leq 1$. Hence we have shown  $$(\xi+\epsilon\eta)\in \mathcal{P}'_{\mu}\quad \mbox{for small}\quad \epsilon>0.$$ 
By the assumption $\xi\in \mathcal{S}'_{\mu}$,
we have,  for  small $\epsilon>0$,
$$0\geq  \frac{E[\xi+\epsilon\eta]-E[\xi]}{\epsilon}.$$ By taking the limit $\epsilon\searrow 0,$  we have, for $\psi=\mathcal{G}[{\xi}],$
\begin{align*}
0\geq  \frac 1 2 \int \mathcal{G}[\xi]\eta\,dx +\frac 1 2 \int \xi\mathcal{G}[\eta]\,dx=
\frac 1 2 \int\psi\eta\,dx +\frac 1 2 \int \mathcal{G}[\xi]\eta\,dx=  {\int}\psi \eta\dd x,
\end{align*} 
where we used the symmetry of the kernel $G$.
On the other hand, by the definition of $\eta$,  we have
\begin{align*}
 {\int}\psi \eta\dd x= {\int}\psi h\dd x - \left({\int}\psi h_1\dd x\right)\left({\int} h\dd x\right)-\left({\int}\psi h_2\dd x\right)\left(\frac 1 2{\int}r^2 h\dd x\right).
\end{align*}
By denoting $\beta:=\left({\int}\psi h_1\dd x\right)$, $\alpha:=\left({\int}\psi h_2\dd x\right)$ and $\tilde{\psi}:=\psi-\frac 1 2\alpha r^2-\beta$, we obtain
\begin{align*}
0&\geq {\int}\psi h\dd x-\beta\left({\int} h\dd x\right)-\alpha\left(\frac 1 2{\int}r^2 h\dd x\right)
=
{\int}
\tilde{\psi} h\dd x 
=\int_{0\leq \xi< \delta_0}\tilde{\psi} h\dd x+\int_{1-\delta_0\geq \xi\geq  \delta_0}\tilde{\psi} h\dd x,
\end{align*} where the decomposition is due to the assumption 
$\textrm{spt}\ h \subset \{ \xi\leq 1-\delta_0\}$.
Since $h$ is an arbitrary function satisfying $h\geq 0$ on the set $\{0\leq \xi<\delta_0\}$, we have
\begin{equation}\label{contra}
\begin{aligned}
\tilde{\psi}&=0\quad a.e.\quad  \textrm{on}\quad 
\{\delta_0\leq\xi\leq 1-\delta_0\},\\
\tilde{\psi} &\leq 0\quad a.e.\quad  \textrm{on}\quad \{0\leq \xi< \delta_0\}. 
\end{aligned}
\end{equation}
On the other hand, since 
$\psi\in H^2_{loc}(\Pi)$ by Lemma \ref{lem_en_iden_origin}, we have
$\tilde{\psi}\in H^2_{loc}$, which implies, on the set $\{\tilde{\psi}=0\}$,  $$\nabla \tilde{\psi}=0\quad a.e. 
\quad \mbox{and}\quad -\frac{1}{r^2}\mathcal L\tilde{\psi}=0\quad a.e.$$
Since $-{r^{-2}}\mathcal L\tilde{\psi}=-{r^{-2}}\mathcal L\psi=\xi$, we get $$\xi =0 \quad a.e.\quad \mbox{on}\quad \{\tilde{\psi}=0\}.$$ 
Thus, by \eqref{contra}, we  get
$$\xi =0 \quad a.e.\quad\mbox{on}  \quad 
\{\delta_0\leq\xi\leq 1-\delta_0\}.$$ It 
 contradicts the assumption \eqref{contra_nontrivial}. Hence we get the claim 
\eqref{claim_key_lem}, 
  which implies 
$\xi\in{\mathcal{P}}_{\mu}$.
 Due to 
$\xi\in\mathcal{S}'_{\mu}$, we get $\xi\in{\mathcal{S}}_{\mu}$.
In sum, we have shown 
$\mathcal{S}'_{\mu}\subset \mathcal{S}_{\mu}$. 
As in the last part of the proof of Lemma \ref{lem_iden_tilde}, 
 we conclude
 $ {\mathcal{I}}_{\mu}= \mathcal{I}'_{\mu}$  and 
$ {\mathcal{S}}_{\mu}= \mathcal{S}'_{\mu}.$ 
 
\end{proof}

Now we are ready to prove Theorem \ref{thm_exist_max}.

\begin{proof}[Proof of Theorem \ref{thm_exist_max}]
By  Lemmas \ref{lem_nontrivial_extra}, \ref{lem_iden_tilde} and \ref{key_lemma}, and the scaling argument \eqref{scaling},  we get the theorem. 
   
   \end{proof}

Before closing this section, for a later use, we show that ${\mathcal{I}}_{\mu}$ is strictly increasing in the variable $\mu>0$.
\begin{lem}
\label{lem_strict}
 Let $0<\mu_0<\mu<\infty$. Then
\begin{align*}
&{\mathcal{I}}_{\mu_0}<{\mathcal{I}}_{\mu}.
\end{align*}
\end{lem}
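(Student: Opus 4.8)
The plan is to exploit the relative translation $\xi_\tau$ from \eqref{tau_trans} exactly as in the proof of Lemma~\ref{lem_iden_tilde}, but now used to \emph{raise} the impulse rather than to correct it. First I would invoke Theorem~\ref{thm_exist_max} to pick a maximizer $\xi\in\mathcal{S}_{\mu_0}=\mathcal{S}'_{\mu_0}$, so that $0\le\xi\le1$, $\tfrac12\|r^2\xi\|_1=\mu_0$, $\|\xi\|_1\le1$, and $E[\xi]=\mathcal{I}_{\mu_0}$. Since $\mathcal{I}_{\mu_0}>0$ by Lemma~\ref{lem_nontrivial_extra}, the maximizer is non-trivial, $\xi\not\equiv0$, which is the only structural fact I will need.

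Next I would track the impulse of $\xi_\tau$ as a function of $\tau\ge0$. By the third line of \eqref{trans_comp},
$$\tfrac12\int r^2\xi_\tau\,dx=\mu_0+\tau\int r\xi\,dx+\tfrac{\tau^2}{2}\|\xi\|_1,$$
which is a continuous, strictly increasing function of $\tau$ on $[0,\infty)$ (both coefficients are positive because $\xi\ge0$ is non-trivial), equals $\mu_0$ at $\tau=0$, and tends to $+\infty$. Hence there is a unique $\tau_\ast>0$ with $\tfrac12\int r^2\xi_{\tau_\ast}\,dx=\mu$. The remaining lines of \eqref{trans_comp} give $0\le\xi_{\tau_\ast}\le1$ and $\|\xi_{\tau_\ast}\|_1=\|\xi\|_1\le1$, so that $\xi_{\tau_\ast}\in\mathcal{P}'_\mu$.

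Finally I would compare energies. The computation \eqref{exploit2}, which rests only on the strict monotonicity of $F$ together with $\xi\ge0$ non-trivial, shows $E[\xi_{\tau_\ast}]>E[\xi]$ for any $\tau_\ast>0$. Combining this with $\mathcal{I}_\mu=\mathcal{I}'_\mu$ from Theorem~\ref{thm_exist_max} yields $\mathcal{I}_\mu=\mathcal{I}'_\mu\ge E[\xi_{\tau_\ast}]>E[\xi]=\mathcal{I}_{\mu_0}$, as claimed. There is no genuine obstacle here: the whole argument is a one-shot application of the machinery already built for Lemma~\ref{lem_iden_tilde}, and the only point deserving a word of care is the solvability of $\tfrac12\int r^2\xi_\tau\,dx=\mu$, which follows from the monotonicity and surjectivity onto $[\mu_0,\infty)$ of the explicit quadratic in $\tau$ displayed above.
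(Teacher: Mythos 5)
Your proposal is correct and follows essentially the same route as the paper: take a maximizer $\xi\in\mathcal{S}_{\mu_0}$ via Theorem \ref{thm_exist_max}, push its impulse up to $\mu$ with the relative translation \eqref{tau_trans} using the computation \eqref{trans_comp}, invoke the strict energy increase \eqref{exploit2}, and conclude via $\mathcal{I}_\mu=\mathcal{I}'_\mu$. The only difference is that you spell out the solvability of $\tfrac12\int r^2\xi_\tau\,dx=\mu$ (continuity, strict monotonicity, and surjectivity of the quadratic in $\tau$), which the paper leaves implicit.
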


\begin{proof}
 
We take any function $\xi\in {\mathcal{S}}_{\mu_0}$ by recalling $ {\mathcal{S}}_{\mu_0}\neq\emptyset$ 
from Theorem \ref{thm_exist_max}.
 We can find some $\tau>0$ such that  the relative translation ${\xi_\tau}$ of $\xi$ 
 defined by \eqref{tau_trans}
lies on ${\mathcal{P}}'_\mu$ as in the computation
\eqref{trans_comp}. We note $E[\xi_\tau]>E[\xi]$ by 
 \eqref{exploit2}. Hence we get
$${\mathcal{I}}_{\mu_0}
=E[\xi]<E[\xi_\tau]\leq {\mathcal{I}}'_\mu={\mathcal{I}}_\mu.$$\end{proof} 
\color{black}
\section{Steady vortex rings from  maximizers}\label{sec_exist_vortex}

In this section, our goal is to show Theorem
\ref{thm_max_is_ring} below, which is needed when proving 
Theorem \ref{thm_cpt} in Section \ref{sec_cpt} and
Theorem \ref{thm_uniq} in Section 
\ref{sec_uniq_proof_hill}.

\subsection{Every maximizer produces a vortex ring.}\ \\
 \begin{thm}\label{thm_max_is_ring}
 For $0<\mu,\nu,\lambda<\infty$, each element $\xi$ of the set ${\mathcal{S}}_{\mu,\nu,\lambda}$ satisfies

\begin{equation}\label{eq_W}
\begin{aligned}
&\xi=\lambda{{1}}_{ \{\Psi>0\}}\quad\mbox{a.e.} 
\quad\mbox{for}\quad \Psi= \mathcal{G}[\xi]-\frac{1}{2}Wr^2-\gamma
\end{aligned}
\end{equation} 
for some constants $$W>0,\quad \gamma\geq 0,$$ which are uniquely determined by $\xi$. Moreover,
$\xi$ is compactly supported in $\mathbb{R}^3$.
\end{thm}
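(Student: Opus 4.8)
The plan is to run the Lagrange-multiplier argument of Lemma \ref{key_lemma}, with the metrical boundary playing the role that the intermediate level set $\{\delta_0\le\xi\le 1-\delta_0\}$ plays there. First I would invoke the scaling \eqref{scaling} to reduce to $\lambda=\nu=1$, so that $\xi=1_A$ for a measurable axi-symmetric $A$ with $|A|\le 1$; by Theorem \ref{thm_exist_max} the same $\xi$ maximizes $E$ over the wider class $\mathcal{P}'_{\mu}$ of functions with $0\le\zeta\le 1$, which is exactly the room needed to add one-sided $L^\infty$-perturbations (decreasing $\xi$ on $A$, increasing it on $A^c$). Writing $\psi=\mathcal{G}[\xi]$, continuous on $\overline\Pi$ by Lemma \ref{lem_en_iden_origin}, the goal is to produce constants $W,\gamma$ and the sign relations $\Psi\ge0$ a.e.\ on $A$ and $\Psi\le0$ a.e.\ on $A^c$, where $\Psi=\psi-\tfrac12Wr^2-\gamma$.

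The crux, and the step I expect to be hardest, is manufacturing the two moment-correcting test functions $h_1,h_2$ (with $\int h_1=1,\ \int r^2h_1=0$ and $\int h_2=0,\ \tfrac12\int r^2h_2=1$) when $\xi$ is a patch and hence no intermediate level set is available. Here I would use Definition \ref{defn_excep} and Lemma \ref{lem_exist_one}: in every neighborhood of an exceptional point $p$ both $A$ and $A^c$ carry positive measure, so one can build admissible perturbations approximating $+\delta_p$ (a positive bump drawn from $A^c$) and $-\delta_p$ (a negative bump drawn from $A$); continuity of $\psi$ makes these test against $\psi$, $1$ and $\tfrac12r^2$ with limits $\pm\psi(p)$, $\pm1$, $\pm\tfrac12 r_p^2$. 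Choosing two exceptional points $p_1,p_2$ with $r_{p_1}\ne r_{p_2}$—available because $A$ has positive finite measure spread over a range of radii—I can solve for $h_1,h_2$ as signed combinations of such bumps, the relevant $2\times2$ moment matrix being invertible precisely when $r_{p_1}^2\ne r_{p_2}^2$. For an arbitrary axi-symmetric $h$ with $\mathrm{spt}\,h\subset\{\xi\le 1-\delta_0\}$ and $h\ge0$ on $A^c$, the combination $\eta=h-(\int h)h_1-(\tfrac12\int r^2h)h_2$ then preserves mass and impulse to first order and keeps $\xi+\epsilon\eta$ admissible for small $\epsilon$, exactly as in Lemma \ref{key_lemma}.

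Granting this, maximality yields $\int\psi\eta\le0$; reading off $\gamma=\int\psi h_1$ and $W=\int\psi h_2$ and letting $h$ vary gives $\Psi\le0$ a.e.\ on $A^c$ and, from the one-sidedness on $A$, $\Psi\ge0$ a.e.\ on $A$. To sharpen the inclusion $A\supset\{\Psi\ge0\}$ to the stated identity I would reuse the elliptic observation of Lemma \ref{key_lemma}: since $\mathcal{L}(\tfrac12Wr^2)=0$ one has $-r^{-2}\mathcal{L}\Psi=\xi$, while $\Psi\in H^2_{loc}$ forces $\nabla\Psi=0$ and $\mathcal{L}\Psi=0$ a.e.\ on $\{\Psi=0\}$, hence $\xi=0$ a.e.\ there; therefore $|\{\Psi=0\}\cap A|=0$ and $\xi=1_{\{\Psi>0\}}$ a.e., which is \eqref{eq_W}.

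It remains to fix the signs, the support, and uniqueness. For $\gamma\ge0$: since $|A|\le1<\infty$, for every $\epsilon>0$ the infinite-measure slab $\{0<r<\epsilon\}$ meets $A^c$ in positive measure, and on such points $\Psi\le0$ together with $\psi\to0$ (estimate \eqref{est_psi}) and $r^2\to0$ forces $-\gamma\le0$. For $W>0$: $W<0$ is impossible because then $\Psi\to+\infty$ as $r\to\infty$, making $\{\Psi>0\}$ of infinite measure, and $W=0,\ \gamma=0$ is impossible since it would force $A=\{\psi>0\}$, which has infinite measure because $\psi>0$ throughout $\Pi$; the remaining borderline $W=0,\ \gamma>0$ is excluded by the positive traveling-speed lower bound of Rellich--Pohozaev type (cf.\ \cite{FT81}, \cite[p42]{FB74}). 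Compact support of $\xi$ then follows since, by \eqref{est_psi}, $\psi$ grows strictly slower than $r^2$, so $\{\psi>\tfrac12Wr^2+\gamma\}$ is bounded. Finally, for uniqueness: for any admissible pair $(W,\gamma)$ the corresponding $\Psi$ vanishes by continuity at every exceptional point of the now-determined set $A$, and two such points with distinct $r^2$ pin down $(W,\gamma)$ through the same nondegenerate $2\times2$ system, so the pair is unique.
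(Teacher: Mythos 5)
Your route is essentially the paper's own: scaling to $\lambda=\nu=1$, the enlarged class $\mathcal{P}'_\mu$ from Theorem \ref{thm_exist_max}, Dirac-approximating bumps at two exceptional points with distinct radii to build the multipliers, the $H^2_{loc}$ argument on $\{\Psi=0\}$ to upgrade the two a.e.\ inclusions to $\xi=1_{\{\Psi>0\}}$, and the $2\times2$ system for uniqueness. (One point you gloss over: for arbitrary $h$ satisfying \eqref{cond_sign_h}, the perturbation $\eta=h-(\int h\,dx)h_1-(\tfrac12\int r^2h\,dx)h_2$ keeps the required sign on $A$ and $A^c$ only if $h_1,h_2$ are chosen among sign-variants of the bump combinations according to the signs of $\int h\,dx$ and $\tfrac12\int r^2h\,dx$; this is the paper's case analysis (I)--(IV) in \eqref{defn_eta_n}, and your bumps of both signs at both points are exactly what makes it possible.) However, two steps are genuinely incomplete, and the first is a real gap.

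Compact support: from \eqref{est_psi} you get $|\psi|\lesssim r$, hence $A=\{\psi>\tfrac12Wr^2+\gamma\}\subset\{r<R_0\}$, which bounds the core only in the $r$-direction, not in $z$. If $\gamma>0$ one can finish with the uniform decay \eqref{psi_conv} of Lemma \ref{lem_en_iden_origin}, since then $A\subset\{\psi\ge\gamma\}$ is bounded; but if $\gamma=0$ --- precisely the case realized by the Hill's vortex itself, by Theorem \ref{thm_uniq} --- the bound $|\psi|\lesssim r$ is perfectly compatible with $\psi>\tfrac12Wr^2$ at points with $r\to0$, $|z|\to\infty$, so nothing you wrote prevents the core from escaping to infinity along the axis. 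Closing this requires the strictly stronger statement $\psi/r^2\to0$ as $|(r,z)|\to\infty$, which is the paper's Lemma \ref{lem_psi_hol_conti} (see \eqref{decay_frac}); its proof is a separate potential-theoretic argument that uses the $r$-boundedness you already have (to get $\omega=r\xi e_\theta\in(L^1\cap L^\infty)(\mathbb{R}^3)$), then $BUC^{1+\alpha}$ regularity of $\psi/r$ and a weighted Hardy inequality, and only then does $\psi/r^2\ge W/2$ on $A$ force $A$ bounded (Proposition \ref{prop_cpt_supp}). Second, your exclusion of the borderline case $W=0$, $\gamma>0$ by citing a ``Rellich--Pohozaev type'' bound from \cite{FT81}, \cite{FB74} cannot be used as a black box: in \cite{FT81} that bound is proved for the particular maximizer produced by penalization, whereas here it is needed for \emph{every} maximizer. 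The argument must be run: $\gamma>0$ makes $A\subset\{\psi\ge\gamma\}$ bounded by \eqref{psi_conv}, hence $\xi$ is compactly supported, and then the virial identity \eqref{iden_en2} of Lemma \ref{lem_3.1_FT}, with the integration by parts justified, gives $E[\xi]=-3\int F(\Psi)\,dx\le0$, contradicting $E[\xi]=\mathcal{I}_\mu>0$; this is Proposition \ref{prop_en_W}. With these two repairs your proof coincides with the paper's.
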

 
 \begin{rem}
 As a consequence of \eqref{eq_W}, we obtain a steady vortex ring $\xi$ since we have \eqref{jac} for the choice of
$f(s)=\lambda f_H(s)=\lambda 1_{\{s>0\}}$. In other words, 
 the function $\xi(x-tWe_z)$ is an exact solution of  \eqref{3d_Euler_eq}.
 \end{rem}


We split the proof  into 3 steps. 
First,  Proposition \ref{prop_exist_vortex} shows the existence (and uniqueness ) of such a pair of constants $W\geq 0$ and $\gamma\geq 0$ for each maximizer $\xi$. 
Second, Proposition \ref{prop_en_W} proves that the constant $W$ is positive.
Lastly, Proposition \ref{prop_cpt_supp} gives compactness of the  (essential) support of $\xi$.

\subsection{Exceptional points of a measurable set}\label{sec5_step1}\ \\

As the first step, we prove that \eqref{eq_W} holds for some unique non-negative $W,\gamma$:
 \begin{prop}\label{prop_exist_vortex}
 For $0<\mu,\nu,\lambda<\infty$, each element $\xi$ of the set ${\mathcal{S}}_{\mu,\nu,\lambda}$ satisfies

\begin{equation}\label{eq_W_lem}
\begin{aligned}
&\xi=\lambda{{1}}_{ \{\Psi>0\}}\quad\mbox{a.e.} 
\quad\mbox{for}\quad \Psi= \mathcal{G}[\xi]-\frac{1}{2}Wr^2-\gamma
\end{aligned}
\end{equation} 
for some constants $$W, \gamma\geq 0,$$ which are uniquely determined by $\xi$.  \ \\
\end{prop}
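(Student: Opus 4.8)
The plan is to read \eqref{eq_W_lem} as the Euler--Lagrange characterization of the maximizer, with $W$ and $\gamma$ appearing as Lagrange multipliers for the impulse constraint $\tfrac12\|r^2\xi\|_1=\mu$ and the mass constraint $\|\xi\|_1\le\nu$. By the scaling \eqref{scaling} I may assume $\nu=\lambda=1$, so that $\xi=1_A$ for an axi-symmetric measurable set $A$, and by Theorem \ref{thm_exist_max} (specifically $\mathcal{S}_\mu=\mathcal{S}'_\mu$) I am free to test maximality against the larger class $\mathcal{P}'_\mu$, in which $0\le\xi\le1$ is the only pointwise restriction. The admissible one-sided perturbations are then exactly those $\eta$ with $\eta\le0$ on $A$ and $\eta\ge0$ on $A^c$; for any such $\eta$ with $\xi+s\eta\in\mathcal{P}'_\mu$ for small $s>0$, maximality gives $\frac{d}{ds}E[\xi+s\eta]\big|_{s=0^+}=\int\psi\,\eta\,dx\le0$, where $\psi=\mathcal{G}[\xi]$ is continuous by Lemma \ref{lem_en_iden_origin}.

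The crux of the matter is that a genuine patch has an empty transition region $\{0<\xi<1\}$, so there are no interior points at which $\xi$ can be varied in both directions and the perturbations used in Lemma \ref{key_lemma} are unavailable. I would overcome this by working on the metrical boundary of $A$ (Definition \ref{defn_excep}): at an exceptional point $p$, every neighbourhood meets both $A$ and $A^c$ in positive measure (Lemma \ref{lem_exist_one}), so I can build two families of admissible perturbations concentrating at $p$ --- a negative family $\eta^-_{p,\varepsilon}\le0$ supported in $A$ with $\int\eta^-_{p,\varepsilon}\,dx=-1$, and a positive family $\eta^+_{p,\varepsilon}\ge0$ supported in $A^c$ with $\int\eta^+_{p,\varepsilon}\,dx=+1$ --- each acting on continuous test functions as $\mp\delta_p$. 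Continuity of $\psi$ and of $r^2$ then yields $\int\psi\,\eta^\pm_{p,\varepsilon}\,dx\to\pm\psi(p)$ and $\tfrac12\int r^2\eta^\pm_{p,\varepsilon}\,dx\to\pm\tfrac12 r_p^2$ as $\varepsilon\to0$, where $r_p$ denotes the $r$-coordinate of $p$.

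With these one-sided Dirac approximations in hand, combining perturbations at finitely many exceptional points produces admissible variations realizing prescribed (mass, impulse) changes; this constraint qualification lets me subtract off the two constraint directions exactly as in \eqref{defn_eta} and thereby extract constants $W,\gamma$ with $\int\Psi\,\eta\,dx\le0$, $\Psi=\psi-\tfrac12Wr^2-\gamma$, for every one-sided admissible $\eta$. Concentrating $\eta\ge0$ at a Lebesgue point of $A^c$ and $\eta\le0$ at a Lebesgue point of $A$ upgrades this to $\Psi\ge0$ a.e.\ on $A$ and $\Psi\le0$ a.e.\ on $A^c$, i.e.\ $\{\Psi>0\}\subseteq A\subseteq\{\Psi\ge0\}$ up to null sets. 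The mass inequality forces its multiplier $\gamma\ge0$, while $W\ge0$ follows because $\psi\to0$ at infinity by \eqref{est_psi_bdd}, so $W<0$ would make $\Psi$ positive on a whole neighbourhood of $\{r=\infty\}$ and violate $\|r^2\xi\|_1=2\mu<\infty$. Uniqueness of $(W,\gamma)$ comes from the linear system $\psi(p)=\tfrac12Wr_p^2+\gamma$, $\psi(q)=\tfrac12Wr_q^2+\gamma$ at two exceptional points with $r_p\neq r_q$ (which exist since $A$ is non-trivial of positive finite volume), and the general $\nu,\lambda$ case then follows by \eqref{scaling}.

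It remains to remove the ambiguity on $\{\Psi=0\}$, which I would do exactly as in Lemma \ref{key_lemma}: since $\psi\in H^2_{loc}(\Pi)$ by Lemma \ref{lem_en_iden_origin} and $\mathcal{L}(\tfrac12Wr^2)=0$, we have $\Psi\in H^2_{loc}$ with $-\tfrac1{r^2}\mathcal{L}\Psi=\xi$ a.e.; because $\nabla\Psi=0$ and $\mathcal{L}\Psi=0$ a.e.\ on the level set $\{\Psi=0\}$, this gives $\xi=0$ a.e.\ there, so $|A\cap\{\Psi=0\}|=0$. Combined with the inclusions above, $\xi=1_A=1_{\{\Psi>0\}}$ a.e., which is \eqref{eq_W_lem}. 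The main obstacle throughout is the construction and control of the concentrating one-sided perturbations at exceptional points, forced on us by the discontinuity of the Hill vorticity function $f_H$; once these are in place, the remaining steps are Lagrange-multiplier bookkeeping and the standard level-set argument.
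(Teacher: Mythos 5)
Your proposal follows the paper's own proof almost step for step: the reduction to $\nu=\lambda=1$ by \eqref{scaling}, testing maximality in the enlarged class $\mathcal{P}'_\mu$ via Theorem \ref{thm_exist_max}, the use of exceptional points of the metrical boundary (Definition \ref{defn_excep}, Lemma \ref{lem_exist_one}, Corollary \ref{cor_exist_two}) to build one-sided Dirac approximations from inside $A$ and from inside $A^c$, the extraction of $W,\gamma$ as Lagrange multipliers by subtracting constraint directions (the paper's four-case construction in \eqref{defn_eta_n} is what your ``constraint qualification'' sentence compresses), the sandwich $\{\Psi>0\}\subset A\subset\{\Psi\ge 0\}$, the $H^2_{loc}$ level-set argument to kill $\{\Psi=0\}\cap A$, and uniqueness from the two-point linear system (the paper's \eqref{def_W_gamma_uniq}). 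Your argument for $W\ge 0$ (if $W<0$ then $\Psi>0$ for all large $r$ since $\psi$ is bounded by \eqref{est_psi_bdd}, contradicting $|A|<\infty$) is also essentially the paper's.

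The one genuine soft spot is your justification of $\gamma\ge 0$: ``the mass inequality forces its multiplier $\gamma\ge 0$'' is a finite-dimensional KKT slogan, and it does not apply here as stated, because in this setting $W$ and $\gamma$ are not produced by any abstract multiplier theorem — they are built by hand from the values of $\psi$ at two exceptional points (the analogue of \eqref{def_W_gamma}), and nothing in that construction carries a sign. A proof of the sign is genuinely needed, since $\gamma\ge 0$ is part of the proposition's conclusion. The paper proves it by a concrete limiting argument: using $\psi\to 0$ at infinity (Lemma \ref{lem_en_iden_origin}) and $|\{\Psi>0\}|<\infty$, pick a sequence $(r_n,z_n)\in\{\Psi\le 0\}$ with $r_n\to 0$, $z_n\to\infty$; then $\limsup_n\Psi(r_n,z_n)=-\gamma\le 0$. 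Alternatively, your own machinery can close the gap without this: test with a one-sided perturbation $\eta=-\alpha\,\delta_p^{-}+\beta\,\delta_q^{+}$ at two exceptional points with $r_p<r_q$ ($\delta_p^{-}$ supported in $A$, $\delta_q^{+}$ in $A^c$), choosing $\beta r_q^2=\alpha r_p^2$ so the impulse is preserved and the mass strictly decreases (hence admissible whether or not $\|\xi\|_1=\nu$); the first-order inequality combined with $\psi(p)=\tfrac12Wr_p^2+\gamma$, $\psi(q)=\tfrac12Wr_q^2+\gamma$ yields $\gamma(r_q^{-2}-r_p^{-2})\le 0$, i.e.\ $\gamma\ge 0$. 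Either repair is short, but as written the step is asserted, not proved.
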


\begin{proof}
Let $\mu\in(0,\infty)$. By the scaling argument \eqref{scaling}, it is enough to consider the case $\nu=\lambda=1$.  \\
 
We prove the uniqueness of $W,\gamma$ first by assuming the existence for a moment. 
Let $\xi\in {\mathcal{S}}_\mu$ and  set $\psi=\mathcal{G}[\xi]$. Suppose that there exist constants $W,\gamma\in\mathbb{R}$ satisfying
\begin{equation*}
\begin{aligned}
&\xi={{1}}_{ \{(\psi-(1/2) Wr^2-\gamma)>0\}}\quad\mbox{a.e.} 
\end{aligned}
\end{equation*} 
We set ${A}=\{x\in\mathbb{R}^3\,|\,(\psi(r,z)-(1/2)Wr^2-\gamma)>0\}$. Since $\Psi:=(\psi-(1/2) Wr^2-\gamma)$ is continuous on $\overline\Pi$ by Lemma \ref{lem_en_iden_origin}, 
the axi-symmetric  set $ {A}$ is open in $\mathbb{R}^3$. 
Clearly, we know
 $$0<|{A}|<\infty$$ because
$E[\xi]=\mathcal{I}_\mu>0$ by Lemma \ref{lem_nontrivial_extra} and 
  $|A|=\|\xi\|_1\leq\nu$.
We take any two points $y', y''\in\mathbb{R}^3$ from the boundary $\partial {A}$ satisfying $r'>r''>0$. Here we match $y', y''\in\mathbb{R}^3$ into $(r',z'), (r'',z'')\in\Pi$, respectively.
Since  $y', y''\in\partial {A}$ implies $\Psi(y')=\Psi(y'')=0$,
  we get
 $$\psi(y')-\frac 1 2Wr'^2-\gamma=0\quad\mbox{and}\quad \psi(y'')-\frac 1 2W r''^2-\gamma=0.$$ By solving these equations about $W$ and $\gamma$, we have
 \begin{equation}\label{def_W_gamma_uniq}W=2\frac{\psi(y')-\psi(y'')}{r'^2-r''^2}\quad \mbox{and}\quad
\gamma=\frac{r'^2}{r'^2-r''^2}\psi(y'')-\frac{r''^2}{r'^2-r''^2}\psi(y'),\end{equation} 
which produces the uniqueness of such $W,\gamma\in\mathbb{R}$. \\

 It remains to show the existence of such constants $W, \gamma\geq 0$ satisfying \eqref{eq_W_lem}.
Let $\xi\in {\mathcal{S}}_\mu$.  Due to 
$\mathcal{S}_\mu\subset \mathcal{P}_\mu$, we have,
by the definition \eqref{defn_adm} of the class $\mathcal P_\mu$,
  $$\xi={{1}}_{{A}}\quad\mbox{for some axi-symmetric measurable subset}\quad A\subset \mathbb{R}^3.$$  
 As before, we know $|{A}|\in(0,\infty)$. 
Our goal is to find some $W,\gamma\geq0$ satisfying 
\begin{equation}\label{goal_exist_w_gamma}
A=\{x\in\mathbb{R}^3\,|\, \mathcal{G}[\xi](x)-(1/2)Wr^2-\gamma>0\} \quad \mbox{a.e.}
\end{equation}
As observed when proving uniqueness, we might expect that any two points $y',y''$ on the boundary $\partial A$ with different distances toward the axis (i.e. $r'\neq r''$)  play an  important role when  verifying 
\eqref{goal_exist_w_gamma}.
 However,
  we know only that $A$ is  measurable (until showing \eqref{goal_exist_w_gamma})
  so the set  is defined up to measure zero. Thus,
   the notion of topological boundary
    is not useful as before. 
    To overcome the difficulty, we use  so-called 
\textit{metrical} boundary, which is the set of     
    \textit{exceptional} points.    
These terminologies were used in  \cite[p78]{Croft}  and \cite[p765]{Szenes} (also see \cite{Kol}).
\begin{defn}\label{defn_excep} 
Let  $\Omega$ be a given fixed open subset of 
$\mathbb{R}^N$, $N\geq1$. For any (Lebesgue) measurable subset  $U\subset\Omega$, 
 we define the density $\mathcal{D}_e(U)$ of the set $U$ by the collection of points $x\in \Omega$   
  such that
$$ {\liminf}_{r\rightarrow 0}\frac{|B_r(x)\cap U|}{|B_r(x)|}=1,$$ where  $B_r(x)=\{y\in\Omega \, | \, |x-y|<r\}$. Since
$0\leq {|B_r(x)\cap U|}/{|B_r(x)|}\leq 1,\,r>0$ holds,
$$x\in \mathcal{D}_e(U)\quad \mbox{if and only if}\quad {\lim}_{r\rightarrow 0}\frac{|B_r(x)\cap U|}{|B_r(x)|}\quad\mbox{exists and the limit value is equal to } 1.$$  
Similarly, we define the dispersion $\mathcal{D}_i(U)$ by 
the set of points  $x\in\Omega$ such that
$$\limsup_{r\rightarrow 0}\frac{|B_r(x)\cap U|}{|B_r(x)|}= 0.$$ 
  We call   the set $$\Omega\setminus (\mathcal{D}_e(U)\cup\mathcal{D}_i(U))=:\mathcal{E}(U)$$ 
the \textit{metrical} boundary of $U$. Any  point of the   {metrical} boundary    $\mathcal{E}(U)$ is called 
   an \textit{exceptional point} of $U$. We note that
  $\mathcal{E}(U)=\mathcal{E}(V)$ when $V$  is equal to $U$ a.e. 
\end{defn} 

The   Lebesgue density theorem says
\begin{lem}\label{lem_lebesgue} For any measurable subset $U$ of  $ \,\mathbb{R}^N,\,N\geq 1$, almost every point of $U$ is a point of the density $\mathcal{D}_e(U)$. 
\end{lem}
It is an immediate corollary of the well-known Lebesgue differentiation theorem.
For example, a proof can be found in  \cite[Theorem 7.13]{book_zygmund}.
The above lemma applied to the complement  $U^c$ says that almost every point of $U^c$ is a point of the dispersion  $\mathcal{D}_i(U)$. As a result, the metrical boundary
$\mathcal{E}(U)$
  has always zero measure. However, the set  is non-empty for typical cases:
 
 %
  \begin{lem}\label{lem_exist_one}
 Let $\Omega\subset\mathbb{R}^N, N\geq 1$ be a non-empty connected open set. For any measurable set $U\subset\Omega$ satisfying $0<|U|\leq\infty$ and $0<|\Omega\setminus U|\leq \infty$,  there exists an exceptional point of $U$.
\end{lem} 

A proof of the above lemma can be found  in  
 \cite[Lemma 4]{Croft} \footnote{As noted in \cite[p78]{Croft}, this result may have been previously known.}  for  the case $\Omega=\mathbb{R}^2$ and \cite{Szenes} for the 1d case even with  a sharper quantitative estimate. One can easily extend their existence proofs up to general connected open sets in $\mathbb{R}^N$.
We present a similar proof  in Appendix \ref{app_excep} for completeness.\\


 On the other hand,
  an exceptional point  has an interesting feature in the following sense:\\
If  $U, U^c$ have positive measures  and if $x$ is an exceptional point of $U$, then there exists 
a positive sequence $\{r_n\}$ satisfying   $r_n\to 0$
while
both $(B_{r_n}(x)\cap U)$ and $(B_{r_n}(x)\cap U^c)$ have positive measures for each $n$. An an application, we can construct a sequence $\{f_n\}$ of non-negative bounded functions which are supported in $U$ converging to the Dirac mass at the exceptional point $x$. For instance, we can take
$$f_n=\frac{1}{|B_{r_n}(x)\cap U|}1_{B_{r_n}(x)\cap U},\quad n\geq 1.$$
Similarly, there is a sequence of functions supported in  $U^c$  with the same property.  \\


For our purpose, we need two exceptional points with different distances toward the axis of symmetry.
 \begin{cor}\label{cor_exist_two}
 For any 
 measurable set $U\subset\mathbb{R}^3$ satisfying $0<|U|<\infty$,  there exist at least two exceptional points 
 $y',y''\in\mathbb{R}^3$ of the set $U$  satisfying $r'>r''>0$.
\end{cor} 
 \begin{proof}
 We take any $\tilde r>0$ such that both
$U_A:=U\cap \{0<r<\tilde r\}$ and $U_B:=U\cap \{r>\tilde r\}$ have finite positive measure. 
By Lemma \ref{lem_exist_one} for 
  $\Omega:= \{0<r<\tilde r\}$, we have an exceptional point $y'$ of $U_A$. Definitely, the point $y'$ lies on $ \{0<r<\tilde r\}$.  Similarly, we get an exceptional point $y''\in \{r>\tilde r\}$ of $U_B$ by setting $\Omega= \{r>\tilde r\}$.
 \end{proof}

Coming back to the proof of existence of $W,\gamma\geq 0$, since $|{A}|\in(0,\infty)$, we can apply Corollary
\ref{cor_exist_two} into  the set $ {A}$
so that 
   there are (at least) two exceptional points $y'=(y'_1,y'_2,y'_3), y''=(y''_1,y''_2,y''_3)\in\mathbb{R}^3$ of the set $A$ whose cylindrical coordinates   $ (r',z'),  (r'',z'')\in \Pi$ satisfy  $r'>r''>0$.
    Set
  \begin{equation}\label{defn_r_0}
 r_0 =\min\{r'-r'',r''\}>0.
\end{equation}
 
$\bullet$ Step 1 - construction of $W,\gamma$ from stream function via exceptional points:\\

We define \begin{equation}\label{def_abc} a=\frac{r'^2}{r'^2-r''^2}, \quad b=\frac{r''^2}{r'^2-r''^2},\quad\mbox{and}\quad c=\frac{2}{r'^2-r''^2}.\end{equation} Then, $a>b>0$ and $c>0$.
By using the stream function
$ \psi=\mathcal{G}[\xi]$,
we define  $\gamma$ and $W$ by
 \begin{equation}\label{def_W_gamma}\gamma  =a\psi({y''})-b\psi({y'}) \quad \mbox{ and }\quad W=c(\psi({y'})-\psi({y''})).\end{equation}
Using such constants $W,\gamma$, we will show 
\begin{align*}
&0\geq  
{\int}\left(\psi-W {\frac 1 2 r^2}-\gamma \right)h\dd x 
\end{align*} for any axi-symmetric compactly supported  $h\in L^\infty(\mathbb{R}^3)$ satisfying
\begin{equation}\label{sign_h}
h\geq 0\quad \mbox{on}\quad A^c,\quad h\leq 0 \quad \mbox{on}\quad A.
\end{equation}
Once we obtain it, it implies
$$\left(\psi-W {\frac 1 2 r^2}-\gamma \right)\leq 0 \quad\mbox{a.e. } \mbox{on}\quad A^c,\quad \left(\psi-W {\frac 1 2 r^2}-\gamma \right)\geq 0 \quad\mbox{a.e. }  \mbox{on}\quad A.$$ Then  the goal 
\eqref{goal_exist_w_gamma} 
of Proposition \ref{prop_exist_vortex} for the existence part will follow once we show
$$\left(\psi-W {\frac 1 2 r^2}-\gamma \right)\neq  0 \quad\mbox{a.e. } \mbox{on}\quad A.$$

\begin{rem}\label{rem_intui} 
Here is the motivation for the choice of $W,\gamma$ in \eqref{def_W_gamma} (and $a,b,c$ in \eqref{def_abc}):\\
If we define axi-symmetric 
functions $h_i$  in $\mathbb{R}^3$ by their cylindrical form:
  $$h_1 =\frac a {2\pi r}\delta_{{(r'',z'')}} -\frac b {2\pi r}\delta_{{(r',z')}} \quad \mbox{and}\quad h_2  =\frac c {2\pi r}(\delta_{{(r',z')}}-\delta_{{(r'',z'')}}),$$ where $\delta_{(r,z)}$ is the Dirac-delta function in $\Pi$ at $(r,z)$,  
   then we formally obtain
   \begin{equation}\label{gamma_defn_mot}
   \int\psi h_1dx= \int\psi (a\delta_{{(r'',z'')}} -b\delta_{{(r',z')}})   drdz=\gamma.
   \end{equation}
Similarly, we get 
\begin{equation}\label{W_defn_mot}
\int\psi h_2dx
 =W. 
\end{equation}
Due to  the relations
 \begin{equation}\label{prop_abc}
   a-b=1, \quad \frac 1 2(ar''^2-br'^2)=0, \quad\frac c 2 (r'^2-r''^2)=1,
 \end{equation}
 we obtain 
\begin{equation}\label{int_cond}
   \int h_{1}dx=1,\quad \int {\frac 1 2 r^2} h_{1}dx=0,\quad \int h_{2}dx=0,\quad \mbox{and}\quad \int {\frac 1 2 r^2} h_{2}dx=1.
\end{equation} 
In order to search $W,\gamma$ as Lagrange multipliers, we recall the proof of Lemma \ref{key_lemma}. As in \eqref{defn_eta}, we    set  $\eta$  by
 \begin{equation}\label{defn_eta_5}
 \eta=h-\left(\int hdx\right)h_1-\left(\int {\frac 1 2 r^2} hdx\right)h_2 
 \end{equation}
for arbitrary bounded $h$ with \eqref{sign_h}. Then we want to have
$(\xi+\epsilon\eta)\in {\mathcal{P}}_\mu$ (see the definition \eqref{defn_adm}) for small $\epsilon>0$.  From 
$\int \eta dx=0$ and $\int {\frac 1 2 r^2} \eta dx=0,$
we have
\begin{equation}\label{int_cond_2}
\int (\xi+\epsilon\eta) dx=\int \xi dx\leq 1\quad\mbox{and}\quad \frac 1 2\int {  r^2} (\xi+\epsilon\eta) dx= \frac 1 2\int { r^2}\xi dx=\mu.
\end{equation} However, it is     too optimistic to  ask that $(\xi+\epsilon\eta)$ is of patch-type. 
Instead, thanks to 
Theorem \ref{thm_exist_max}, we are allowed to have 
$(\xi+\epsilon\eta)$
 in the larger class
${\mathcal{P}}'_\mu$ (or $\mathcal{P}_\mu''$) (see the definition \eqref{defn_prime_class}). Nevertheless, we still have a problem since
 $(\xi+\epsilon\eta)$ fails  in general to satisfy the pointwise bound  
\begin{equation}\label{bdd_cond}
0\leq (\xi+\epsilon\eta)\leq 1\quad \mbox{a.e.}
\end{equation} 
In fact, $\eta$ is not a measurable function but  merely a measure. 
Hence, we are asked to approximate the formal perturbation $\eta$ 
 of \eqref{defn_eta_5}
  by a sequence of measurable functions 
$\{\eta_n\}$  
satisfying   \eqref{int_cond_2} and \eqref{bdd_cond} at the same time. It raises 
technical difficulties. For instance, it asks certain sign condition for $\eta_n$ due to
the form $\xi=1_A$. Indeed, from $\epsilon>0$, we need
\begin{equation}\label{eta_sign_rem}
 \eta_n\leq 0 \quad \mbox{on}\quad A,\quad \eta_n\geq 0 \quad \mbox{on}\quad A^c 
\end{equation}
for arbitrary $h$. Hence,
  we need sequences  $\{h_{i,n}\}$ converging to $h_i$ for each $i=1,2$ such that
 the functions
$h_{i,n}$ 
 satisfy  not only
\eqref{int_cond}  but also
certain sign condition depending the choice of $h$ (see \eqref{condn_h_n}).   As a result, 
 $\eta_n$ satisfies \eqref{eta_sign_rem} for each $n$. 
 In short, for different $h$,   we have to construct different $h_{i,n}$ (so different $\eta_n$)  (see \eqref{defn_h_n}, \eqref{defn_eta_n}). 
 The construction   will be given  below in detail   since  the process searching such a perturbation sequence seems not standard. \\ %

   \end{rem}

$\bullet$ Step 2 - approximation (toward   Dirac mass) 
 supported on each side:\\

Since  $y', y''$ are  exceptional points of $A$, we have 
 a decreasing sequence $\{r_n\}_{n=1}^\infty$ of positive numbers satisfying $r_n\to 0$ as $n\to\infty$ and
$$|B_{r_{n}}({y'})\cap A|,\, |B_{r_{n}}({y'})\cap A^c|,\,  |B_{r_{n}}({y''})\cap A|,\, |B_{r_{n}}({y''})\cap A^c|>0\quad \mbox{for}\quad n\geq 1.$$
 Here $B_{r_n}(y'), B_{r_n}(y'')$ are usual balls  
 in $\mathbb{R}^3$ as defined in \eqref{defn_ball}, and  $|\cdot|$ is the Lebesgue measure for $\mathbb{R}^3$. Since $A\subset \mathbb{R}^3$ is axi-symmetric, we have
$$ |T_{r_{n}}({r',z'})\cap A|,\, |T_{r_{n}}({r',z'})\cap A^c|,\,  |T_{r_{n}}({r'',z''})\cap A|,\, |T_{r_{n}}({r'',z''})\cap A^c|>0\quad \mbox{for}\quad n\geq 1.$$
 Here $T_{r_n}(r',z'), T_{r_n}(r'',z'')$
 are  tori  in $\mathbb{R}^3$ defined in \eqref{defn_ball}.  We may assume 
 \begin{equation*}\label{condn_r_1}
0<r_{1}<\frac{r_0}{2}. 
 \end{equation*}
Then, due to   \eqref{defn_r_0} and the monotonicity of $\{r_n\}$, we know
 $$  T_{r_{n}}({r',z'})\cap T_{r_{n}}({r'',z''})=\emptyset\quad\mbox{for   }\quad n\geq 1.$$
 We denote the above axi-symmetric sets  of positive measures b{y}
  $$Y^{{+}}_n =T_{r_{n}}({r',z'})\cap A,\quad Y^{{-}}_n =T_{r_{n}}({r',z'})\cap A^c,\quad Z^{{+}}_n =T_{r_{n}}({r'',z''})\cap A,\quad Z^{{-}}_n =T_{r_{n}}({r'',z''})\cap A^c.$$  
For each $n\geq 1$, we define the axi-symmetric, compactly supported  functions
$f^\pm_{n},g^\pm_{n}\in L^\infty(\mathbb{R}^3)$
  by 
$$f^\pm_{n}(x) =\frac{1}{|Y^{{\pm}}_n|}{{1}}_{Y^{{\pm}}_n}(x),\quad 
 g^\pm_{n}(x) =\frac{1}{ |Z^{{\pm}}_n|}{{1}}_{Z^{{\pm}}_n}(x).$$
The above functions are designed to satisfy
\begin{equation*}\begin{split}
f^\pm_{n},g^\pm_{n}\geq 0, \quad &\int f^\pm_{n}\,dx=1, \quad \int g^\pm_{n}\,dx=1,\\  
\mbox{spt}(f^\pm_{n}) \subset T_{r_n}(r',z'),\quad 
&\mbox{spt}(g^\pm_{n}) \subset T_{r_n}(r'',z''),\\
\mbox{spt}(f^+_{n}), \,\mbox{spt}(g^+_{n})\subset A,\quad 
&\mbox{spt}(f^-_{n}), \,\mbox{spt}(g^-_{n})\subset A^c, \quad n\geq 1,
\end{split}\end{equation*} where the last line 
 means that each sequence is supported either on $A$ or $A^c$.
 Thanks to the convergence $r_n\to 0$,
 the sequences $\{f^\pm_{n}\},\{g^\pm_{n}\}$  approximate the Dirac-masses on the circles
 $$\{(x_1,x_2,x_3)\in\mathbb{R}^3\,|\, x_1^2+x_2^2=r'^2, x_3=z'\}, \quad\{(x_1,x_2,x_3)\in\mathbb{R}^3\,|\,  x_1^2+x_2^2=r''^2, x_3=z''\},$$ respectively in the following sense:\\For any continuous axi-symmetric function $\phi$ in $\mathbb{R}^3$, we have 
 \begin{equation}\label{pro_f_g}
 \int f^\pm_{n}\phi dx \to \phi(y'),\quad  \int g^\pm_{n}\phi dx \to \phi(y'') \quad \mbox{as}\quad n\to \infty.
 \end{equation}\\

$\bullet$ Step 3 - construction of a basis for two constraints(mass, impulse) problem with sign condition:\\

The impulse of the functions are estimated by
$$ \int {\frac 1 2 r^2} f^\pm_{n}\,dx \in\left (\frac 1 2 (r'-r_n)^2,\frac 1 2 (r'+r_n)^2\right),\quad   \int {\frac 1 2 r^2} g^\pm_{n}\,dx\in\left (\frac 1 2 (r''-r_n)^2,\frac 1 2 (r''+r_n)^2\right).$$
Thus, for each $n$, we can set  $\tau'^\pm_{n} \in(r'-r_n,r'+r_n)$ and
 $\tau''^\pm_{n}\in(r''-r_n,r''+r_n)$ b{y} solving   $$\frac 1 2 \left(\tau'^\pm_{n}\right)^2=\int {\frac 1 2 r^2} f^\pm_{n}\,dx, \quad \frac 1 2\left(\tau''^\pm_{n}\right)^2=\int {\frac 1 2 r^2} g^\pm_{n}\,dx.$$
We note 
  $\tau'^\mp_{n}>\tau''^\pm_{n}$. Then we define 
   $$a^\pm_{n}=\frac{(\tau'^\mp_{n})^2}{(\tau'^\mp_{n})^2-(\tau''^\pm_{n})^2}, \quad b^\pm_{n}=\frac{(\tau''^\pm_{n})^2}{(\tau'^\mp_{n})^2-(\tau''^\pm_{n})^2},\quad  c^\pm_{n}=\frac{2}{(\tau'^\pm_{n})^2-(\tau''^\mp_{n})^2}.$$
 By recalling 
the definition of  $a,b,c$ in \eqref{def_abc},
   we  have 
\begin{equation}\label{pro_a_n}
 a^\pm\rightarrow a, \quad b^\pm_n\rightarrow b,\quad  c^\pm_n\rightarrow c \quad\mbox{as}\quad n\to\infty,
\end{equation}   due to $$ |\tau'^\pm_{n}-r'|,
  |\tau''^\pm_{n}-r''|\leq r_n\rightarrow 0.$$   
 We simply observe 
(cf. \eqref{prop_abc}) 
\begin{equation}\label{prop_abc_n}
     a^\pm_n>b^\pm_n>0,\quad c^\pm_n>0, \quad a^\pm_n-b^\pm_n=1,\quad \frac 1 2\left(a^\pm_n (\tau''^\pm_{n})^2-b^\pm_n (\tau'^\mp_{n})^2\right)=0,\quad   \frac {c^\pm_n} 2 \left((\tau'^\pm_{n})^2-(\tau''^\mp_{n})^2\right)=1.
\end{equation}

Now, we define axi-symmetric, compactly supported   functions 
$h^{\pm}_{n,1}, h^{\pm}_{n,2}\in L^\infty(\mathbb{R}^3)$  by
\begin{equation}\label{defn_h_n} h^{\pm}_{n,1}=a^\pm_{n} g^\pm_{n}-b^\pm_{n} f^\mp_{n},\quad h^{\pm}_{n,2}=c^\pm_{n}(f^\pm_{n}- g^\mp_{n}).\end{equation}
By 
\eqref{prop_abc_n},
they form a basis for two constraints(mass, impulse) problem: 
\begin{equation}\label{condn_h_n_2}
\int h^{\pm}_{n,1}dx =1,\quad \int {\frac 1 2 r^2} h^{\pm}_{n,1} dx=0,\quad\quad \int h^{\pm}_{n,2} dx=0,\quad \int {\frac 1 2 r^2} h^{\pm }_{n,2} dx=1.
\end{equation}
Moreover, they satisfy the sign condition:
\begin{equation}\label{condn_h_n}\begin{split}
& h^{+}_{n,1}, h^{+}_{n,2}\geq 0,\quad  h^{-}_{n,1}, h^{-}_{n,2}\leq 0\quad\mbox{on}\quad A,\\
& h^{+}_{n,1}, h^{+}_{n,2}\leq 0,\quad  h^{-}_{n,1}, h^{-}_{n,2}\geq 0\quad\mbox{on}\quad A^c.
\end{split}\end{equation}


By setting 
\begin{equation}\label{defn_w_n}
W^\pm_n=\int\psi h^\pm_{n,2}dx\quad\mbox{and}\quad \gamma^\pm_n  =\int\psi h^\pm_{n,1}dx
\end{equation}(cf. \eqref{W_defn_mot} and \eqref{gamma_defn_mot}),
 we can show
\begin{equation}\label{conv_w_n}
W^\pm_n\rightarrow W,\quad \gamma^\pm_n\rightarrow \gamma\quad\mbox{as}\quad n\rightarrow \infty,
\end{equation}
 where $W,\gamma$ are defined in
\eqref{def_W_gamma}. 
 Indeed, since the stream function $\psi$ is continuous by Lemma \ref{lem_en_iden_origin}, 
 we have, by \eqref{pro_f_g}, \eqref{pro_a_n},
\begin{equation*}\begin{split}
&W^\pm_n=\int\psi h^\pm_{n,2}dx= c^\pm_{n}\left(\int \psi f^\pm_{n}dx -\int \psi g^\mp_{n}dx\right)\to c (\psi(y')-\psi(y''))=W\quad\mbox{as}\quad n\to \infty
\end{split}\end{equation*} and
 \begin{equation*}\begin{split}
&\gamma^\pm_n=\int\psi h^\pm_{n,1}dx=
 a^\pm_{n}\int \psi g^\pm_{n}dx-b^\pm_{n}\int \psi  f^\mp_{n}dx
\to a\psi({y''})-b\psi({y'})=\gamma \quad\mbox{as}\quad n\to \infty.
\end{split}\end{equation*} \\

$\bullet$ Step 4 - construction of a sequence of 
perturbations around patch-type data :\\

Now we are ready to define an approximation in $\mathcal{P}'_\mu$ toward the formal perturbation \eqref{defn_eta_5} around our patch-type function $\xi=1_A$.
Let us take and fix any axi-symmetric function $h\in L^\infty(\mathbb{R}^3)$ satisfying
\begin{equation}\label{cond_sign_h}
h:\mbox{compactly supported},\quad h\geq 0\quad\mbox{on}\quad A^c,\quad \mbox{and}\quad h\leq 0\quad\mbox{on}\quad A.
\end{equation}
 Then there are 4 possible cases depending on the sign of the integrals $\int h\,dx, \int {\frac 1 2 r^2} h \,dx$:\\
case (I) : $\int h\,dx\geq 0, \int {\frac 1 2 r^2} h \,dx\geq 0$, \quad 
case (II) : $\int h\,dx\geq 0, \int {\frac 1 2 r^2} h \,dx< 0$,\\
case (III) : $\int h\,dx< 0, \int {\frac 1 2 r^2} h\,dx \geq 0$,\quad
case (IV) : $\int h\,dx< 0, \int {\frac 1 2 r^2} h\,dx < 0$.\\

For each $n\geq 1$, we define 
\begin{equation}\label{defn_eta_n}
\eta_n:=\begin{cases}&h-(\int h\,dx) h^+_{n,1}-(\int {\frac 1 2 r^2} h\,dx)h^+_{n,2}\quad\mbox{when case (I),}\\
&h-(\int h\,dx) h^+_{n,1}-(\int {\frac 1 2 r^2} h\,dx)h^-_{n,2}\quad\mbox{when case (II),}\\
&h-(\int h\,dx) h^-_{n,1}-(\int {\frac 1 2 r^2} h\,dx)h^+_{n,2}\quad\mbox{when case (III),}\\
&h-(\int h\,dx) h^-_{n,1}-(\int {\frac 1 2 r^2} h\,dx)h^-_{n,2}\quad\mbox{when case (IV).}
\end{cases}
\end{equation}
They are designed to satisfy 
\begin{equation}\label{obs_sign}
 \eta_n\leq 0\quad \mbox{on}\quad A,\quad\eta_n\geq 0\quad\mbox{on}\quad A^c,\quad \int \eta _n \,dx=0,\quad\mbox{and}\quad\int {\frac 1 2 r^2}\eta_n\,dx=0,
\end{equation} for any possible cases.\\

For each $n\geq 1$, we claim 
$$(\xi+\epsilon\eta_n)\in\mathcal{P}'_\mu\quad\mbox{for sufficiently small}\quad  \epsilon>0.$$ 
Indeed,  by fixing the function $h$ and $n\geq 1$, 
we have 
$$0\leq \xi+\epsilon\eta_n\leq 1
\quad \mbox{for sufficiently small}\quad \epsilon>0\quad\mbox{(depending  on  }  \|h^\pm_{n,i}\|_\infty,\,  i=1,2\mbox{)}
$$  thanks to the sign property \eqref{obs_sign}.
In addition, the property \eqref{obs_sign} on the integrals    implies
$$\int(\xi+\epsilon\eta_n)\,dx=\int \xi\,dx\leq 1\quad\mbox{and}\quad
\int {\frac 1 2 r^2}(\xi+\epsilon\eta_n)\,dx=\int {\frac 1 2 r^2} \xi\,dx=\mu. $$
As a result, we obtain the above claim for small $\epsilon>0$.\\

$\bullet$ Step 5 - verification of the boundary of the patch $1_A$ via Lagrange multipliers $W,\gamma$:\\

   Since $\xi\in\mathcal{S}_\mu=\mathcal{S}'_\mu$ by Theorem \ref{thm_exist_max},
we obtain, for small $\epsilon>0$, $$0\geq\frac{E[\xi+\epsilon\eta_n]-E[\xi]}{\epsilon}.$$ 
  Hence, b{y} taking the  limit  $ \epsilon\searrow 0$, we have 
  \begin{align}\label{ineq_int_eta_n}
\quad \quad 0\geq  \frac 1 2 \int\mathcal{G}[\xi]\eta_n\,dx +\frac 1 2 \int \xi\mathcal{G}[\eta_n]\,dx=
\frac 1 2 \int\psi\eta_n\,dx +\frac 1 2 \int \mathcal{G}[\xi]\eta_n\,dx=  {\int}\psi \eta_n\dd x,
\end{align} 
where we used the symmetry of the kernel $G$.
By the definitions \eqref{defn_eta_n} and \eqref{defn_w_n}, for the case $(I)$,  we  have
\begin{align*}
0\geq   {\int}\psi \eta_n\dd x= {\int}\psi h\dd x-\gamma^+_n\left({\int} h\dd x\right)-W^+_n\left({\int}{\frac 1 2 r^2} h\dd x\right)
=
{\int}\left(\psi-W^+_n {\frac 1 2 r^2}-\gamma^+_n \right)h\dd x.
\end{align*}
Similarly, we have
\begin{align*}
&0\geq  
{\int}\left(\psi-W^-_n {\frac 1 2 r^2}-\gamma^+_n \right)h\dd x,
\mbox{ for the  case (II),}\\
&0\geq  
{\int}\left(\psi-W^+_n {\frac 1 2 r^2}-\gamma^-_n \right)h\dd x,
\mbox{ for the case  (III),}\\
&0\geq  
{\int}\left(\psi-W^-_n {\frac 1 2 r^2}-\gamma^-_n \right)h\dd x,
\mbox{ for the case (IV).}
\end{align*}\\
By \eqref{conv_w_n}, 
we can take limit  $n\to \infty$ into the above inequalities  to get
\begin{align}\label{ineq_adj_stream}
&0\geq  
{\int}\left(\psi-W {\frac 1 2 r^2}-\gamma \right)h\dd x
=\int_{A^c}+\int_{A}
\end{align} for any possible cases. We set the adjusted stream function $\Psi =\psi-{ (1/2)  W r^2}-\gamma.$
Since $h$ is an arbitrary function satisfying the sign condition
 \eqref{cond_sign_h},  the inequality 
\eqref{ineq_adj_stream}
 implies
\begin{equation*} 
\begin{aligned}
\Psi&\leq 0\quad a.e.\quad  \textrm{on}\quad A^c \quad\mbox{and}\quad \Psi \geq 0\quad a.e.\quad  \textrm{on}\quad A. 
\end{aligned}
\end{equation*} Thus we get
\begin{equation}\label{semi_con} 
\{\Psi>0\}\subset A\subset \{\Psi\geq 0\}\quad\mbox{up to measure zero}.
\end{equation}\\

On the other hand, since 
$\psi\in H^2_{loc}(\Pi)$ by Lemma \ref{lem_en_iden_origin}, we have
$\Psi\in H^2_{loc}$, which implies
    $$\nabla\Psi=0\quad a.e. \quad \mbox{on}\quad \{\Psi=0\}.$$ Thus we obtain
    $$-\frac{1}{r^2}\mathcal{L}\Psi=0\quad a.e.\quad \mbox{on}\quad \{\Psi=0\}.$$
Due to $- {r^{-2}}\mathcal{L}\Psi=- {r^{-2}}\mathcal{L}\psi=\xi=1_A$,
we have
$$\{\Psi=0\}\subset A^c\quad\mbox{up to measure zero}.$$ 
Hence we conclude, by 
\eqref{semi_con},
$$A=\{\Psi>0\}\quad\mbox{up to measure zero},$$ which gives  the goal \eqref{goal_exist_w_gamma}.\\

Lastly, let us show   $W\geq 0$ and $\gamma\geq 0$.
 B{y} the convergence $\psi\to 0$ as $|(r,z)|\to\infty$
due to Lemma
\ref{lem_en_iden_origin}
and b{y} $|\{\Psi>0\}|<\infty$, 
we can take a sequence $\{(r_n,z_n)\}$ in $\Pi$   such that $$(r_n, z_n)\in\{\Psi\leq 0\},\quad n\geq 1,$$ $$\psi(r_n,z_n)\to 0,\quad  z_n\to\infty,\quad r_n\to 0\quad \mbox{as}\quad n\to\infty.$$
 Thus we have
\begin{align}\label{eq_limsup}
\limsup_{n\to\infty}\left(\psi(r_n,z_n)-\frac 1 2Wr_n^2-\gamma\right)=\limsup_{n\to\infty}\Psi(r_n,z_n) \leq 0.
\end{align}
Hence we obtain $\gamma\geq 0$.  Similarly, 
 by taking  another sequence $\{(r_n,z_n)\}$ such that $(r_n,z_n)\in\{\Psi\leq 0\}$,  $\psi(r_n,z_n)\to0$,   $z_n\to0$, and $r_n\to\infty$ as $n\to \infty$, we get \eqref{eq_limsup} again, which cannot be true when  $W<0$. Thus we obtain $W\geq 0$. It finishes the proof of  Proposition \ref{prop_exist_vortex}.

\end{proof}

\color{black}
  \subsection{Traveling speed is non-trivial.}\label{sec5_step2}\ \\
  
In this subsection, we prove that  the traveling speed $W$ of  vortex rings from our variational problem \eqref{var_prob} is  positive.
To prove,  we
use the energy identity \eqref{iden_en2} below due to   \cite[Lemma 3.1]{FT81}.
\begin{lem}
\label{lem_3.1_FT}
For axi-symmetric   $\xi\in \left(  L^1_w\cap L^\infty\cap L^1\right)(\mathbb{R}^3)$,  if $\xi$ is 
compactly supported, then we have 
\begin{equation}\label{iden_en2}
E[\xi]
=\int_{\mathbb{R}^3}(x\cdot \nabla\psi)\xi\,dx=\int_{\mathbb{R}^3}(r\partial_r \psi +z\partial_z \psi)\xi\,dx,
\end{equation} where $\psi=\mathcal{G}[\xi]$.
\end{lem}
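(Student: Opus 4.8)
The plan is to derive the identity from the scaling (homogeneity) structure of the energy, rather than by a direct Rellich--Pohozaev integration by parts. Write $\psi=\mathcal{G}[\xi]$ and recall from \eqref{defn_en} that $E[\xi]=\tfrac12\int\xi\psi\,dx$, and from Lemma \ref{lem_en_iden_origin} that $\psi$ is $C^1$ with $x\cdot\nabla\psi=r\partial_r\psi+z\partial_z\psi$ for axi-symmetric $\psi$. For $t>0$ I would introduce the dilation $\xi_t(x):=\xi(tx)$ and set $\psi_t:=\mathcal{G}[\xi_t]$.

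First I would record the homogeneity of the kernel. From the representation \eqref{form_F}, the variable $s=\frac{(r-r')^2+(z-z')^2}{rr'}$ is invariant under $(r,z,r',z')\mapsto(tr,tz,tr',tz')$ while $\sqrt{rr'}$ scales by $t$, so $G(tr,tz,tr',tz')=t\,G(r,z,r',z')$. Plugging this into the formula \eqref{form_psi} for $\psi_t$ and changing variables gives $\psi_t(x)=t^{-4}\psi(tx)$, and a further change of variables in \eqref{defn_en} yields the clean scaling $E[\xi_t]=t^{-7}E[\xi]$. Differentiating this explicit relation at $t=1$ produces the left-hand value $\frac{d}{dt}\big|_{t=1}E[\xi_t]=-7\,E[\xi]$.

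The second evaluation of the same derivative is where the identity appears. Because $E[\cdot]$ is the quadratic form attached to the symmetric kernel $G$, its $t$-derivative can be transferred entirely onto the stream function, $\frac{d}{dt}E[\xi_t]=\int\xi_t\,\partial_t\psi_t\,dx$; this is exactly what lets me avoid differentiating the merely bounded density $\xi$ (which may be a characteristic function), and it follows from $\int(\partial_t\xi_t)\mathcal{G}[\xi_t]\,dx=\int\xi_t\,\mathcal{G}[\partial_t\xi_t]\,dx$ together with $\mathcal{G}[\partial_t\xi_t]=\partial_t\psi_t$. Using $\psi_t(x)=t^{-4}\psi(tx)$ and $\psi\in C^1$, I compute $\partial_t\psi_t|_{t=1}=-4\psi+x\cdot\nabla\psi$, so that $\frac{d}{dt}\big|_{t=1}E[\xi_t]=-4\int\xi\psi\,dx+\int\xi\,(x\cdot\nabla\psi)\,dx=-8E[\xi]+\int\xi\,(x\cdot\nabla\psi)\,dx$, where I used $\int\xi\psi\,dx=2E[\xi]$. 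Equating the two evaluations gives $-7E[\xi]=-8E[\xi]+\int\xi\,(x\cdot\nabla\psi)\,dx$, i.e. $E[\xi]=\int\xi\,(x\cdot\nabla\psi)\,dx$, which is \eqref{iden_en2} after rewriting $x\cdot\nabla\psi=r\partial_r\psi+z\partial_z\psi$.

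The main obstacle is the rigorous justification of the differentiation under the integral sign and of transferring $\partial_t$ from $\xi_t$ to $\psi_t$. Here the compact support hypothesis is essential: for $t$ near $1$ all densities $\xi_t$ are supported in one fixed bounded set on which $\psi$ and $\nabla\psi$ are bounded (Lemma \ref{lem_en_iden_origin}), so the difference quotients $\frac{1}{t-1}\int\xi(\psi_t-\psi)\,dx$ converge by dominated convergence to $\int\xi\,\partial_t\psi_1\,dx$ with no boundary terms. An entirely equivalent alternative would be to prove the identity directly by multiplying $-r^{-2}\mathcal{L}\psi=\xi$ by $x\cdot\nabla\psi$ and integrating by parts in $\Pi$ against the weight $r^{-1}$; that Rellich-type computation reproduces $E[\xi]=\tfrac12\int r^{-2}\big(|\partial_r\psi|^2+|\partial_z\psi|^2\big)\,dx$ but demands care with the behaviour at $r=0$ and at infinity, which the scaling argument sidesteps.
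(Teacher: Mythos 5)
Your proposal is correct, but it proves the identity by a genuinely different route than the paper. The paper does not reproduce a proof at all: it remarks that \eqref{iden_en2} follows formally from applying the weighted integration by parts formula \eqref{ibp_dx} twice and cites Friedman--Turkington for the detailed Rellich--Pohozaev-type computation, with Remark \ref{rem_abs} explaining that compact support guarantees absolute convergence of the integral. You instead derive the identity from the dilation structure: the degree-one homogeneity $G(tr,tz,tr',tz')=t\,G(r,z,r',z')$ gives $\psi_t=t^{-4}\psi(t\cdot)$ and $E[\xi_t]=t^{-7}E[\xi]$ (both consistent with the paper, cf. \eqref{comp_imp} and the scaling identity for $\mathcal{G}$ preceding \eqref{scaling}), and comparing the two evaluations of the derivative at $t=1$ yields $-7E[\xi]=-8E[\xi]+\int\xi\,(x\cdot\nabla\psi)\,dx$, i.e. the claim. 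What your route buys: no boundary terms at $r=0$ or at infinity and no weighted integration by parts. What it costs: it leans on the exact homogeneity of the kernel and the quadratic structure of $E$, so, unlike the direct computation, it would not survive perturbations of either.

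One step should be written more carefully. The transfer $\frac{d}{dt}E[\xi_t]=\int\xi_t\,\partial_t\psi_t\,dx$ cannot be justified, as stated, through ``$\int(\partial_t\xi_t)\mathcal{G}[\xi_t]\,dx=\int\xi_t\,\mathcal{G}[\partial_t\xi_t]\,dx$'', because $\partial_t\xi_t$ does not exist even as a locally integrable function when $\xi$ is a patch. The clean rigorous route is polarization: by symmetry of $G$ (exactly the manipulation used in the proof of \eqref{est_E_diff}),
\begin{equation*}
2\left(E[\xi_t]-E[\xi]\right)=\int(\xi_t+\xi)\,(\psi_t-\psi)\,dx,
\end{equation*}
hence
\begin{equation*}
\frac{E[\xi_t]-E[\xi]}{t-1}=\int\xi\,\frac{\psi_t-\psi}{t-1}\,dx+\frac{1}{2}\int(\xi_t-\xi)\,\frac{\psi_t-\psi}{t-1}\,dx.
\end{equation*}
The first term converges by dominated convergence as you describe: for $t$ near $1$ all supports lie in one fixed compact set, on which the difference quotient of $\psi$ is uniformly bounded because $\psi\in C^1(\overline\Pi)$ (this follows from the proof of Lemma \ref{lem_en_iden_origin}, where $\phi,\nabla\phi$ are continuous and $\psi(r,z)=r\,\phi_2(re_{x_1}+ze_{x_3})$). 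The second, cross term is the one your write-up omits; it vanishes because $\|\xi_t-\xi\|_{L^1}\to0$ as $t\to1$ (continuity of dilations in $L^1$) while the difference quotient stays uniformly bounded on the common compact support. With these two observations your argument is complete.
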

The above lemma can be formally obtained  by using the integration by parts formula
\begin{equation}\label{ibp_dx}
\int\frac{(\partial_r f)(\partial_r g)+(\partial_z f)(\partial_z g)}{r^2}dx=-\int f\frac{\mathcal Lg}{r^2}dx,\quad f,g:\mbox{axi-symmetric} 
\end{equation}
 twice. 
A proof in detail can be found  in  \cite[p9-10]{FT81}.

\begin{rem}\label{rem_abs}
The  assumption on compactness (so boundedness) of $\mbox{spt}(\xi)$ in Lemma \ref{lem_3.1_FT} guarantees that the integral in \eqref{iden_en2} converges absolutely. Indeed,  if
$\mbox{spt}(\xi)\subset B_K(0)$, then
we have
\begin{equation}\label{abs_conv}
\int|(r\partial_r \psi +z\partial_z \psi)\xi|\,dx\lesssim K^2
\int\left|\frac{\partial_r \psi}{r} +\frac{\partial_z \psi}{r}\right|\cdot|\xi|\,dx
\lesssim K^2 \|\mathcal{K}[\xi]\|_2\|\xi\|_2 <\infty
\end{equation}
by \eqref{iden_en} of Lemma \ref{lem_en_iden_origin}.
If one wants to drop the assumption,
the integral 
  should be understood in a limit sense.
\end{rem}


\begin{prop}\label{prop_en_W}
For any   $\xi\in \mathcal{S}_\mu$, the constant
$W$ in \eqref{eq_W_lem} of Proposition \ref{prop_exist_vortex} is positive. 
\end{prop}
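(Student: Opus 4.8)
The plan is to argue by contradiction. Since Proposition \ref{prop_exist_vortex} already gives $W\ge 0$, it suffices to rule out $W=0$. So suppose $W=0$; then the adjusted stream function reduces to $\Psi=\psi-\gamma$ with $\psi=\mathcal{G}[\xi]$ and $\gamma\ge 0$, and the core $A:=\{\Psi>0\}=\{\psi>\gamma\}$ satisfies $\xi=\lambda 1_A=1_A$ a.e.\ (recall $\lambda=1$ in $\mathcal{S}_\mu$) together with $|A|=\|\xi\|_1\le 1<\infty$.

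First I would exclude $\gamma=0$. Since $\xi\ge 0$ is non-trivial (because $E[\xi]=\mathcal{I}_\mu>0$ by Lemma \ref{lem_nontrivial_extra}) and the kernel $G$ is positive a.e., we have $\psi(r,z)>0$ for a.e.\ $(r,z)\in\Pi$; hence $\{\psi>0\}$ has full measure, which is incompatible with $|A|<\infty$. Therefore $\gamma>0$. Next I would show that, in this remaining situation, $A$ is bounded. Indeed $\xi\in(L^1_w\cap L^\infty\cap L^1)(\mathbb{R}^3)$, so by Lemma \ref{lem_en_iden_origin} the stream function $\psi$ is continuous with $\psi(r,z)\to 0$ as $|(r,z)|\to\infty$ (see \eqref{psi_conv}), while the bound \eqref{est_psi} gives $|\psi(r,z)|\lesssim r$ near the axis. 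These two facts force $\{\psi>\gamma\}$ to lie inside a bounded region of $\Pi$ contained in $\{r\ge\delta\}$ for some $\delta>0$; in particular $\xi$ is compactly supported and its support is separated from the axis.

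With compact support secured I would invoke the energy identity of Lemma \ref{lem_3.1_FT}. Because $W=0$ gives $\nabla\psi=\nabla\Psi$, this yields
\[
E[\xi]=\int_A\big(r\partial_r\psi+z\partial_z\psi\big)\,dx=\int_A x\cdot\nabla\Psi\,dx .
\]
To evaluate the last integral I would pass to the positive part $\Psi^+=\max(\Psi,0)$, which is a compactly supported $H^1$ function (supported in $\overline A$, away from the axis) satisfying $\nabla\Psi^+=1_{\{\Psi>0\}}\nabla\Psi$ a.e.; integrating by parts against the vector field $x$, for which $\mathrm{div}\,x=3$, gives
\[
\int_A x\cdot\nabla\Psi\,dx=\int_{\mathbb{R}^3}x\cdot\nabla\Psi^+\,dx=-3\int_{\mathbb{R}^3}\Psi^+\,dx=-3\int_A\Psi\,dx .
\]
Since $\Psi>0$ on the positive-measure set $A$, the right-hand side is strictly negative, contradicting $E[\xi]=\mathcal{I}_\mu>0$. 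This contradiction shows $W\neq 0$, hence $W>0$.

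The main obstacle is the rigorous justification of the energy identity together with the ensuing integration by parts, since a priori the core $A$ is only measurable and could be unbounded. The observation that removes this difficulty is that one need not appeal to the (later) compact-support statement of Proposition \ref{prop_cpt_supp}: in the case $W=0$ boundedness of $A$ comes for free from $\gamma>0$ and the decay of $\psi$, and the integration by parts is then carried out cleanly on the compactly supported Sobolev function $\Psi^+$ rather than on the topological boundary $\partial A$, whose regularity is not available.
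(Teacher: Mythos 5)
Your proposal is correct and follows essentially the same route as the paper's proof: contradiction from $W=0$, deducing $\gamma>0$ from $|A|<\infty$ and the a.e.\ positivity of the kernel, boundedness of the core from the decay \eqref{psi_conv} of $\psi$, and then the energy identity of Lemma \ref{lem_3.1_FT} combined with integration by parts against the vector field $x$ to obtain $E[\xi]=-3\int \Psi^{+}\,dx\leq 0$, contradicting $E[\xi]=\mathcal{I}_\mu>0$. The only difference is in how the integration by parts is justified: the paper uses cutoff functions $\varphi_M$ with absolute-convergence checks, while you work directly with the compactly supported $H^1$ function $\Psi^{+}$ (which is exactly the paper's $F(\Psi)$), an equally valid and slightly cleaner justification.
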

\begin{proof}
Let  $\xi\in \mathcal{S}_\mu$. By 
Proposition \ref{prop_exist_vortex},
we have
\begin{equation*}
\begin{aligned}
&\xi= {{1}}_{ \{\psi-\frac{1}{2}Wr^2-\gamma>0\}}\quad\mbox{a.e.} 
\end{aligned}
\end{equation*} 
 for some $W\geq 0$ and $\gamma\geq 0$.    
  For a contradiction, let us assume $W=0$. By setting 
\begin{equation}\label{set_a}
  A=\{x\in\mathbb{R}^3\,|\, \psi(x)-\gamma>0\},
\end{equation}  
 we know
  $|A|=\|\xi\|_1\leq 1$. Thus we obtain $\gamma>0$. Indeed, if $\gamma=0$, then $$A=\{\psi>0\}=\mathbb{R}^3\setminus\{r=0\}$$ 
  since the kernel $G$ is positive a.e. 
  and $\xi\geq0$ is non-trivial.  It implies $|A|=\infty$, which is a contradiction. Thus $\gamma=0$ is impossible.\\
  
   By 
\eqref{psi_conv} in Lemma \ref{lem_en_iden_origin},
we know the convergence $\psi(x)\to 0$ as $|x|\to\infty$.   
It implies that the set $A$ in \eqref{set_a} is bounded due to $\gamma>0$. Thus $\xi$ is (essentially) compactly supported. 
Now we can apply the identity 
\eqref{iden_en2} of
Lemma \ref{lem_3.1_FT}
 to have
 \begin{align*}
E[\xi]&= \int(r\partial_r\psi +z\partial_z\psi)\xi\,dx= \int(r\partial_r\Psi +z\partial_z\Psi)\xi\,dx
 =
 \int\left(r\partial_rF(\Psi) +z \partial_zF(\Psi)\right)\,dx,
\end{align*}   where
 we set $\Psi=(\psi-\gamma)$ and 
$F(s)=s^+=\begin{cases} &s,\quad s>0\\ &0,\quad s\leq 0,\end{cases} $ which 
 is an antiderivative of the vorticity function  $f_H(s)=1_{\{s>0\}}$. Then,
we can compute, by integration by parts which will be verified below,
 \begin{equation}\begin{split}\label{ibp_pre}
E[\xi]&=
   \int x\cdot \nabla[F(\Psi)] \,dx=  -\int  (\nabla\cdot x) F(\Psi) \,dx
=
- 3  \int F(\Psi) \,dx\leq 0,
 \end{split}\end{equation}  
  which gives a contradiction
due to $E[\xi]=\mathcal{I}_\mu >0$ from Theorem \ref{thm_exist_max}. Therefore we get $W>0$ once we justify 
the integration by parts done in the above. \\

To justify, we take
  a radial function
 $\varphi\in C^{\infty}_{c}(\mathbb{R}^3)$ satisfying  $\varphi(x)=1$ for $|x|\leq 1$ and $ \varphi(x)=0$ for $|x|\geq 2$, and set the cut-off function on $\mathbb{R}^3$ by $\varphi_M(x)=\varphi(x/M)$ for any $M>0$. Then we have
 \begin{equation}\begin{split}\label{ibp} 
  \int  (x\varphi_M) \cdot \nabla[F(\Psi)]\,dx
  =
  -\int   \varphi_M (\nabla\cdot x) F(\Psi) \,dx
  -\int (x\cdot \nabla\varphi_M)F(\Psi)  \,dx.
 \end{split}\end{equation}  
The first integral  on the left-hand side of 
\eqref{ibp} converges absolutely as $M\to\infty$ since
$$
 \int  |(x\varphi_M) \cdot \nabla[F(\Psi)]|\,dx=\int|\varphi_M(r\partial_r \psi +z\partial_z \psi)\xi|\,dx\leq \int| (r\partial_r \psi +z\partial_z \psi)\xi|\,dx<\infty
$$ by the computation \eqref{abs_conv} in Remark \ref{rem_abs}. 
We observe $$0\leq F(\Psi)= \Psi 1_{A}\leq \psi 1_{A}  $$
and
$$\|\psi\|_\infty\lesssim \left(   \|r^2\xi\|_{1}+
   \|\xi\|_{L^1\cap L^2} \right)
   \lesssim \left(   \|r^2\xi\|_{1}+
   \|\xi\|_{L^1\cap L^\infty}\right)\lesssim (\mu+1)$$ 
 by  \eqref{est_psi_bdd} for $\delta=1$.
Thus, the first integral  on the right-hand side of 
\eqref{ibp}
converges absolutely since 
$$
\int   |\varphi_M (\nabla\cdot x) F(\Psi)| \,dx\leq 3
\int  | F(\Psi)| \,dx\lesssim (1+\mu)|A|<\infty.
$$
For the last integral of \eqref{ibp}, we compute
 \begin{align*} 
 \left|\int  (x\cdot \nabla\varphi_M)  F(\Psi)\,dx\right|\lesssim
 (1+\mu)\int_A |x| |\nabla\varphi_M|  \,dx
 \lesssim
 (1+\mu)|{A\cap\{M\leq |x|\leq 2M\}} |.
 \end{align*} 
 Since $A$ is bounded in $\mathbb{R}^3$, the intersection 
  $A\cap\{M\leq |x|\leq 2M\}$
 has zero measure for sufficiently large $M>0$.
 We have justified the integration by parts done in \eqref{ibp_pre}.\\
  
\end{proof}

  \subsection{Vortex core is bounded.}\label{sec5_step3}\ \\

In order to show compactness of the vortex core,  we prove first that   $r^{-2}\mathcal{G}[\xi]$ for each maximizer $\xi$ vanishes at infinity. 
\begin{lem}\label{lem_psi_hol_conti}
Let   $\xi\in \mathcal{S}_\mu$.
 Then, the stream function  $\psi=\mathcal{G}[\xi]$  
 satisfies, for any $\alpha\in(0,1)$, $$\frac \psi r\in BUC^{1+\alpha}(\overline\Pi)\quad\mbox{and}\quad \frac \psi {r^2}\in BUC^{\alpha}(\overline\Pi).$$ In particular, it satisfies 

\begin{align}\label{decay_frac}
\frac{\psi(r,z)}{r^2}\to 0\quad \textrm{as}\quad |(r,z)|\to\infty.   
\end{align}
\end{lem}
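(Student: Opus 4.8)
The plan is to upgrade the regularity already recorded for the vector potential $\phi=\tfrac{\psi}{r}e_\theta=\tfrac{1}{4\pi|x|}*\omega$ in Lemma \ref{lem_en_iden_origin}, where $\omega=(r\xi)e_\theta$, and then to transfer it back to the scalar quantities $\psi/r$ and $\psi/r^2$ through the identity \eqref{psi_from_phi}. The crucial preliminary observation is that the essential support of $\xi$ is confined to a bounded cylinder $\{r<R_0\}$: by Proposition \ref{prop_en_W} we have $W>0$, and since $\xi=\lambda{{1}}_{\{\Psi>0\}}$ with $\Psi=\psi-\tfrac12 Wr^2-\gamma$ and $\gamma\ge0$ by \eqref{eq_W_lem}, the linear growth bound $|\psi(r,z)|\lesssim r(\|r^2\xi\|_{1}+\|\xi\|_{L^1\cap L^2})$ from \eqref{est_psi} forces $\Psi(r,z)\le Cr-\tfrac12 Wr^2-\gamma<0$ once $r>2C/W$. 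Hence $\{\Psi>0\}\subset\{r<R_0\}$ for some $R_0=R_0(\mu)$, and therefore $\|\omega\|_\infty=\|r\xi\|_\infty\le R_0$ because $0\le\xi\le1$. Combined with $\omega\in L^1(\mathbb{R}^3)$ from \eqref{vor_l1}, this gives $\omega\in(L^1\cap L^\infty)(\mathbb{R}^3)$.

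First I would run elliptic regularity on $-\Delta\phi=\omega$. Since $\omega\in L^\infty(\mathbb{R}^3)$, the Calder\'on--Zygmund estimate yields $\phi\in W^{2,p}_{\mathrm{loc}}(\mathbb{R}^3)$ for every $p<\infty$ with bounds uniform in space (the estimate being translation invariant), so Morrey's embedding produces $\nabla\phi\in C^\alpha$ uniformly for all $\alpha\in(0,1)$. Global boundedness of $\phi$ and $\nabla\phi$ follows from the potential representation: splitting the kernel $\nabla\tfrac{1}{4\pi|x|}$ into its part on $\{|x-y|<1\}$ (controlled by $\|\omega\|_\infty$, since $|x-y|^{-2}$ is locally integrable in three dimensions) and on $\{|x-y|\ge1\}$ (controlled by $\|\omega\|_1$) gives $\|\nabla\phi\|_\infty\lesssim\|\omega\|_\infty+\|\omega\|_1$. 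The same splitting, now sending $|x|\to\infty$, shows $\nabla\phi(x)\to0$: the far field is $\lesssim|x|^{-2}\|\omega\|_1$ and the near field is controlled by the tail smallness of $\int_{|y|\ge|x|/2}|\omega|$ together with the local integrability of the kernel. Altogether $\phi\in BUC^{1+\alpha}(\overline{\mathbb{R}^3})$ with $\nabla\phi$ vanishing at infinity.

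Next I would transfer this to $\overline\Pi$. By \eqref{psi_from_phi}, $\tfrac{\psi}{r}(r,z)=\phi_2(r,0,z)$, i.e. $\psi/r$ is the composition of $\phi_2\in BUC^{1+\alpha}(\overline{\mathbb{R}^3})$ with the isometric inclusion $(r,z)\mapsto(r,0,z)$ of $\overline\Pi$; hence $\psi/r\in BUC^{1+\alpha}(\overline\Pi)$. For $\psi/r^2$ I would use that axi-symmetry and continuity of $\phi$ force it to vanish on the symmetry axis, so $\phi_2(0,0,z)=0$ for all $z$ (compare the limits of $\phi=\tfrac\psi r e_\theta$ along $\theta=0$ and $\theta=\pi/2$). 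Writing $\phi_2(r,0,z)=r\int_0^1\partial_{x_1}\phi_2(tr,0,z)\,dt$ gives $\tfrac{\psi}{r^2}(r,z)=\int_0^1\partial_{x_1}\phi_2(tr,0,z)\,dt$, which inherits boundedness and $C^\alpha$-regularity from $\partial_{x_1}\phi_2\in BUC^\alpha$; thus $\psi/r^2\in BUC^\alpha(\overline\Pi)$. The decay \eqref{decay_frac} then splits into two regimes: for $r\ge1$ one has $|\psi/r^2|\le|\phi_2|\to0$ as $|(r,z)|\to\infty$ since $\phi$ vanishes at infinity by Lemma \ref{lem_en_iden_origin}, while for $r<1$ the integral representation bounds $|\psi/r^2|$ by $\sup_{t\in[0,1]}|\nabla\phi(tr,0,z)|$, which tends to $0$ as $|z|\to\infty$ because $\nabla\phi$ vanishes at infinity.

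The main obstacle is the passage through the axis $\{r=0\}$: the factor $1/r$ in $\phi=\tfrac\psi r e_\theta$ is singular, so bare elliptic estimates for $\psi$ in $\Pi$ would degenerate there, and the H\"older regularity of $\psi/r^2$ genuinely relies on the vanishing of $\phi$ on the axis together with the $C^{1,\alpha}$ regularity of the vector potential. Establishing the decay of $\nabla\phi$ at infinity --- needed for \eqref{decay_frac} in the thin region $r<1$, $|z|\to\infty$, where the weight $r^2$ provides no help --- is the other delicate point, and it is precisely where the $L^1\cap L^\infty$ control of $\omega$ (rather than mere local boundedness) is used.
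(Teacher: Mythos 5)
Your proof is correct. Up through the two H\"older statements it coincides with the paper's own argument: confinement of $\mbox{spt}\,\xi$ to a cylinder $\{r\le R_0\}$ via $W>0$ (the paper gets the cylinder from the uniform bound $\|\psi\|_\infty\lesssim 1+\mu$ of \eqref{est_psi_bdd} rather than from your linear bound \eqref{est_psi}, but both work), hence $\omega=(r\xi)e_\theta\in(L^1\cap L^\infty)(\mathbb{R}^3)$; then potential-theoretic/Calder\'on--Zygmund estimates for $\phi=\frac{1}{4\pi|\cdot|}*\omega$ give $\phi\in BUC^{1+\alpha}(\overline{\mathbb{R}^3})$, and the transfer to $\overline\Pi$ goes through \eqref{psi_from_phi}, the vanishing of $\phi$ on the axis, and the identity $\psi/r^2=\int_0^1\partial_r(\psi/r)|_{(rs,z)}\,ds$ --- exactly as in the paper. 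Where you genuinely diverge is the decay \eqref{decay_frac}. The paper never proves pointwise decay of $\nabla\phi$: it applies the weighted Hardy inequality in $\mathbb{R}^2$ slice-wise to conclude $|\phi|/r\in L^p(\mathbb{R}^3)$ for $p\in(3/2,2)$, and then uses the soft fact that an $L^p$ function which is uniformly H\"older continuous must vanish at infinity. You instead split into the regime $r\ge1$, where $|\psi/r^2|\le|\phi_2|$ and decay of $\phi$ suffices (it is cleaner here to cite the stated conclusion \eqref{psi_conv} via $|\psi/r^2|\le|\psi|$ for $r\ge 1$, since the decay of $\phi$ itself is only established inside the proof of Lemma \ref{lem_en_iden_origin}, not in its statement), and the regime $r<1$, where you prove $\nabla\phi(x)\to0$ as $|x|\to\infty$ by splitting the kernel $|x-y|^{-2}$ at a small radius, using $\|\omega\|_\infty$ on the singular part and the $L^1$-tail of $\omega$ on the rest. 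That tail argument is genuinely needed, since at this stage $\mbox{spt}\,\xi$ is only known to be bounded in $r$, not in $z$ (compactness of the support is Proposition \ref{prop_cpt_supp}, which relies on this lemma), so your kernel splitting cannot be replaced by a crude compact-support bound. Your route is more elementary --- no weighted Hardy inequality --- and yields the extra conclusion that $\nabla\phi$ vanishes at infinity; the paper's route trades that pointwise kernel analysis for one application of the Hardy inequality plus the $L^p$ bounds on $\nabla\phi$ it already has.
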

 
\begin{proof}
Let   $\xi\in \mathcal{S}_\mu$. By Propositions \ref{prop_exist_vortex} and \ref{prop_en_W}, it satisfies

\begin{equation*}
\begin{aligned}
&\xi= {{1}}_{ \{\psi-\frac{1}{2}Wr^2-\gamma>0\}}\quad\mbox{a.e.} 
\end{aligned}
\end{equation*}
for some $W>0$ and $\gamma\geq 0$. We observe
that if $x\in \mbox{spt}( \xi)$, then $$\psi(r,z)\geq  \frac 1 2 W r^2.$$
Since we have 
$\|\psi\|_\infty\lesssim   (\mu+1)$
 by \eqref{est_psi_bdd} for $\delta=1$,  there exists $R>0$ such that $$\mbox{spt}( \xi)\subset \{x\in\mathbb{R}^3\,|\, r\leq R\}. $$
Thus, by setting 
$\omega(x)=r\xi(r,z) e_\theta(\theta)$, we get 
$|\omega(x)|\leq R\xi(r,z)$, which implies
$$\omega\in (L^1\cap L^\infty)(\mathbb{R}^3).$$
By setting 
 $\phi(x)=
 ({\psi(r,z)}/{r})e_\theta(\theta)$, we have
 $ \phi =({1}/{4\pi|x|})* \omega$ (recall Subsection \ref{subsec_axi-sym}).
Since $\omega\in (L^1\cap L^\infty)$, this representation 
implies
\begin{equation}\label{est_phi_r}
\nabla \phi\in L^{p}(\mathbb{R}^3),\quad p\in (3/2,\infty)\quad\mbox{and}\quad
\phi\in W^{2,q}(\mathbb{R}^3),\quad q\in (3,\infty)
\end{equation} (e.g. see \cite[p354]{Stein93}).
It implies
$\phi$ and $\nabla\phi$ are continuous on $\mathbb{R}^3$  (e.g. see \cite[p284]{Evans_book}). 
In particular, $\phi\in BUC^{1+\alpha}(\overline{\mathbb{R}^3})$  for any $\alpha\in(0,1)$  (e.g. see \cite[p280]{Evans_book}).   Since
\begin{equation*}
 \frac{\psi(r,z)}{r}=  \phi_2(r e_{x_1}+z e_{x_3}),\quad (r,z)\in \Pi,
\end{equation*} 
 as in \eqref{psi_from_phi}  where $\phi(x)=\phi_{1}(x) e_{x_1}+\phi_{2}(x)e_{x_2} +\phi_3(x) e_{x_3}$, we get
 $\psi/r\in BUC^{1+\alpha}(\overline\Pi)$ and
  $\partial_r(\psi/r)\in BUC^{\alpha}(\overline\Pi)$.\ \\

We observe that the form $\phi(x)=
 ({\psi(r,z)}/{r})e_\theta(\theta)$ implies
  $\phi(x)|_{r=0}=0$ thanks to continuity of $\phi$ on $\mathbb{R}^3$ and $\psi/r$ on $\overline\Pi$. Thus,  we obtain $(\psi(r,z) / r)|_{r=0}=|\phi(x)||_{r=0}
  =0$.  By applying  the identity
\begin{align*}
f(r)-f(0)=f(rs)|_{s=0}^1=r\int_0^1 f'(rs)ds
\end{align*} into $f(r)=r^{-1}\psi(r,z)$ (for fixed $z$), we obtain, for each $(r,z)\in\Pi,$
\begin{align*}
\frac{\psi(r,z)}{r^2} 
=\int_{0}^{1}\partial_{r}\left(\frac{\psi(r,z)}{r}\right)\Big|_{(r,z)=(rs,z)}\dd s. 
\end{align*}
Due to 
  $\partial_r(\psi/r)\in BUC^{\alpha}(\overline\Pi)$,
  we get  $\psi/r^2\in BUC^\alpha(\overline\Pi)$.
As a consequence, we establish $|\phi|/r \in BUC^{ \alpha}(\overline{\mathbb{R}^3})$.\\ 

On the other hand, 
by  the weighted Hardy's inequality for $\mathbb{R}^2$ (see e.g. \cite{Kufner} or \cite[Corollary 14 (ii)]{DaHi}) and by \eqref{est_phi_r},  we have, for any $p\in(3/2,2)$,
\begin{align*}
\left\|\frac{|\phi|}{r} \right\|^p_{p}&\lesssim\sum_{i=1}^3\int_{\mathbb{R}}\left( \int_{\mathbb{R}^2}\frac {|\phi_i|^p}{{\sqrt{x_1^2+x_2^2}}^p}dx_1dx_2\right)dx_3 \\&
\lesssim_p\sum_{i=1}^3\int_{\mathbb{R}}\left( \int_{\mathbb{R}^2} {|\nabla_{x_1,x_2}\phi_i|^p}dx_1dx_2\right)dx_3
\lesssim \left\|\nabla \phi \right\|^p_{p}<\infty.
\end{align*}  
Thus we have the convergence
$$\frac{|\phi(x)|}{r} \to 0\quad\mbox{as}\quad  |x|\to \infty$$ thanks to
$|\phi|/r\in BUC^{ \alpha}(\overline{\mathbb{R}^3})$.  In other words, we obtain \eqref{decay_frac} .
 
\end{proof}
 
Thanks to Lemma \ref{lem_psi_hol_conti}, we can prove below  that every maximizer is compactly supported. We also obtain
a lower bound of   the traveling speed 
$W$
  depending only on its impulse $\mu$ together with an interesting identity. They are essentially contained in \cite[Lemma 3.2]{FT81} (for $\mu=1$).  We will need them 
  when proving Proposition \ref{prop_small_mu} and Theorem \ref{thm_uniq} in Section \ref{sec_uniq_proof_hill}.  
\begin{prop}\label{prop_cpt_supp}
For any   $\xi\in \mathcal{S}_\mu$, the  support of $\xi$ in $\mathbb{R}^3$ is compact. Moreover, 
the constant
$W$ in \eqref{eq_W_lem} of Proposition \ref{prop_exist_vortex} satisfies
\begin{equation}\label{est_W_lower}
 0<\frac{\mathcal{I}_\mu}{2\mu}\leq W\quad \mbox{and}\quad 
 7\mathcal{I}_\mu=5W\mu+3\gamma\int_{\mathbb{R}^3} \xi dx.
\end{equation}

\end{prop}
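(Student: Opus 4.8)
The plan is to prove compactness of the support first, since this is a prerequisite for applying the energy identity in Lemma \ref{lem_3.1_FT}, and then to extract both the lower bound for $W$ and the virial-type identity from a single integration by parts. Throughout I work in the reduced case $\nu=\lambda=1$, so that $\xi=1_A$ with $A=\{\Psi>0\}$ up to measure zero, where $\Psi=\psi-\tfrac12 Wr^2-\gamma$ and $\psi=\mathcal{G}[\xi]$, with $W>0$ by Proposition \ref{prop_en_W} and $\gamma\geq0$ by Proposition \ref{prop_exist_vortex}. For the compactness, I would observe that on $\mbox{spt}(\xi)$ we have $\Psi>0$, i.e. $\psi>\tfrac12 Wr^2+\gamma\geq \tfrac12 Wr^2$, so that $\psi/r^2>W/2>0$ there. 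By Lemma \ref{lem_psi_hol_conti} the function $\psi/r^2$ is continuous on $\overline\Pi$ and tends to $0$ as $|(r,z)|\to\infty$; hence the superlevel set $\{\psi/r^2>W/2\}$ is bounded, and so is $\mbox{spt}(\xi)$. This uses $W>0$ in an essential way and settles the first assertion.

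With compact support in hand, I would apply Lemma \ref{lem_3.1_FT} to write $E[\xi]=\int (r\partial_r\psi+z\partial_z\psi)\xi\,dx$. Substituting $\psi=\Psi+\tfrac12 Wr^2+\gamma$ splits the integrand as $(r\partial_r\Psi+z\partial_z\Psi)+Wr^2$, so that
\begin{equation*}
E[\xi]=\int \xi\,(x\cdot\nabla\Psi)\,dx+W\int r^2\xi\,dx.
\end{equation*}
Introducing $F(\Psi)=\Psi^+=\Psi\xi$ exactly as in the proof of Proposition \ref{prop_en_W}, one has $\xi\,(x\cdot\nabla\Psi)=x\cdot\nabla[F(\Psi)]$ a.e., and the same cut-off integration by parts used to derive \eqref{ibp_pre} (legitimate because $A$ is now bounded, so the $\varphi_M$ boundary term vanishes for large cut-off radius) yields $\int x\cdot\nabla[F(\Psi)]\,dx=-3\int \Psi\xi\,dx$. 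Using $\int r^2\xi\,dx=2\mu$, this produces the key relation
\begin{equation*}
E[\xi]=-3\int \Psi\xi\,dx+2W\mu.
\end{equation*}

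From here both conclusions follow by elementary algebra. The lower bound is immediate: since $\Psi>0$ on $\mbox{spt}(\xi)$ we have $\int\Psi\xi\,dx\geq0$, whence $\mathcal{I}_\mu=E[\xi]\leq 2W\mu$, i.e. $W\geq \mathcal{I}_\mu/(2\mu)>0$. For the identity I would expand $\int\Psi\xi\,dx=\int\psi\xi\,dx-\tfrac12 W\int r^2\xi\,dx-\gamma\int\xi\,dx$ and substitute $\int\psi\xi\,dx=2E[\xi]$ together with $\int r^2\xi\,dx=2\mu$ to obtain $\int\Psi\xi\,dx=2E[\xi]-W\mu-\gamma\int\xi\,dx$; plugging this into the key relation and collecting terms gives $7E[\xi]=5W\mu+3\gamma\int\xi\,dx$, which is the asserted identity because $E[\xi]=\mathcal{I}_\mu$. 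The main obstacle is not the algebra but the logical ordering: Lemma \ref{lem_3.1_FT} and the integration by parts both require compact support, so the boundedness of $\mbox{spt}(\xi)$ — which in turn rests on the decay of $\psi/r^2$ from Lemma \ref{lem_psi_hol_conti} and on $W>0$ — must be established before anything else.
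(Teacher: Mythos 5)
Your proposal is correct and follows essentially the same route as the paper's own proof: compactness of the support from the decay of $\psi/r^2$ (Lemma \ref{lem_psi_hol_conti}) together with $W>0$, then the energy identity of Lemma \ref{lem_3.1_FT} plus the cut-off integration by parts as in \eqref{ibp_pre}, yielding $E[\xi]=-3\int F(\Psi)\,dx+2W\mu$, from which the lower bound and the identity $7\mathcal{I}_\mu=5W\mu+3\gamma\int\xi\,dx$ follow by the same algebra. The only cosmetic difference is that the paper records the expansion of $\int F(\Psi)\,dx$ as a separate claim \eqref{W_claim} before combining, whereas you substitute it directly; the content is identical.
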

\begin{proof}
Let   $\xi\in \mathcal{S}_\mu$. By Proposition \ref{prop_exist_vortex} and \ref{prop_en_W},  we get
$
\xi= {{1}}_{ A} 
$ a.e.  where $A=\{\psi-\frac{1}{2}Wr^2-\gamma>0\}$ 
for some $W>0$ and $\gamma\geq 0$. 
  If 
$x\in A$, 
 then we have 
$$ \frac{\psi(r,z)}{r^2}\geq  \frac 1 2 W>0.$$
Thus the convergence \eqref{decay_frac} 
from Lemma \ref{lem_psi_hol_conti}
 implies
that $A$ is bounded so that 
the (essential) support of $\xi$ is bounded in $\mathbb{R}^3$.\\ 

 Now we  use the identity 
 \eqref{iden_en2} from  Lemma \ref{lem_3.1_FT}. Here we follow a similar line of the proof of 
Proposition \ref{prop_en_W}. 
 By setting 
 $\Psi=\psi-(1/2)Wr^2-\gamma$,  we observe
 $\partial_r\Psi =\partial_r\psi -Wr$ and $\partial_z\Psi =\partial_z\psi.$ Thus we  have, by setting 
 $F(s)=s^+$, \begin{equation*}
 E[\xi]=\int(r\partial_r\Psi +z\partial_z\Psi)\xi\,dx+W\int r^2\xi\,dx
 =\int x\cdot \nabla [F(\Psi)] dx+2W\mu.
\end{equation*}  
 By integration by parts, we   compute 
$$ \int x\cdot \nabla [F(\Psi)] dx= -3 \int F(\Psi)dx,$$
which  can be  justified as done in the proof of the integration by parts \eqref{ibp_pre} in Proposition \ref{prop_en_W}. 
In sum, we get
\begin{equation}\label{W_lower_comp}
\mathcal{I}_\mu =E[\xi]=
-3{\int F(\Psi)\,dx}+2W\mu\leq 2W\mu.
\end{equation} Due to $\mathcal{I}_\mu>0$ from Theorem \ref{thm_exist_max}, the estimate in \eqref{est_W_lower} is obtained.\\

To prove the identity in 
\eqref{est_W_lower}, we claim 
 \begin{align}\label{W_claim}
\mathcal{I}_\mu=\frac{1}{2}\left(
\int F(\Psi) dx + W\mu +\gamma \int \xi dx
\right).
\end{align}  Indeed, we compute 
\begin{align*}
 \int F(\Psi) dx=&\int \Psi^+dx= \int \Psi \xi dx
= \int \left(\psi-\frac 1 2 W r^2-\gamma \right) \xi dx\\
&=  2 E[\xi] -\frac 1 2 W\int r^2\xi dx-\gamma \int \xi dx= 2 \mathcal{I}_\mu-  W\mu -\gamma \int \xi dx,
\end{align*}  which gives the above claim. By combining the
identities \eqref{W_lower_comp} and \eqref{W_claim},
we get the identity 
$$
7\mathcal{I}_\mu=5W\mu+3\gamma\int \xi dx.
$$

   \end{proof}

 Now we are ready to finish proving Theorem \ref{thm_max_is_ring}.
 \begin{proof}[Proof of Theorem \ref{thm_max_is_ring}]
 
By using
Proposition \ref{prop_exist_vortex},   \ref{prop_en_W},    \ref{prop_cpt_supp}, the proof is done.

\end{proof}

\subsection{Positive flux constant gives the full mass.}\ \\

Before finishing this section, we present  the following lemma saying that
  positivity of $\gamma$ in \eqref{eq_W} for  $\xi\in \mathcal{S}_\mu$ guarantees that $\xi$ has the full mass. We put this lemma here since its proof  follows   a similar manner of the proof of Proposition \ref{prop_exist_vortex} even though the result  will be used only in Section \ref{sec_uniq_proof_hill}.
  \begin{lem}\label{lem_pos_gam}
  For each $\xi\in \mathcal{S}_{\mu}$ having $\gamma>0$ in \eqref{eq_W}, we have $$\int_{\mathbb{R}^3} \xi dx =1.$$
\end{lem}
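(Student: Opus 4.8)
The plan is to argue by contradiction through the first variation, exhibiting that a positive flux constant is incompatible with a non-saturated mass constraint. Assume $\xi\in\mathcal{S}_\mu$ has $\gamma>0$ but $\int\xi\,dx<1$. By Theorem \ref{thm_max_is_ring} we may write $\xi=1_A$ with $A=\{\Psi>0\}$ up to measure zero, where $\Psi=\psi-\tfrac12Wr^2-\gamma$, $\psi=\mathcal{G}[\xi]$, and $W>0$. Since $0<|A|=\int\xi\,dx<\infty$, Corollary \ref{cor_exist_two} furnishes two exceptional points $y',y''$ of $A$ with cylindrical radii $r'>r''>0$. The first step is to observe that $\Psi$ vanishes at every exceptional point: as $\Psi$ is continuous on $\overline\Pi$ (Lemma \ref{lem_en_iden_origin}) and $A=\{\Psi>0\}$ a.e.\ is open, the value $\Psi(y)>0$ would force a full-density neighbourhood inside $A$ and $\Psi(y)<0$ a full-density neighbourhood inside $A^c$, either contradicting exceptionality; hence $\psi(y')=\tfrac12Wr'^2+\gamma$ and $\psi(y'')=\tfrac12Wr''^2+\gamma$.

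Next I would build the admissible perturbation by reusing the approximations constructed in the proof of Proposition \ref{prop_exist_vortex}. Let $f^-_n$ and $g^+_n$ be the normalized indicator functions supported on $T_{r_n}(y'')\cap A^c$ and $T_{r_n}(y')\cap A$ respectively, so that $f^-_n\ge0$ on $A^c$, $g^+_n\ge0$ on $A$, $\int f^-_n\,dx=\int g^+_n\,dx=1$, and, by \eqref{pro_f_g} together with the continuity of $\psi$, $\int\psi f^-_n\,dx\to\psi(y'')$, $\int\psi g^+_n\,dx\to\psi(y')$, while $\int\tfrac12 r^2 f^-_n\,dx\to\tfrac12 r''^2$ and $\int\tfrac12 r^2 g^+_n\,dx\to\tfrac12 r'^2$. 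Setting $\lambda_n=\bigl(\int\tfrac12 r^2 f^-_n\,dx\bigr)/\bigl(\int\tfrac12 r^2 g^+_n\,dx\bigr)$ and $\eta_n=f^-_n-\lambda_n g^+_n$, the perturbation preserves the impulse exactly, $\int\tfrac12 r^2\eta_n\,dx=0$, obeys the sign condition $\eta_n\le0$ on $A$ and $\eta_n\ge0$ on $A^c$, and raises the mass by $\int\eta_n\,dx=1-\lambda_n\to 1-r''^2/r'^2>0$. Because $\|\xi\|_1<1$ strictly and $\xi=1_A$ with $\eta_n\le0$ on $A$, $\eta_n\ge0$ on $A^c$, one checks $0\le\xi+\epsilon\eta_n\le1$, $\tfrac12\|r^2(\xi+\epsilon\eta_n)\|_1=\mu$, and $\|\xi+\epsilon\eta_n\|_1<1$ for all sufficiently small $\epsilon>0$, i.e.\ $\xi+\epsilon\eta_n\in\mathcal{P}'_\mu$.

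Finally I would compute the first variation. Since $\xi\in\mathcal{S}_\mu=\mathcal{S}'_\mu$ by Theorem \ref{thm_exist_max}, admissibility of $\xi+\epsilon\eta_n$ yields $\tfrac{d}{d\epsilon}E[\xi+\epsilon\eta_n]|_{\epsilon=0^+}=\int\psi\eta_n\,dx\le0$, exactly as in \eqref{ineq_int_eta_n}. On the other hand, passing to the limit,
\[
\int\psi\eta_n\,dx\ \longrightarrow\ \psi(y'')-\tfrac{r''^2}{r'^2}\,\psi(y')=\gamma\Bigl(1-\tfrac{r''^2}{r'^2}\Bigr)>0,
\]
where the impulse-preserving choice of $\lambda_n$ cancels the $\tfrac12Wr^2$ contributions and leaves only the $\gamma$-term, strictly positive precisely because $\gamma>0$ and $r''<r'$. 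This contradicts $\int\psi\eta_n\,dx\le0$ for large $n$, so $\int\xi\,dx<1$ is impossible and $\int\xi\,dx=1$. The main obstacle is the bookkeeping that makes $\eta_n$ simultaneously impulse-preserving, mass-increasing, and admissible while respecting the patch sign structure; the decisive point is the placement of the positive bump at the smaller radius $y''$ and the negative bump at the larger radius $y'$, which is exactly what forces $\lambda_n<1$ and prevents the $\gamma$-terms from cancelling.
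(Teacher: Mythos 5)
Your proof is correct, but it takes a genuinely different route from the paper's. The paper proves the contrapositive: assuming $\int\xi\,dx<1$, it reruns the Lagrange-multiplier machinery of Proposition \ref{prop_exist_vortex} with \emph{arbitrary} sign-constrained test functions $h$, correcting only the impulse via $h^{\pm}_{n,2}$ (the slack mass constraint makes the mass-correcting term unnecessary); passing to the limit gives $0\geq\int\bigl(\psi-\tfrac12Wr^2\bigr)h\,dx$ for all admissible $h$, whence $A=\{\psi-\tfrac12Wr^2>0\}$ a.e.\ by the same elliptic-regularity step as in Proposition \ref{prop_exist_vortex}, and then $\gamma=0$ follows from the \emph{uniqueness} of the pair $(W,\gamma)$. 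You instead derive a direct contradiction from $\gamma>0$ together with $\int\xi\,dx<1$ using a single explicit competitor: an impulse-preserving two-bump perturbation, positive bump at the smaller-radius exceptional point inside $A^c$, negative bump at the larger-radius one inside $A$. The extra ingredient you supply, which the paper never states but which is correct, is that the adjusted stream function $\Psi$ vanishes at every exceptional point of $A$: continuity of $\Psi$ (Lemma \ref{lem_en_iden_origin}), the identity $A=\{\Psi>0\}$ a.e.\ from Theorem \ref{thm_max_is_ring}, and the a.e.-invariance of the metrical boundary force this, and it is what lets you evaluate the limiting first variation as $\gamma\bigl(1-r''^2/r'^2\bigr)>0$, contradicting \eqref{ineq_int_eta_n}. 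Your argument is shorter and more quantitative: it bypasses the arbitrary-$h$ argument, the $H^2_{loc}$ analysis on $\{\Psi=0\}$, and the appeal to uniqueness of the multipliers, and it isolates the exact mechanism (with a slack mass constraint, shifting impulse-neutral mass toward the axis gains energy $\gamma(1-r''^2/r'^2)$ to first order, so $\gamma$ must vanish). The paper's route, in exchange, produces the structural identity $A=\{\psi-\tfrac12Wr^2>0\}$ as an intermediate conclusion rather than a bare contradiction; both proofs rely equally on Theorem \ref{thm_max_is_ring} and Theorem \ref{thm_exist_max}, so neither needs fewer prerequisites.
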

\begin{rem}
The above lemma was first shown by \cite[Remark 1 in Section 5]{FT81} for a certain maximizer $\xi\in\mathcal{S}_\mu$ constructed in the paper. We simply adapt the proof here so that it works for \textit{every} maximizer (cf. \cite[Remark 2.6 (ii)]{AC2019} for 2d case).
\end{rem}
\begin{proof}
 
 Let  $\xi\in \mathcal{S}_{\mu}$. Then, 
  $\xi\in  \mathcal{S}'_{\mu}$ by Theorem \ref{thm_exist_max}, and there exist unique $W>0, \gamma\geq 0$ such that  $\xi={1}_A$ with $A=\{\psi-(1/2)Wr^2-\gamma>0\}$   by Theorem \ref{thm_max_is_ring} which we just have proved. We recall that the unique constants 
$W, \gamma$ are simply obtained  by \eqref{def_W_gamma_uniq} 
or \eqref{def_W_gamma} (in the proof of Proposition \ref{prop_exist_vortex}).  \\
 
 Let us suppose   $$\int \xi dx <1.$$ Then our goal is to  show $\gamma=0$. 
For each $n\geq 1$, we define (cf. see \eqref{defn_eta_n}) $$\eta_n =\begin{cases}&h-\left(\frac 1 2\int {  r^2} hdx\right)h^+_{n,2}\quad\mbox{when}\quad \frac 1 2\int {   r^2}h dx \geq 0 \quad \mbox{i.e. case (I),(III)},\\
&h-\left(\frac 1 2\int {  r^2} hdx\right)h^-_{n,2}\quad\mbox{when}\quad \frac 1 2\int {  r^2}h dx < 0\quad \mbox{i.e. case (II),(IV)},
\end{cases}$$ where $ h, h^\pm_{n,2}\in L^\infty(\mathbb{R}^3)$ are axi-symmetric, compactly supported functions appeared in 
\eqref{defn_h_n} and \eqref{cond_sign_h} during the proof of Proposition  \ref{prop_exist_vortex}. Thanks to the properties 
\eqref{condn_h_n_2}, \eqref{condn_h_n} of $h^\pm_{n,2}$, the function $\eta_n$ for each $n$ satisfies 
(cf. \eqref{obs_sign})
$$ \eta_n\leq 0\quad\mbox{on}\quad A,\quad \eta_n\geq 0\quad  \mbox{on}\quad A^c,\quad \frac 1 2 \int {  r^2}\eta_n\,dx=0.$$
We fix $n\geq 1$ and consider $(\xi+\epsilon\eta_n)$ for $\epsilon>0$.
Due to the assumption $\int \xi dx <1$, we get $$(\xi+\epsilon\eta_n)\in \mathcal{P}'_\mu\quad \mbox{for any sufficiently small}\quad \epsilon>0.$$
Thus we have $E[\xi+\epsilon\eta_n]-E[\xi]\leq 0$ for such $\epsilon>0$.
  Hence,
   by taking limit $\epsilon\searrow 0$, we have
$$0\geq \int \psi\eta_n dx$$ as in \eqref{ineq_int_eta_n}.  
From the definition of $W^+_n$ in \eqref{defn_w_n}, we have 
\begin{align*}
&0\geq 
{\int}\left(\psi-W^+_n {\frac 1 2 r^2} \right)h\dd x\quad \mbox{when}\quad \frac 1 2\int {  r^2} h dx \geq 0,\\
&0\geq  
{\int}\left(\psi-W^-_n {\frac 1 2 r^2} \right)h\dd x\quad \mbox{when}\quad \frac 1 2\int {  r^2} h dx< 0.
\end{align*}
By  the convergence $W^\pm_n\rightarrow W$  
in \eqref{conv_w_n}, we can take the limit to the above inequalities   to obtain
\begin{align*}
&0\geq  
{\int}\left(\psi-W {\frac 1 2 r^2}  \right)h\dd x=\int_{A^c}+\int_{A}
\end{align*} for any case.
Since $h$ is an arbitrary function satisfying 
the sign condition \eqref{cond_sign_h}, 
 we   follow the same approach as in  
 (the last part of) the proof of Proposition 
 \ref{prop_exist_vortex} to arrive at
$$ A=\{\psi-\frac 1 2 W{ r^2}>0 \}\quad a.e.$$ By uniqueness of the pair of constants $(W, \gamma)$ from Proposition \ref{prop_exist_vortex}, we get $\gamma=0$.  \ \\
 
\end{proof}
\section{Compactness}\label{sec_cpt}

In this section, we prove compactness (Theorem \ref{thm_cpt}), which is needed  when proving the stability    (Theorem \ref{thm_hill_gen})   in Subsection \ref{subsec_proof}.

\vspace{15pt}
 
\subsection{Concentrated compactness lemma:  Lions (1984)}\label{subsec_cpt}\ \\

We start the proof by stating  a slight variation of the concentrated compactness lemma  \cite[Lemma I.1]{Lions84a}. For instance, such a variation can be found in  \cite[Lemma 1]{BNL13}.
Here,  $  B_R(r',\,z')=\{(r,z)\in\Pi\,|\, |(r,z)-(r',\,z')|<R\}$ is the disk in the half-space $\Pi$ centered at $(r',z')$ with radius $R$ as  defined in \eqref{defn_ball}.
\begin{lem}\label{lem_concent}
Let $0<\mu<\infty$. Let $\{\rho_n\}_{n=1}^\infty\subset L^{1}(\Pi)$ satisfy 
\begin{align*}
\rho_n\geq 0\quad \mbox{for}\quad  n\geq 1\quad \mbox{and}\quad 
{\int_\Pi}\rho_n drdz\to \mu\quad \textrm{as}\quad n\to\infty.
\end{align*}
Then, there exists a subsequence $\{\rho_{n_k}\}_{k=1}^\infty$ satisfying  one of the three following possibilities:\\

\noindent 
(i) (Compactness)
There exists a sequence $\{(r_k,z_k)\}_{k=1}^\infty\subset  {\overline\Pi}$ such that 
for arbitrary $\varepsilon>0$, there exist $R>0$ and an integer $k_0\geq 1$  such that 
\begin{align*}
\int_{
 B_R(r_k,z_k)
}
\rho_{n_k} drdz\geq \mu-\varepsilon \qquad \textrm{for}\quad k\geq k_0.
\end{align*}\\ 
\noindent 
(ii) (Vanishing) For each $R>0$,
\begin{align*}
\lim_{k\to\infty}\sup_{(r',\,z')\in \overline\Pi }\int_{
 B_R(r',\,z')
}\rho_{n_k} drdz=0.    
\end{align*}\\
\noindent 
(iii) (Dichotomy) There exists a constant $\alpha\in (0,\mu)$ such that for arbitrary $\varepsilon>0$, there exist 
an integer $k_0\geq 1$ and
sequences  $\{\rho_{k}^{(1)}\}_{k=1}^\infty$, $\{\rho_{k}^{(2)}\}_{k=1}^\infty\subset L^{1}(\Pi)$ such that for each $k\geq k_0$, \\
$$ \rho_{k}^{(1)} ={1}_{\Omega_k^{(1)}}\rho_{n_k},
\quad \rho_{k}^{(2)} ={1}_{\Omega_k^{(2)}}\rho_{n_k}\quad\mbox{
for some disjoint measurable subsets }\quad \Omega_k^{(1)},\Omega_k^{(2)}\subset \Pi,$$
\begin{equation*}
\begin{aligned}
||\rho_{n_k}-(\rho_{k}^{(1)}+\rho_{k}^{(2)})||_{L^1(\Pi)}+
\left|{\int_\Pi}\rho_{k}^{(1)}drdz-\alpha  \right|
+\left|{\int_\Pi}\rho_{k}^{(2)}drdz-(\mu-\alpha)  \right|
\leq \varepsilon,
\end{aligned}
\end{equation*}
 and
  $$\textrm{dist}\ (\Omega_k^{(1)},\Omega_k^{(2)})\to \infty\quad \textrm{as}\quad k\to\infty.$$ 

\end{lem}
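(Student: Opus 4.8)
The plan is to run Lions' classical concentration-function argument, adapted to the half-plane $\Pi$. For each $n$ I would introduce the L\'evy concentration function
$$Q_n(R)=\sup_{(r',z')\in\overline\Pi}\int_{B_R(r',z')}\rho_n\,drdz,\qquad R\geq0.$$
Each $Q_n$ is nondecreasing in $R$, satisfies $0\leq Q_n(R)\leq\int_\Pi\rho_n\,drdz$, and $Q_n(R)\to\int_\Pi\rho_n\,drdz$ as $R\to\infty$ by monotone convergence. Since $\int_\Pi\rho_n\,drdz\to\mu$, the family $\{Q_n\}$ is uniformly bounded, so by Helly's selection theorem there is a subsequence (relabelled $\{\rho_{n_k}\}$) with $Q_{n_k}(R)\to Q(R)$ for every $R\geq0$, where $Q:[0,\infty)\to[0,\mu]$ is nondecreasing. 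Setting $\alpha:=\lim_{R\to\infty}Q(R)\in[0,\mu]$, the three alternatives of the lemma correspond exactly to $\alpha=0$, $\alpha=\mu$, and $0<\alpha<\mu$.

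The two extreme cases are quick. If $\alpha=0$, then $Q\equiv0$ by monotonicity, so for each fixed $R$ one has $\sup_{(r',z')\in\overline\Pi}\int_{B_R(r',z')}\rho_{n_k}\,drdz=Q_{n_k}(R)\to0$, which is precisely the vanishing alternative (ii). If $\alpha=\mu$, then for $\varepsilon>0$ I would choose $R_\varepsilon$ with $Q(R_\varepsilon)>\mu-\varepsilon/2$, so that $Q_{n_k}(R_\varepsilon)>\mu-\varepsilon$ for large $k$ and hence a center $(r_k,z_k)\in\overline\Pi$ with $\int_{B_{R_\varepsilon}(r_k,z_k)}\rho_{n_k}\,drdz>\mu-\varepsilon$. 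To produce one sequence of centers valid for all $\varepsilon$, I would use the elementary fact that two disks of equal radius each carrying more than half of the (nearly $\mu$) total mass must intersect, which keeps the optimal centers within a bounded distance of one another as $\varepsilon$ shrinks; a diagonal argument then yields a single $\{(r_k,z_k)\}\subset\overline\Pi$ realizing the compactness alternative (i).

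The substantive case is dichotomy, $0<\alpha<\mu$. Here I would fix $\varepsilon>0$, choose $R$ with $Q(R)>\alpha-\varepsilon$, and let $(r_k,z_k)\in\overline\Pi$ be centers with $\int_{B_R(r_k,z_k)}\rho_{n_k}\,drdz>\alpha-\varepsilon$ for large $k$. Using $Q(S)\leq\alpha$ for all $S$ together with $Q_{n_k}(S)\to Q(S)$, the mass gained between radii $R$ and $S$ around $(r_k,z_k)$, namely $\int_{B_S(r_k,z_k)}\rho_{n_k}-\int_{B_R(r_k,z_k)}\rho_{n_k}$, can be made at most $2\varepsilon$ for each fixed $S$ and all large $k$. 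I would then set
$$\rho^{(1)}_k={1}_{\Omega^{(1)}_k}\rho_{n_k},\quad \Omega^{(1)}_k=B_R(r_k,z_k),\qquad \rho^{(2)}_k={1}_{\Omega^{(2)}_k}\rho_{n_k},\quad \Omega^{(2)}_k=\Pi\setminus B_{S_k}(r_k,z_k),$$
with $S_k\to\infty$ chosen to diverge slowly along the subsequence. The annular mass $\|\rho_{n_k}-(\rho^{(1)}_k+\rho^{(2)}_k)\|_{L^1(\Pi)}$ then stays below $2\varepsilon$, while $\int_\Pi\rho^{(1)}_k\,drdz$ is within $\varepsilon$ of $\alpha$ and $\int_\Pi\rho^{(2)}_k\,drdz$ is within $O(\varepsilon)$ of $\mu-\alpha$ (using $\int_\Pi\rho_{n_k}\to\mu$), and $\textrm{dist}(\Omega^{(1)}_k,\Omega^{(2)}_k)=S_k-R\to\infty$. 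Finally a diagonalization over $\varepsilon=1/j$ delivers the required sequences, the value $\alpha=\lim_{R\to\infty}Q(R)$ being fixed independently of $\varepsilon$.

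I expect the slow-growth choice of $S_k$ in the dichotomy case to be the main obstacle: the bound $Q_{n_k}(S)\approx Q(S)\leq\alpha$ only holds for $S$ held fixed as $k\to\infty$, so $S_k$ must be allowed to grow just slowly enough that the annular mass remains controlled while $S_k-R\to\infty$. The cleanest way to reconcile these competing demands is to arrange the whole argument as a single diagonal extraction over $\varepsilon=1/j$, selecting $S_k$ and the index $k$ jointly so that the estimates along the chosen subsequence hold simultaneously.
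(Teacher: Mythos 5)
Your proposal is correct, but it takes a genuinely different route from the paper. The paper's proof is a two-line reduction: it observes that Lions' concentration-compactness lemma \cite[Lemma I.1]{Lions84a} remains valid on the half-plane $\Pi$, and then applies that result to the normalized sequence $\tilde\rho_n:=\frac{\mu}{\int_\Pi\rho_n\,drdz}\,\rho_n$, whose members have total mass exactly $\mu$; the conclusions transfer back to $\rho_n$ because the normalizing factor tends to $1$. You instead re-derive the lemma from first principles via the L\'evy concentration function $Q_n$, Helly's selection theorem, and the trichotomy $\alpha=0$, $\alpha=\mu$, $0<\alpha<\mu$ --- that is, you reproduce the standard proof of Lions' lemma adapted to $\Pi$. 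What your approach buys: it is self-contained; it makes explicit \emph{why} the half-plane causes no difficulty (the concentration-function machinery uses nothing about $\mathbb{R}^2$ that $\overline\Pi$ lacks, since no translation invariance is invoked); and it absorbs the varying masses $\int_\Pi\rho_n\,drdz\to\mu$ directly into the estimates, so no normalization is needed. What the paper's approach buys: brevity, and freedom from the diagonal/slow-growth bookkeeping in the dichotomy case, which you rightly single out as the main technical burden of the direct proof and which you resolve in the standard way --- for each fixed $S$ the bound $Q_{n_k}(S)\le\alpha+\varepsilon$ holds for all large $k$ uniformly in the center (the sup in $Q_{n_k}$ is over all centers), so $S_k$ may be sent to infinity along a diagonal while the annular mass stays below $2\varepsilon$. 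Your sketches of the remaining standard steps (the intersecting-balls argument producing a single sequence of centers in the compactness case, and the fact that $\alpha$ is fixed once and for all, independently of $\varepsilon$, in the dichotomy case) are accurate, so the proof goes through.
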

\begin{proof}[Proof of Lemma \ref{lem_concent}]
  We  first observe that the concentrated compactness lemma \cite[Lemma I.1]{Lions84a} holds even  for the half space $\Pi=\{(r,z)\in\mathbb{R}^2\,|\, r,z,\in\mathbb R ,\, r>0\}$. Then,
 we 
 apply the result 
into $$\tilde{\rho}_{n}:=\frac{\mu}{ {{\int_\Pi}\rho_n drdz}}\rho_n\,$$
due to
$\int_{\Pi} \tilde\rho_n\,drdz=\mu$. 
\end{proof}
 \subsection{Proof of compactness theorem (Theorem \ref{thm_cpt})} \ \\

 \begin{proof}[Proof of Theorem \ref{thm_cpt}]
Let $\mu\in(0,\infty)$. As in the previous sections, it is enough to show Theorem \ref{thm_cpt} for the case $\nu=\lambda=1$ since the general case will follow the scaling argument \eqref{scaling}. \\

  
  Let  $\{\xi_n\}$ be a sequence of non-negative axi-symmetric functions and let $\{a_n\}$ be  a sequence of positive numbers such that
 \begin{equation}\begin{split}\label{condn_cpt}
 &a_n\to 0\quad \mbox{as}\quad n\to\infty,\\
 &\limsup_{n\to\infty}\|\xi_n\|_{{1}}\leq1,\quad
 \lim_{n\to\infty}\int_{\{x\in\mathbb{R}^3\,|\,|\xi_n(x)-1|\geq a_n\}}\xi_n\,dx=0,\quad \color{black}
 \lim_{n\to\infty}\frac 1 2\|r^2\xi_n\|_{1} =\mu,  \\
&\sup_n\|\xi_n\|_2<\infty,\quad  \mbox{and}\quad \quad \lim_{n\to\infty}E[\xi_n]= {\mathcal{I}}_{\mu}.
\end{split}\end{equation}  
 We  set
  \begin{equation}\label{unif_l2}
  K_0=\sup_n\|\xi_n\|_{2}<\infty.
  \end{equation}    
By taking a subsequence if necessary  (still denoted by $\{\xi_n\}$  for simplicity),  we may assume 
\begin{equation}\label{condn_cpt2} \|\xi_n\|_1\leq 2, \quad \frac 1 2 \mu < \mu_n <2\mu \quad\mbox{for each}\quad n\geq 1,\end{equation}
   where we set $\mu_n=\frac 1 2 \|r^2\xi_n\|_{1}$.
 By setting $$\rho_n(r,z)=\pi  r^3\xi_n(r,z)\geq 0,\quad (r,z)\in\Pi,$$  we have
  $$\int_\Pi \rho_ndrdz=\pi\int_\Pi r^3 \xi_n drdz= \frac 1 2\int_{\mathbb{R}^3}r^2\xi_n \,dx= \mu_n\to \mu\quad\mbox{as} \quad n\to\infty.$$
 Now we can apply Lemma \ref{lem_concent} into the sequence $\{\rho_n\}$. 
 Then, for a certain subsequence  (still using the same parameter $n$),  one of the three cases,  (ii) Vanishing, (iii) Dichotomy, (i) Compactness, should occur. First, we shall exclude the cases (ii) Vanishing, (iii) Dichotomy  in order to get the case (i) Compactness.\\  

$\bullet$ Elimination of Case (ii)\ \textit{Vanishing:}\ \\

Let us suppose that the vanishing case (ii) happens. i.e. we assume
\begin{align}\label{vanishing}
\lim_{n\to\infty}\sup_{(r',\,z')\in \overline \Pi}\int_{
B_R(r',\,z')
}r^3\xi_n \,drdz=0\quad \textrm{for each}\ R>0.
\end{align} 
To contradict, it is enough to show 
\begin{equation}\label{en_conv_1}
 \lim_{n\to\infty}E[\xi_n]=0
\end{equation}
 since
this implies ${\mathcal{I}}_{\mu}= 0$ by \eqref{condn_cpt}, which gives  a contradiction to ${\mathcal{I}}_{\mu}>0$ by Theorem \ref{thm_exist_max} 
To show \eqref{en_conv_1},
we recall  the estimate \eqref{est_F} with $\tau=3/2$:
\begin{align}\label{G_est_}
G(r,z,r',z')\leq C_1 \frac{(rr')^{2}}{|(r,z)-(r',z')|^3},
\end{align} where $C_1>0$ is a universal constant. Then, for any $R\geq 1$,
 we decompose
\begin{align*}
E[\xi_n]&= \iint  \pi G(r,z,r',z')\xi_n(r',z')\xi_n(r,z)rr'\,dr'dz'drdz \\&
=\iint_{|(r,z)-(r',z')|\geq R} +
\iint\limits_{\substack{|(r,z)-(r',z')|< R, \\ G(r,z,r',z')<  C_1 Rr'^2r^2}}+\iint\limits_{\substack{|(r,z)-(r',z')|< R, \\ G(r,z,r',z')\geq C_1 Rr'^2r^2}}=:I_{R,n}+II_{R,n}+III_{R,n}.
\end{align*}
For the integral $I_{R,n}$, 
we have, by \eqref{condn_cpt2} and \eqref{G_est_}, 
\begin{align*}
I_{R,n}
\lesssim \frac{\mu_n^{2}}{ R^{3}}\lesssim  \frac{\mu^2}{ R^{3}}.
\end{align*} 
\color{black}
For the integral $II_{R,n}$, we estimate
\begin{align*}
II_{R,n}& \lesssim R
\int_{\Pi}\int_{B_R(r,z)} r'^3 r^3 \xi_n(r',z')\xi_n(r,z)\,dr'dz'drdz\\
&\lesssim R   \left( \int_{\Pi} r^3\xi_n(r,z)\,drdz\right)  \left(\sup_{(r,z)\in \Pi}\int_{
B_R(r,z)
}r'^3\xi_n(r',z')\,dr'dz'\right)\\
&\lesssim R  \mu   \left(\sup_{(r,z)\in \Pi}\int_{
B_R(r,z)
}r'^3\xi_n(r',z')\,dr'dz'\right). 
\end{align*}
For the integral $III_{R,n}$, we observe that 
the condition $G\geq  C_1 Rr'^2r^2$ together with
\eqref{G_est_}
 implies $|(r,z)-(r',z')|\leq R^{-1/3}\leq 1\leq R$.  
Thus we get
\begin{align*}&III_{R,n}\lesssim \iint\limits_{\substack{|(r,z)-(r',z')|\leq  R^{-1/3} }}   G(r,z,r',z')r'\xi_n(r',z')r\xi_n(r,z)\,dr'dz'drdz  \\
&= \int_\Pi r\xi_n(r,z) \left(\int_{B_{R^{-1/3}}(r,z)}  G(r,z,r',z')r'\xi_n(r',z')\,dr'dz' \right)drdz \\
&\leq   \int_\Pi r\xi_n(r,z)\left(\int_{B_{R^{-1/3}}(r,z)}  r'|G(r,z,r',z')|^2\,dr'dz'\right)^{1/2}\left(\int_{B_{R^{-1/3}}(r,z)}  r'|\xi_n(r',z')|^2\,dr'dz'\right)^{1/2}  drdz \\
&\lesssim  \|\xi_n\|_{2}  \int_\Pi r\xi_n(r,z)   \left(\int_{B_{R^{-1/3}}(r,z)}  r'|G(r,z,r',z')|^2\,dr'dz'   \right)^{1/2} drdz.  
\end{align*}
On the other hand, we recall the estimate 
\eqref{est_g_l_2} of Lemma \ref{lem_g_l_2}:  
 \begin{align*}
\int_{B_{M}(r,z)}  r'|G(r,z,r',z')|^2\,dr'dz'  \lesssim  (Mr^{4}+M^{7/2}r^{3/2}),\quad M>0, \quad(r,z)\in\Pi.
\end{align*}  
By plugging $M=R^{-1/3}$ into the above, we have
\begin{align*} 
\left(\int_{B_{R^{-1/3}}(r,z)}  r'|G(r,z,r',z')|^2\,dr'dz'   \right)^{1/2}\lesssim (1+r^2)  R^{-1/6},\quad R\geq 1, \quad (r,z)\in\Pi.
\end{align*} Hence, we get
 \begin{align*}
III_{R,n}&\lesssim  \|\xi_n\|_{2}  \int_\Pi r\xi_n(r,z)   \left(R^{-1/6}(1+r^{2}) \right) drdz.  \\
&\lesssim R^{-1/6}\|\xi_n\|_{2}  \left(
\|\xi_n\|_1+\|r^2\xi_n\|_{1}
\right) \lesssim R^{-1/6}   K_0  (1+\mu).
 \end{align*}  
Collecting the above estimates, we get, for $R\geq 1$ and for $n\geq 1$,
\begin{align*}
E[\xi_n] \lesssim \frac{\mu^{2}}{ R^{3}}
+  R \mu   \left(\sup_{(r,z)\in \Pi}\int_{
B_R(r,z)
}r'^3\xi_n(r',z')\,dr'dz'\right)
+  R^{-1/6}   K_0  (1+\mu).
\end{align*}
We take $\limsup_{n\to\infty}$ to get, by \eqref{vanishing},
\begin{align*}
\limsup_{n\to\infty} E[\xi_n]\lesssim\frac{\mu^{2}}{ R^{3}}
+  R^{-1/6}   K_0  (1+\mu),\quad R\geq 1.
\end{align*}  Then sending
 $R\to\infty$ implies \eqref{en_conv_1}. Thus the case  (ii) \textit{Vanishing} cannot occur.\\

 \color{black}

$\bullet$ Elimination of Case (iii)\ \textit{Dichotomy:}\ \\

Let us suppose that the dichotomy case (iii) happens with some
  $\alpha\in (0,\mu)$. \\
  
$\ast$ Step 1 - Applying Steiner symmetrization into each half:\\
We  fix $\varepsilon>0$. Then there exist  an integer $k_0\geq 1$ and sequences  $\{\xi_{1,n}\},\{\xi_{2,n}\}\subset L^{1}_w(\mathbb{R}^3)$
such that 
 \begin{equation}\label{disjoint}
  \xi_{1,n} ={1}_{\Omega_n^{(1)}}\xi_n,
\quad \xi_{2,n} ={1}_{\Omega_n^{(2)}}\xi_n\quad\mbox{
for some disjoint axi-symmetric   subsets }\quad \Omega_n^{(1)},\Omega_n^{(2)}\subset \mathbb{R}^3, 
\end{equation} 
\begin{align}
&||r^2\xi_{3,n}||_{1}+|\alpha_n-\alpha|+|\beta_n-(\mu-\alpha)|\leq \varepsilon\quad \textrm{for}\quad n\geq k_0,\quad\mbox{and} \label{ep_al}\\
&d_n \to\infty\quad \textrm{as}\quad n\to\infty, \label{dn_0}
\end{align} where we set $$
\xi_{3,n}=\xi_{n}-\xi_{1,n}-\xi_{2,n},\quad
d_n=\textrm{dist}\ (\Omega_n^{(1)},\Omega_n^{(2)}),\quad\alpha_n=\frac 1 2 {\int}r^2\xi_{1,n}\dd x,\quad \mbox{and}\quad \beta_n=\frac 1 2{\int}r^2\xi_{2,n}\dd x.$$
By choosing a subsequence (still using the same index $n$),
 we may assume that 
\begin{equation}\label{alpha_bar}
 \alpha_n\to \overline{\alpha}\quad \mbox{and}\quad \beta_n\to \overline{\beta}\quad\mbox{as}\quad  n\to\infty
\end{equation}
 for some $\overline{\alpha}\in[\alpha-\varepsilon,\alpha+\varepsilon]$ and $\overline{\beta}\in[(\mu-\alpha)-\varepsilon,(\mu-\alpha)+\varepsilon]$. 
Recalling
$\xi_{n}=\xi_{1,n}+\xi_{2,n}+\xi_{3,n}$,
we split the energy of $\xi_n$ into
\begin{align*}
E[\xi_n]&=\pi\iint rr'G(r,z,r',z')\xi_n(r,z)\xi_n(r',z')dr'dz'drdz \\
&=E[\xi_{1,n}]+E[\xi_{2,n}]+ 2\pi\iint rr'G(r,z,r',z') \xi_{1,n}(r,z)\xi_{2,n}(r',z')dr'dz'drdz\\
&\quad \quad
+\pi\iint rr'G(r,z,r',z') (2\xi_{n}(r,z)-\xi_{3,n}(r,z))  \xi_{3,n}(r',z')dr'dz'drdz\\
&=:E[\xi_{1,n}]+E[\xi_{2,n}]+ 
I_n+II_n. 
\end{align*}  
The estimate \eqref{est_F}  for $\tau=3/2$
 implies
\begin{align*}
I_n 
\lesssim \frac{(\mu_n)^{2}}{  (d_n)^{3}}\lesssim \frac{\mu^{2}}{  (d_n)^{3}} .
\end{align*} 
For the integral $II_n$, we apply \eqref{est_iint} to get
\begin{align*}
|II_n|
&\lesssim\left(\|r^2(2\xi_n-\xi_{3,n})\|_{1} 
+\|2\xi_n-\xi_{3,n}\|_{L^1\cap L^2}\right)  \|r^2\xi_{3,n}\|^{1/2}_{1}  \|\xi_{3,n}\|^{1/2}_{1}
\\
&\lesssim\left(\|r^2\xi_n\|_{1}
+\|\xi_n\|_{L^1\cap L^2}\right)    \|\xi_{n}\|^{1/2}_{1}  \varepsilon^{1/2}\lesssim(1+\mu+K_0)\varepsilon^{1/2},\quad n\geq k_0,
\end{align*} 
where we used \eqref{ep_al},
\eqref{unif_l2} and \eqref{condn_cpt2}.
Hence, we get
\begin{align*}
E[\xi_n] 
\leq E [\xi_{1,n}]+E [\xi_{2,n}]+C\frac{\mu^{2}}{ (d_n)^{3}}+C(1+\mu+K_0)\varepsilon^{1/2},
\end{align*} where $C>0$ is a universal constant. 
Now we take   Steiner symmetrization  $\xi_{i,n}^{*}$ (Proposition \ref{prop_steiner} with $p=2$) of $\xi_{i,n}$ for $i=1,2$ to see that  for $n\geq k_0$,
\begin{equation}\begin{split}\label{star_facts} 
&E[\xi_n] 
\leq E [\xi_{1,n}^*]+E [\xi_{2,n}^*]+C\frac{\mu^{2}}{ (d_n)^{3}}+C(1+\mu+K_0)\varepsilon^{1/2},\\
&||\xi_{1,n}^{*}||_1+||\xi_{2,n}^{*}||_1=
||\xi_{1,n}||_1+||\xi_{2,n}||_1=||\xi_{1,n}+\xi_{2,n}||_1\leq  \|\xi_n\|_{{1}}, \\ 
&\alpha_n=\frac 1 2 \|r^2\xi_{1,n}^*\|_{1},\quad \beta_n=\frac 1 2 \|r^2\xi_{2,n}^*\|_{1}.
\end{split}\end{equation} \\

$\ast$ Step 2 - Taking limit on $n\to\infty$:\\
We observe that  $\xi_{i,n}^{*} $ for  each $i$ and for each $n$ is
axi-symmetric and non-negative.
 We also have 
 \begin{equation}\label{dic_key}
    \int_{\{x\in\mathbb{R}^3\,|\,|\xi_{i,n}^*(x)-1|\geq a_n\}}\xi_{i,n}^*\,dx= \int_{\{x\in\mathbb{R}^3\,|\,|\xi_{i,n}(x)-1|\geq a_n\}}\xi_{i,n}\,dx\leq\int_{\{x\in\mathbb{R}^3\,|\,|\xi_{n}(x)-1|\geq a_n\}}\xi_{n}\,dx, 
 \end{equation}
 where the  equality follows from the property
\eqref{steiner_rearr} of 
 Steiner symmetrization (Proposition \ref{prop_steiner})
while
 the   inequality comes from  the form
$$  \xi_{i,n} ={1}_{\Omega_n^{(i)}}\xi_n$$ 
  in  \eqref{disjoint}.
  Thus, by \eqref{condn_cpt}, we have, for each $i=1,2$, 
 \begin{equation}\label{conv_star_outside}
 \lim_{n\to\infty}\int_{\{x\in\mathbb{R}^3\,|\,|\xi_{i,n}^*(x)-1|\geq a_n\}}\xi_{i,n}^*\,dx= 0.
 \end{equation}   
  We also have, by \eqref{steiner_rearr},
$$ ||\xi_{1,n}^{*}||_2^2+||\xi_{2,n}^{*}||_2^2=
||\xi_{1,n}||_2^2+||\xi_{2,n}||_2^2=||\xi_{1,n}+\xi_{2,n}||_2^2\leq  \|\xi_n\|^2_{2}.$$ Thus,
by \eqref{unif_l2}, we have  the uniform $L^2-$bound  $$\quad 
\sup_n\left(||\xi_{1,n}^{*}||_2+ ||\xi_{2,n}^{*}||_2\right)\lesssim K_0.$$  
By choosing a subsequence (still denoted by $\{\xi_{i,n}^{*}\}$), we obtain  
\begin{equation}\label{dicto_weak}
\xi_{i,n}^{*} \rightharpoonup \overline{\xi}_i  \quad\mbox{in} \quad L^2(\mathbb{R}^3)\quad \mbox{as} \quad n\to\infty 
\end{equation}
 for some non-negative axi-symmetric $\overline{\xi}_i\in L^{2}(\mathbb{R}^3)$ for $i=1,2$.  We note 
 \begin{equation}\label{l2_sum}
 ||\overline{\xi}_i||_2\leq \liminf_{n\to\infty}||\xi_{i,n}^{*}||_2\lesssim K_0\quad\mbox{for}\quad i=1,2.
\end{equation}   
We observe, for any bounded set $U\subset\mathbb{R}^3$, we have \begin{equation}\label{dicto_weak_l1}
 \xi_{i,n}^{*} \rightharpoonup \overline{\xi}_i  \quad\mbox{in} \quad L^1(U)\quad \mbox{as}\quad n\to\infty 
\end{equation} due to
$L^\infty(U)\subset L^2(U)$. Thus we get, for   bounded  $U\subset\mathbb{R}^3$, 
 $$||\overline{\xi}_i||_{L^1(U)}\leq \liminf_{n\to\infty}||\xi_{i,n}^{*}||_{L^1(U)}\leq \liminf_{n\to\infty}||\xi_{i,n}^{*}||_{1},  $$ which implies 
  $$||\overline{\xi}_i||_{1}\leq   \liminf_{n\to\infty}||\xi_{i,n}^{*}||_{1},  $$
 for $i=1,2$. Thus
we have, by \eqref{condn_cpt} and \eqref{star_facts},
\begin{equation}\label{l1_sum}
||\overline{\xi}_1||_{1}+||\overline{\xi}_2||_{1}  \leq \liminf_{n\to\infty}||\xi_{1,n}^{*}||_{1}+
\liminf_{n\to\infty}||\xi_{2,n}^{*}||_{1} \leq
\limsup_{n\to\infty}\left(||\xi_{1,n}^{*}||_{1}+
 ||\xi_{2,n}^{*}||_{1}\right)
 \leq\limsup_{n\to\infty} ||\xi_{n}||_{1}\leq 1. 
\end{equation} 
 By a similar argument, we have
\begin{equation*}
 r^2\xi_{i,n}^{*} \rightharpoonup r^2\overline{\xi}_i  \quad\mbox{in} \quad L^1(U)\quad \mbox{as } n\to\infty 
\end{equation*} for   bounded   $U\subset\mathbb{R}^3$
   so that
 \begin{equation*}
  \|r^2\overline{\xi}_i||_{1}\leq   \liminf_{n\to\infty}||r^2\xi_{i,n}^{*}||_{1},\quad i=1,2.
\end{equation*} Thus, by recalling
  \eqref{alpha_bar} and \eqref{star_facts}, we have
\begin{equation}\label{alpha_bar_conv}
\overline{\alpha}\geq \frac 1 2\|r^2\overline{\xi}_1\|_{1},\quad \overline{\beta}\geq \frac 1 2 \|r^2\overline{\xi}_2\|_{1}.
\end{equation}  
 
   We claim\begin{equation}\label{l_infty_bdd}
   \|\overline{\xi}_i\|_\infty\leq 1\quad  \mbox{for}\quad i=1,2.
\end{equation}  To prove, let us suppose that the case $\|\overline{\xi}_i\|_\infty>1$ happens either  $i=1$  or $2$. We may assume that the case for $i=1$ occurs since the other case can be solved in  the same way. Then there exist  a measurable subset $U\subset\mathbb{R}^3$ and a small constant $\eta>0$ such that
$$|U|>0\quad\mbox{and}\quad
\overline{\xi}_1\geq 1+\eta\quad \mbox{  in}\quad U.$$ We may assume that $U$ is bounded in $\mathbb{R}^3$. 
From the weak convergence \eqref{dicto_weak_l1},
 we have 
\begin{equation}\label{lower_eta}
(1+\eta)  {|U|}\leq ||\overline{\xi}_1||_{L^1(U)}\leq \liminf_{n\to\infty}||\xi_{1,n}^{*}||_{L^1(U)}.   
\end{equation}
On the other hand, by setting
$A^*_{1,n}=\{x\in\mathbb{R}^3\,|\,|\xi_{1,n}^*(x)-1|\geq a_n\}$,  we estimate
$$||\xi_{1,n}^{*}||_{L^1(U)}=\int_{U\cap A^*_{1,n}}  \xi^*_{1,n}  \,dx+\int_{U\setminus A^*_{1,n}}  \xi^*_{1,n} \,dx
\leq \int_{A^*_{1,n}}  \xi^*_{1,n}  \,dx+
(1+a_n)   |U|.
$$ By taking $\limsup_{n\to\infty}$, we get,  by \eqref{conv_star_outside} and  \eqref{condn_cpt},
$$\limsup_{n\to\infty}||\xi_{1,n}^{*}||_{L^1(U)}
\leq \limsup_{n\to\infty}\int_{A^*_{1,n}}  \xi^*_{1,n}  \,dx+
\limsup_{n\to\infty}(1+a_n)   |U|=|U|.
$$ It contradicts to \eqref{lower_eta}. Thus we have \eqref{l_infty_bdd}.\\

Next, we claim
\begin{align}\label{conv_ki_e}
\lim_{n\to\infty}E[\xi_{i,n}^{*}]=E[\overline{\xi}_i]\quad \mbox{for}\quad i=1,2.
\end{align} Indeed,
since $\xi_{i,n}^{*}$ is  from  Steiner symmetrization (Proposition \ref{prop_steiner}) by definition, 
 it  satisfies
    {the monotonicity }condition  \eqref{cond_sym} 
    and 
  $$\|\xi_{i,n}^{*}\|_{L^1\cap L^2} +\|r^2\xi_{i,n}^{*}\|_{1}=\|\xi_{i,n}\|_{L^1\cap L^2}+\|r^2\xi_{i,n}\|_{1}\leq \|\xi_{n}\|_{L^1\cap L^2}
   +\|r^2\xi_{n}\|_{1}
\lesssim 1+K_0+\mu   
  $$ by \eqref{unif_l2}, \eqref{condn_cpt2}, \eqref{disjoint}.
  Since we have the weak-convergence \eqref{dicto_weak}, we can apply
Lemma \ref{lem_energy_conv} into the sequence $\{\xi_{i,n}^{*}\}$ for $i=1,2$ so that we obtain
the convergence  \eqref{conv_ki_e} of the kinetic energy.\\

 \color{black}
 
By  sending $n\to\infty$ in \eqref{star_facts} and by using  \eqref{condn_cpt}, \eqref{conv_ki_e}, \eqref{dn_0},
 \eqref{l1_sum}, \eqref{l2_sum}, \eqref{alpha_bar_conv}, and \eqref{l_infty_bdd}, we obtain
\begin{equation}\begin{split}\label{eq_ep_fct}
&0\leq \overline{\xi}_i\leq 1\quad  \quad i=1,2,\\
&{\mathcal{I}}_{\mu}\leq E[\overline{\xi}_1]+E[\overline{\xi}_2] +C(1+\mu+K_0)\varepsilon^{1/2},\\
&||\overline{\xi}_1||_1+||\overline{\xi}_2||_1\leq 1, \quad 
||\overline{\xi}_1||_2+||\overline{\xi}_2||_2\leq  C K_0, \color{black} \\
&\overline{\alpha}\geq \frac 1 2\|r^2\overline{\xi}_1\|_{1},\quad \overline{\beta}\geq \frac 1 2\|r^2\overline{\xi}_2\|_{1}.
\end{split}\end{equation} \\

$\ast$ Step 3 - Dropping the parameter $\varepsilon$:\\
By summarizing  what we have done in Step 1 and Step 2, for \textit{each} $\varepsilon>0$, there exist
 functions $\overline\xi_1=\overline\xi^\varepsilon_1$, $\overline\xi_2=\overline\xi^\varepsilon_2$ and the constants $\overline\alpha=\overline\alpha^\varepsilon$, $\overline\beta=\overline\beta^\varepsilon$ satisfying \eqref{eq_ep_fct}
   while the constants $C>0$ in \eqref{eq_ep_fct} are independent of the choice of $\varepsilon>0$. 
Then we can apply a similar argument in Step 2 for the sequences $$\{\overline{\xi}^{\varepsilon_m}_i\}_{m=1}^\infty,\quad \varepsilon_m=\frac 1 m$$ for $i=1,2$ in order to obtain the following claim: \\

There exist axi-symmetric non-negative functions $\hat{\xi}_i\in L^2(\mathbb{R}^3)$ for $i=1,2$ such that 
 \begin{equation}\begin{split}\label{eq_hat_fct}
&0\leq \hat{\xi}_i\leq 1,\quad i=1,2,\\
&{\mathcal{I}}_{\mu}\leq E[\hat{\xi}_{1}]+E[\hat{\xi}_{2}],\\
&||\hat{\xi}_{1}||_1+||\hat{\xi}_{2}||_1\leq 1, \\
&\alpha\geq \frac 1 2 \|r^2\hat{\xi}_1\|_{1},\quad \mu-\alpha\geq  \frac 1 2 \|r^2\hat{\xi}_2\|_{1}.
 \end{split}\end{equation} 
 Indeed,  by the uniform $L^2$-bound in \eqref{eq_ep_fct}, we can first extract  subsequential weak-limits  $\hat{\xi}_i\in L^2(\mathbb{R}^3)$ in $L^2$ for $i=1,2, $ i.e.  $$\overline{\xi}_{i}^{\varepsilon_m}\rightharpoonup \hat{\xi}_i\quad\mbox{in}\quad L^{2}\quad\mbox{as}\quad m\to\infty\quad\mbox{(by reindexing)}.$$
Thanks to \eqref{eq_ep_fct}, such limits   satisfies 
 the same  pointwise estimate: $$0\leq \hat{\xi}_i\leq 1.$$ 
We also have 
$\overline{\xi}_{i}^{\varepsilon_m}\rightharpoonup \hat{\xi}_i$ in $L^{1}(U)$ for any bounded set $U\subset\mathbb{R}^3$ as we proved \eqref{dicto_weak_l1}. Thus, as in \eqref{l1_sum}, we get the same $L^1$-bound: $$||\hat{\xi}_{1}||_1+||\hat{\xi}_{2}||_1\leq 1.$$ Similarly, 
since 
   $\overline{\alpha}^{\varepsilon_m}\in[\alpha-\varepsilon_m,\alpha+\varepsilon_m]$ and $\,\overline{\beta}^{\varepsilon_m}\in[(\mu-\alpha)-\varepsilon_m,(\mu-\alpha)+\varepsilon_m]$, 
we get
$$\alpha\geq \frac 1 2 \|r^2\hat{\xi}_1\|_{1},\quad \mu-\alpha\geq  \frac 1 2 \|r^2\hat{\xi}_2\|_{1}.$$  
Since $ \overline{\xi}_i^{\varepsilon_m}$    satisfies  {the monotonicity }condition \eqref{cond_sym} for each $m$ and for $i=1,2$, 
we can apply Lemma \ref{lem_energy_conv} to the sequences  to obtain $E[\overline{\xi}^{\varepsilon_m}_i]\to E[\hat{\xi}_{i}]$ for $i=1,2$. Applying the convergence  into \eqref{eq_ep_fct}, we get
$${\mathcal{I}}_{\mu}\leq E[\hat{\xi}_{1}]+E[\hat{\xi}_{2}].$$ Hence, we get the claim \eqref{eq_hat_fct}.\\

$\ast$ Step 4: Contradiction to the dichotomy case.\\
We first observe that if both $\hat{\xi}_1$ and $\hat{\xi}_2$ are identically zero, then we have $${\mathcal{I}}_{\mu}\leq E[\hat{\xi}_{1}]+E[\hat{\xi}_{2}]=0$$ which is a contradiction to ${\mathcal{I}}_{\mu}>0$ in Theorem 
\ref{thm_exist_max}.
 Therefore, either 
 $\hat{\xi}_1$ or  $\hat{\xi}_2$ should be nontrivial. 
Without loss of generality, we  may   assume  $\hat{\xi}_1\nequiv 0$. Then we have 
 \begin{equation}\label{nu_1}
  0<||\hat{\xi}_1||_1\leq 1-||\hat{\xi}_2||_1=:\nu_1.
 \end{equation}
 By using Theorems \ref{thm_exist_max} and  \ref{thm_max_is_ring},
 we   take a compactly supported function $$\zeta_1\in {\mathcal{S}}_{\alpha,\nu_1,1}.$$ 
We recall  $ {\mathcal{S}}_{\alpha,\nu_1,1}= {\mathcal{S}}''_{\alpha,\nu_1,1}$
  (by Theorem \ref{thm_exist_max}) and $ \hat{\xi}_1\in \mathcal{P}''_{\alpha,\nu_1,1}$ (by \eqref{eq_hat_fct} and \eqref{nu_1}). Thus we have 
  $E[\zeta_1]\geq E[\hat{\xi}_1].$ 
 Now we have
\begin{align*}
&0\leq  {\zeta}_1\leq 1,\quad 0\leq \hat{\xi}_2\leq 1,\\
& {\mathcal{I}}_{\mu}\leq E[\zeta_{1}]+E[\hat{\xi}_{2}],\\
&||\zeta_{1}||_1+||\hat{\xi}_{2}||_1\leq 1, \\
&\alpha=  \frac 1 2 \|r^2\zeta_1\|_{1},\quad \mu-\alpha\geq  \frac 1 2 \|r^2\hat{\xi}_2\|_{1}.
\end{align*}
 
Next, we observe that if $\hat{\xi}_2\equiv 0$, we have $${\mathcal{I}}_{\mu}\leq E[\zeta_1]={\mathcal{I}}_{\alpha,\nu_1,1}\leq {\mathcal{I}}_{\alpha,1,1} =\mathcal{I}_{\alpha},$$ where the last inequality comes from $\nu_1\leq 1$.  Thus it is a contradiction to $\mathcal{I}_{\alpha}<\mathcal{I}_{\mu}$ by Lemma \ref{lem_strict}. Thus we may assume   $\hat{\xi}_2\nequiv 0$, which implies
$$0<||\hat{\xi}_2||_1\leq 1-||\zeta_1||_1=:\nu_2.$$
By using Theorems \ref{thm_exist_max} and  \ref{thm_max_is_ring} again,
we   take a compactly supported function $$\zeta_2\in {\mathcal{S}}_{\mu-\alpha,\nu_2,1}.$$ As before, we have
 $E[\zeta_2]\geq E[\hat{\xi}_2]$ due to
 $ {\mathcal{S}}_{\mu-\alpha,\nu_2,1}= {\mathcal{S}}''_{\mu-\alpha,\nu_2,1}$ and
 $\,\hat{\xi}_2\in \mathcal{P}''_{\mu-\alpha,\nu_2,1}$.
 In sum, we have
\begin{equation}\begin{split}\label{zeta_imp}
&0\leq  {\zeta}_i\leq 1\quad \mbox{for}\quad i=1,2,\\
&{\mathcal{I}}_{\mu}\leq E[\zeta_{1}]+E[\zeta_{2}],\\
&||\zeta_{1}||_1+||\zeta_{2}||_1\leq 1, \\
&\alpha=\frac 1 2 \|r^2\zeta_1\|_{1},\quad \mu-\alpha= \frac 1 2 \|r^2\zeta_2\|_{1}.
\end{split}\end{equation} 
By  a translation in $z$-variable (if necessary), we may assume that $$\textrm{spt}\ \zeta_1\cap \textrm{spt}\ \zeta_2=\emptyset.$$ Then we have
$$0\leq (\zeta_1+\zeta_2)\leq 1,\quad \int (\zeta_1+\zeta_2)\,dx\leq 1,\quad\mbox{and}\quad \frac{1}{2}\int r^2 (\zeta_1+\zeta_2)\,dx=\mu,$$ which implies
$(\zeta_1+\zeta_2)\in \mathcal{P}'_\mu$ so that
$$E[(\zeta_1+\zeta_2)]\leq 
 \mathcal{I}_{\mu}'= \mathcal{I}_{\mu}$$ by Theorem \ref{thm_exist_max}. 
Together with
\eqref{zeta_imp}, we get
\begin{align*}
{\mathcal{I}}_{\mu}&\leq E[\zeta_1]+E[\zeta_2]
 =E[\zeta_1+\zeta_2]-2\pi\iint rr'G(r,z,r',z')\zeta_1(r,z)\zeta_2(r',z')dr'dz'drdz \\
&\leq {\mathcal{I}}_{\mu}-2\pi\iint rr'G(r,z,r',z')\zeta_1(r,z)\zeta_2(r',z')dr'dz'drdz.
\end{align*} 
Hence, either $\zeta_1\equiv 0\,$  or  $\,\zeta_2\equiv 0$ holds due to
$G(r,z,r',z')>0$ a.e. 
This contradicts to 
 the last line of \eqref{zeta_imp} due to $\alpha\in(0,\mu)$. Thus the case (iii) \textit{Dichotomy} cannot occur.\\

$\bullet$ Case (i) \textit{Compactness}:\ \\

Up to now, we have shown that the case (i) \textit{Compactness} should occur. That means: \\

There exists a sequence $\{(\tilde r_n,\tilde z_n)\}\subset \overline{\Pi}$ such that for arbitrary $\varepsilon>0$, there exist $R=R(\varepsilon)>0$ and $k_0=k_0(\varepsilon)\geq1$ such that 

\begin{align}\label{cpt_original}
\frac 1 2\int_{
T_n^\varepsilon
}r^2\xi_n\,dx\geq \mu-\varepsilon,\qquad \textrm{for all}\ n\geq k_0(\varepsilon),
\end{align} where axi-symmetric $T^\varepsilon_n \subset \mathbb{R}^3$ is defined by  $$T^\varepsilon_n=T_{R(\varepsilon)}(\tilde r_n, \tilde z_n)=
\{
x\in\mathbb{R}^3\,|\, |(r,z)-(\tilde r_n, \tilde z_n)|<R(\varepsilon)
\} $$ when
$x_1^2+x_2^2=r^2,\,x_3=z$ as defined in  \eqref{defn_ball}.\\


$\ast$ Step 1 - Boundedness of $\{\tilde r_n\}$:\\
We note that there are only two cases whether (a) $\limsup_{n\to\infty} \tilde r_n=\infty$ or (b) $\sup_{n }\tilde r_n<\infty$. We shall first show that the case (a) cannot occur.\\

Let us suppose the case (a) $\limsup_{n\to\infty} \tilde r_n=\infty$ happens. We may assume that $\lim_{n\to\infty}\tilde r_n=\infty$ by choosing a subsequence (and by reindexing). We claim  $$\lim_{n\to\infty}E[\xi_n]=0.$$ This claim implies $\mathcal{I}_{\mu} =0$ by \eqref{condn_cpt},  which is a contradiction to $\mathcal{I}_{\mu}>0$ in 
Theorem \ref{thm_exist_max}.
To prove the claim, we set $
\psi_n(x)=\mathcal{G}[\xi_n].$ Then, for $\varepsilon>0$, we decompose
\begin{align*}
 E[\xi_n]=\frac{1}{2}{\int} \psi_n\xi_n\dd x=\frac{1}{2}\int_{
T_n^\varepsilon
 }+\frac{1}{2}\int_{\mathbb{R}^{3}\setminus T_n^\varepsilon
 }=:I_{n,\varepsilon}+II_{n,\varepsilon}, 
\end{align*} 
For the first term $I_{n,\varepsilon}$, by using  \eqref{est_psi}, we estimate,
for $n\geq 1$,
\begin{align*}
I_{n,\varepsilon}
\leq \frac 1 2\left\|\frac{\psi_n}{r}\right\|_{\infty}\int_{
T^\varepsilon_n
}r\xi_n\dd x
\lesssim (1+K_0+\mu)\left(\sup_{|r-\tilde r_n|<R(\varepsilon)}\frac 1 r\right)\int_{
T^\varepsilon_n
}r^2\xi_n\dd x 
\lesssim \frac{(1+K_0+\mu)\mu}{\left(\tilde r_n-R(\varepsilon)\right) } 
\end{align*}  by using
\eqref{unif_l2} and \eqref{condn_cpt2}, which gives $ I_{n,\varepsilon}\to0$ as $n\to\infty$.
For the second term $II_{n,\varepsilon}$, by using   H\"older's inequality,
for $n\geq k_0(\varepsilon)$,
\begin{equation}\begin{split}\label{II_n_epsilon_comp}
II_{n,\varepsilon}
&\leq \frac 1 2\left\|\frac{\psi_n}{r}\right\|_{\infty} \int_{\mathbb{R}^{3}\setminus
T^\varepsilon_n
}r\xi_n\dd x 
 \leq \frac 1 2\left\|\frac{\psi_n}{r}\right\|_{\infty}\left(\int_{\mathbb{R}^{3}\setminus
T^\varepsilon_n
}r^2\xi_n\dd x\right)^{1/2}\left(\int \xi_n\dd x\right)^{1/2} \\&
\lesssim (1+K_0+\mu)(\mu_n-(\mu-\varepsilon))^{1/2}, 
 \end{split}\end{equation} which gives
$ \limsup_{n\to\infty}II_{n,\varepsilon}\lesssim (1+K_0+\mu)\varepsilon^{1/2}$.
  Collecting the above estimates, we get, for any $\varepsilon>0$,
    \begin{align*}
  \limsup_{n\to\infty}E[\xi_n] \lesssim (1+K_0+\mu)\varepsilon^{1/2},
\end{align*} which implies $$E[\xi_n]\to 0\quad\mbox{as}\quad n\to\infty.$$ Thus the case (a) cannot occur, and the case (b)  $$\sup_{n }\tilde r_n<\infty$$ should occur. \\

  
  
$\ast$ Step 2 - Reformulated  goal via translations:\\
 We may assume that $\tilde r_n=0$ for $n\geq1$ by replacing $R(\varepsilon)$ with $$
 \left(R(\varepsilon)+\sup_{n  }\tilde r_n\right)>0$$ due to the set inclusion
  $ T^\varepsilon_n\subset
  \{
x\in\mathbb{R}^3\,|\, |(r,z)-(0, \tilde z_n)|< \left(R(\varepsilon)+\sup_{n  }\tilde r_n\right)
\}$.  
   Now we have $(\tilde r_n, \tilde z_n)=(0, \tilde z_n)\in \partial\Pi$ for $n\geq 1$.
By redefining $\xi_n$ by  translation of $\xi_n$ in $x_3$-variable, 
  our goal \eqref{conclu_cpt} is transformed  into (by setting $c_n=\tilde z_n$) the following new goal:
  \begin{equation}\label{new_goal}
  \mbox{ to extract a subsequence  of}\, \{\xi_{n}\} \mbox{ converging to }\,
  \xi   \mbox{ in }\,L^1_w\, \mbox{ for some }\,\xi\in\mathcal{S}_\mu.
  \end{equation}
 We note that this translation in $x_3-$variable does not
  break \eqref{condn_cpt}, \eqref{unif_l2}, \eqref{condn_cpt2}. Moreover,    
\eqref{cpt_original} can be re-written by the following form:\\ 

  
 For arbitrary $\varepsilon>0$, there exist $R=R(\varepsilon)>0$ and $k_0=k_0(\varepsilon)\geq1$ such that 

\begin{align}\label{st_cpt}
\mu_n\geq \frac 1 2\int_{B^\varepsilon }r^2\xi_n\dd x\geq (\mu-\varepsilon),\qquad \textrm{for all}\ n\geq k_0(\varepsilon),
\end{align} by setting  $B^\varepsilon=B_{R(\varepsilon)}(0)=
T_{R(\varepsilon)}(0,0
)= \{
x\in\mathbb{R}^3\,|\, |x |<R(\varepsilon)
\}$ (see the definition \eqref{defn_ball}). \\

 
$\ast$ Step 3 - Extracting a weak-limit:\\
 Since the sequence $\{\xi_n\}$ is uniformly bounded in $L^{2}$ from \eqref{unif_l2}, by choosing a subsequence (still denoted by $\{\xi_n\}$), we get $$\xi_n\rightharpoonup \xi\quad\mbox{in} \quad L^{2}(\mathbb{R}^3)\quad \mbox{as}\quad n\to\infty$$ for some non-negative axi-symmetric function $\xi\in L^2(\mathbb{R}^3)$. The weak convergence in $L^2$ implies, for any bounded subset $U\subset\mathbb{R}^3$,
$$\xi_n\rightharpoonup \xi\quad\mbox{and}\quad 
r^2\xi_n\rightharpoonup r^2\xi
\quad\mbox{in} \quad L^{1}(U)\quad \mbox{as}\quad n\to\infty.$$ Hence, we get, by \eqref{condn_cpt}, \eqref{unif_l2},  \eqref{st_cpt},
\begin{equation}\label{othercond}
\|\xi\|_2\leq K_0,\quad  \int\xi\,dx\leq 1,\quad 
(\mu-\varepsilon)\leq\frac 1 2{\int_{B^\varepsilon}}r^2\xi \dd x\leq \mu
\quad\mbox{for}\quad \varepsilon>0, \quad \mbox{and}\quad
\frac 1 2{\int}r^2\xi \dd x= \mu.
\end{equation}\\

$\ast$ Step 4 - Verifying the pointwise bound:\\
   We claim   
\begin{equation}\label{l_infty_bdd_}
  \|\xi\|_\infty\leq 1.
\end{equation}   
  Indeed, we can follow the same approach in the proof of \eqref{l_infty_bdd} in the dichotomy case. For a contradiction, we assume that there is a bounded set $U\subset \mathbb{R}^3$ and a constant $\eta>0$ such that
   $$|U|>0\quad\mbox{and}\quad
{\xi}\geq 1+\eta\quad \mbox{in}\quad U.$$ Then we get
\begin{equation}\label{lower_eta_}
(1+\eta)  {|U|}\leq ||{\xi}||_{L^1(U)}\leq \liminf_{n\to\infty}||\xi_{n}||_{L^1(U)}.   
\end{equation}
   On the other hand,  by setting
\begin{equation}\label{defn_A_n}
A_n=\{x\in\mathbb{R}^3\,|\,|\xi_{n}(x)-1|\geq a_n\},
\end{equation}
 we get
$$||\xi_{n}||_{L^1(U)}=\int_{U\cap A_n}  \xi_{n}  \,dx+\int_{U\cap A^c_n}  \xi_{n} \,dx
\leq \int_{A_n}  \xi_{n}  \,dx+
(1+a_n)   |U|.
$$ By taking $\limsup_n$ and by using   \eqref{condn_cpt}, we get
$$\limsup_{n\to\infty}||\xi_{n}||_{L^1(U)}
\leq  |U|.
$$  It contradicts to \eqref{lower_eta_}. We have proved \eqref{l_infty_bdd_}. Thanks to  \eqref{othercond}, 
we know 
\begin{equation}\label{conc_p'}
\xi\in\mathcal{P}'_\mu
\end{equation}	
(recall  the definition \eqref{defn_prime_class} of $\mathcal{P}'_\mu$ in Subsection \ref{subsec_other_cl}).\\

$\ast$ Step 5 - Establishing convergence in energy:\\
Next, we claim 
\begin{align}\label{conv_en}
\lim_{n\to \infty}E[\xi_n]=E[\xi].     
\end{align}  
Indeed, we estimate, as in \eqref{II_n_epsilon_comp},
\begin{align*}
 \int_{\mathbb{R}^{3}\setminus {B^\varepsilon}}\xi_n \mathcal{G}[\xi_n]\,dx 
&\leq 
 \left\|\frac{\mathcal{G}[\xi_n]}{r}\right\|_{\infty} \int_{\mathbb{R}^{3}\setminus {B^\varepsilon}}r\xi_n\,dx  \leq 
 \left\|\frac{\mathcal{G}[\xi_n]}{r}\right\|_{\infty}\left(\int_{\mathbb{R}^{3}\setminus {B^\varepsilon}}r^2\xi_n\,dx\right)^{1/2}\left(\int \xi_n\,dx\right)^{1/2}\\&
\leq C (1+K_0+\mu)  \left(\mu_n-(\mu-\varepsilon)\right)^{1/2},\quad n\geq k_0(\varepsilon),
\end{align*} by  \eqref{est_psi}, \eqref{unif_l2},   \eqref{condn_cpt2}, and \eqref{st_cpt}. In the same way,  we obtain
\begin{align*}
 \int_{\mathbb{R}^{3}\setminus {B^\varepsilon}}\xi \mathcal{G}[\xi]\,dx &
\leq C(1+K_0+\mu) \varepsilon^{1/2}
\end{align*} thanks to  the estimate \eqref{othercond}.
Then, by using \eqref{est_en_diff} of Lemma \ref{lem_en_diff},   we can estimate  difference in energy for $n\geq k_0(\varepsilon)$ by
\begin{align*}
 \left| E[\xi_n]-E[\xi]\right|
&\leq \frac{1}{4\pi}\left| \int_{{  B^\varepsilon} }\int_{{  B^\varepsilon}  }  {G}(x,y)\Big(\xi_n(x)\xi_n(y)-\xi(x)\xi(y)\Big)\,dxdy \right|
\\&\quad 
+C(1+K_0+\mu) |\mu_n-\mu|^{1/2}+C(1+K_0+\mu) \varepsilon^{1/2}.
\end{align*} 
Since $ G \in L^{2}({ B^\varepsilon}\times { B^\varepsilon})$  by Lemma \ref{lem_g_l_2} and $\xi_n(x)\xi_n(y)\rightharpoonup \xi(x)\xi(y)$ in $L^{2}({ B^\varepsilon}\times { B^\varepsilon})$,  we get  the claim \eqref{conv_en} by sending $n\to\infty$ first and then $\varepsilon \to0$.\\


$\ast$ Step 6 - It is of a patch-type:\\
Since the  convergence \eqref{conv_en} implies $E[\xi]={\mathcal{I}}_\mu=\mathcal{I}'_\mu$ by \eqref{condn_cpt},
  we obtain $\xi\in \mathcal{S}'_{\mu}$ due to   \eqref{conc_p'}.
By using ${\mathcal{S}}_\mu=\mathcal{S}'_\mu$ from Theorem \ref{thm_exist_max}, 
we get $$\xi\in {\mathcal{S}}_\mu,$$ which implies 
$\xi\in \mathcal{P}_\mu$. That means 
$$\xi={{1}}_A\quad\mbox{for some axi-symmetric measurable set } A\subset\mathbb{R}^3.$$\\ 

$\ast$ Step 7 - Weak convergence $\sqrt{{r^2}\xi_n}\rightharpoonup  \sqrt{{r^2}\xi} \,$ in $\, L^2(\mathbb{R}^3)$:\\
We observe, as $n\to \infty$, \begin{equation}\label{conv_norm_2}
\frac 1 {\sqrt{2}}\|\sqrt{{r^2}\xi_n}\|_{2}=\sqrt{\mu_n} \quad\to \quad\sqrt{\mu}= \frac 1 {\sqrt{2}}\|\sqrt{{r^2}\xi}\|_{2},
\end{equation}
 which implies
$\sup_n\|\sqrt{{r^2}\xi_n}\|_{2}<\infty$. Thus there exists a non-negative axi-symmetric function $ g\in L^2(\mathbb{R}^3)$ such that 
$$\sqrt{{r^2}\xi_n}\rightharpoonup  g$$ in $L^2(\mathbb{R}^3)$ (by reindexing).
We claim
\begin{equation}\label{claim_g}
 g=\sqrt{{r^2}\xi}\quad a.e.\quad \mbox{in}\quad \mathbb{R}^3. 
\end{equation}
Indeed, for any bounded set $U\subset\mathbb{R}^3$,
we have $$\sqrt{{r^2}\xi_n}\rightharpoonup  g\quad\mbox{in}\quad L^1(U).$$
On the other hand, for any $\phi\in L^\infty(U)$,    we can prove
\begin{equation}\label{weakconv} \int_U\sqrt{{r^2}\xi_n}\phi dx 
\rightarrow   \int_U\sqrt{{r^2}\xi}\phi dx\quad\mbox{as}\quad n\to \infty
\end{equation} in the following way:\\

 First we estimate, by using $\sqrt{\xi}=\xi$,  
\begin{equation*}
\begin{split}
&\left|\int_U\sqrt{{r^2}\xi_n}\phi dx 
-  \int_U\sqrt{{r^2}\xi}\phi dx\right| 
 =
\left|\int_Ur\phi\left(\sqrt{\xi_n}-\xi\right) dx 
 \right| \\
 &\leq    \int_Ur|\phi|  \left|\sqrt{\xi_n}-\xi_n\right| dx  
  +\left|\int_Ur\phi  \left({\xi_n}-\xi\right) dx 
 \right|  =:I_n+II_n. 
\end{split}
\end{equation*}   
We observe $II_n\to 0 $ {as} $ n\to \infty$ due to
  $(r\phi)\in L^2(U)$ and   $\xi_n\rightharpoonup \xi$ in $L^{2}(U)$.   
For $I_n$, 
by recalling the definition \eqref{defn_A_n} of $A_n$,
we estimate 
  \begin{equation*}
\begin{split}
&I_n
=  \int_{U\cap A_n} 
+ \int_{U\cap A^c_n}  
\leq  \int_{U\cap A_n} r|\phi| \left(\sqrt{\xi_n}+\xi_n\right)dx
+ \int_{U\cap A^c_n}r|\phi| \left(|\sqrt{\xi_n}-1|+|\xi_n-1|\right)dx \\
&\leq  
\left(\sup_{r\in U} r\right)  \|\phi\|_{L^\infty(U)} \left(\sqrt{|U|} \left(\int_{A_n}\xi_ndx\right)^{1/2}+ \int_{A_n}\xi_ndx\right)
+ 2\int_{U\cap A^c_n}r|\phi|  |\xi_n-1| dx \\
&\leq \left(\sup_{r\in U} r\right)  \|\phi\|_{L^\infty(U)} \left(\sqrt{|U|} \left(\int_{A_n}\xi_ndx\right)^{1/2}+ \int_{A_n}\xi_ndx\right)
+ 2 \left( \sup_{r\in U} r\right)  \|\phi\|_{L^\infty(U)} a_n {|U|}.
\end{split}
\end{equation*}  Thus, by \eqref{condn_cpt}, we have
$I_n\to 0$ as $n\to\infty$.  Now we have \eqref{weakconv}, which implies
$$
g= \sqrt{r^2\xi}\quad a.e. \quad \mbox{in}\quad U.
$$ for any bounded set $U\subset\mathbb{R}^3$. Hence we get the claim \eqref{claim_g}.\\

   $\ast$ Step 8 - Strong convergence $ {{r^2}\xi_n}\to  {{r^2}\xi}\,$ in $\,L^1(\mathbb{R}^3)$:\\
Since we have   $\sqrt{{r^2}\xi_n}\rightharpoonup  \sqrt{{r^2}\xi}\,$ in $L^2(\mathbb{R}^3)$ by \eqref{claim_g}
and $\|\sqrt{{r^2}\xi_n}\|_{2}\rightarrow \|\sqrt{{r^2}\xi}\|_{2}\,$   by \eqref{conv_norm_2}, 
we have the strong convergence  \begin{equation}\label{l2_st_conv}
\sqrt{{r^2}\xi_n}\rightarrow \sqrt{{r^2}\xi}\quad\mbox{in}\quad L^2(\mathbb{R}^3)\quad\mbox{as}\quad n\to\infty.
\end{equation} 
We note that for the ball $B^\varepsilon$ in \eqref{st_cpt} and for all $n\geq k_0(\varepsilon)$,
  \begin{equation}\begin{split}\label{est_decomp_l1w} 
\int {r^2}|\xi_n-\xi|\dd x&\leq 
\int_{B^\varepsilon }{r^2}|\xi_n-\xi|\dd x
+\int_{\mathbb{R}^{3}\setminus B^\varepsilon}{r^2} \xi_n \dd x
+ \int_{\mathbb{R}^{3}\setminus B^\varepsilon}{r^2} \xi\dd x\\
& \leq \int_{B^\varepsilon }{r^2}|\xi_n-\xi|\dd x+2[\mu_n-(\mu-\varepsilon)]+ 2\varepsilon.
\end{split} \end{equation}
We claim, for each $\varepsilon>0$,
\begin{equation}\label{claim_l1}
 \lim_{n\to\infty}
\int_{B^\varepsilon }{r^2}|\xi_n-\xi|\dd x=0
\end{equation} for each fixed $\varepsilon>0$. Indeed, we estimate
\begin{align*}
\int_{B^\varepsilon }{r^2}|\xi_n-\xi|\dd x&\leq
\int_{B^\varepsilon }{r^2}|\xi_n-{1}_{A_n^c}|\dd x
+\int_{B^\varepsilon }{r^2}|{1}_{A_n^c}-\xi|\dd x=:I_n^\varepsilon+II_n^\varepsilon.
\end{align*} For the integral $I_n^\varepsilon$, we have,
by recalling the definition \eqref{defn_A_n} of $A_n$,
\begin{equation}\begin{split}\label{est_i_n_l1w}
 I_n^\varepsilon&=
\int_{B^\varepsilon\cap A_n }{r^2}|\xi_n-{1}_{A_n^c}|\dd x
+\int_{B^\varepsilon\cap A_n^c }{r^2}|\xi_n-{1}_{A_n^c}|\dd x\\
&= \int_{B^\varepsilon\cap A_n }{r^2}\xi_n\dd x
+\int_{B^\varepsilon\cap A_n^c }{r^2}|\xi_n-1|\dd x\\
&\leq\left( R(\varepsilon)^2\int_{A_n }\xi_n\dd x
+   R(\varepsilon)^2a_n |B^\varepsilon|\right)\to0\quad\mbox{as}\quad n\to \infty
 \end{split}\end{equation} by \eqref{condn_cpt}. 
   For the integral $II_n^\varepsilon$, we estimate, by recalling $\xi=1_A$,
\begin{align*}
 II_n^\varepsilon&=
\int_{B^\varepsilon }{r^2}|{1}_{A_n^c}-{1}_A|\dd x
=
\int_{B^\varepsilon }{r^2}|{1}_{A_n^c}-{1}_A|^2\dd x
=
\int_{B^\varepsilon }{r^2}|\sqrt{{1}_{A_n^c}}-\sqrt{{1}_A}|^2\dd x\\
&\leq 2\int_{B^\varepsilon }{r^2}|\sqrt{{1}_{A_n^c}}-\sqrt{\xi_n}|^2\dd x +2\int_{B^\varepsilon }{r^2}|\sqrt{\xi_n}-\sqrt{{1}_A}|^2\dd x
=: II_{n,1}^\varepsilon+ II_{n,2}^\varepsilon.
 \end{align*} 
As in \eqref{est_i_n_l1w},  we estimate
\begin{align*}
 II_{n,1}^\varepsilon&
 = 2\int_{B^\varepsilon\cap A_n }{r^2}| \sqrt{\xi_n}|^2\dd x  + 2\int_{B^\varepsilon\cap A_n^c }{r^2}|1-\sqrt{\xi_n}|^2\dd x \\
  & \leq  2\int_{ B^\varepsilon\cap A_n }{r^2}{\xi_n}\dd x  + 2\int_{B^\varepsilon\cap A_n^c }{r^2}|1-{\xi_n}|^2\dd x \\
   & \leq\left(  2R(\varepsilon)^2\int_{ A_n }{\xi_n}\dd x  + 2(a_n  R(\varepsilon))^2  |{B^\varepsilon}|\right)\to0\quad\mbox{as}\quad n\to \infty
 \end{align*} by \eqref{condn_cpt}. 
 Lastly, we observe, by recalling $\xi={1}_A$,  $$II_{n,2}^\varepsilon=2\int_{B^\varepsilon }{r^2}|\sqrt{\xi_n}-\sqrt{\xi}|^2\dd x=2\int_{B^\varepsilon }\Big|\sqrt{{r^2}\xi_n}-\sqrt{{r^2}\xi}\Big|^2\dd x\to 0 \quad \mbox{as}\quad n \to \infty$$ by \eqref{l2_st_conv}. Hence we get the claim \eqref{claim_l1}.\\
 
Combining the claim 
\eqref{claim_l1}
with \eqref{est_decomp_l1w}, we get, for each $\varepsilon>0$,
\begin{align*}
\limsup_{n\to\infty}\int {r^2}|\xi_n-\xi|\dd x\leq \limsup_{n\to\infty}
\int_{B^\varepsilon }{r^2}|\xi_n-\xi|\dd x
 +4\varepsilon\leq 4\varepsilon ,
\end{align*}\\
Sending   $\varepsilon \to0$, we obtain the convergence ${r^2}\xi_n\to {r^2}\xi$ in $L^{1}(\mathbb{R}^3)$ as $n\to\infty$, which is our goal 
\eqref{new_goal}. 
Lastly, the set ${\mathcal{S}}_{\mu,\nu,\lambda} $ is non-empty thanks to
Theorem \ref{thm_exist_max}.
It finishes the proof of Theorem \ref{thm_cpt}. \\

 \end{proof}



 \section{Uniqueness  of Hill's vortex}\label{sec_uniq_proof_hill}
 \subsection{Hill's problem and uniqueness result:  Amick-Fraenkel (1986)}\ \\
 
The final goal is to   
prove Theorem \ref{thm_uniq}.
First, we introduce the  setting  of Amick-Fraenkel \cite[Theorem 1.1]{AF86}.\\
  
 

The paper \cite{AF86} denotes $\mathcal H(\Pi)$ the completion of $C^\infty_c(\Pi)$ (the class of infinitely smooth and compactly supported functions in $\Pi$) in the norm $\|\cdot\|_{\mathcal{H}}$ from the inner product defined by
\begin{equation}\label{defn_h_inner}
\langle\phi,\psi\rangle_{\mathcal{H}}=\int_\Pi\frac{1}{r^2}((\partial_r\phi)(\partial_r\psi) +(\partial_z\phi)(\partial_z\psi))\,r\,drdz.
\end{equation}
Note if $\phi,\psi\in C^\infty_c(\Pi)$, then we can   integrate by parts (e.g. as in \eqref{ibp_dx}) to get another representation
\begin{equation}\label{equiv_norm}
\langle\phi,\psi\rangle_{\mathcal H}=\int_\Pi \left(-\frac{1}{r^2}\mathcal{L}\psi\right)\phi \,r\,drdz.
\end{equation}
For instance, we  have  $E[-{r^{-2}}\mathcal{L}\psi]=\pi\|\psi\|^2_{\mathcal{H}}$ for any $\psi\in C^\infty_c(\Pi)$
(see \eqref{defn_en} and \eqref{iden_en}).
This setting can be embedded in $\mathbb{R}^5$ in the following sense:\\

We denote $ y=(y_1,y_2,y_3,y_4,y_5)=(y',y_5)\in\mathbb{R}^5$. 
For a function $\phi:\Pi\to\mathbb{R}$, we define the   cylindrical symmetric function $\mathcal{T}[\phi]:\mathbb{R}^5\to\mathbb R$ by $$\mathcal{T}[\phi](y)=\frac{\phi(r,z)}{r^2},$$ where $r^2=|y'|^2=y_1^2+y_2^2+y_3^2+y_4^2$ and $z=y_5$. 
When
$\phi\in C^\infty_c(\Pi)$, we get
  $\mathcal{T}\phi\in C^\infty_c(\mathbb{R}^5\setminus\{r=0\})$, and it satisfies 
\begin{equation}\label{tran_h_iden} \Delta_{\mathbb{R}^5}(\mathcal{T}\phi)=\frac{1}{r^2}\mathcal L \phi=\mathcal{T}[\mathcal{L}\phi].\end{equation}
Moreover, for $\phi,\psi\in C^\infty_c(\Pi)$, we can compute
\begin{equation}\begin{split}\label{comp_iso}
 \langle\phi,\psi\rangle_{\mathcal{H}}&=\int_\Pi
\left[
r^3\left(\partial_r(\phi/r^2) \partial_r(\psi/r^2)+\partial_z(\phi/r^2)  \partial_z(\psi/r^2)\right)
+2\partial_r(\phi\psi/r^2) 
\right]drdz\\
&=\int_\Pi
\left[r^3\left(\partial_r(\phi/r^2)\partial_r(\psi/r^2) +\partial_z(\phi/r^2)\partial_z(\psi/r^2)\right)
\right]drdz=\frac{1}{2\pi^2}\int_{\mathbb{R}^5}
\nabla\mathcal{T}\phi\cdot \nabla\mathcal{T}\psi
d y,
\end{split}\end{equation}
  which produces  the identity 
\begin{equation}\label{iden_norm}
\|\phi\|_{\mathcal{H}}=\frac 1 {\sqrt {2 \pi^2}}\|\nabla\mathcal{T}\phi\|_{L^2(\mathbb{R}^5)}.
\end{equation}
 For given constants 
  $\lambda, W>0$, 
 the authors of \cite{AF86} defined the \textit{Hill's problem} for   $(\lambda, W)$  in the following way (also see \cite{FB74}):\\

To find $\psi$ such that
$$-\frac{1}{r^2} \mathcal{L}\psi =\lambda f_H(\Psi),\quad \Psi:=\psi-\frac{1}{2}Wr^2$$
$$\psi|_{r=0}=0, \quad \psi(r,z)\rightarrow 0\quad \mbox{as}\quad r^2+z^2\rightarrow \infty\quad\mbox{in}\quad \overline{\Pi},$$
for the vorticity function  $f_H=1_{(0,\infty)}$ as defined in \eqref{defn_hill_vor_fct}.\\

In \cite{AF86},  any function $\psi\in \mathcal H(\Pi)\setminus\{0\}$ is said to be a \textit{weak} solution of the Hill's problem for  $(\lambda, W)$
  if  it satisfies
\begin{equation}\label{prob_hill}
 \langle\phi,\psi\rangle_{\mathcal{H}}=\lambda\int_{A(\psi)}\phi 
\, r\, dr dz\quad\mbox{for any}\quad \phi\in \mathcal H(\Pi), 
\end{equation}
where 
$$ A(\psi)=\{(r,z)\in\Pi\,|\,\psi(r,z)-\frac{1}{2}Wr^2>0\}.$$
Thanks to 
\eqref{equiv_norm}, we expect that  a weak solution $\psi$ satisfies
\begin{equation}\label{rough}
-\frac 1 {r^2} \mathcal{L}\psi= \lambda 1_{\{\psi(r,z)-(1/2)Wr^2>0\}}
\end{equation}
in a certain weak sense.
In our variational setting,  
 for any maximizer $\xi\in\mathcal{S}_{\mu,\nu,\lambda}$,
  the stream function $\psi=\mathcal{G}[\xi]$
 satisfies
\eqref{rough}
once we assume   $\gamma=0$  in \eqref{eq_W}. 
 Thus it becomes  a weak solution of the Hill's problem. We write the statement in the form of a lemma, whose proof in detail is given in Appendix \ref{app_hill_sol} (even if it looks very natural and  trivial):
\begin{lem}\label{lem_hill_sol} If $\xi\in\mathcal{S}_\mu$ satisfies

 \begin{equation}\label{eq_W_no_g_}
\begin{aligned}
&\xi= {{1}}_{ \{\psi-
(1/2)Wr^2>0\}} \quad\mbox{ a.e.}\quad\mbox{ for some }\quad W>0,
\end{aligned}
\end{equation}
  then
  the stream function $\psi=\mathcal{G}[\xi]$
  is a weak solution of the Hill's problem for $(1,W)$.
\end{lem}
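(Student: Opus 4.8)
The plan is to unwind both the definition of the energy space $\mathcal{H}(\Pi)$ and the weak formulation \eqref{prob_hill}, checking separately that $\psi=\mathcal{G}[\xi]$ lies in $\mathcal{H}(\Pi)\setminus\{0\}$ and that it satisfies the variational identity \eqref{prob_hill} with $\lambda=1$. First I would establish membership. By the energy identity \eqref{iden_en} of Lemma \ref{lem_en_iden_origin} together with $dx=2\pi r\,dr\,dz$, one has $E[\xi]=\pi\|\psi\|_{\mathcal{H}}^2$, so the hypothesis $E[\xi]=\mathcal{I}_\mu\in(0,\infty)$ (Theorem \ref{thm_exist_max}) yields simultaneously $\|\psi\|_{\mathcal{H}}<\infty$ and $\psi\not\equiv 0$. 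Finiteness of the norm is not yet membership in the completion, so I would pass to the isometric $\mathbb{R}^5$ picture \eqref{iden_norm}: the lift $\mathcal{T}\psi=\psi/r^2$ belongs to $\dot{H}^1(\mathbb{R}^5)$. Using the regularity $\psi/r^2\in BUC^{\alpha}(\overline\Pi)$ with $\psi/r^2\to 0$ at infinity (Lemma \ref{lem_psi_hol_conti}) and the compactness of the core $\mathrm{spt}\,\xi$ (Proposition \ref{prop_cpt_supp}), a truncation-and-mollification argument produces approximants in $C^\infty_c(\mathbb{R}^5)$; these may be taken to vanish near the symmetry axis $\{|y'|=0\}$ because that axis is a line in $\mathbb{R}^5$ (codimension $4\ge 2$) and so has zero $\dot{H}^1$-capacity. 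This exhibits $\psi$ as a genuine element of $\mathcal{H}(\Pi)$.

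Next I would verify \eqref{prob_hill} on the dense subset $C^\infty_c(\Pi)$ and then extend by density. For $\phi\in C^\infty_c(\Pi)$ the integration-by-parts formula \eqref{ibp_dx}, valid since $\phi$ is compactly supported inside $\Pi$ and $\psi\in H^2_{loc}(\Pi)$ by Lemma \ref{lem_en_iden_origin}, gives $\langle\phi,\psi\rangle_{\mathcal{H}}=\int_\Pi\left(-\frac{1}{r^2}\mathcal{L}\psi\right)\phi\,r\,dr\,dz$. Since $-\frac{1}{r^2}\mathcal{L}\psi=\xi=1_{A(\psi)}$ a.e., where $A(\psi)=\{\psi-\frac{1}{2}Wr^2>0\}$ coincides with the core $A$ by the hypothesis \eqref{eq_W_no_g_}, this equals $\int_{A(\psi)}\phi\,r\,dr\,dz$. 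Hence \eqref{prob_hill} holds with $\lambda=1$ for all $\phi\in C^\infty_c(\Pi)$.

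Finally I would extend the identity to every $\phi\in\mathcal{H}(\Pi)$. The left-hand side $\phi\mapsto\langle\phi,\psi\rangle_{\mathcal{H}}$ is continuous by Cauchy--Schwarz. For the right-hand side, the change of variables to $\mathbb{R}^5$ gives $\int_{A(\psi)}\phi\,r\,dr\,dz=\frac{1}{2\pi^2}\int_{\mathbb{R}^5}1_{\widetilde{A}}\,\mathcal{T}\phi\,dy$, where $\widetilde{A}$ is the lift of $A$; Hölder's inequality together with the Sobolev embedding $\dot{H}^1(\mathbb{R}^5)\hookrightarrow L^{10/3}(\mathbb{R}^5)$ then yields $\left|\int_{A(\psi)}\phi\,r\,dr\,dz\right|\lesssim|\widetilde{A}|^{7/10}\|\phi\|_{\mathcal{H}}$, which is finite because $A$ is bounded. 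Thus both functionals are continuous on $\mathcal{H}(\Pi)$ and agree on the dense subset $C^\infty_c(\Pi)$, so they agree throughout, which shows that $\psi$ is a weak solution of the Hill's problem for $(1,W)$.

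The step demanding the most care is the membership $\psi\in\mathcal{H}(\Pi)$: finiteness of $\|\psi\|_{\mathcal{H}}$ is immediate from the energy identity, but confirming that $\psi$ truly lies in the completion of $C^\infty_c(\Pi)$, rather than merely having finite norm, rests on the capacity-zero character of the axis in the $\mathbb{R}^5$ realization and on the decay and Hölder regularity of $\psi/r^2$. Everything else reduces to a single integration by parts and a routine density argument.
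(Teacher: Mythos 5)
Your proposal is correct in substance, and its second half (the verification of \eqref{prob_hill}) is essentially the paper's argument: integration by parts for $\phi\in C^\infty_c(\Pi)$ using $-r^{-2}\mathcal{L}\psi=\xi$ a.e.\ from Lemma \ref{lem_en_iden_origin}, then extension by density with a H\"older--Sobolev bound that is the same estimate as the paper's \eqref{comp_5_}, just written in the $\mathbb{R}^5$ variables ($|\widetilde A|^{7/10}$ in place of $\|\xi\|_\infty^{3/10}\|r^2\xi\|_1^{7/10}$, which coincide since $\xi=1_A$). Where you genuinely diverge is the membership step $\psi\in\mathcal{H}(\Pi)$. The paper gets there through hard elliptic theory: it shows $\mathcal{T}\psi\in H^1_{loc}(\mathbb{R}^5)$, deduces the Newtonian-potential representation $\mathcal{T}\psi=\tfrac{1}{8\pi^2|\cdot|^3}*_{\mathbb{R}^5}\xi$ with $\xi$ bounded and compactly supported in $\mathbb{R}^5$, concludes $\mathcal{T}\psi\in W^{2,p}(\mathbb{R}^5)\cap H^1(\mathbb{R}^5)$, and then lands in $E_s$ (the axis issue being absorbed into the cited Lemma \ref{AF86_lem}). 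You instead run a soft approximation argument, truncation and mollification in $\mathbb{R}^5$ combined with the zero $\dot H^1$-capacity of the codimension-four axis. This is viable and more self-contained, but two points need more care than your sketch gives them. First, invoking \eqref{iden_norm} for $\psi$ is formally circular, since that identity is established for elements of $\mathcal{H}(\Pi)$, which is what you are trying to prove; you must instead verify directly that $\nabla\mathcal{T}\psi\in L^2(\mathbb{R}^5\setminus\{r=0\})$, and the term $\psi/r^3$ appearing in $\partial_r(\psi/r^2)$ is only square-integrable thanks to the boundedness of $\psi/r^2$ near the axis and the decay coming from the compact support of $\xi$ (Proposition \ref{prop_cpt_supp}), so this computation must be done, not cited. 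Second, truncation at infinity in the homogeneous norm requires a quantitative rate, $|\mathcal{T}\psi(y)|=o(|y|^{-3/2})$, not merely $\mathcal{T}\psi\to 0$ from Lemma \ref{lem_psi_hol_conti}; again the compact core supplies $|\mathcal{T}\psi|\lesssim|y|^{-3}$ via the kernel bound \eqref{est_F} with $\tau=3/2$, but this should be stated. What the paper's route buys is that all such decay and integrability comes for free from the representation formula; what your route buys is avoiding the $W^{2,p}$ machinery and making the role of the axis (capacity-zero removability) explicit rather than hidden inside Lemma \ref{AF86_lem}.
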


 
 
Now we borrow the uniqueness result 
of 
 Amick-Fraenkel  
\cite{AF86}: 
\begin{thm}\label{thm_AF}[Theorem 1.1 in \cite{AF86}]
If $\psi\in \mathcal H(\Pi)\setminus\{0\}$ is a weak solution of the Hill's problem   for $(\lambda,W)$, then we have $$\psi(r,z)=\psi_{H(\lambda,a)}(r,z-c)$$ for some $c\in\mathbb{R}$ and 
 for the  constant $a=a(\lambda,W)>0$ solving the equation
$W=(2/15)\lambda a^2$
where $\psi_{H(\lambda,a)}$ is the stream function
\eqref{defn_hill_gen_st} of the Hill's vortex 
$$\xi_{H(\lambda,a)}=\lambda 1_{B_a}(x),\quad B_a\subset\mathbb{R}^3:\mbox{the ball centered at the origin with radius\,} a.$$
\end{thm}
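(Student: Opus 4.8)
The plan is to reduce the problem, via the isometry $\mathcal{T}$ of \eqref{tran_h_iden}--\eqref{iden_norm}, to a semilinear elliptic equation in $\mathbb{R}^5$ with a monotone but discontinuous nonlinearity, and then to run the method of moving planes. Writing $w=\mathcal{T}\psi=\psi/r^2$, the weak formulation \eqref{prob_hill} together with $\Delta_{\mathbb{R}^5}(\mathcal{T}\phi)=r^{-2}\mathcal{L}\phi$ shows that the cylindrically ($O(4)$-)symmetric function $w$ is a weak solution of
\begin{equation*}
-\Delta_{\mathbb{R}^5}w=\lambda\,{1}_{\{w>W/2\}}\quad\text{in }\mathbb{R}^5,\qquad w(y)\to0\ \text{as }|y|\to\infty,
\end{equation*}
where we used $\Psi=r^2(w-W/2)$ and $r>0$, so that $\{\Psi>0\}=\{w>W/2\}$ (the axis $\{y'=0\}$, of codimension four, carries no distributional mass for $H^1$ data). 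Under this reduction the Hill's vortex $\xi_{H(\lambda,a)}$ corresponds \emph{exactly} to the case in which the superlevel set $\{w>W/2\}$ is a ball of radius $a$ centred on the axis, since the half-ball $\{r^2+z^2<a^2\}\subset\Pi$ lifts to $\{|y|<a\}$. Thus the theorem is equivalent to showing that $\{w>W/2\}$ must be such a ball. Because the right-hand side is bounded, elliptic regularity upgrades $w$ to $W^{2,p}_{loc}\subset C^{1,\alpha}_{loc}$ for every $p$; moreover $w$ is harmonic in $\{w<W/2\}$ and solves $-\Delta w=\lambda$ in $\{w>W/2\}$, so $w$ is real-analytic on each side of the free boundary $\{w=W/2\}$.

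Next I would record the qualitative facts needed to start the scheme. Since the source $\lambda{1}_{\{w>W/2\}}\ge0$ is non-negative and $w\to0$ at infinity, $w$ is superharmonic, so by the minimum principle $w>0$ everywhere and, as $W/2>0$, the core $U:=\{w>W/2\}$ is bounded and non-empty. Representing $w$ as the Newtonian potential of $\lambda{1}_U$ in $\mathbb{R}^5$ yields the decay $w(y)=c_0|y|^{-3}+O(|y|^{-4})$ with $c_0>0$, together with the matching bound for $\nabla w$; this precise decay is what allows the reflection argument to be initiated from infinity. Cylindrical symmetry also forces the centre of symmetry onto the axis $\{y'=0\}$, so only its axial coordinate and the sphericity remain to be determined.

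The heart of the proof is the moving-plane argument. Fixing a direction $e\in S^4$, for $\mu\in\mathbb{R}$ set $T_\mu=\{y\cdot e=\mu\}$, let $y^\mu$ denote reflection of $y$ across $T_\mu$, and put $v_\mu=w(y^\mu)-w(y)$ on the cap $\Sigma_\mu=\{y\cdot e>\mu\}$. Then $v_\mu=0$ on $T_\mu$, $v_\mu\to0$ at infinity with the decay above, and
\begin{equation*}
-\Delta v_\mu=\lambda\big({1}_{\{w(y^\mu)>W/2\}}-{1}_{\{w>W/2\}}\big).
\end{equation*}
The key structural fact is that $s\mapsto\lambda{1}_{\{s>W/2\}}$ is \emph{non-decreasing}: on $\{v_\mu\ge0\}$ the right-hand side is $\ge0$ and on $\{v_\mu<0\}$ it is $\le0$. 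Exploiting this sign I would show $v_\mu\ge0$ in $\Sigma_\mu$ for all large $\mu$ (using the decay to apply the maximum principle on the unbounded cap), then decrease $\mu$ to a critical value $\mu_0$ and prove $v_{\mu_0}\equiv0$, i.e. $w$ is symmetric about $T_{\mu_0}$ and monotone along $e$. Carrying this out for every $e$ pins all the symmetry hyperplanes to a common centre $P_0=(0,c)$ on the axis and yields that $w$ is radially symmetric and strictly decreasing about $P_0$. \textbf{The main obstacle is exactly the discontinuity of ${1}_{\{\cdot>W/2\}}$:} since $-\Delta v_\mu$ is a difference of indicator functions rather than $c(y)\,v_\mu$ with bounded $c(y)$, the linear strong maximum principle and Hopf boundary-point lemma that drive the classical Gidas--Ni--Nirenberg and Serrin schemes are unavailable. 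Closing the start and, above all, the final step (excluding $v_{\mu_0}\not\equiv0$ by sliding a little further) therefore requires a careful analysis of the free boundary $\{w=W/2\}$ — using the analyticity of $w$ on each side, the $C^1$-matching across the interface, and the monotonicity of $f_H$ — to recover the comparison and strong-maximum-principle conclusions in this degenerate setting.

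Once radial symmetry about $P_0=(0,c)$ is established, the conclusion follows from a short ODE computation. Translating so that $c=0$, $w=w(\rho)$ with $\rho=|y|$ solves $w''+\tfrac{4}{\rho}w'=-\lambda$ for $\rho<a$ and $=0$ for $\rho>a$, where $a$ is the single radius of the ball $\{w>W/2\}$ and $w(a)=W/2$. The solution bounded at $0$ and decaying at infinity is $w=A-\tfrac{\lambda}{10}\rho^2$ inside and $w=C\rho^{-3}$ outside; imposing continuity of $w$ and $w'$ at $\rho=a$ forces $\tfrac{\lambda a}{5}=\tfrac{3}{a}\cdot\tfrac{W}{2}$, i.e. the relation $W=\tfrac{2}{15}\lambda a^2$ of the statement, and determines $A$. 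Undoing the transformation $\psi=r^2w$ recovers precisely the stream function $\psi_{H(\lambda,a)}(r,z-c)$ of \eqref{defn_hill_gen_st}, completing the proof.
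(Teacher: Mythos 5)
First, note that the paper does not prove Theorem \ref{thm_AF} itself: the statement is imported verbatim from Amick--Fraenkel \cite{AF86}, and the paper only outlines the ideas of that proof in Remark \ref{rem_idea_uniqAF}. Your proposal follows exactly the same strategy as \cite{AF86} and as that remark: lift via $\mathcal{T}$ (using \eqref{tran_h_iden} and Lemma \ref{AF86_lem}) to the cylindrically symmetric problem $-\Delta_{\mathbb{R}^5}w=\lambda\,{1}_{\{w>W/2\}}$ in $\mathbb{R}^5$ with $w\to 0$ at infinity, establish positivity, boundedness of the core and the $|y|^{-3}$ decay of the Newtonian potential in $\mathbb{R}^5$, run moving planes to get radial symmetry about a point on the axis, and finish with the explicit ODE. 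The reduction and the final ODE matching are sound; in particular your computation $\lambda a/5 = 3W/(2a)$ does yield $W=(2/15)\lambda a^2$ as claimed.

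The genuine gap is the step you yourself flag and then defer. Because $s\mapsto\lambda\,{1}_{\{s>W/2\}}$ is discontinuous, the difference $v_\mu$ does not satisfy a linear equation of the form $-\Delta v_\mu=c(y)\,v_\mu$ with bounded $c$, so the strong maximum principle, the Hopf boundary-point lemma, and the standard continuation argument of Serrin and Gidas--Ni--Nirenberg \cite{Serrin71}, \cite{GNN} are all unavailable --- exactly the difficulty Remark \ref{rem_idea_uniqAF} identifies when it says $f_H$ "is not regular enough to satisfy the original setting of \cite{GNN} directly." Overcoming this obstruction is the entire mathematical content of \cite{AF86} (their Section 3): both the initiation of the reflection from infinity and, above all, the exclusion of $v_{\mu_0}\not\equiv 0$ at the critical plane require a replacement for the strong maximum principle adapted to the free boundary $\{w=W/2\}$, and none of this is supplied by appealing to "a careful analysis of the free boundary." Everything you do carry out (the $\mathbb{R}^5$ lift, decay estimates, the ODE) is the routine part; the discontinuity is precisely what makes the uniqueness a substantial theorem rather than an exercise. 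If your intent was to cite \cite{AF86} for the symmetry step, your write-up is an accurate account of their strategy; as a self-contained proof it is incomplete at its crucial point.
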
 
 \begin{rem}\label{rem_idea_uniqAF}
One of the key ideas  of \cite{AF86}
 is to use the observation \eqref{tran_h_iden}.  Indeed,
 for  a weak solution $\psi$ of the Hill's problem for $(\lambda, W)$,  
 we expect \eqref{rough} (i.e. \eqref{eq_W} for $\gamma=0$). It implies,
  by  \eqref{tran_h_iden},
 \begin{equation*} 
 -\Delta_{\mathbb{R}^5}(\mathcal{T}\psi) 
 =-\mathcal{T}(\mathcal L \psi)=\lambda f_H(\psi-(1/2)Wr^2)=\lambda f_H(\mathcal{T}\psi-(1/2)W)\geq 0\quad\mbox{in}\quad\mathbb{R}^5.
\end{equation*}
One may expect spherical symmetry of $\mathcal{T} \psi$ in $\mathbb{R}^5$ (up to a translation in $y_5$-direction) via  the moving plane method due to \cite{Serrin71}, \cite{GNN}. 
The main difficulty lies on the fact that the vorticity function $f_H$ is not regular enough to satisfy the original setting of \cite{GNN} directly. 
By overcoming the obstacle (see Section 3 of \cite{AF86}),
spherical symmetry of $\mathcal{T} \psi$  is obtained.
We may assume that
$\mathcal{T}\psi$ is radially symmetric by shifting in $y_5$-variable
if necessary.  Moreover, it is strictly decreasing in the radial direction.   Then,
  it only remains to solve 
for  
$\eta(|y|)=(\mathcal{T}\psi)(y) $ and some unknown $a>0$ to the following O.D.E. problem:
\begin{equation*}\label{ode_hill}
\begin{split}
&\eta \in C^1[0,\infty): \mbox{strictly decreasing},\\
& -\frac 1 {t^4} (t^4 \eta')'=\lambda, \quad 0<t<a,\\
 & -\frac 1 {t^4} (t^4 \eta')'=0, \quad t>a,\\
 & \eta(a)=\frac{1}{2}W,\quad \eta(\infty)=0.\\
\end{split}
\end{equation*} It has the unique solution $\eta$ so that 
$\psi(x)=r^2\eta(|x|)$ is equal to $\psi_{H(\lambda,a)}(x)$ in \eqref{defn_hill_gen_st}. 
The radius $a$ is determined by \eqref{def_a}
 (cf. for the circular vortex pair
\eqref{lamb_dipole}, 
   refer to \cite{Burton96} or 
  Section 6.2 of \cite{AC2019}).
 \ \\
 \end{rem}
\subsection{Every maximizer with small impulse loses certain mass. }\ \\

In order to prove Theorem \ref{thm_uniq}, we first show the following proposition saying that 
every maximizer with small impulse has zero flux constant $\gamma$. 

 \begin{prop}\label{prop_small_mu}
   There exists a constant $ M_1>0$ such that for any 
 $0<\mu\leq M_1$ and for each $\xi\in \mathcal{S}_{\mu}$, we have 
 $$\int_{\mathbb{R}^3}\xi\,dx<1.$$ In that case,
 the flux constant $\gamma$   in \eqref{eq_W} of Theorem \ref{thm_max_is_ring} is equal to $0$. 
 \end{prop}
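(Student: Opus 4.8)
The plan is to establish the present proposition as the first implication in the three-step chain stated after Theorem \ref{thm_uniq}, namely $\mu\ll 1 \Rightarrow \int\xi_{\max}\,dx<1$, and then to read off $\gamma=0$ as an immediate consequence. First I would use the scaling \eqref{scaling} to normalize to $\nu=\lambda=1$, so that $\xi\in\mathcal{S}_\mu$ and the impulse $\mu$ is the only free parameter. By Theorem \ref{thm_max_is_ring} every such maximizer is a patch $\xi=1_A$ with $A=\{\psi-\tfrac12 Wr^2-\gamma>0\}$ for uniquely determined $W>0$ and $\gamma\ge 0$, where $\psi=\mathcal{G}[\xi]$, and its circulation is $\int\xi\,dx=|A|\le\nu=1$. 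The crucial bookkeeping is that, once the \emph{strict} bound $\int\xi\,dx<1$ is known, the flux constant must vanish: Lemma \ref{lem_pos_gam} says that $\gamma>0$ forces $\int\xi\,dx=1$, so its contrapositive yields $\gamma=0$. Thus the entire substance of the proposition is the measure estimate $|A|<1$ for small $\mu$.

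To obtain this estimate I would first extract a lower bound for the traveling speed that depends only on $\mu$. Proposition \ref{prop_cpt_supp} already gives $W\ge \mathcal{I}_\mu/(2\mu)$, so it remains to bound $\mathcal{I}_\mu$ from below in terms of $\mu$ alone. This I would do by testing the variational problem against the Hill's vortex $\xi_{H(1,a)}$ whose radius $a$ is fixed by $\mu=\tfrac{4\pi}{15}a^5$: as long as $\mu$ is small enough that its circulation $\tfrac43\pi a^3\le 1$, this is an admissible competitor, and the explicit values in \eqref{comp_imp} give $\mathcal{I}_\mu\ge E[\xi_{H(1,a)}]=c_0\mu^{7/5}$ for an explicit $c_0>0$. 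Combining the two bounds produces a clean, purely $\mu$-dependent lower bound $W\gtrsim \mu^{2/5}$, which is the quantitative form of the speed estimate announced in the introduction.

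The last and most delicate step is to convert this speed bound into the measure estimate $|A|<1$, and here I would invoke the elementary stream-function estimate of Lemma \ref{lem_est_psi_axis}. On the core one has the pointwise inequality $\psi(r,z)>\tfrac12 Wr^2+\gamma$, equivalently $\psi/r^2>\tfrac12 W\gtrsim\mu^{2/5}$; pairing this with the control of $\psi$ (respectively of $\psi/r^2$, which one may view as the Newtonian potential in $\mathbb{R}^5$ of the solid of revolution of prescribed impulse) locates $A$ in a region whose three-dimensional measure can be controlled through the impulse normalization $\tfrac12\int_A r^2\,dx=\mu$. Shrinking the admissibility threshold to a possibly smaller $M_1>0$ then forces $|A|<1$, after which $\gamma=0$ follows from Lemma \ref{lem_pos_gam} as above. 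I expect this final conversion to be the main obstacle: since $\psi/r^2$ genuinely blows up as one approaches the core, the confinement cannot be read off from a single supremum bound but must be balanced carefully against the lower bound $W\gtrsim\mu^{2/5}$, and it is precisely this balancing that Lemma \ref{lem_est_psi_axis} is designed to carry out.
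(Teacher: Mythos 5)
Your scaffolding matches the paper's: normalize to $\nu=\lambda=1$ via \eqref{scaling}, extract $W\ge \mathcal{I}_\mu/(2\mu)$ from Proposition \ref{prop_cpt_supp}, lower-bound $\mathcal{I}_\mu\gtrsim\mu^{7/5}$ to get $W\gtrsim\mu^{2/5}$, and deduce $\gamma=0$ from the contrapositive of Lemma \ref{lem_pos_gam}. Your competitor argument for the lower bound on $\mathcal{I}_\mu$ (the Hill vortex $\xi_{H(1,a)}$ with $\mu=\frac{4\pi}{15}a^5$, admissible once its circulation $\frac43\pi a^3\le 1$, with energy read off from \eqref{comp_imp}) is a legitimate and in fact more explicit variant of the paper's, which instead rescales an arbitrary maximizer $\xi_1\in\mathcal{S}_1$ by $\xi_\mu(x)=\xi_1(\mu^{-1/5}x)$; both yield $\mathcal{I}_\mu\ge c\,\mu^{7/5}$ and hence $W\gtrsim\mu^{2/5}$.

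The gap is the final step, which you describe but do not carry out, and your description points in a direction that does not work. There is no confinement statement "locating $A$ in a region of controlled measure": $A$ may have pieces arbitrarily far from the axis, and no supremum bound on $\psi$ or $\psi/r^2$ combined with the impulse alone will bound $|A|$. What the paper actually does is split the mass into two regimes. Far from the axis, Chebyshev with the impulse constraint gives $\mu=\frac12\int r^2\xi\,dx\ge \frac12\int_{r\ge2\sqrt\mu}r^2\xi\,dx\ge 2\mu\int_{r\ge2\sqrt\mu}\xi\,dx$, hence $\int_{r\ge2\sqrt\mu}\xi\,dx\le\frac12$; this produces the irreducible $\frac12$ and uses no information about $W$ at all. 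Near the axis one uses the pointwise bound $\xi=1_A\le 2\psi/(Wr^2)$ (valid because $\psi>\frac12Wr^2+\gamma$ on $A$ and $\psi\ge0$ off $A$), inserts the representation $\psi=\mathcal{G}[\xi]$, swaps the order of integration using the symmetry $G(r,z,r',z')=G(r',z',r,z)$, and applies Lemma \ref{lem_est_psi_axis} to the \emph{inner} integral, yielding for each dyadic shell $\int_{\alpha\le r<2\alpha}\xi\,dx\lesssim \frac{\alpha^2}{W}\int\xi\,dx\le\frac{\alpha^2}{W}$; summing over $\alpha=(2\sqrt\mu)2^{-i-1}$ gives $\int_{0<r<2\sqrt\mu}\xi\,dx\lesssim\mu/W\lesssim\mu^{3/5}$. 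Altogether $\int\xi\,dx\le\frac12+C\mu^{3/5}<1$ once $\mu\le M_1$ is small. This Fubini/symmetry mechanism plus the dyadic decomposition (and the Chebyshev split that caps the far-field mass at $\frac12$) is the actual content of the proposition; without it, "balancing $\psi/r^2\gtrsim\mu^{2/5}$ against Lemma \ref{lem_est_psi_axis}" remains a hope rather than a proof.
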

\begin{rem}\label{rem_small_mu} The same result  for a certain maximizer $\xi\in\mathcal{S}_\mu$ can be found in \cite[Remark 5.2]{FT81}, which was obtained from some uniform estimates for a sequence of maximizers for the penalized  energy functional \eqref{defn_pen_en}. Here we adapt  the proof so that it works for \textit{every} maximizer (cf.\cite[Remark 2.6 (iii)]{AC2019} for the circular vortex pair \eqref{lamb_dipole}).
\end{rem}

To prove 
Proposition \ref{prop_small_mu}, we need the following  estimate  of the kernel $G$
({cf}. Lemma 4.8 in \cite{FT81}).
 \begin{lem}\label{lem_est_psi_axis}
For $\alpha>0$, we have
$$ {\int_{ r'< \alpha} }G(r,z,r',z')r'dr'dz'\lesssim \alpha^4, \quad (r,z)\in\Pi.$$
\end{lem}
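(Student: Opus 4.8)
The plan is to bound the kernel $G(r,z,r',z')$ uniformly in the source point $(r,z)$ when the field point is close to the axis, i.e. when $r'<\alpha$, and then integrate this bound over the strip $\{r'<\alpha\}$. The natural tool is the estimate \eqref{est_F}, which gives $G(r,z,r',z')\lesssim_\tau (rr')^{\tau+1/2}/(\sqrt{|r-r'|^2+|z-z'|^2})^{2\tau}$ for $0<\tau\leq 3/2$. The quantity we want to control scales like $\alpha^4$, so I would first figure out which choice of $\tau$ produces the correct power of $r'$ after integration. Since integrating $r'\,dr'dz'$ over a region where $r'<\alpha$ already supplies factors of $\alpha$, and the bound on $G$ contributes a factor $r'^{\tau+1/2}$, I expect the exponent bookkeeping to force a particular $\tau$.

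First I would fix $(r,z)\in\Pi$ and split according to the size of $r$ relative to $\alpha$, or more cleanly, simply bound $G$ using \eqref{est_F} and carry out the $(r',z')$-integral directly. Choosing $\tau=3/2$ gives $G\lesssim (rr')^2/|(r,z)-(r',z')|^3$, which decays fast in the $z'$-direction and is integrable away from the singularity; choosing a small $\tau$ instead makes the singularity at $(r',z')=(r,z)$ harmless but worsens the decay. Because the claimed bound $\alpha^4$ must hold \emph{uniformly} in $(r,z)$ — including when $(r,z)$ itself sits inside the strip $\{r'<\alpha\}$ and the kernel is singular — the argument cannot rely on $(r,z)$ being far from the integration region. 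The cleanest route is to observe that $\int G(r,z,r',z')\,r'\,dr'dz'$ over all of $\Pi$ is essentially the value at $(r,z)$ of the stream function of the indicator $1_{\{r'<\alpha\}}$, and to estimate that stream function.

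Concretely, I would set $\zeta = 1_{\{0<r'<\alpha\}}$ (as an axi-symmetric density) and recognize that $\int_{r'<\alpha} G(r,z,r',z')\,r'\,dr'dz' = \mathcal{G}[\zeta](r,z)$ in the notation of \eqref{form_psi}, except that $\zeta$ is not integrable in $z'$. To fix this I would instead integrate against the kernel estimate directly: using \eqref{est_F} with $\tau=3/2$ controls the tail $|z-z'|$ large, while for the near-diagonal part one uses a smaller $\tau$ (say $\tau=1/2$, giving $G\lesssim (rr')/|(r,z)-(r',z')|$) so that the $z'$-integral over a bounded window converges. Combining the two regimes and using $r'<\alpha$ throughout, the factors of $r'$ from the kernel together with the width $\alpha$ of the strip should assemble into $\alpha^4$, with $r$-dependence cancelling because the two regimes trade off $r$-powers in opposite directions.

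The main obstacle I anticipate is the uniformity in $(r,z)$: when $(r,z)$ lies in or near the strip $\{r'<\alpha\}$, the kernel is singular at the diagonal, and a single choice of $\tau$ in \eqref{est_F} will not simultaneously handle the logarithmic/algebraic singularity near $(r',z')=(r,z)$ and the slow decay for large $|z-z'|$. The delicate step is therefore to partition the $(r',z')$-integral into a near-diagonal piece (where a small $\tau$ tames the singularity and the factor $r'<\alpha$ plus the bounded integration volume give the right $\alpha$-power) and a far piece (where $\tau=3/2$ gives integrable decay in $z'$), and to check that \emph{both} pieces are bounded by $\alpha^4$ with a constant independent of $(r,z)$. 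Once the exponents are matched in each regime, the conclusion follows by adding the two contributions.
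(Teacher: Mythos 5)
Your plan coincides with the paper's own proof: the paper likewise splits the $(r',z')$-integral into the near-diagonal region $t<r/2$, handled with \eqref{est_F} at $\tau=1/2$ (yielding a bound $\lesssim r^4$, which suffices because that region is nonempty only when $r<2\alpha$), and the far region $t\geq r/2$, handled with $\tau=3/2$ (yielding $\lesssim \alpha^3 r$ resp. $\alpha^4$ after a change of variables), exactly the two-regime decomposition you describe. The only cosmetic difference is that the paper first cases on $r>2\alpha$ versus $r\leq 2\alpha$, which amounts to the observation that the near-diagonal piece is empty in the former case.
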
 
\begin{proof}

Let $\alpha>0$ and $(r,z)\in\Pi$.  
 Since we have  
 \begin{equation*}\begin{split}
{\int_{ r'< \alpha} }G(r,z,r',z')r'dr'dz'={\int_{ r'< \alpha} }G(r,0,r',z'-z)r'dr'dz'
={\int_{ r'< \alpha} }G(r,0,r',z')r'dr'dz',
\end{split}\end{equation*}  we may assume $z=0$. We denote  $$  t 
=\sqrt{(r-r')^2+(z-z')^2}=r\sqrt{\left(1-\frac{r'}{r}\right)^2+\left(\frac{z-z'}{r}\right)^2}.$$
 When $r>2\alpha$, we can estimate, by setting $\tau=3/2$ in \eqref{est_F} ,
\begin{equation}\label{alpha_change}\begin{split}
{\int_{ r'< \alpha} }G(r,0,r',z')r'dr'dz'&\lesssim
\int_{\substack{r'< \alpha }}\frac{r^{2}(r')^{3}}{t^{3}}dr'dz'=r^{2}
\int_{\substack{r'< \alpha }}\left(\frac{r'}{r}\right)^{3}\left(\sqrt{\left(1-\frac{r'}{r}\right)^2+\left(\frac{z'}{r}\right)^2}\right)^{-3}dr'dz'\\
&= r^{4}
\int_{\substack{r'< \alpha/r }}
  ({r'})^{3}
  \left(\sqrt{\left(1-{r'}\right)^2+{z'}^2}\right)^{-3}dr'dz'\\
  &= r^{4}  \left(\frac{\alpha}{r}\right)^{3}
\int_{0}^{\alpha/r }\int_{\mathbb{R}}
  \left(\sqrt{\left(1-{r'}\right)^2+{z'}^2}\right)^{-3}dz'dr' \\&
\lesssim\alpha^3r  
\int_0^{\alpha/r} 
  \frac{1}{\left(1-r'\right)^{2}} dr' 
  \lesssim\alpha^4,
  \end{split}\end{equation}
  where we used change of variables 
  $\frac {r'}{r}\mapsto r',\quad\frac{z'}{r}\mapsto z'.$\\
  
  Now we consider  the remained case $r\leq 2\alpha$. We 
 split the integral
\begin{equation*}\begin{split}
{\int_{ r'< \alpha} }G(r,0,r',z')r'dr'dz'=\int_{\substack{r'< \alpha,\\ t<r/2}}
+\int_{\substack{r'< \alpha,\\ t\geq r/2}}=:I+II.
\end{split}\end{equation*} 
For $I$, we have, by \eqref{est_F} with $\tau=1/2$,
\begin{equation*}\begin{split}
\int_{\substack{r'< \alpha,\\ t< r/2}}G(r,0,r',z')r'dr'dz'&\lesssim
\int_{\substack{r'< \alpha,\\ t< r/2}}\frac{r(r')^{2}}{t}dr'dz'=r^{2}
\int_{\substack{r'< \alpha,\\ t< r/2}}\left(\frac{r'}{r}\right)^{2}\left(\sqrt{\left(1-\frac{r'}{r}\right)^2+\left(\frac{z'}{r}\right)^2}\right)^{-1}dr'dz'\\
&\leq r^{4}
\int_{\substack{
 \sqrt{(1-{r'})^2+{z'}^2}<1/2}}
  ({r'})^{2}
  \left(\sqrt{\left(1-{r'}\right)^2+{z'}^2}\right)^{-1}dr'dz'\\
  &\lesssim r^{4}
\int_{\substack{
 \sqrt{(1-{r'})^2+{z'}^2}<1/2}}
  \left(\sqrt{\left(1-{r'}\right)^2+{z'}^2}\right)^{-1}dr'dz'\lesssim 
r^4\lesssim \alpha^4.
\end{split}\end{equation*} 
For $II$, we estimate, by \eqref{est_F} with $\tau=3/2$ as in \eqref{alpha_change},
\begin{equation*}\begin{split}
\int_{\substack{r'< \alpha,\\ t\geq r/2}}G(r,0,r',z')r'dr'dz'&\lesssim
\int_{\substack{r'< \alpha,\\ t\geq r/2}}\frac{r^{2}(r')^{3}}{t^{3}}dr'dz'=r^{2}
\int_{\substack{r'< \alpha,\\ t\geq  r/2}}\left(\frac{r'}{r}\right)^{3}\left(\sqrt{\left(1-\frac{r'}{r}\right)^2+\left(\frac{z'}{r}\right)^2}\right)^{-3}dr'dz'\\
&= r^{4}
\int_{\substack{r'< \alpha/r,\\
 \sqrt{(1-{r'})^2+{z'}^2}\geq 1/2}}
  ({r'})^{3}
  \left(\sqrt{\left(1-{r'}\right)^2+{z'}^2}\right)^{-3}dr'dz'\\
  &\lesssim r^{4}\left(\frac{\alpha}{r}\right)^{3}
\int_{\substack{
 \sqrt{(1-{r'})^2+{z'}^2}\geq 1/2}}
  \left(\sqrt{\left(1-{r'}\right)^2+{z'}^2}\right)^{-3}dr'dz'
\lesssim r^{4}\left(\frac{\alpha}{r}\right)^{3}\lesssim \alpha^4.  
 \end{split}\end{equation*}

\end{proof}

\color{black}

Now we prove Proposition \ref{prop_small_mu}.
\begin{proof}[Proof of Proposition \ref{prop_small_mu}]\ \\

Let  $\xi\in \mathcal{S} _{\mu}$ for some $\mu\in(0,\infty)$. Then, by Theorem \ref{thm_max_is_ring}, there exist unique $W=W_\xi>0, \gamma=\gamma_\xi\geq 0$ such that  $\xi={1}_{\{\psi-(1/2)Wr^2-\gamma>0\}}$. Moreover,
$\xi$ is compactly supported. 
  By $$\mu=\frac{1}{2}{\int}r^2\xi dx\geq \frac{1}{2}{\int}_{r\geq2\sqrt\mu}r^2\xi dx\geq 2\mu   \int_{r\geq2\sqrt\mu} \xi\dd x,$$ we get
  $\int_{r\geq2\sqrt\mu} \xi\dd x\leq \frac{1}{2}, $
which implies
\begin{align}\label{mass_1_2}
 \int  \xi  dx\leq  \int_{0<r<2\sqrt\mu} \xi\dd x+\frac 1 2.
\end{align} 
Since we have
$$0\leq  \xi={1}_{\{\psi-\frac{1}{2}Wr^2-\gamma>0\}}\leq \frac{2\psi}{Wr^2},$$
we estimate, for any $\alpha>0
$,
\begin{align*}
\int_{\alpha \leq r<2 \alpha}\xi\dd x
&=2\pi \int_{\alpha \leq r<2 \alpha}\xi r drdz\leq 
2\pi \int_{\alpha \leq  r<2 \alpha}\frac{2\psi}{Wr^2}rdrdz\\ 
&= \frac{4\pi}{\alpha^2 W}\int_{\alpha\leq r<2\alpha} \left({\int_\Pi}G(r,z,r',z')\xi(r',z')r'dr'dz'\right)rdrdz \\
&= \frac{4\pi}{\alpha^2 W}\int_{\Pi} \xi(r,z)\left( {\int_{\alpha\leq r'<2\alpha} }G(r,z,r',z')r'dr'dz'\right)rdrdz \\
&\lesssim \frac{1}{\alpha^2 W}
\sup_{(r,z)\in\Pi}\left( {\int_{\alpha\leq r'<2\alpha} }G(r,z,r',z')r'dr'dz'\right)
\int \xi \,dx,
\end{align*}  where we used the symmetry of $G$ in the last equality.
Using Lemma \ref{lem_est_psi_axis}, we get
\begin{align*}
\int_{\alpha \leq r<2 \alpha}\xi\dd x
&\lesssim \frac{\alpha^{2}}{ W}\int \xi\,dx\leq \frac{\alpha^{2}}{ W}.
\end{align*} 
Thus we get
\begin{equation}\begin{split}\label{mu_over_W}
\int_{0 <r<2\sqrt \mu }\xi\dd x&= \sum_{i=0}^\infty\int_{(2\sqrt\mu) 2^{-i-1} \leq r<(2\sqrt\mu) 2^{-i}}\xi\dd x
\lesssim \frac{1}{W}\sum_{i=0}^\infty \left((2\sqrt\mu) 2^{-i-1}\right)^{2}
\lesssim \frac{ \mu }{W}\sum_{i=0}^\infty \left( \frac 1 4\right)^{i} 
\lesssim \frac{\mu }{W}.
\end{split}\end{equation}  Now we recall the estimate \eqref{est_W_lower} of Proposition \ref{prop_cpt_supp}:
\begin{equation*}\label{3d_Est} \mathcal{I}_\mu\leq 2W\mu.\end{equation*}

On the other hand, we claim
$$\mathcal I_\mu \geq \mathcal{I}_1\mu^{7/5}$$ for  any $\mu\leq 1$. Indeed, it is a simple consequence from scaling. Let us take and fix any $\xi_1\in \mathcal{S}_1$ whose existence is guaranteed by Theorem \ref{thm_exist_max}. By setting 
$$\xi_\mu(x)=\xi_1(\mu^{-1/5}x),$$ we get 
$\xi_\mu\in \mathcal{P}_\mu$ for any $\mu\leq 1$.
Thus, we get the above claim due to $\mathcal I_\mu\geq E(\xi_\mu)= \mu^{7/5}E(\xi_1)=\mu^{7/5}\mathcal I_1$. \\

Hence, we have 
$ 2W\mu\geq \mathcal I_\mu \geq \mathcal{I}_1\mu^{7/5}$ for any $\mu\leq 1,
$ which gives
\begin{equation}\label{lower_W}
  W   \geq \frac{\mathcal{I}_1}{2}\mu^{2/5},\quad\mu\leq 1.
\end{equation}
In sum,  for any $\mu\leq1$ and for any $\xi\in\mathcal{S}_\mu$, we have, by \eqref{mass_1_2}, \eqref{mu_over_W}, \eqref{lower_W},
$$   \int \xi  dx \leq 
\int_{0 <r<2\sqrt \mu }\xi\dd x 
+\frac 1 2
\leq   {C\mu^{3/5}}+\frac 1 2 $$ for some universal constant $C>0$.
Thus there exists a sufficiently small constant $M_1>0$ such that for any 
$0<\mu\leq M_1$ and for any $\xi\in\mathcal{S}_\mu$, we have  $$\int \xi dx<1,$$ which implies $$\gamma=\gamma_\xi=0$$ by Lemma \ref{lem_pos_gam}.
\end{proof}
 Now we are ready to prove Theorem \ref{thm_uniq}.
\subsection{Proof of  uniqueness theorem (Theorem \ref{thm_uniq})}\ \\

 \begin{proof}[Proof of Theorem \ref{thm_uniq}]

Thanks to the  scaling argument \eqref{scaling}, it is enough to show the theorem for general $\mu>0$ with fixed $\lambda=\nu=1$.\\

Let $\mu\in(0,M_1]$ where $M_1>0$ is the constant from Proposition \ref{prop_small_mu}. 
Due to
 $\mathcal{S}_\mu\neq \emptyset$ from 
Theorem \ref{thm_exist_max}, we can take any $\xi\in\mathcal{S}_\mu$. Then, by Theorem \ref{thm_max_is_ring}, we get 
 \begin{equation*}
\begin{aligned}
&\xi= {{1}}_{ \{ \psi-(1/2)W_\xi r^2-\gamma_\xi >0\}}.
\end{aligned}
\end{equation*} for some constants $W_\xi>0$ and $\gamma_\xi\geq0$.  
By Proposition \ref{prop_small_mu}, we get $\gamma_\xi=0$, which implies
$$7 \mathcal{I}_\mu=5W_\xi\cdot\mu$$ by the identity
\eqref{est_W_lower} of Proposition \ref{prop_cpt_supp}. In other words, $W_\xi$ is determined by  knowing only the value of $\mu$. Let us denote $W_\xi$ by $W_\mu$ from now on. Then, by Lemma \ref{lem_hill_sol}, the stream function $\psi=\mathcal{G}[\xi]$ is a weak solution of the Hill's problem for $(\lambda,W)=(1,W_\mu)$. 
  We 
set 
the radius $a_\mu>0$ solving $W_\mu=(2/15)(a_\mu)^2$. 
  Then, by Theorem \ref{thm_AF}, there exists  a constant $c'\in\mathbb{R}$ such that
$$\psi(x)=\psi_{H(1,a_\mu)}(x+c'e_z),$$
where 
$\psi_{H(1,a_\mu)}$ is the stream function
\eqref{defn_hill_gen_st} of the Hill's vortex 
$\xi_{H(1,a_\mu)}=1_{B_{a_\mu}}$. Thus we get
$\xi(x)=\xi_{H(1,a_\mu)}(x+c'e_z).$ In sum, we have shown, for any $0<\mu\leq M_1$, 
$$\emptyset\neq\mathcal{S}_\mu\subset \{
\xi_{H(1,a_\mu)}(\cdot+ce_z)\,|\,  c\in\mathbb{R}
\}. $$ In particular,
the radius $a_\mu$ is explicitly computed by 
$$\mu 
=\frac{1}{2}\int r^2 \xi dx
=\frac{1}{2}\int r^2 \xi_{H(1,a_\mu)}dx=\frac{4\pi}{15}(a_\mu)^5$$ (e.g. see \eqref{comp_imp}).
 To show the reverse inclusion, we recall that any  translation in $z-$variable does not change the quantities involved in the variational problem \eqref{var_prob}. Thus, from 
$\xi_{H(1,a_\mu)}(\cdot+c'e_z)=\xi\in \mathcal{S}_\mu$ for some $c'\in\mathbb{R}$, 
 we obtain $$\mathcal{S}_\mu\supset \{
\xi_{H(1,a_\mu)}(\cdot+ce_z)\,|\, c\in\mathbb{R}
\}. $$

\end{proof}


 
\appendix
\addcontentsline{toc}{section}{Appendices}

 \section{Proof of Lemma \ref{lem_stream_AR}}\label{app_en_conv}
 
We begin with proving the following estimate of the stream function $\psi=\mathcal{G}[\xi]$ when $\xi$ satisfies the {the monotonicity }condition \eqref{cond_sym}. The result 
  is essentially due to \cite[Lemma 3.5]{FT81}. Here we follow the approach of \cite[Proposition 3.3]{AC2019}.
\begin{lem} \label{lem_stream_sym}
Let  $\xi\in (L^1_w \cap L^{2}\cap L^{1})  (\mathbb{R}^{3})$   be an  axi-symmetric nonnegative function satisfying {the monotonicity }condition \eqref{cond_sym}.
Then, 
$\psi=\mathcal{G}[\xi]$ satisfies 
\begin{align}\label{est_stream_sym}
\psi(r,z)  \lesssim \left(\|\xi\|_{L^1\cap L^2}  +\|r^2\xi\|_{1} \right)\cdot\left(\frac{r^2}{\sqrt A} +
\frac{1}{  A} +
 r^2\left(\frac{A}{|z|}\right)^3  \right),\quad (r,z)\in\Pi
\end{align} provided 
$r\leq \frac{|z|}{A},$ $|z|>0,$ and $A\geq1$. 
\end{lem}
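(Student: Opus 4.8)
The plan is to reduce to $z>0$, convert the monotonicity hypothesis \eqref{cond_sym} into a pointwise decay estimate, and then split the defining integral $\psi(r,z)=\int_\Pi G(r,z,r',z')\xi(r',z')r'\,dr'dz'$ according to the height $z'$. Since $G$ depends on $z,z'$ only through $(z-z')^2$ and $\xi$ is even in $z'$, the stream function $\psi$ is even in $z$, so I may assume $z>0$. Next, for fixed $r'$ the function $\xi(r',\cdot)$ is even and non-increasing on $(0,\infty)$, hence $z'\xi(r',z')\le\int_0^{z'}\xi(r',w)\,dw\le\tfrac12 m(r')$ with $m(r'):=\int_{\mathbb R}\xi(r',w)\,dw$; thus $\xi(r',z')\le m(r')/(2z')$, and in particular $\xi(r',z')\le m(r')/z$ whenever $z'>z/2$. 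I record that $\int_0^\infty m(r')r'^k\,dr'$ equals, up to the factor $2\pi$, the quantity $\|r^{k-1}\xi\|_1$ for $k=1,2,3$, so these moments are controlled by $\|\xi\|_{L^1\cap L^2}+\|r^2\xi\|_1$.

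For the far region $\{z'\le z/2\}$ one has $\sqrt{(r-r')^2+(z-z')^2}\ge z/2$, and applying \eqref{est_F} with $\tau=3/2$ bounds this contribution by $r^2 z^{-3}\int_\Pi r'^3\xi\,dr'dz'\lesssim r^2 z^{-3}\|r^2\xi\|_1$; since $A\ge1$ this is absorbed into the third target term $r^2(A/|z|)^3\|r^2\xi\|_1$. Here no use of monotonicity is needed, the large separation in $z$ doing all the work.

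The essential region is $\{z'>z/2\}$, where the monotone decay supplies the gain $\xi\le m(r')/z$. After inserting this, the $z'$-variable enters only through the kernel, so the estimate reduces to the one-dimensional quantity $H(r,r'):=\int_{\mathbb R}G(r,z,r',z')\,dz'$. Writing $G=\tfrac{\sqrt{rr'}}{2\pi}F(s)$ and integrating in $z'$ gives $H(r,r')=\tfrac{rr'}{2\pi}\int_{\mathbb R}F\big(s_0+u^2\big)\,du$ with $s_0=(r-r')^2/(rr')$; using the behaviour of $F$ in \eqref{log_beha} near $0$ and its decay at infinity, the $u$-integral is $\lesssim(1+s_0)^{-1}$, yielding the clean, singularity-free bound $H(r,r')\lesssim (rr')^2/\big(rr'+(r-r')^2\big)\lesssim (rr')^2/\max(r,r')^2$. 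Hence the $\{z'>z/2\}$ contribution is $\lesssim \tfrac1z\int_0^\infty m(r')r'\,H(r,r')\,dr'$, which I split at $r'=r$: the part $r'\le r$ (where $H\lesssim r'^2$) yields $\lesssim \tfrac rz\|r\xi\|_1$, and the part $r'>r$ (where $H\lesssim r^2$) yields $\lesssim \tfrac{r^2}{z}\|\xi\|_1$. Finally I invoke the hypothesis $r\le |z|/A$: it gives $r/z\le 1/A$ directly, and, distinguishing $z\ge\sqrt A$ from $z<\sqrt A$ together with $r^2\le z^2/A^2$, it gives $r^2/z\lesssim r^2/\sqrt A + 1/A$. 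Combining with $\|r\xi\|_1,\|\xi\|_1\lesssim \|\xi\|_{L^1\cap L^2}+\|r^2\xi\|_1$ produces the first two target terms and closes the estimate.

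The main obstacle is precisely this region $\{z'>z/2\}$: one must simultaneously absorb the local (logarithmic) singularity of $G$ and keep exact track of the powers of $r$, $z$ and $A$. The device that resolves it is performing the $z'$-integration first to obtain $H(r,r')\lesssim (rr')^2/\max(r,r')^2$, which trivializes the diagonal behaviour; after that, the constraint $r\le|z|/A$ is exactly what converts the geometric factors $r/z$ and $r^2/z$ into the stated $A^{-1}$ and $r^2A^{-1/2}$ decay.
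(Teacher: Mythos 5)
Your proof is correct, but it takes a genuinely different route from the paper's. The paper splits the integral by distance to the singularity ($t<r/2$ versus $t\geq r/2$) and exploits the monotonicity hypothesis only in averaged form, through the slab inequality $\int_{s-(s/A)}^{s+(s/A)}g(\sigma)\,d\sigma\leq \frac{4}{A}\|g\|_{L^1(0,\infty)}$ for non-increasing $g$; the near-diagonal piece is then handled by H\"older's inequality, which is precisely where the $L^2$ norm of $\xi$ enters. You instead split by height ($z'\leq z/2$ versus $z'>z/2$), convert monotonicity into the pointwise Chebyshev-type bound $\xi(r',z')\leq m(r')/(2z')$ with $m(r')=\int_{\mathbb{R}}\xi(r',w)\,dw$, and then dispose of the kernel singularity once and for all by integrating $G$ in $z'$: the bound $\int_{\mathbb{R}}F(s_0+u^2)\,du\lesssim (1+s_0)^{-1}$ (which does follow from \eqref{est_F} with a small exponent $\tau<1/2$ near $u=0$, or from \eqref{log_beha}, together with $\tau=3/2$ at infinity) gives $H(r,r')\lesssim (rr')^2/\bigl(rr'+(r-r')^2\bigr)\lesssim\min(r,r')^2$, since $rr'+(r-r')^2=r^2-rr'+r'^2\geq\frac12\max(r,r')^2$. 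Your subsequent splitting at $r'=r$ and the elementary conversions $r/z\le 1/A$ and $r^2/z\le r^2/\sqrt A+1/A$ are all correct, and the far region $\{z'\le z/2\}$ is handled exactly as one would expect. What your route buys: the diagonal behaviour is trivialized by the one-dimensional kernel integral rather than by H\"older, and as a byproduct your argument never touches $\|\xi\|_2$ — you in fact prove the slightly stronger statement with $\|\xi\|_{1}+\|r^2\xi\|_1$ alone on the right-hand side (using $\|r\xi\|_1\le\|r^2\xi\|_1^{1/2}\|\xi\|_1^{1/2}$). What the paper's route buys is that it stays entirely within the pointwise kernel bounds \eqref{est_F} and never needs the integrated kernel $H(r,r')$, at the cost of invoking the $L^2$ norm in the statement.
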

 \begin{proof}

    By replacing $A$ to $A/2$, 
it is equivalent to show  \eqref{est_stream_sym} for $r\leq 2|z|/A$ and $A\geq 2$.
    Let us take $ (r,z)\in\Pi$ satisfying $r\leq 2|z|/A$.    
     We may assume that $z>0$.    
By setting $$t=
|(r,z)-(r',z')|,$$
we split the integral
\begin{equation*}\begin{split}
\psi(r,z)
= 
\int_\Pi
G(r,z,r',z')
\xi(r',z')r'dr'dz'=
\int_{t<r/2}\dots+\int_{t\geq r/2}\dots=:I+II.
\end{split}\end{equation*}
For the term $I$, we estimate, by \eqref{est_F} with $\tau=1/6$ and by H\"older's inequality,
\begin{equation*}\begin{split}
I&\lesssim\int_{t<r/2}\left({\sqrt{rr'}}\,
 \left(\frac{rr'}{t^2}\right)^{1/6}
 (r')^{1/3}\right) (r')^{2/3}\xi(r',z') dr'dz' \\
&\lesssim\left(\int_{t<r/2}  
 \frac{(rr')^{2}}{t}
 r' dr'dz'\right)^{1/3}   \|\xi {1}_{\{t<r/2\}}\|_{3/2}=:   I_1\cdot I_2.
\end{split}\end{equation*}
 For $t<r/2$, we have $r\sim r'$ so we estimate
\begin{equation*}\begin{split}
I_1&\leq r^{5/3}\left(\int_{t<r/2}  
 \frac{1}{t}
  dr'dz'\right)^{1/3} 
   \lesssim r^{2}.
 \end{split}\end{equation*}
We observe that the conditions $t< r/2$ and $r\leq 2z/A$  imply $|z-z'|< z/A$. 
We also observe that, for any    function $g:\mathbb{R}_{>0}\to\mathbb{R}_{\geq0}$ which is non-increasing,
\begin{align}\label{g_est}
\int_{s-(s/A)}^{s+(s/A)}g(\sigma)\dd \sigma\leq \frac{4}{A}||g||_{L^{1}(0,\infty)}\quad s>0,\ A\geq 2,
\end{align}   thanks  to 
 $sg(s)\leq ||g||_{L^1(0,\infty)}$ for any $s>0$.  By the assumption 
\eqref{cond_sym}, we can apply
 \eqref{g_est} into the one-dimensional function $\xi(r',\cdot_{z'})$ of $z'$-variable (by fixing $r'$), which produces 
 \begin{equation*}\begin{split}
I_2&\leq   \left(\|\xi {1}_{\{t<r/2\}}\|_{1} + \|\xi {1}_{\{t<r/2\}}\|_{2} \right) \leq   \left(\|\xi {1}_{\{|z-z'|< z/A\}}\|_{1} + \|\xi {1}_{\{|z-z'|< z/A\}}\|_{2} \right)\\ &\lesssim \left(\frac{1}{A}\|\xi\|_1+\frac{1}{\sqrt{A}}\|\xi\|_2\right)
\lesssim\frac{1}{\sqrt A}  \|\xi\|_{L^1\cap L^2}.
\end{split}\end{equation*} 
Thus we get 
$$
I\lesssim\frac{r^2}{\sqrt A}\|\xi\|_{L^1\cap L^2}.$$
For the term $II$,    by \eqref{est_F} with $\tau=3/2$,
we estimate
\begin{equation*}\begin{split}
II&\lesssim\int_{t\geq r/2}\left( {\sqrt{rr'}} \,\left(\frac{rr'}{t^2}\right)^{3/2}\right)\xi(r',z')r'dr'dz' =  \int_{t\geq r/2}\left(  \frac{r^2(r')^3}{t^3}  \right)\xi(r',z')dr'dz' \\
&=\int\limits_{\substack{t\geq r/2, \\ |z-z'|< z/A}}
+\int\limits_{\substack{t\geq r/2, \\ |z-z'|\geq z/A}}
 =: II_1+II_2.
\end{split}\end{equation*} 
Since
$t\geq r/2$ implies $r'\leq |r'-r|+r\leq 3t$,  we have, by \eqref{g_est},
\begin{equation*}\begin{split}
 II_1 &
 \lesssim \int\limits_{ |z-z'|< z/A}    (r')^2  \xi(r',z')dr'dz'
 \leq  \int\limits_{ |z-z'|< z/A}    \left((r')^1+(r')^3\right)  \xi(r',z')dr'dz'\\
 &\lesssim  \frac{1}{  A}\left(\|\xi\|_{1} + \|r^2\xi\|_{1}\right).
\end{split}\end{equation*} 
  For $II_2$, since
  $|z-z'|\geq  z/A $ implies
$t\geq     z/A $, 
  we have
\begin{equation*}\begin{split}
 II_2 \leq  r^2 \int\limits_{t\geq     z/A}\left(  \frac{ (r')^3}{t^3}  \right)\xi(r',z')dr'dz' 
 \lesssim r^2\left(\frac{A}{z}\right)^3   \|r^2\xi\|_{1}.\\
\end{split}\end{equation*} 
In sum, we obtained
\begin{equation*}\begin{split}
\psi(r,z)&\lesssim \frac{r^2}{\sqrt A} \|\xi\|_{L^1\cap L^2} +
\frac{1}{  A}\left(\|\xi\|_{1} + \|r^2\xi\|_{1}\right)+
r^2\left(\frac{A}{z}\right)^3 \|r^2\xi\|_{1},
\end{split}\end{equation*} which implies \eqref{est_stream_sym}. 
  \end{proof}
\color{black}


Now we are ready to prove Lemma \ref{lem_stream_AR}.
    \begin{proof}[Proof of Lemma \ref{lem_stream_AR}]
      
We decompose

\begin{align*}
\int_{\mathbb{R}^{3}\backslash Q }\psi \xi \dd x
=\int_{r\geq R}+\int_{\substack{r<R, \\ |z|\geq AR}}=:I+II ,
\end{align*}\\
and estimate, by \eqref{est_psi_bdd} with $\delta=1$,
\begin{align*}
I\lesssim 
\left(\|\xi\|_{L^1\cap L^2}  +\|r^2\xi\|_{1} \right) \int_{r\geq R} \frac{r^2}{r^2} \xi\dd x
\lesssim \frac{1}{R^{2}}\left(\|\xi\|_{L^1\cap L^2}  +\|r^2\xi\|_{1} \right)^2.
\end{align*}\\
For $II$, since  $r<R$ and $|z|\geq AR$  imply $r\leq |z|/A$, applying \eqref{est_stream_sym} yields 

\begin{align*}
II
&\lesssim \left(\|\xi\|_{L^1\cap L^2}  +\|r^2\xi\|_{1} \right) \int\limits_{\substack{r<R, \\ |z|\geq AR}} 
\left(\frac{r^2}{\sqrt A} +
\frac{1}{  A} +
 r^2\left(\frac{A}{|z|}\right)^3  \right)
 \xi \dd x \\
 &\lesssim \left(\|\xi\|_{L^1\cap L^2}  +\|r^2\xi\|_{1} \right) \int 
\left(\frac{r^2}{\sqrt A} +
\frac{1}{  A} +
 \frac{r^2}{R^3}   \right)
 \xi \dd x \\
  &\lesssim \left(\frac{1}{\sqrt A}  +
 \frac{1}{R^3}   \right)\cdot \left(\|\xi\|_{L^1\cap L^2}  +\|r^2\xi\|_{1} \right)^2.
\end{align*} Combining the above estimates, we obtain the conclusion \eqref{est_stream_AR}.
 
  \end{proof}


 \section{Proof of Lemma \ref{lem_exist_one}}\label{app_excep}

 \begin{proof}[Proof of Lemma \ref{lem_exist_one}]

 Let $\Omega\subset\mathbb{R}^N$  be a non-empty connected open set and  let $U\subset \Omega$ satisfy  $|U|>0$ and $|\Omega\setminus U|>0$.
Since $|\mathcal{D}_e(U)|=|U|>0$ by Lemma \ref{lem_lebesgue}, we can take some point  $y\in\mathcal{D}_e(U)$. Similarly, from 
$|\mathcal{D}_i(U)|=|\Omega\setminus U|>0$, we   take another point  $z\in \mathcal{D}_i(U)$. 
 We connect $y$ and $z$ by a polygonal line $L$ consisting of a finite number of line segments joined end to end lying in $\Omega$ and set $$r_0=
\mbox{dist}(L,\Omega^c)
>0. 
$$   (For the case $\Omega=\mathbb{R}^N$,   we simply take $r_0=1$.)
Let us show the existence of   an exceptional  point.
For  $r\in(0,r_0)$, we define $f_r:L\to\mathbb{R}$ by $$f_r(x)=\frac{|B_{r}(x)\cap U|}{|B_{r}(x)|},\quad x\in L.$$
Since $y\in\mathcal{D}_e(U)$  and $z\in\mathcal{D}_i(U)$, there exists $r_1\in(0,r_0/2)$ such that 
$$ f_{r_1}(y)\geq 3/4,\quad 
 f_{r_1}(z)\leq 1/4.$$
 Since 
$f_{r_1}$ is continuous on $L$,  there exists  $x_1\in L$ satisfying
 $$\frac{|B_{r_1}( {x_1})\cap U|}{|B_{r_1}({ {x_1})}|}=\frac{1}{2}.$$  We note
 $\overline{B_{r_1}( {x_1})}\subset \Omega$.\\

 By an induction, we can construct a sequence of positive numbers $\{r_n\}_{n=1}^\infty$ and
a sequence $\{ {x_n} \}_{n=1}^\infty$ of points in $\Omega$ such that (by setting $x_0=x_1$) 
$$0<r_{n}<\frac{r_{n-1}}{2},\quad
   B_{r_{n}}({x_{{n}}})\subset  B_{
 r_{n-1}}( {x_{n-1}}), \quad
\frac{|B_{r_{n}}( {x_{n}})\cap U|}{|B_{r_{n}}({ {x_{n}})}|}=\frac{1}{2}
 \quad
 \mbox{ for any  } n\geq 1. $$ 
 Indeed, we assume that there exist
 $r_k, x_k$ satisfying the above conditions up to $k=1,2,\dots, n$. Then, by applying Lemma \ref{lem_lebesgue} to $U\cap B_{r_n}( {x_n})$, we have $ |\mathcal{D}_e[U]\cap B_{r_n}( {x_n})|>0$. Similarly, 
$ |\mathcal{D}_i[U]\cap B_{r_n}( {x_n})|=|\mathcal{D}_e[\Omega\setminus U]\cap B_{r_n}( {x_n})|>0$.
 Therefore we can take $ {y'}, {z'}\in B_{r_n}( {x_n})$ such that
$$ {y'}\in\mathcal{D}_e(U)\quad\mbox{and}\quad
 {z'}\in\mathcal{D}_i(U). $$
We  take sufficiently small $r_{n+1}\in(0, r_n/2)$ such that $B_{r_{n+1}}( {y'}), B_{r_{n+1}}( {z'})\subset B_{r_n}( {x_n}),$ and $$\frac{|B_{r_{n+1}}( {y'})\cap U|}{|B_{r_{n+1}}( {y'})|}\geq \frac{3}{4}\quad\mbox{and}\quad \frac{|B_{r_{n+1}}( {z'})\cap U|}{|B_{r_{n+1}}( {z'})|}\leq \frac{1}{4}.$$ By the same argument in the above, we have a point
$ {x_{n+1}}$ on the line segment connecting
$ {y'}$ and $ {z'}$ with
$$\frac{|B_{r_{n+1}}( {x_{n+1}})\cap U|}{|B_{r_{n+1}}({ {x_{n+1}})}|}=\frac{1}{2}.$$
Clearly, we have  $B_{r_{n+1}}({ {x_{n+1}})}\subset B_{r_{n}}({ {x_{n}})}$. \\

 This construction guarantees  that  $\lim_n {x_n}=:x$ exists and 
 $\{x\}=\cap_{n\geq 1} \overline{B_{r_n}( {x_n})}\subset \Omega$. In particular, we can verify that the limit $x$ is an exceptional point of $U$. Indeed,
  from
 $  B_{r_n}( {x_n})\subset B_{2r_{n}}( {x})$,  we observe 
 \begin{equation*}\begin{split}
{|B_{2 r_{n}}( {x})\cap U|}&\leq {|B_{r_{n}}( {x_n})\cap U|}
+|B_{2 r_n}( {x})\setminus B_{r_n}( {x_n})|
 \leq \frac{1}{2}|B_{r_{n}}( {x_n})|
+ (2^N-1)|B_{r_{n}}( {x_n})|\\ &
 =
\left(2^N-\frac 1 2\right)|B_{r_{n}}( {x_n})|=
\frac{2^N- (1/2)}{2^N}|B_{2r_{n}}({x})|\quad\mbox{for any } n\geq 1.
 \end{split}\end{equation*}
 Similarly, we get
  \begin{equation*}\begin{split}
{|B_{2 r_{n}}( {x})\cap (\Omega\setminus U)|} \leq \frac{2^N- (1/2)}{2^N}|B_{2r_{n}}( {x})|\quad\mbox{for any } n\geq 1.
 \end{split}\end{equation*} 
 It implies
   \begin{equation*}\begin{split}
0<\frac{1/2}{2^N}\leq \frac{{|B_{2 r_{n}}( {x})\cap U|}}{|B_{2r_{n}}( {x})|} \leq \frac{2^N- (1/2)}{2^N}<1\quad\mbox{for any } n\geq 1.
 \end{split}\end{equation*} Thus $x\notin(\mathcal{D}_e(U)\cup \mathcal{D}_i(U))$, which means $x\in\mathcal{E}(U)$.
\ \\

\end{proof}
 
\section{Proof of Lemma \ref{lem_hill_sol}}\label{app_hill_sol}

We begin with the following observations:\\

 For any $\phi\in C^\infty_c(\Pi)$,  
 we have that $\mathcal{T}\phi\in L^{10/3}(\mathbb{R}^5)$  is compactly supported with
 \begin{equation}\label{GNS}
\| \mathcal{T}\phi\|_{L^{10/3}(\mathbb{R}^5)}\lesssim \|\nabla\mathcal{T}\phi\|_{L^{2}(\mathbb{R}^5)}
\end{equation}  by  Gagliardo-Nirenberg-Sobolev inequality
(e.g. see \cite[p277]{Evans_book}). 
  In addition,   
  we have
\begin{equation}\label{poincare}
\int_{\Pi} |\phi|^{10/3}r^{-11/3}drdz\lesssim \|\phi\|^{10/3}_{\mathcal{H}}\end{equation}  by \eqref{iden_norm} and by the computation
  \begin{equation*}\begin{split} \label{comp_10_3}
\int_{\mathbb{R}^5} |\mathcal{T}\phi|^{10/3} d y=2\pi^2\int_\Pi |\phi/r^2|^{10/3}r^3drdz=
2\pi^2\int_{\Pi} |\phi|^{10/3}r^{-11/3}drdz.
\end{split}\end{equation*} 

Let $E$ be the Hilbert space which is the completion of $C^\infty_c(\mathbb{R}^5)$ in the norm $\|\cdot\|_E$ from the inner product
$$\langle f,g\rangle_{E}
=\frac 1 {2\pi^2}\int_{\mathbb{R}^5}\nabla f\cdot\nabla g \,dy.$$
\color{black}
We denote $E_s$ the closed linear subspace of $E$ which is formed by completing  $C_{c,s}^\infty(\mathbb{R}^5)$  (the class\footnote{
Here we use the subscript `s' for cylindrical symmetry and `c' for compact support while the original paper \cite{AF86} used subscript `c' for the symmetry and  `0' for compact support.
} of infinitely smooth and compactly supported functions $f$ in $\mathbb{R}^5$ with the \textit{cylindrical symmetry} $f(y)=f(r,z)$) in the norm $\|\cdot\|_E$. 
 We observe, for $f,g\in E_s$,
$$\langle f,g\rangle_{E}
=\int_\Pi\left((\partial_r f)(\partial_r g) +(\partial_z f)(\partial_z g)\right){r^3}\,drdz.$$
Then,
 \cite[Lemma 2.2]{AF86}  says that the space $\mathcal{H}(\Pi)$ defined from 
\eqref{defn_h_inner} 
  can be identified 
with the   space $E_s$  via the transform $\mathcal{T}$.

\begin{lem}\label{AF86_lem}[Lemma 2.2 in \cite{AF86}]
The spaces $\mathcal{H}(\Pi)$ and $E_s$ are isometrically isomorphic
 under the transformation $f=\mathcal{T}[\phi]$ of any $\phi\in\mathcal{H}(\Pi)$ or $f\in E_s$. 
\end{lem}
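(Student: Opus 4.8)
The plan is to show that $\mathcal{T}$ is an isometric bijection between suitable dense subspaces and then pass to completions; the only nontrivial point is surjectivity onto $E_s$. First I would observe that $\mathcal{T}$ restricts to a linear bijection from $C_c^\infty(\Pi)$ onto $C_{c,s}^\infty(\mathbb{R}^5\setminus\{r=0\})$, the space of cylindrically symmetric smooth compactly supported functions on $\mathbb{R}^5$ whose support avoids the axis $\{r=0\}=\{y'=0\}$, with inverse $f\mapsto r^2 f$ (both $\phi=r^2 f$ and $\mathcal{T}\phi=\phi/r^2$ are smooth precisely because their supports stay away from $\{r=0\}$, where $r=|y'|$ fails to be smooth). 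By the norm identity \eqref{iden_norm}, equivalently its polarized form \eqref{comp_iso}, this restriction is an isometry between $(C_c^\infty(\Pi),\|\cdot\|_{\mathcal H})$ and $(C_{c,s}^\infty(\mathbb{R}^5\setminus\{r=0\}),\|\cdot\|_E)$. Since an isometry between dense subspaces extends uniquely to an isometry between their completions, $\mathcal{T}$ extends to an isometry $\mathcal{H}(\Pi)\to E_s$ whose range is exactly the $\|\cdot\|_E$-closure of $C_{c,s}^\infty(\mathbb{R}^5\setminus\{r=0\})$ inside $E_s$.

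It remains to prove surjectivity, i.e. that $C_{c,s}^\infty(\mathbb{R}^5\setminus\{r=0\})$ is dense in $E_s=\overline{C_{c,s}^\infty(\mathbb{R}^5)}^{\,\|\cdot\|_E}$. This is the heart of the matter: it is a capacity argument exploiting that the axis has codimension $4$ in $\mathbb{R}^5$. Given $f\in C_{c,s}^\infty(\mathbb{R}^5)$, I would introduce a radial cutoff $\chi_\epsilon=\chi_\epsilon(r)$ with $\chi_\epsilon=0$ for $r<\epsilon$, $\chi_\epsilon=1$ for $r>2\epsilon$, and $|\chi_\epsilon'|\lesssim \epsilon^{-1}$, and set $f_\epsilon=\chi_\epsilon f\in C_{c,s}^\infty(\mathbb{R}^5\setminus\{r=0\})$. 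Writing
$$\nabla(f-f_\epsilon)=(1-\chi_\epsilon)\nabla f-f\,\chi_\epsilon'(r)\,\nabla r,$$
I would estimate the two terms separately. Since $\{r<2\epsilon\}\cap\mathrm{supp}\,f$ has $\mathbb{R}^5$-measure $O(\epsilon^4)$ and $\nabla f$ is bounded, the first term contributes $O(\epsilon^4)$ to $\|\nabla(f-f_\epsilon)\|_{L^2(\mathbb{R}^5)}^2$. For the second, using $|\nabla r|=1$ and $|\chi_\epsilon'|\lesssim\epsilon^{-1}$ on the annulus $\{\epsilon<r<2\epsilon\}$, whose relevant part again has measure $O(\epsilon^4)$, together with $\|f\|_\infty<\infty$, I obtain a contribution $\lesssim\epsilon^{-2}\cdot\epsilon^4=\epsilon^2$.

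Hence $\|f-f_\epsilon\|_E\to 0$ as $\epsilon\to 0$, which gives the desired density, and therefore the extended map $\mathcal{T}$ is an isometric isomorphism of $\mathcal{H}(\Pi)$ onto $E_s$. The embeddings \eqref{GNS} and \eqref{poincare} are not needed for the isomorphism itself, but they allow one to realize the abstract completions as honest function spaces. The main obstacle is precisely the surjectivity step: one must verify that deleting a shrinking neighborhood of the axis costs no Dirichlet energy in the limit, and this is exactly where the high codimension of $\{r=0\}$ in $\mathbb{R}^5$ is essential (it is the analogue of a point being $H^1$-negligible in $\mathbb{R}^n$ only when $n\ge 2$; here the relevant condition is codimension at least $2$, amply satisfied by the codimension-$4$ axis).
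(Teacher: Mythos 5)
Your proposal is correct and takes essentially the same route as the paper: the isometry on the dense subspaces comes from the polarized identity \eqref{comp_iso}, and the entire issue reduces to the density of $C^\infty_{c,s}(\mathbb{R}^5\setminus\{r=0\})$ in $E_s$, which is precisely the point the paper isolates (and then delegates to the proof of Lemma 2.2 in \cite{AF86}). Your explicit cutoff estimate, using that the axis has codimension $4$ so the annular region contributes $\epsilon^{-2}\cdot\epsilon^{4}=\epsilon^{2}$ to the Dirichlet energy, is a correct filling-in of exactly that cited density step.
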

The proof follows the computation \eqref{comp_iso} once we note that the space
$C_{c,s}^\infty(\mathbb{R}^5\setminus\{r=0\})$ is dense in $C_{c,s}^\infty(\mathbb{R}^5)$ under the norm $\|\cdot\|_{E}$, hence in $E_s$.
For the detail, we refer to the proof of \cite[Lemma 2.2]{AF86}. \\

By Lemma \ref{AF86_lem} and by 
the definition of  $\mathcal H(\Pi)=\overline{C_c^\infty(\Pi)}$, the identity \eqref{iden_norm}  holds for any $\phi\in\mathcal{H}(\Pi)$. 
Similarly,
the estimates
\eqref{GNS} and \eqref{poincare} hold for   $\phi\in\mathcal{H}(\Pi)$. 
For deep discussions about such homogeneous spaces $E, E_s$, 
we recommend \cite[Sections II.6, II.7]{Gal}. \\

Now we are ready to prove Lemma \ref{lem_hill_sol}.
\begin{proof}[Proof of Lemma \ref{lem_hill_sol}]
Let $\xi\in\mathcal{S}_\mu$ satisfy \eqref{eq_W_no_g_}.
We recall the definition  \eqref{defn_h_inner} of the norm $\|\cdot\|_{\mathcal{H}}$.
By  \eqref{iden_en} of Lemma \ref{lem_en_iden_origin}  and by 
Theorem \ref{thm_exist_max}, we have
$$\pi\|\psi\|_{\mathcal{H}}^2= E[\xi]=\mathcal{I}_\mu\in(0,\infty),$$
where 
$\psi:=\mathcal{G}[\xi]$ is the stream function $ \xi$. 
 Now we show $\psi\in \mathcal{H}(\Pi)$ which  is equivalent to prove 
$\mathcal{T}\psi\in E_s$ due to Lemma \ref{AF86_lem}. 
In the proof below, we denote 
$x\in\mathbb{R}^3$ and $y\in\mathbb{R}^5$.
Since the axi-symmetric function $\xi(\cdot_x)$ lies on $\left( L^1_w\cap L^\infty\right)(\mathbb{R}^3)$,   we get
$$\xi(\cdot_y)\in \left(L^1 \cap L^\infty\right) (\mathbb{R}^5)$$ by 
$$\frac 1 \pi \int_{\mathbb{R}^5} |\xi |d y=2\pi\int_\Pi r^3|\xi| drdz= \int_{\mathbb{R}^3}r^2|\xi |dx.$$
Thanks to Theorem \ref{thm_max_is_ring} and
Lemma \ref{lem_psi_hol_conti}, we know that
$$ \xi(\cdot_y)
\quad\mbox{is compactly supported in}\quad \mathbb{R}^5,$$
\begin{equation}\label{tpsi}
\mathcal{T}\psi\in BUC^\alpha(\overline{\mathbb{R}^5}),\quad 0<\alpha<1,\quad\mbox{and}\quad
(\mathcal{T}\psi)(y)\to 0\quad\mbox{as}\quad |y|\to\infty,\quad y\in\mathbb{R}^5.
\end{equation}
We also observe,  
 by Lemma \ref{lem_en_iden_origin} and
 by the identity \eqref{tran_h_iden},  
\begin{equation*}
\mathcal{T}\psi\in H^2_{loc}(\mathbb{R}^5\setminus\{r=0\})\quad\mbox{and}\quad 
-\Delta_{\mathbb{R}^5}(\mathcal{T}\psi)=-\mathcal{T}[\mathcal{L}\psi]
=\xi\quad
\mbox{a.e.}\quad
\mbox{in}\quad\mathbb{R}^5\setminus\{r=0\}.
\end{equation*}
Moreover, for any cylindrical symmetric bounded subset $U\subset\mathbb{R}^5$ with the corresponding axi-symmetric bounded set $\tilde{U}\subset\mathbb{R}^3$,
we have
\begin{align*}
\int_{U}| \mathcal{T}\psi|^2dy&\lesssim   \int_{\tilde{U}}\Big|\frac{ \psi}{r}\Big|^2dx 
\lesssim |\tilde{U}|_{\mathbb{R}^3}\|{\psi}/r\|^2_{L^\infty(\mathbb{R}^3)}<\infty
\end{align*} by \eqref{est_psi} and
\begin{align*}
\int_{U}|\nabla_{\mathbb{R}^5}\mathcal{T}\psi|^2dy&\lesssim 
\int_{U}\left(\Big|\frac{\partial_r\psi}{r^2}\Big|^2+\Big|\frac{\partial_z\psi}{r^2}\Big|^2+ \Big|\frac{ \psi}{r^3}\Big|^2\right)dy\lesssim 
\int_{\tilde{U}}\left(\Big|\frac{\partial_r\psi}{r}\Big|^2+\Big|\frac{\partial_z\psi}{r}\Big|^2\right)dx+ \int_{\tilde{U}}\Big|\frac{ \psi}{r^2}\Big|^2dx
\\&
\lesssim E[\xi]+|\tilde{U}|_{\mathbb{R}^3}\|\mathcal{T}{\psi}\|^2_{L^\infty(\mathbb{R}^5)}<\infty
\end{align*} by \eqref{iden_en} and \eqref{tpsi}.
It gives
$ 
\mathcal{T}\psi\in H^1_{loc}({\mathbb{R}^5}).
$
Hence the Poisson equation 
 $-\Delta_{\mathbb{R}^5}(\mathcal{T}\psi) 
=\xi$ is satisfied in a weak sense  in any ball in $\mathbb{R}^5$, which gives
the following representation of $\mathcal{T}\psi$ via the fundamental solution 
in $\mathbb{R}^5$:
$$\mathcal{T}\psi=\frac{1}{8\pi^2| \cdot_y|^3} *_{\mathbb{R}^5}\xi(\cdot_y).$$
This representation 
implies (as in the proof of Lemma \ref{lem_en_iden_origin})
 $$\mathcal{T}\psi\in W^{2,p}(\mathbb{R}^5)\cap BUC^{1+\alpha}(\overline{\mathbb{R}^5}), \quad p>5/3, \quad 0<\alpha<1.$$
In particular, we have
$$ \mathcal{T}\psi \in H^1(\mathbb{R}^5).
$$ Since
$\mathcal{T}\psi$ is cylindrical symmetric, we conclude
$\mathcal{T}\psi\in  E_s$, 
which implies $\psi\in \mathcal{H}(\Pi)$  by Lemma \ref{AF86_lem}.\\

It remains to show that the weak formulation \eqref{prob_hill} holds. Due to our assumption \eqref{eq_W_no_g_}, it is equivalent to  show 
\begin{equation}\label{prob_hill_}
 \langle\phi,\psi\rangle_{\mathcal{H}}
 = \int_\Pi\phi\, 
\xi\, 
 r\,dr dz\quad\mbox{for any}\quad \phi\in \mathcal H(\Pi). 
\end{equation}
First, we observe
that the right-hand side of \eqref{prob_hill_} makes sense for any 
$\phi\in\mathcal{H}(\Pi)$ 
thanks to 
\eqref{poincare}
and the fact $\xi\in (L^\infty\cap L^1_w)(\mathbb{R}^3)$. Indeed,   we can estimate, by 
\eqref{poincare},
\begin{equation}\begin{split}\label{comp_5_}
\int_\Pi|\phi| \xi  r dr dz&= \int_{\Pi}|\phi|r^{-11/10} \xi r^{21/10}  dr dz
\leq \left(\int_\Pi  |\phi|^{10/3}r^{-11/3}drdz\right)^{3/10} \left(
\int_\Pi \xi^{10/7} r^3drdz
\right)^{7/10}\\
&\lesssim \| \phi \|_{\mathcal{H}} \left(\|\xi\|^{3/7}_\infty
\int_{\mathbb{R}^3}   r^2\xi dx
\right)^{7/10}
= \| \phi \|_{\mathcal{H}}\|\xi\|^{3/10}_\infty  \|r^2\xi\|_{1}^{7/10} 
\lesssim \mu^{7/10}\| \phi \|_{\mathcal{H}}.
\end{split}\end{equation} 
Second, \eqref{prob_hill_} is clear for any $\phi \in C^\infty_c(\Pi)$ by integration by parts thanks to 
  $$\xi=-\frac{1}{r^2}\mathcal{L}\psi\quad a.e.$$ from Lemma \ref{lem_en_iden_origin}.  Lastly, for a general $\phi\in \mathcal H(\Pi)$, we take a sequence $\{\phi_n\}\subset C^\infty_c(\Pi)$ such that
$\phi_n\to\phi$ in $\mathcal{H}(\Pi)$. For the left-hand side of \eqref{prob_hill_},
we know $\langle\phi_n,\psi\rangle_{\mathcal{H}}\to \langle\phi,\psi\rangle_{\mathcal{H}}$ as $n\to\infty$.
For the right-hand side of \eqref{prob_hill_}, as in the computation
\eqref{comp_5_}, we have the convergence
$$   \int_\Pi|\phi_n-\phi| 
\xi 
 r dr dz \lesssim \mu^{7/10}\|\phi_n-\phi\|_{\mathcal{H}} \to0\quad \mbox{as}\quad n \to \infty.$$ Hence we
   obtain \eqref{prob_hill_} for any $\phi\in\mathcal{H}(\Pi)$.
\color{black}
\end{proof}

 \section*{Acknowledgements}

The work  is partially supported by the NRF-2018R1D1A1B07043065 (National Research Foundation of Korea)  and by the Research Fund 1.200085.01 of UNIST(Ulsan National Institute of Science \& Technology). We thank Ken Abe for many helpful discussions, including discussions on the identity \eqref{def_W_gamma_uniq}.
Without his     encouragement, this research work would not have been possible. We also thank the anonymous referee for valuable comments.
\vspace{15pt}

 \bibliographystyle{abbrv}
\bibliography{ref_01_2022_Hill_vortex}
\end{document}